\newcommand\cyr{%
\renewcommand\rmdefault{wncyr}%
\renewcommand\sfdefault{wncyss}%
\renewcommand\encodingdefault{OT2}%
\normalfont
\selectfont}
\DeclareTextFontCommand{\textcyr}{\cyr}
\DeclareFontFamily{OT1}{rsfs}{}
\DeclareFontShape{OT1}{rsfs}{n}{it}{<-> rsfs10}{}
\DeclareMathAlphabet{\mathscr}{OT1}{rsfs}{n}{it}
\newcommand{\cf}{{\itshape cf.} }
\newcommand{\etale}{{\'e}tale }
\numberwithin{equation}{section}
\newtheorem{theorem}{Theorem}[section]
\newtheorem{lemma}[theorem]{Lemma}
\newtheorem{lemmadefinition}[theorem]{Lemma-Definition}
\newtheorem{proposition}[theorem]{Proposition}
\newtheorem{corollary}[theorem]{Corollary}
\newtheorem{maintheorem}{Main Theorem}
\newtheorem{Problem}{Problem}
\theoremstyle{definition}
\newtheorem{definition}[theorem]{Definition}
\newtheorem{remark}[theorem]{Remark}
\theoremstyle{remark}
\newtheorem{construction}[theorem]{Construction}
\newtheorem{example}[theorem]{Example}
\newtheorem*{acknowledgement}{Acknowledgement}
\newcommand{\im}{\operatorname{Im}}
\renewcommand{\ker}{\operatorname{Ker}}
\newcommand{\Spec}{\operatorname{Spec}}
\newcommand{\id}{\operatorname{id}}
\newcommand{\Hom}{\operatorname{Hom}}
\newcommand{\Ann}{\operatorname{Ann}}
\newcommand{\codim}{\operatorname{codim}}
\newcommand{\coker}{\operatorname{Coker}}
\newcommand{\Tr}{\operatorname{Tr}}
\newcommand{\Pic}{\operatorname{Pic}}
\newcommand{\Cl}{\operatorname{Cl}}
\newcommand{\et}{\operatorname{\acute{e}t}}
\newcommand{\bFEt}{{\bf F.\acute{E}t}}
\newcommand{\vpl}{\operatornamewithlimits{\varprojlim}}
\newcommand{\vil}{\operatornamewithlimits{\varinjlim}}
\newcommand{\fm}{\frak{m}}
\newcommand{\fq}{\frak{q}}
\newcommand{\fn}{\frak{n}}
\newcommand{\bfe}{{\bf e}}
\newcommand{\mbN}{\mathbb{N}}
\newcommand{\tower}{(\{ R_i\}_{i \geq 0}, \{t_i \}_{i \geq 0} )}
\begin{document}
\title[Perfectoid towers, tilts and the \'etale cohomology groups]
{Perfectoid towers and their tilts : with an application to the \'etale cohomology groups of local log-regular rings}

\author[S.Ishiro]{Shinnosuke Ishiro}
\address{National Institute of Technology, Gunma College, 580 Toriba-machi, Maebashi-shi, Gunma 371-8530, Japan}
\email{shinnosukeishiro@gmail.com}

\author[K.Nakazato]{Kei Nakazato}
\address{Proxima Technology Inc., 5-24-16, Ueno, Taito, 
Tokyo 110-0005, Japan}
\email{keinakazato31@gmail.com}

\author[K. Shimomoto]{Kazuma Shimomoto}
\address{Department of Mathematics, Institute of Science Tokyo, 2-12-1 Ookayama, Meguro, Tokyo 152-8551, Japan}
\email{shimomotokazuma@gmail.com}

\thanks{2020 {\em Mathematics Subject Classification\/}:13B02, 13B40, 13F35, 14A21, 14G45}

\keywords{
Perfectoid tower,
tilting,
Frobenius maps,
\etale cohomology,
log-regularity.}

\begin{abstract}
To initiate a systematic study on the applications of perfectoid methods to Noetherian rings, we introduce the notions of perfectoid towers and their tilts.
%, and examine their properties.
%Using these, we establish a comparison theorem on finiteness of \'etale cohomology groups of a perfectoid tower and of the tilt.
%We also specialize this to prove the finiteness of the prime-to-$p$-torsion subgroup of the divisor class group of a local log-regular ring that appears in logarithmic geometry in the mixed characteristic case.
We mainly show that the tilting operation preserves several homological invariants and finiteness properties.
Using this, we also provide a comparison result on \'etale cohomology groups under the tilting. 
As an application, we prove finiteness of the prime-to-$p$-torsion subgroup of the divisor class group of a local log-regular ring that appears in logarithmic geometry in the mixed characteristic case.
\end{abstract}

\maketitle

\setcounter{tocdepth}{3}
\tableofcontents

\section{Introduction}

In recent years, the perfectoid technique became one of the most effective tools in commutative ring theory and singularity theory in mixed characteristic.
The \textit{tilting operation} $S\rightsquigarrow S^{\flat}$ for a perfectoid ring $S$ is a central notion in this method, which makes a bridge between objects in mixed characteristic and objects in positive characteristic.
However, perfectoid rings themselves are too big to fit into Noetherian ring theory. 
Hence, for applications, one often requires distinguished Noetherian ring extensions that approximate perfectoids. 
Indeed, in many earlier works (such as \cite{CK19}, \cite{KS20} and \cite{GR22}), one constructs a highly ramified tower of regular local rings or local log-regular rings: 
$$
R_{0}\subseteq R_{1}\subseteq R_{2}\subseteq \cdots
$$
that converges to a (pre)perfectoid ring. Our purposes in this paper are to axiomatize the above towers and establish a kind of Noetherization of perfectoid theory. As an application, we show a finiteness result on the divisor class groups of local log-regular rings.

Fix a prime $p$. We first notice that the highly ramified towers in the positive characteristic case are of the form:
$$
R\subseteq R^{1/p}\subseteq R^{1/p^2}\subseteq\cdots. 
$$ 
This type of towers also appears when one considers the perfect closure of a reduced $\mathbb{F}_{p}$-algebra.
Thus we formulate this class as a tower-theoretic analogue of perfect $\mathbb{F}_{p}$-algebras, and call them \textit{perfect towers} (Definition \ref{ptower}). 
Next, we introduce \textit{perfectoid towers} as a generalization of perfect towers, which includes the towers applied so far (cf.\ Proposition \ref{smalltilt} and Example \ref{examplelogstilt}). 
A perfectoid tower is given by a direct system of rings $R_{0}\xrightarrow{t_{0}}R_{1}\xrightarrow{t_1}\cdots$ satisfying seven axioms in Definition \ref{invqperf} and Definition \ref{stilt}. 
If we assume that each $R_i$ is Noetherian, then these axioms are essential to cope with two main difficulties which we explain below.
%(although the axioms depend on what adic topology is considered, here we deal with only $p$-adic one for simplicity). 

The first difficulty is that the residue ring $R_i/pR_i$ on each layer is not necessarily semiperfect. 
We overcome it by the axioms (b), (c), and (d); these ensure the existence of a surjective ring map $F_i : R_{i+1}/pR_{i+1} \to R_i/pR_i$ which gives a decomposition of the Frobenius endomorphism. 
We call $F_{i}$ the \textit{$i$-th Frobenius projection}, and define a ring $R_j^{s.\flat}$ $(j\geq 0)$ as the inverse limit of Frobenius projections starting at $R_{j}/pR_{j}$. 
Then the resulting tower $R^{s.\flat}_{0}\xrightarrow{t^{s.\flat}_{0}}R^{s.\flat}_{1}\xrightarrow{t^{s.\flat}_1}\cdots$ is perfect, and thus we obtain the tilting operation $(\{R_i\}_{i \geq 0},  \{t_i\}_{i \geq 0}) \rightsquigarrow (\{R^{s.\flat}_i\}_{i \geq 0},  \{t^{s.\flat}_i\}_{i \geq 0})$. 
We remark that this strategy is an axiomatization of the principal arguments in \cite{Sh11}. 

The second one is that each $R^{s.\flat}_{i}$ could be imperfect. 
Because of this, the Witt ring $W(R^{s.\flat}_{i})$ is often uncontrollable. 
On the other hand, the definition of Bhatt-Morrow-Scholze's perfectoid rings (\cite{BMS18}) contains an axiom involving Fontaine's theta map $\theta_{S}: W(S^{\flat}) \to S$ (see Definition \ref{integralperfectoid} (3)), where perfectness of $S^\flat$ is quite effective. 
Our axioms (f) and (g) are the substitutes for it; these require the Frobenius projections to behave well, especially on the $p$-torsion parts.  
This idea is closely related to Gabber and Ramero's characterization of perfectoid rings (\cite[Corollary 16.3.75]{GR22}; see also Theorem \ref{FontainePerfectoid}). 
Indeed, we apply it to deduce that the completed direct limit of a perfectoid tower is a perfectoid ring (Corollary \ref{smalltiltproperty1}).

We then verify fundamental properties of the tilting operation for towers. 
For example, the tilt $(\{R^{s.\flat}_i\}_{i \geq 0},  \{t^{s.\flat}_i\}_{i \geq 0})$ is a perfectoid tower with respect to an ideal $I^{s.\flat}_{0}\subseteq R^{s.\flat}_{0}$ which is the kernel of the $0$-th projection $R_0^{s.\flat} \to R_0/pR_0$ (Proposition \ref{stiltstilt}). 
It induces an isomorphism between two perfectoid objects of different characteristics modulo the defining ideals (Lemma \ref{921WedN}). 
Moreover, this operation preserves several finiteness properties such as Noetherianness on each layer (Proposition \ref{smalltiltproperty2}). 
A key to deducing these statements is the following result (see Remark \ref{rMonNm306} for homological interpretation). 

\begin{maintheorem}[see Theorem \ref{exactstilt}]\label{mt1}
$I^{s.\flat}_{0}$ is a principal ideal.
%and $(I^{s.\flat}_{i+1})^{p}=I^{s.\flat}_{i}R_{i+1}^{s.\flat}$. 
Moreover, we have isomorphisms of (possibly) non-unital rings $(R_i^{s.\flat})_{I^{s.\flat}_0\textnormal{-tor}}\cong (R_{i})_{p\textnormal{-tor}}$ $(i\geq 0)$ that are compatible with $\{t^{s.\flat}_i\}_{i\geq 0}$ and $\{t_i\}_{i\geq 0}$. 
\end{maintheorem}
Under certain normality assumptions, we obtain a comparison result on the finiteness of \etale cohomology groups under tilting for towers (Proposition \ref{TiltEtaleCohHensel}). 
%This proposition is formulated as a generalization of a crucial part of the proof of \cite[Theorem 3.1.3]{KS20} (indeed, the regularity assumption is dropped).
This proposition is considered to rework the crucial part of the proof of \cite[Theorem 3.1.3]{KS20} in a systematic way.
Actually, our proposition applies beyond the regular case.

%substancial, for the purpose of utility, 

As a typical example, we investigate certain towers of \textit{local log-regular rings}; 
this class of rings is defined by Kazuya Kato, and is central to logarithmic geometry (we refer to \cite{GR22}, \cite{Ka89} and \cite{Ogus18} for the readers interested in logarithmic geometry). 
By Kato's structure theorem, a complete local log-regular ring $(R, \mathcal{Q}, \alpha)$ of mixed characteristic is of the form $C(k)\llbracket \mathcal{Q}\oplus \mathbb{N}^r \rrbracket/(p-f)$ where $C(k)$ is a Cohen ring of the residue field $k$ of $R$ (see Theorem \ref{CohenLogReg}). 
Gabber and Ramero gave a systematic way to build a perfectoid tower (in our sense) over it, which consists of local log-regular rings (Construction \ref{logtower}). 
In this paper, we reveal that its tilt also consists of local log-regular rings, and arises from $C(k)\llbracket\mathcal{Q}\oplus \mathbb{N}^{r}\rrbracket/(p)$ (Theorem \ref{TiltingLogRegular}). 
It says that these two rings on the starting layers fit into a Noetherian variant of the tilting correspondence in perfectoid theory (e.g.\ $\mathbb{Z}_{p}$ corresponds to $\mathbb{F}_{p}\llbracket x\rrbracket$). 

%\begin{maintheorem}[Theorem \ref{TiltingLogRegular}]
%\footnote{Theorem \ref{TiltingLogRegular} deals with more general cases. }]
%\label{mainthm4}
%Let $(\{ R_i \}_{i \geq 0} , \{ t_i \}_{i \geq 0})$ be a perfectoid tower which consists of complete local log-regular rings $\{(R_i,\mathcal{Q}^{(i)},\alpha_i)\}_{i\geq 0}$ given by Proposition \ref{claimlog}. 
%Let $(\{R^{s.\flat}_i\}_{i \geq 0} ,\{ t^{s.\flat}_i\}_{i \geq 0})$ be the tilt of $(\{R_i\}_{i \geq 0} ,\{ t_i\}_{i \geq 0})$ associated to $(R_{0}, (p))$. 
%Let $r$ be the dimension of $R_0/I_{\alpha_0}$. 
%Then for any $j \ge 0$, the following assertions hold.
%\begin{enumerate}
%\item
%There exists a homomorphism of monoids $\alpha^{s.\flat}_{j} : \mathcal{Q}^{s.\flat}_j \to R_{j}^{s.\flat}$ such that $(R_j^{s.\flat}, \mathcal{Q}^{s.\flat}_{j}, \alpha^{s.\flat}_{j})$ is a local log-regular ring. Moreover, we obtain the isomorphism $R_j^{s.\flat} \cong k\llbracket \mathcal{Q}^{(j)} \oplus (\mathbb{N}^r)^{(j)}\rrbracket$ where $k$ is the residue field of $R_0$.
%\item
%Assume further that $k_{j+1}$ is $F$-finite. 
%The ring map $t^{s.\flat}_j : R_j^{s.\flat} \to R_{j+1}^{s.\flat}$ is module-finite and $(R_j)^{s.\flat}$ is $F$-finite.
%\end{enumerate}
%\end{maintheorem}

%As a practical direction, we give a comparison result for \'etale cohomology groups of (Henselian) perfectoid towers under tilting (Theorem \ref{TiltEtaleCohHensel}).
%This plays an important role in the proof of Main Theorem \ref{mainthm5} mentioned below.

We regard Theorem \ref{TiltingLogRegular} to be of fundamental importance in the search on the singularities of Noetherian rings via perfectoid methods.
For instance, we can investigate the \textit{divisor class groups} of  local log-regular rings.\footnote{K.\ Kato proved that a local log-regular ring is a normal domain (\cite{Ka89}).}
%by combining it with the comparison result of the finiteness of \'etale cohomology of perfectoid towers and of their tilts (Proposition \ref{TiltEtaleCohHensel}), we can investigate the \textit{divisor class groups} of  local log-regular rings.\footnote{K.\ Kato proved that a local log-regular ring is a normal domain (\cite{Ka89}).}
The divisor class group of a Noetherian normal domain is an important invariant, but it is often hard to compute.\footnote{Every abelian group is realized as a divisor class group of some Dedekind domain (due to Claborn's result \cite{Cla66}).}
On the other hand, Polstra recently proved a remarkable result stating that the torsion subgroup of the divisor class group of an $F$-finite strongly $F$-regular domain is finite (\cite{Pol20}).
Based on this result, we obtain the following finiteness theorem.

%\begin{theorem}[Polstra]\label{PolstraThm}
%Let $(R,\fm)$ be a Noetherian local $\mathbb{F}_p$-algebra, where we set $\mathbb{F}_p:=\mathbb{Z}/p\mathbb{Z}$ for a prime number $p$. Suppose that $R$ is an $F$-finite strongly $F$-regular domain. Then the torsion subgroup of the divisor class group of $R$ is finite.
%\end{theorem}

%We specialize Proposition \ref{TiltEtaleCohHensel} to a study of divisor class groups, and obtain the following finiteness theorem. 
%It can be regarded as a mixed characteristic analogue of Polstra's theorem.
% \ref{PolstraThm}. 

\begin{maintheorem}[Theorem \ref{torsiondivisorclass}]
\label{mainthm5}
Let $(R,\mathcal{Q},\alpha)$ be a local log-regular ring of mixed characteristic with perfect residue field $k$ of characteristic $p>0$, and denote by $\Cl(R)$ the divisor class group with its torsion subgroup $\Cl(R)_{\rm{tor}}$.
%Then the following assertions hold.
%\begin{enumerate}
%\item
%Assume that $R \cong W(k)\llbracket\mathcal{Q}\rrbracket$ for a fine, sharp and saturated monoid $\mathcal{Q}$, where $W(k)$ is the ring of Witt vectors over $k$. Then $\Cl(R)_{\rm{tor}} \otimes \mathbb{Z}[\frac{1}{p}]$ is a finite group. In other words, the $\ell$-primary subgroup of $\Cl(R)_{\rm{tor}}$ is finite for all primes $\ell \ne p$ and vanishes for almost all primes $\ell \ne p$.
%\item
Assume that $\widehat{R^{\rm{sh}}}[\frac{1}{p}]$ is locally factorial, where $\widehat{R^{\rm{sh}}}$ is the completion of the strict Henselization $R^{\rm{sh}}$. Then $\Cl(R)_{\rm{tor}} \otimes \mathbb{Z}[\frac{1}{p}]$ is a finite group. In other words, the $\ell$-primary subgroup of $\Cl(R)_{\rm{tor}}$ is finite for all primes $\ell \ne p$ and vanishes for almost all primes $\ell \ne p$.
%\end{enumerate}
\end{maintheorem}

Our approach to the above theorem is a combination of Theorem \ref{TiltingLogRegular} and Proposition \ref{TiltEtaleCohHensel}.
%the comparison result of the finiteness of \'etale cohomology of perfectoid towers and of their tilts (Proposition \ref{TiltEtaleCohHensel}).
%reduction to Polstra's theorem by using tilting.
%ここに手法を入れる。
%In the first case of the theorem, the proof uses only a few known results in commutative algebra. 
%On the other hand, in the second case, 
%In Main Theorem \ref{mainthm5}, Theorem \ref{TiltEtaleCohHensel} plays a crucial role. 
Although we formulated the above theorem only in mixed characteristic, it has an analogue in characteristic $p>0$, which is relatively easy as follows from the fact that $F$-finite log-regular rings are strongly $F$-regular (Lemma \ref{F-regularLog}) combined with Polstra's theorem. 

For a local log-regular ring $(R, \mathcal{Q}, \alpha)$, Gabber and Ramero constructed the isomorphism $\Cl(Q) \cong \Cl(R)$ where $\Cl(\mathcal{Q})$ is the divisor class group of the associated monoid (\cite[Corollary 12.6.43]{GR22}). It induces the finite generation of $\Cl(R)$.\footnote{The first named author recently provided an elementary proof of \cite[Corollary 12.6.43]{GR22}. See \cite{Ish24}.}

Recently, H. Cai, S. Lee, L. Ma, K. Schwede, and K. Tucker proved that the torsion part of the divisor class group of a BCM-regular ring is finite (see \cite[Theorem 7.0.10.]{CLMSK22}). 
Since they also proved that local log-regular rings are BCM-regular, their result recovers Main Theorem \ref{mainthm5}. 
Although their approach relies on the evaluation of a certain inequality with the perfectoid signature which is defined in \cite{CLMSK22} as an analogue of $F$-signature, it does not use a reduction to positive characteristic and is therefore essentially different from our approach.

This paper is organized as follows.
In \S \ref{SectLog}, we discuss several properties of monoids and local log-regular rings needed in later sections.
%We investigate the properties of the monoid which consists of $p$-th power roots of elements of a given monoid.
%Many properties of 
%我々は後で必要になるlog-regular ringの環論的な性質やモノイドの性質を調べる。
We also record a shorter proof of the result that \textit{local log-regular rings are splinter} based on the direct summand theorem in \S\ref{LRsplinter}. 
In \S \ref{sectionPerf}, we introduce the notions of perfect towers, perfectoid towers, and their tilts.
The most part of this section is devoted to studying fundamental properties of them; in particular, \S\ref{subsecTilt} deals with Main Theorem \ref{mt1}. 
%To define them as tower-theoretic analogues of perfect rings, perfectoid rings, and their tilts, we also introduce the notions of purely inseparable towers and inverse perfection of purely inseparable towers.
%Perfect towers are a class of towers of rings whose direct limit is a perfect $\mathbb{F}_p$-algebra.
%Purely inseparable towers are defined as a naive class of tower of rings that can be defined the inverse perfection of a tower.
%After the definition, we discuss fundamental properties of purely inseparable towers (resp. perfectoid towers) and their inverse perfections (resp. tilts).
In the last subsection \S\ref{smalltiltlog}, we provide explicit examples of perfectoid towers consisting of local log-regular rings, and compute their tilts.
In \S 4, we give a proof for Main Theorem \ref{mainthm5} using the tilting operation, which is an application of \S \ref{SectLog} and \S \ref{sectionPerf}. 
In Appendix, we review the notion of \textit{maximal sequences} associated to certain differential modules due to Gabber and Ramero \cite{GR22}.
This plays an important role in the construction of perfectoid towers of local log-regular rings (Construction \ref{logtower}).

\ \\

\textbf{Convention}: Throughout this paper, we follow the convention stated below. 
\begin{itemize}
\item
We consistently fix a prime $p>0$. If we need to refer to another prime different from $p$, we denote it by $\ell$. 
\item
All rings are assumed to be commutative and unital (unless otherwise stated; cf.\ Theorem \ref{exactstilt} (2)). We mean by a \textit{ring map} a unital ring homomorphism. 
\item
A local ring is a (not necessarily Noetherian) ring with a unique maximal ideal. When a ring $R$ is local, then we use $\fm_R$ (or simply $\fm$ if no confusion is likely) to denote its unique maximal ideal. 
We say that a ring map $f: R\to S$ is \textit{local} if $R$ and $S$ are local rings and $f^{-1}(\mathfrak{m}_{S})=\mathfrak{m}_{R}$. 
\item
Unless otherwise stated, a pair $(A, I)$ consisting of a ring $A$ and an ideal $I\subseteq A$ will be simply called a \textit{pair}.
\item
The Frobenius endomorphism on an $\mathbb{F}_{p}$-algebra $R$ is denoted by $F_R$.
If there is no confusion, we denote it by $\textnormal{Frob}$. 
\end{itemize}

\begin{acknowledgement}
The authors are grateful to Professor K. Fujiwara for his continued support and comments. 
Our gratitude also goes to Professor Shunsuke Takagi, Shunya Saito, Ryo Ishizuka, and Kazuki Hayashi for their advice on the content of this paper. 
Finally, we sincerely thank the referees for their careful reading and valuable suggestions.
The second-named author was partially supported by JSPS Grant-in-Aid for Early-Career Scientists 23K12952. The third-named author was partially supported by JSPS Grant-in-Aid for Scientific Research(C) 18K03257 and Scientific Research(C) 23K03077.
\end{acknowledgement}

\section{Log-regularity}\label{SectLog}
%In this section, we discuss the theory of log rings with a focus on log-regularity. 
%As a preparation to dealing with log rings, we first give a brief review and some remarks on commutative monoids in \S\ref{monoidReview}. 
%It also includes a study on a certain type of extension of monoids appeared in Gabber-Ramero's construction of perfectoid towers. 
%In particular, Lemma \ref{finite424} and Lemma \ref{injective1} in \S\ref{intshmon} are used in our proof of Main Theorem \ref{mainthm5}. 
%In \S\ref{LogRings}, we review the definition of local log-regular rings and their notable properties, such as Kato's structure theorem (Theorem \ref{CohenLogReg}). 
%We also recall the notion of strong $F$-regularity, and describe how it is related to log-regularity. 
%As a consequence, we give a proof for the first part of Main Theorem \ref{mainthm5} (see Proposition \ref{torsiondivisorunramified}). 
%In \S\ref{LRsplinter}, we discuss Gabber-Ramero's result which claims that \textit{any local log-regular ring is a splinter} (Theorem \ref{log-splinter}). We give an alternative proof for it using the Direct Summand Theorem. 
In this section, we discuss several properties of monoids and local log-regular rings.
In \S\ref{monoidReview}, we review basic terms on monoids, and examine the behavior of $p$-times maps which are effectively used in Gabber-Ramero's treatment of perfectoid towers (see Construction \ref{logtower}).
In \S\ref{LogRings}, we review the definition of local log-regular rings and crucial results by K. Kato, and study the relationship with strong $F$-regularity.
In \S\ref{LRsplinter}, we recall Gabber-Ramero's result which claims that \textit{any local log-regular ring is a splinter} (Theorem \ref{log-splinter}), and give an alternative proof for it using the Direct Summand Theorem by Y. Andr\'e \cite{Andre18} (its derived variant is proved by B. Bhatt \cite{Bh18}).

\subsection{Preliminaries on monoids}\label{monoidReview}
\subsubsection{Basic terms}
Here we review the definition of several notions on monoids. 

\begin{definition}
A \textit{monoid} is a semigroup with a unit. A \textit{homomorphism of monoids} is a semigroup homomorphism between monoids that sends a unit to a unit. 
\end{definition}

Throughout this paper, all monoids are assumed to be commutative. 
We denote by $\bold{Mnd}$ the category whose objects are (commutative) monoids and whose morphisms are homomorphisms of monoids.  

We denote a unit by $0$. 
Let $\mathcal{Q}$ be a monoid and $\mathcal{Q}^{*}$ denote the set of all $p \in \mathcal{Q}$ such that there exists $q \in \mathcal{Q}$ such that $p+q=0$. 
Let $\mathcal{Q}^{gp}$ denote the set of elements $a-b$ for all $a,b \in \mathcal{Q}$ where $a-b = a'-b'$ if and only if there exists $c \in \mathcal{Q}$ such that $a+b'+c=a'+b+c$. 
By definition, $\mathcal{Q}^{gp}$ is an abelian group. 
The following conditions yield good classes of monoids.

\begin{definition}\label{propertymon}
Let $\mathcal{Q}$ be a monoid.
\begin{enumerate}
\item
$\mathcal{Q}$ is called \textit{integral} if for $x, x'$ and $y \in Q$, $x+y=x' +y$ implies $x=x'$.

\item $\mathcal{Q}$ is called \textit{fine} if it is finitely generated and integral.

\item $\mathcal{Q}$ is called \textit{sharp} if $Q^{*}=0$.

\item $\mathcal{Q}$ is called \textit{saturated} if the following conditions hold.
\begin{enumerate}
    \item 
    $Q$ is integral. 
    \item
    For any $x \in \mathcal{Q}^{gp}$, if $nx \in \mathcal{Q}$ for some $n \geq 1$, then $x \in \mathcal{Q}$.
\end{enumerate}
%whenever $x \in \mathcal{Q}^{gp}$ is such that $nx \in \mathcal{Q}$ for some $n \geq 1$, then $x \in Q$.
%\footnote{Since $\mathcal{Q}$ is integral, $\iota_{\mathcal{Q}^{gp}}  : \mathcal{Q} \to \mathcal{Q}^{gp}~;~q \mapsto q-0$ is injective (see \cite[Chapter {\bf I}, Proposition 1.3.3]{Ogus18}). In Definition \ref{}(4), we identify $\mathcal{Q}$ with its image in $\mathcal{Q}^{gp}$}
\end{enumerate}
\end{definition}
For an integral monoid $\mathcal{Q}$, the map $\iota_{\mathcal{Q}}  : \mathcal{Q} \to \mathcal{Q}^{gp}~;~q \mapsto q-0$ is injective (see \cite[Chapter {\bf I}, Proposition 1.3.3]{Ogus18}). In Definition \ref{propertymon} (4), we identify $\mathcal{Q}$ with its image in $\mathcal{Q}^{gp}$.

Next we recall the definition of a module over a monoid.\footnote{This is called a \textit{$\mathcal{Q}$-set in \cite{Ogus18}}. We call it as above to follow the convention of the terminology in commutative ring theory.}

\begin{definition}[$\mathcal{Q}$-module]
Let $\mathcal{Q}$ be a monoid. 
\begin{enumerate}
\item
A \emph{$\mathcal{Q}$-module} is a set $M$ equipped with a binary operation 
$$
\mathcal{Q}\times{M}\to M\ ;\ (q, x)\mapsto q+x
$$
having the following properties: 
\begin{enumerate}
\item
$0+x=x$ for any $x\in M$;
\item
$(p+q)+x=p+(q+x)$  for any $p,q\in \mathcal{Q}$ and $x\in M$. 
\end{enumerate}
\item
A \emph{homomorphism of $\mathcal{Q}$-modules} is a (set-theoretic) map $f: M\to N$ between $\mathcal{Q}$-modules such that 
$f(q+x)=q+f(x)$ for any $q\in \mathcal{Q}$ and $x\in M$.
We denote by $\mathcal{Q}\textnormal{-}\bold{Mod}$ the category of $\mathcal{Q}$-modules and homomorphisms of $\mathcal{Q}$-modules. 
\end{enumerate}
\end{definition}

For a monoid $\mathcal{Q}$ and a family of $\mathcal{Q}$-modules $\{ M_i \}_{i \in I}$, we denote by $\coprod_{i \in I} M_i$ the disjoint union with the binary operation induced by that of each $M_i$.
Then it is the coproduct in $\mathcal{Q}\textnormal{-}\bold{Mod}$.

\begin{definition}[Monoid algebras]
    Let $R$ be a ring and let $\mathcal{Q}$ be a monoid.
    Then the \textit{monoid algebra} $R[\mathcal{Q}]$ is the $R$-algebra which is the free $R$-module $R^{\oplus\mathcal{Q}} $, endowed with the unique ring structure induced by the homomorphism of monoids 
    $$\mathcal{Q} \to R[\mathcal{Q}]\ ;\ q \mapsto e^q.$$
\end{definition}

%We refer the reader to the definition of a monoid algebra $R[\mathcal{Q}]$ to \cite{Ogus18}.
%We denote by $e^q$ (resp. $e^{\mathcal{Q}}$) the image of an element $q$ of $\mathcal{Q}$ (resp. the monoid $\mathcal{Q}$) in $R[\mathcal{Q}]$.
For a monoid $\mathcal{Q}$, one obtains the functor 
\begin{equation}\label{Mnd-Mod}
\mathcal{Q}\textnormal{-}\bold{Mod}\to R[\mathcal{Q}]\textnormal{-}\bold{Mod}\ ;\ M\mapsto R[M],
\end{equation}
which is a left adjoint of the forgetful functor $R[\mathcal{Q}]\textnormal{-}\bold{Mod}\to\mathcal{Q}\textnormal{-}\bold{Mod}$. 
Notice that (\ref{Mnd-Mod}) preserves coproducts (we use this property to prove Proposition \ref{Ogus1.4.2.7}).

%%% Def環境をcoproductが何を意味しているのかで書く．
%%% 有限個ではなくて，一般の添字集合で構成して，それがQ-module categoryのcoprodctになっているという書き方に変更．

%\begin{definition}[Coproduct of $\mathcal{Q}$-modules]
%Let $Q$ be a monoid and let $M_1$ and $M_2$ be $\mathcal{Q}$-modules.
%Then we define the coproduct of $\mathcal{Q}$-modules $M_1 \coprod M_2$ as a set of coproducts with the binary operation which is induced by the binary operations of $M_1$ and $M_2$.
%\end{definition}

Like ideals (resp. prime ideals, the Krull dimension) of a ring, an ideal (resp. prime ideals, the dimension) of a monoid is defined as follows. 

\begin{definition}
Let $\mathcal{Q}$ be a monoid.

\begin{enumerate}
\item
%For any monoid $\mathcal{Q}$, $\mathcal{Q}$ itself has an obvious structure as a $\mathcal{Q}$-module. 
A $\mathcal{Q}$-submodule of $\mathcal{Q}$ is called an \textit{ideal of $\mathcal{Q}$}. 
%In other words, a subset $I \subseteq \mathcal{Q}$ is an ideal if and only if for $k \in I$ and $q \in \mathcal{Q}$, $k+q \in I$. 
\item
An ideal $I$ is called \textit{prime} if $I \neq \mathcal{Q}$ and $p+q \in I$ implies $p \in I$ or $q \in I$. Remark that the empty set $\emptyset$ is a prime ideal of $\mathcal{Q}$. 
%\end{enumerate}
%\end{definition}
%Like the Krull dimension of a ring, the dimension of a monoid is defined as follows. 
%\begin{definition}
\item
The \textit{dimension} of a monoid $\mathcal{Q}$ is the maximal length $d$ of the ascending chain\footnote{In this paper, the symbol $\subset$ is used to indicate \emph{proper} inclusion for making an analogy to the inequality symbols as in \cite{Ogus18}.} of prime ideals
\begin{center}
$\emptyset=\fq_{0} \subset \fq_{1} \subset \cdots \subset \fq_{d}=\mathcal{Q}^{+}$,
\end{center}
where $\mathcal{Q}^{+}$ is the set of non-unit elements of $\mathcal{Q}$ (i.e.\ $\mathcal{Q}^{+}=\mathcal{Q}\setminus \mathcal{Q}^{*}$). 
We also denote it by $\dim \mathcal{Q}$.
\end{enumerate}
\end{definition}

Next we review a good class of homomorphisms of monoids, called \textit{exact homomorphisms}. 

\begin{definition}[Exact homomorphisms]
\label{exactmorpshim}
Let $\mathcal{P}$ and $\mathcal{Q}$ be monoids. 
\begin{enumerate}
\item
A homomorphism of monoids $\varphi: \mathcal{P}\to \mathcal{Q}$ is said to be \emph{exact} if the diagram of monoids: 
\[\xymatrix{
\mathcal{P}\ar[d]\ar[r]^{\varphi}&\mathcal{Q}\ar[d]\\
{\mathcal{P}}^{gp}\ar[r]^{\varphi^{gp}}&\mathcal{Q}^{gp}
}\]
is cartesian. 
\item
An \emph{exact submonoid} of $\mathcal{Q}$ is a submonoid $\mathcal{Q}'$ of $\mathcal{Q}$ such that the inclusion map $\mathcal{Q}'\hookrightarrow \mathcal{Q}$ is exact 
(in other words, $(\mathcal{Q}')^{gp}\cap\mathcal{Q}=\mathcal{Q}'$). 
\end{enumerate}
\end{definition}

There is a quite useful characterization of exact submonoids (Proposition \ref{Ogus1.4.2.7}). 
To see this, we recall a graded decomposition of a $\mathcal{Q}$-module attached to a submonoid. For a monoid $\mathcal{Q}$ and a submonoid $\mathcal{Q}'\subseteq \mathcal{Q}$, we denote by $\mathcal{Q}\to\mathcal{Q}/\mathcal{Q}'$ the cokernel of the inclusion map $\mathcal{Q}'\hookrightarrow \mathcal{Q}$.

\begin{definition}\label{qgdecomp}
Let $\mathcal{Q}$ be an integral monoid, and let $\mathcal{Q}'\subseteq \mathcal{Q}$ be a submonoid. Then for any $g\in \mathcal{Q}/\mathcal{Q}'$, we denote by $\mathcal{Q}_{g}$ a $\mathcal{Q}'$-module defined as follows. 
\begin{itemize}
\item
As a set, $\mathcal{Q}_{g}$ is the inverse image of $g\in \mathcal{Q}/\mathcal{Q}'$ under the cokernel $\mathcal{Q}\to \mathcal{Q}/\mathcal{Q}'$ of $\mathcal{Q}' \hookrightarrow\mathcal{Q}$. 
\item
The operation $\mathcal{Q}'\times \mathcal{Q}_{g}\to \mathcal{Q}_{g}$ is defined by the rule: $(q, x)\mapsto q+x$ 
(where $q+x$ denotes the sum of $q$ and $x$ in $\mathcal{Q}$). 
\end{itemize} 
\end{definition}
By definition, $\mathcal{Q}=\coprod_{g\in \mathcal{Q}/\mathcal{Q}'}\mathcal{Q}_{g}$ in $\mathcal{Q}'\textnormal{-}\bold{Mod}$.
%as sets. 
%The right-hand side is viewed as the coproduct of $\mathcal{Q}'$-modules $\{\mathcal{Q}_{g}\}_{g\in \mathcal{Q}/\mathcal{Q}'}$, and hence a $\mathcal{Q}/\mathcal{Q}'$-graded decomposition of the $\mathcal{Q}'$-module $\mathcal{Q}$. 
Using this, one can refine a characterization of exact embeddings described in \cite[Chapter {\bf I}, Proposition 4.2.7]{Ogus18}.

\begin{proposition}[cf. \ {\cite[Chapter {\bf I}, Proposition 4.2.7]{Ogus18}}]
\label{Ogus1.4.2.7}
Let $\mathcal{Q}$ be an integral monoid, and let $\mathcal{Q}'\subseteq \mathcal{Q}$ be a submonoid. 
Let $\theta: \mathcal{Q}'\hookrightarrow \mathcal{Q}$ be the inclusion map, and let $\mathbb{Z}[\theta]: \mathbb{Z}[\mathcal{Q}']\to \mathbb{Z}[\mathcal{Q}]$ be the induced ring map. 
Set $G:=\mathcal{Q}/\mathcal{Q}'$. 
Then the following assertions hold. 
\begin{enumerate}
\item
The $\mathbb{Z}[\mathcal{Q}']$-module $\mathbb{Z}[\mathcal{Q}]$ admits a $G$-graded decomposition $\mathbb{Z}[ \mathcal{Q}]=\bigoplus_{g\in G}\mathbb{Z}[ \mathcal{Q}_g]$. 

\item
The  following conditions are equivalent. 
\begin{enumerate}
\item
The inclusion map $\theta: \mathcal{Q}'\hookrightarrow\mathcal{Q}$ is exact. 
In other words, $(\mathcal{Q}')^{gp}\cap\mathcal{Q}=\mathcal{Q}'$. 
\item
$\mathcal{Q}_{0}=\mathcal{Q}'$. 
\item
$\mathbb{Z}[\theta]$ splits as a $\mathbb{Z}[\mathcal{Q}']$-linear map. 
\item
$\mathbb{Z}[\theta]$ is equal to the canonical embedding $\mathbb{Z}[\mathcal{Q}_{0}]\hookrightarrow \bigoplus_{g\in G}\mathbb{Z}[ \mathcal{Q}_g]$. 
\item
$\mathbb{Z}[\theta]$ is universally injective. 
\end{enumerate}

\end{enumerate}
\end{proposition}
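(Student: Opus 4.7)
The plan is to establish part (1) from the already-noted set-level decomposition, and then prove part (2) via the cycle (a)$\Leftrightarrow$(b)$\Rightarrow$(d)$\Rightarrow$(c)$\Rightarrow$(e)$\Rightarrow$(a), with (e)$\Rightarrow$(a) being the non-formal step.

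For (1), I would observe that the set-decomposition $\mathcal{Q}=\coprod_{g\in G}\mathcal{Q}_{g}$ recorded immediately after Definition \ref{qgdecomp} is in fact a coproduct in $\mathcal{Q}'\textnormal{-}\bold{Mod}$. Since the functor $M\mapsto \mathbb{Z}[M]$ of (\ref{Mnd-Mod}) is a left adjoint and hence preserves coproducts (as explicitly remarked in the excerpt), applying it yields the $G$-graded decomposition $\mathbb{Z}[\mathcal{Q}]=\bigoplus_{g\in G}\mathbb{Z}[\mathcal{Q}_g]$ of $\mathbb{Z}[\mathcal{Q}']$-modules.

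For (a)$\Leftrightarrow$(b), I would unwind the cokernel in $\bold{Mnd}$: a class in $\mathcal{Q}/\mathcal{Q}'$ is $0$ iff it is represented by some $x\in\mathcal{Q}$ for which there exists $p\in\mathcal{Q}'$ with $x+p\in\mathcal{Q}'$. Since $\mathcal{Q}$ is integral, the embedding $\mathcal{Q}\hookrightarrow\mathcal{Q}^{gp}$ identifies this set with $\mathcal{Q}\cap(\mathcal{Q}')^{gp}$, so $\mathcal{Q}_0=\mathcal{Q}'$ is literally the exactness condition. Then (b)$\Rightarrow$(d) is immediate because the $g=0$ summand of the decomposition of (1) becomes $\mathbb{Z}[\mathcal{Q}']$ and $\mathbb{Z}[\theta]$ is by construction the inclusion into it; (d)$\Rightarrow$(c) follows by projecting onto the $g=0$ component, which is $\mathbb{Z}[\mathcal{Q}']$-linear; and (c)$\Rightarrow$(e) is the standard fact that a split monomorphism survives any base change.

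The main obstacle is (e)$\Rightarrow$(a), since the other implications are essentially formal. My plan is to detect a failure of exactness by a single cyclic test module. Assume for contradiction that there is $x\in(\mathcal{Q}')^{gp}\cap\mathcal{Q}$ with $x\notin\mathcal{Q}'$, and pick $p_1,p_2\in\mathcal{Q}'$ with $x+p_1=p_2$ in $\mathcal{Q}$. Consider $M:=\mathbb{Z}[\mathcal{Q}']/(e^{p_1})$. The class of $e^{p_2}$ in $M$ is nonzero, because membership $e^{p_2}\in (e^{p_1})\mathbb{Z}[\mathcal{Q}']$ would force $p_2-p_1=x\in\mathcal{Q}'$, contradicting the choice of $x$. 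On the other hand, the identity $e^{p_2}=e^{p_1}\cdot e^{x}$ in $\mathbb{Z}[\mathcal{Q}]$ shows that the image of $e^{p_2}$ in $M\otimes_{\mathbb{Z}[\mathcal{Q}']}\mathbb{Z}[\mathcal{Q}]\cong\mathbb{Z}[\mathcal{Q}]/(e^{p_1})$ vanishes. This contradicts the universal injectivity of $\mathbb{Z}[\theta]$ and closes the cycle.
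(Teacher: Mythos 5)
Your proposal is correct, and for part (1), for (a)$\Leftrightarrow$(b), and for the formal implications (b)$\Rightarrow$(d)$\Rightarrow$(c) it coincides with the paper's argument. The difference is in how the hard equivalence is closed: the paper simply declares (a)$\Leftrightarrow$(c)$\Leftrightarrow$(e) to be ``none other than'' the cited result \cite[Chapter {\bf I}, Proposition 4.2.7]{Ogus18} and only supplies the genuinely new pieces, whereas you prove the full cycle (a)$\Leftrightarrow$(b)$\Rightarrow$(d)$\Rightarrow$(c)$\Rightarrow$(e)$\Rightarrow$(a) from scratch. Your implication (e)$\Rightarrow$(a) via the cyclic test module $M=\mathbb{Z}[\mathcal{Q}']/(e^{p_1})$ is sound: writing $x=p_2-p_1$ with $p_1,p_2\in\mathcal{Q}'$, integrality of $\mathcal{Q}'$ (inherited from $\mathcal{Q}$) guarantees that the ideal $(e^{p_1})\subseteq\mathbb{Z}[\mathcal{Q}']$ is spanned by the pairwise distinct monomials $e^{p_1+q}$, so $e^{p_2}\notin(e^{p_1})$ unless $x\in\mathcal{Q}'$, while $e^{p_2}=e^{p_1}e^{x}$ dies in $M\otimes_{\mathbb{Z}[\mathcal{Q}']}\mathbb{Z}[\mathcal{Q}]$; this contradicts universal injectivity. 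What your route buys is a self-contained proof independent of the external reference (and it makes visible exactly where integrality of $\mathcal{Q}$ enters); what the paper's route buys is brevity, at the cost of outsourcing the substantive step. You should make explicit the appeal to integrality of $\mathcal{Q}'$ in the monomial-membership argument, but this is a presentational point, not a gap.
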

\begin{proof}
(1): By applying the functor (\ref{Mnd-Mod}) (that admits a right adjoint) to the decomposition $\mathcal{Q}=\coprod_{g\in G}\mathcal{Q}_{g}$, we find that the assertion follows. 

(2): Since $\mathcal{Q}_{0}=(\mathcal{Q}')^{gp}\cap\mathcal{Q}$ as sets by definition, the equivalence (a)$\Leftrightarrow$(b) follows. 
The assertion (a)$\Leftrightarrow$(c)$\Leftrightarrow$(e) is none other than \cite[Chapter {\bf I}, Proposition 4.2.7]{Ogus18}. 
Moreover, (d) implies (c) obviously. 
Thus it suffices to show the implication (b)$\Rightarrow$(d). 
Assume that (b) is satisfied. Then one can decompose $\mathcal{Q}$ into the direct sum of $\mathcal{Q}'$-modules $\coprod_{g\in G}\mathcal{Q}_{g}$ with $\mathcal{Q}_0=\mathcal{Q}'$. 
Hence the inclusion map $\mathcal{Q}'\hookrightarrow \mathcal{Q}$ is equal to the canonical embedding $\mathcal{Q}_{0}\hookrightarrow \coprod_{g\in G}\mathcal{Q}_{g}$. 
Thus the induced homomorphism $\mathbb{Z}[\theta]: \mathbb{Z}[\mathcal{Q}_{0}]\hookrightarrow \mathbb{Z}[\coprod_{g\in G}\mathcal{Q}_{g}]$ satisfies (d), as desired. 
\end{proof}

\begin{remark}\label{rmk301Wed}
In the situation of Proposition \ref{Ogus1.4.2.7}, assume that the condition (d) is satisfied. 
Then the split surjection $\pi: \mathbb{Z}[\mathcal{Q}]\to \mathbb{Z}[\mathcal{Q}']$ has the property that $\pi (e^{\mathcal{Q}})= e^{\mathcal{Q}'}$ by the construction of the $G$-graded decomposition $\mathbb{Z}[ \mathcal{Q}]=\bigoplus_{g\in G}\mathbb{Z}[ \mathcal{Q}_g]$. 
Moreover, $\pi (e^{\mathcal{Q}^{+}})\subseteq e^{(\mathcal{Q}')^{+}}$ because $\mathcal{Q}^{+}\cap \mathcal{Q}'\subseteq (\mathcal{Q}')^{+}$. 
We use this fact in our proof for Theorem \ref{log-splinter}. 
\end{remark}

Proposition \ref{Ogus1.4.2.7} implies the following useful lemma.

\begin{lemma}
\label{monoidsplit}
Let $\mathcal{Q}$ be a fine, sharp, and saturated monoid. Let $A$ be a ring. Then there is an embedding of monoids $\mathcal{Q} \hookrightarrow \mathbb{N}^d$ such that the induced map of monoid algebras
\begin{equation}\label{24613N}
A[\mathcal{Q}] \to A[\mathbb{N}^d]
\end{equation}
splits as an $A[\mathcal{Q}]$-linear map. 
\end{lemma}

\begin{proof}
Since $\mathcal{Q}$ is saturated, there exists an embedding $\mathcal{Q}$ into some $\mathbb{N}^d$ as an exact submonoid in view of \cite[Chapter {\bf I}, Corollary 2.2.7]{Ogus18}. Then by Proposition \ref{Ogus1.4.2.7}, %\cite[Proposition 4.2.7]{Ogus18}, 
the associated map of monoid algebras
\begin{equation}
\label{splitmap}
\mathbb{Z}[\mathcal{Q}] \to \mathbb{Z}[\mathbb{N}^d]
\end{equation}
splits as a $\mathbb{Z}[\mathcal{Q}]$-linear map. By tensoring $(\ref{splitmap})$ with $A$, we get the desired split map.
\end{proof}

\subsubsection{$c$-times maps on integral monoids}\label{intshmon}
For an integral monoid $\mathcal{Q}$, we denote by $\mathcal{Q}_\mathbb{Q}$ the submonoid of $\mathcal{Q}^{gp}\otimes_\mathbb{Z}\mathbb{Q}$ defined as 
$$
\mathcal{Q}_\mathbb{Q}:=\{x \otimes r\in\mathcal{Q}^{gp}\otimes_\mathbb{Z}\mathbb{Q}\ |\ x\in \mathcal{Q},\ r\in\mathbb{Q}_{\geq0}\}.
$$
Using this, one can define the following monoid which plays a central role in Gabber-Ramero's construction of perfectoid towers consisting of local log-regular rings. 

\begin{definition}
\label{cthpowermonoid}
Let $\mathcal{Q}$ be an integral monoid. Let $c$ and $i$ be non-negative integers with $c>0$. 
\begin{enumerate}
\item
We denote by $\mathcal{Q}^{(i)}_{c}$ the submonoid of $\mathcal{Q}_{\mathbb{Q}}$ defined as 
$$
\mathcal{Q}^{(i)}_{c}:=\{\gamma\in \mathcal{Q}_\mathbb{Q}\ |\ {c^i}\gamma\in \mathcal{Q}\}.
$$
\item
We denote by $\iota^{(i)}_{c}: \mathcal{Q}^{(i)}_{c}\hookrightarrow \mathcal{Q}^{(i+1)}_{c}$ the inclusion map, and by $\mathbb{Z}[\iota^{(i)}_{c}]: \mathbb{Z}[\mathcal{Q}^{(i)}_{c}]\to \mathbb{Z}[\mathcal{Q}^{(i+1)}_{c}]$ the induced ring map. 
\end{enumerate}
\end{definition}

In the rest of this subsection, we fix a positive integer $c>0$. 
To prove several properties of $\mathcal{Q}^{(i)}_{c}$, 
the following one is important as a starting point.  
\begin{lemma}
\label{finite1}
Let $\mathcal{Q}$ be an integral monoid. Then for 
%every $c>0$ and 
every $i\geq 0$, the following assertions hold. 
\begin{enumerate}
\item 
$\mathcal{Q}^{(i)}_c$ is integral.
\item
$\mathcal{Q}^{(i+1)}_{c}=(\mathcal{Q}^{(i)}_c)^{(1)}_{c}$. 
\item
The $c$-times map on $\mathcal{Q}_{\mathbb{Q}}$ restricts to an isomorphism of monoids: 
$$
f_{c}: \mathcal{Q}^{(i+1)}_{c}\xrightarrow{\cong} \mathcal{Q}^{(i)}_{c}\ ;\ \gamma\mapsto c\gamma. 
$$

\end{enumerate}
\end{lemma}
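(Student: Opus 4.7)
My plan is to prove all three assertions by working inside the ambient $\mathbb{Q}$-vector space $\mathcal{Q}^{gp}\otimes_{\mathbb{Z}}\mathbb{Q}$ and exploiting that multiplication by $c$ is a group automorphism there. The key technical observation tying (2) and (3) together is the identification $(\mathcal{Q}^{(i)}_{c})_{\mathbb{Q}}=\mathcal{Q}_{\mathbb{Q}}$ for every $i\geq 0$; once this is in place, both (2) and (3) reduce to unpacking the defining condition $c^{i}\gamma\in\mathcal{Q}$ and transporting it across the $c$-times map.

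For (1), integrality of $\mathcal{Q}^{(i)}_{c}$ is automatic since it is a submonoid of the torsion-free abelian group $\mathcal{Q}^{gp}\otimes_{\mathbb{Z}}\mathbb{Q}$. To check sharpness, I would take an invertible $\gamma\in \mathcal{Q}^{(i)}_{c}$, so that both $c^{i}\gamma$ and $-c^{i}\gamma$ lie in $\mathcal{Q}$; sharpness of $\mathcal{Q}$ then forces $c^{i}\gamma=0$, and the ambient group being a $\mathbb{Q}$-vector space forces $\gamma=0$.

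For the identification $(\mathcal{Q}^{(i)}_{c})_{\mathbb{Q}}=\mathcal{Q}_{\mathbb{Q}}$, the inclusion $\subseteq$ is obvious from $\mathcal{Q}^{(i)}_{c}\subseteq \mathcal{Q}_{\mathbb{Q}}$, while for $\supseteq$ I would write any $x\otimes r\in \mathcal{Q}_{\mathbb{Q}}$ (with $x\in\mathcal{Q}$, $r\in\mathbb{Q}_{\geq 0}$) as $(x\otimes c^{-i})\cdot (r c^{i})$ and note that $x\otimes c^{-i}\in\mathcal{Q}^{(i)}_{c}$ since $c^{i}\cdot(x\otimes c^{-i})=x\in \mathcal{Q}$. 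Granting this, (2) is a one-line unpacking: $\gamma\in (\mathcal{Q}^{(i)}_{c})^{(1)}_{c}$ iff $c\gamma\in\mathcal{Q}^{(i)}_{c}$, iff $c^{i+1}\gamma\in\mathcal{Q}$, iff $\gamma\in\mathcal{Q}^{(i+1)}_{c}$. For (3), the same calculation shows $f_{c}$ is well defined and sends $\mathcal{Q}^{(i+1)}_{c}$ into $\mathcal{Q}^{(i)}_{c}$; its two-sided inverse is the assignment $\delta\mapsto \delta\otimes c^{-1}$, which lands in $\mathcal{Q}^{(i+1)}_{c}$ because $c^{i+1}(\delta\otimes c^{-1})=c^{i}\delta\in\mathcal{Q}$. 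Injectivity and the homomorphism property are inherited from multiplication by $c$ on $\mathcal{Q}^{gp}\otimes_{\mathbb{Z}}\mathbb{Q}$.

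I do not anticipate any serious obstacle; the entire argument is a matter of elementary bookkeeping in $\mathcal{Q}^{gp}\otimes_{\mathbb{Z}}\mathbb{Q}$. The one point that deserves care is verifying that fractional expressions like $x\otimes c^{-i}$ genuinely lie in the submonoid $\mathcal{Q}_{\mathbb{Q}}$ (rather than merely in $\mathcal{Q}^{gp}\otimes_{\mathbb{Z}}\mathbb{Q}$), but the factorization in the previous paragraph handles this uniformly.
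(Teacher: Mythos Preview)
Your proposal is correct and follows essentially the same approach as the paper: both arguments work inside $\mathcal{Q}^{gp}\otimes_{\mathbb{Z}}\mathbb{Q}$, establish the identification $(\mathcal{Q}^{(i)}_{c})_{\mathbb{Q}}=\mathcal{Q}_{\mathbb{Q}}$ as the key step for (2), and use torsion-freeness plus an explicit preimage $\gamma\mapsto c^{-1}\gamma$ for (3). The only cosmetic difference is that the paper phrases the identification in (2) via the isomorphism $\mathcal{Q}^{gp}\otimes_{\mathbb{Z}}\mathbb{Q}\xrightarrow{\cong}(\mathcal{Q}^{(i)}_{c})^{gp}\otimes_{\mathbb{Z}}\mathbb{Q}$ induced by inclusion, whereas you verify the equality of cones directly; the content is the same.
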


\begin{proof}
(1): Since $\mathcal{Q}^{gp}\otimes_\mathbb{Z}\mathbb{Q}$ is an integral monoid, so is $\mathcal{Q}_c^{(i)}$.

(2): Since any $g\in(\mathcal{Q}_c^{(i)})^{gp}$ satisfies $c^ig\in \mathcal{Q}^{gp}$, 
the inclusion map $\mathcal{Q}^{gp}\hookrightarrow (\mathcal{Q}_c^{(i)})^{gp}$ becomes an isomorphism $\varphi: \mathcal{Q}^{gp}\otimes_\mathbb{Z}\mathbb{Q}\xrightarrow{\cong}(\mathcal{Q}_c^{(i)})^{gp}\otimes_\mathbb{Z}\mathbb{Q}$ by extension of scalars along the flat ring map $\mathbb{Z}\to \mathbb{Q}$. 
The restriction $\widetilde{\varphi}: \mathcal{Q}_\mathbb{Q}\hookrightarrow (\mathcal{Q}_c^{(i)})_\mathbb{Q}$ of $\varphi$ is also an isomorphism, and 
one can easily check that $\widetilde{\varphi}$ restricts to the desired canonical isomorphism $\mathcal{Q}_{c}^{(i+1)}\xrightarrow{\cong} (\mathcal{Q}^{(i)}_{c})^{(1)}_c$.

(3): It is easy to see that the $c$-times map on  $\mathcal{Q}_{\mathbb{Q}}$ restricts to the homomorphism of monoids $f_{c}$. 
Since the abelian group $\mathcal{Q}_{\mathbb{Q}}=\mathcal{Q}^{gp}\otimes_{\mathbb{Z}}\mathbb{Q}$ is torsion-free, $f_{c}$ is injective. Moreover, any element $\gamma$ in 
$\mathcal{Q}^{{(i)}}_{c}$ is of the form $x\otimes r$ for some $x\in \mathcal{Q}^{gp}$ and $r\in \mathbb{Q}$, which satisfy that $c(x\otimes\frac{r}{c})=\gamma$ and $c^{i+1}(x\otimes\frac{r}{c})\in \mathcal{Q}$. Hence $f_{c}$ is also surjective, as desired. 
\end{proof}

Let us inspect monoid-theoretic aspects of the inclusion $\iota^{(i)}_{c}: \mathcal{Q}^{(i)}_{c}\hookrightarrow \mathcal{Q}^{(i+1)}_{c}$.

\begin{lemma}\label{PropertyQc}
    Let $\mathcal{Q}$ be an integral monoid, and let $\mathbf{P} \in \{ \textit{fine, sharp, saturated }\}$. 
    If $\mathcal{Q}$ satisfies $\mathbf{P}$, then $\mathcal{Q}^{(i)}_c$ also satisfies $\mathbf{P}$ for every $i\geq 0$. 
\end{lemma}

\begin{proof}
Assume that $\mathcal{Q}$ is sharp. Pick $x, y\in \mathcal{Q}_c^{(i)}$ such that $x+y=0$. 
Then $c^{i}x=0$ because $\mathcal{Q}$ is sharp. 
Since $\mathcal{Q}_c^{(i)}$ is a submonoid of the torsion-free group $\mathcal{Q}^{gp}\otimes_{\mathbb{Z}}\mathbb{Q}$, we have $x=0$. 
Next, if $\mathcal{Q}$ is fine or saturated, then it suffices to show the case $i=1$ by Lemma \ref{finite1} (2).
If $\mathcal{Q}$ is fine, then there exists a finite system of generators $\{x_1,\ldots, x_r\}$ of $\mathcal{Q}$. 
Hence $\mathcal{Q}^{(1)}_{c}$ also has a finite system of generators $\{x_j\otimes\frac{1}{c}\}_{j=1,\ldots, r}$.
Finally, assume that $\mathcal{Q}^{(1)}_c$ is saturated.
Pick an element $x$ of $(\mathcal{Q}^{(1)}_{c})^{gp}$ such that $nx\in\mathcal{Q}^{(1)}_{c}$. 
Then the element $cx$ of $\mathcal{Q}^{gp}$ satisfies $n(c x)=c (nx)\in\mathcal{Q}$.  Hence $c x\in \mathcal{Q}$ because $\mathcal{Q}$ is saturated. 
\end{proof}

The assumption of fineness on $\mathcal{Q}$ induces several finiteness properties. 

\begin{lemma}\label{finite424}
Let $\mathcal{Q}$ be a fine monoid. 
Then for %every $c>0$ and 
every $i\geq 0$, the following assertions hold. 
\begin{enumerate}
\item
The ring map $\mathbb{Z}[\iota^{(i)}_{c}]: \mathbb{Z}[\mathcal{Q}^{(i)}_{c}]\to \mathbb{Z}[\mathcal{Q}^{(i+1)}_c]$ is module-finite. 
\item
$\mathcal{Q}^{(i+1)}_{c}/\mathcal{Q}^{(i)}_{c}\cong (\mathcal{Q}^{(i+1)}_{c})^{gp}/(\mathcal{Q}^{(i)}_{c})^{gp}$ as monoids. Moreover, $\mathcal{Q}^{(i+1)}_{c}/\mathcal{Q}^{(i)}_{c}$ forms a finite abelian group. 
\item
For a prime $p>0$, we have $|\mathcal{Q}^{(i+1)}_{p}/\mathcal{Q}^{(i)}_{p}|=p^s$ for some $s\geq 0$. 
\end{enumerate}
\end{lemma}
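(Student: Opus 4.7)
The plan is to bootstrap from Lemma \ref{finite1}, which already records that each $\mathcal{Q}^{(i)}_c$ is integral and sharp and that the $c$-times map gives an isomorphism $f_c\colon \mathcal{Q}^{(i+1)}_c\xrightarrow{\cong}\mathcal{Q}^{(i)}_c$. Iterating $f_c$ produces an isomorphism $\mathcal{Q}^{(i+1)}_c\cong\mathcal{Q}^{(0)}_c=\mathcal{Q}$ of monoids, which transports fineness and sharpness from $\mathcal{Q}$ to $\mathcal{Q}^{(i+1)}_c$; this settles (1).

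For (2), I would produce an explicit finite generating set of $\mathcal{Q}^{(i+1)}_c$ over $\mathcal{Q}^{(i)}_c$, which is enough since module-finiteness of the monoid extension implies module-finiteness of the induced extension of monoid algebras. Fix finitely many generators $q_1,\ldots,q_n$ of the fine monoid $\mathcal{Q}$. Any $\gamma\in\mathcal{Q}^{(i+1)}_c$ has $c^{i+1}\gamma=\sum_j a_j q_j\in\mathcal{Q}$ for some $a_j\in\mathbb{N}$. Writing $a_j=cb_j+r_j$ with $0\leq r_j<c$ gives
\[
\gamma=\sum_j\frac{b_j}{c^i}q_j+\sum_j\frac{r_j}{c^{i+1}}q_j,
\]
where the first summand lies in $\mathcal{Q}^{(i)}_c$ (because $c^i$ times it equals $\sum b_j q_j\in\mathcal{Q}$), and the second lies in the finite set $\{\sum_j \frac{r_j}{c^{i+1}}q_j\mid 0\leq r_j<c\}\subseteq\mathcal{Q}^{(i+1)}_c$. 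Hence $\mathcal{Q}^{(i+1)}_c$ is finitely generated over $\mathcal{Q}^{(i)}_c$, giving (2).

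For (3), consider the canonical map $\pi\colon\mathcal{Q}^{(i+1)}_c/\mathcal{Q}^{(i)}_c\to(\mathcal{Q}^{(i+1)}_c)^{gp}/(\mathcal{Q}^{(i)}_c)^{gp}$ induced by the universal property of the monoid cokernel. The key observation is that for every $\beta\in\mathcal{Q}^{(i+1)}_c$ one has $c\beta\in\mathcal{Q}^{(i)}_c$, so the image of $\beta$ in the group quotient has order dividing $c$; in particular $-\beta\equiv(c-1)\beta$ modulo $(\mathcal{Q}^{(i)}_c)^{gp}$. Therefore any element $\alpha-\beta$ of the group quotient equals the class of $\alpha+(c-1)\beta\in\mathcal{Q}^{(i+1)}_c$, proving surjectivity of $\pi$. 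Injectivity is essentially the definition of the monoid cokernel: if $\gamma_1-\gamma_2\in(\mathcal{Q}^{(i)}_c)^{gp}$, write $\gamma_1-\gamma_2=\mu-\nu$ with $\mu,\nu\in\mathcal{Q}^{(i)}_c$, whence $\gamma_1+\nu=\gamma_2+\mu$ in the integral monoid $\mathcal{Q}^{(i+1)}_c$, so $\gamma_1\sim\gamma_2$ in the cokernel. Finiteness of the quotient then follows from (2) (equivalently, from the fact that $(\mathcal{Q}^{(i+1)}_c)^{gp}$ is a finitely generated abelian group and the quotient has exponent dividing $c$). Finally, (4) is immediate: when $c=p$ the finite abelian group from (3) has exponent dividing $p$, hence is elementary abelian of order $p^s$ for some $s\geq 0$. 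The only mildly delicate point is the surjectivity of $\pi$ in (3), where one must exploit the torsion-by-$c$ behaviour to rewrite formal differences as genuine elements of $\mathcal{Q}^{(i+1)}_c$.
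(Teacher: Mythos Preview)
Your proof is correct and largely parallels the paper's. Parts (1), (3), and (4) follow essentially the same line: the paper also reduces to $i=0$ via Lemma~\ref{finite1}, proves (3) by identifying $\mathcal{Q}^{(1)}_c/\mathcal{Q}$ with the image of $\mathcal{Q}^{(1)}_c$ in $(\mathcal{Q}^{(1)}_c)^{gp}/\mathcal{Q}^{gp}$ (citing \cite[Chapter {\bf I}, Proposition 1.3.3]{Ogus18} for the injectivity you verify by hand) and uses the same $-\beta\equiv (c-1)\beta$ trick for surjectivity, and deduces (4) from a surjection $(\mathbb{Z}/p\mathbb{Z})^r\twoheadrightarrow(\mathcal{Q}^{(1)}_p)^{gp}/\mathcal{Q}^{gp}$, which is equivalent to your exponent-$p$ observation.

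The one genuine difference is in (2). The paper argues at the ring level: it observes that $\mathbb{Z}[\mathcal{Q}^{(1)}_c]$ is generated as a $\mathbb{Z}[\mathcal{Q}]$-algebra by the elements $e^{\frac{1}{c}x_j}$, each of which satisfies $(e^{\frac{1}{c}x_j})^c=e^{x_j}\in\mathbb{Z}[\mathcal{Q}]$, and concludes module-finiteness from integrality. You instead work at the monoid level, producing an explicit finite set of coset representatives via division with remainder. Your approach is more direct and yields an explicit bound ($c^n$ representatives) on the number of generators, while the paper's integrality argument is shorter to write down but relies on the standard fact that a finitely generated integral algebra extension is module-finite. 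Both are perfectly valid.
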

\begin{proof}
In view of Lemma \ref{finite1} (2), it suffices to deal with the case when $i=0$ only. Here notice that $\mathcal{Q}^{(0)}_{c}=\mathcal{Q}$.

(1): 
Let $\{\frac{1}{c}x_1,\ldots, \frac{1}{c}x_r\}$ be the system of generators of $\mathcal{Q}^{(1)}_c$ obtained in the proof of Lemma \ref{PropertyQc} where $\frac{1}{c}x_j:=x_j\otimes\frac{1}{c}$. 
Then the $\mathbb{Z}[\mathcal{Q}]$-algebra $\mathbb{Z}[\mathcal{Q}^{(1)}_{c}]$ is generated by $\{e^{\frac{1}{c}x_1},\ldots, e^{\frac{1}{c}x_r}\}$, and each $e^{\frac{1}{c}x_j}\in \mathbb{Z}[\mathcal{Q}^{(1)}_{c}]$ is integral over $\mathbb{Z}[\mathcal{Q}]$. Hence $\mathbb{Z}[\iota^{(0)}_{c}]$ is module-finite, as desired. 

(2): 
By \cite[Chapter {\bf I}, Proposition 1.3.3]{Ogus18}, $\mathcal{Q}^{(1)}_{c}/\mathcal{Q}$ is identified with the image of the composition 
\begin{equation}\label{Ogus133}
\mathcal{Q}^{(1)}_{c}\hookrightarrow(\mathcal{Q}^{(1)}_{c})^{gp}\twoheadrightarrow(\mathcal{Q}^{(1)}_{c})^{gp}/\mathcal{Q}^{gp}.
\end{equation} 
Since $\mathcal{Q}^{(1)}_{c}$ is generated by $\frac{1}{c}x_1,\ldots, \frac{1}{c}x_r$, $(\mathcal{Q}^{(1)}_{c})^{gp}$ is generated by $\frac{1}{c}x_1,\ldots, \frac{1}{c}x_r, -\frac{1}{c}x_1,\ldots, -\frac{1}{c}x_r$ as a monoid. 
On the other hand, we have $-\frac{1}{c}x_j\equiv (c-1)\frac{1}{c}x_j\mod \mathcal{Q}^{gp}$ 
for $j=1,\ldots, r$. Hence $(\mathcal{Q}^{(1)}_{c})^{gp}/\mathcal{Q}^{gp}$ is generated by $\{\frac{1}{c}x_j\mod \mathcal{Q}^{gp}\}_{j=1,\ldots, r}$ as a monoid. 
Therefore, the composite map (\ref{Ogus133}) is surjective, and $(\mathcal{Q}^{(1)}_{c})^{gp}/\mathcal{Q}^{gp}$ is a finitely generated torsion abelian group. 
Thus, $\mathcal{Q}^{(1)}_{c}/\mathcal{Q}$ coincides with $(\mathcal{Q}^{(1)}_{c})^{gp}/\mathcal{Q}^{gp}$, which is a finite abelian group, as desired. 

(3): 
Since there exists a surjective group homomorphism
$$
f: \underbrace{\mathbb{Z}/p\mathbb{Z}\times\cdots\times\mathbb{Z}/p\mathbb{Z}}_{r}\twoheadrightarrow (\mathcal{Q}^{(1)}_{p})^{gp}/\mathcal{Q}^{gp}\ ;\ 
(\overline{n_1},\ldots, \overline{n_r})\mapsto n_1\biggl(\frac{1}{p}{x_1}\biggr)+\cdots +n_r\biggl(\frac{1}{p}x_r\biggr)\mod \mathcal{Q}^{gp}\ ,
$$
we have $p^r=|(\mathcal{Q}^{(1)}_{p})^{gp}/\mathcal{Q}^{gp}||\ker({f)}|$. Hence $|(\mathcal{Q}^{(1)}_{p})^{gp}/\mathcal{Q}^{gp}|=p^s$ for some $s\geq 0$. Thus the assertion follows from (2). 
\end{proof}

By assuming saturatedness, one finds the exactness of $\iota^{(i)}_{c}: \mathcal{Q}^{(i)}_{c}\hookrightarrow \mathcal{Q}^{(i+1)}_{c}$. 

\begin{lemma}\label{FineSharpSat}
Let $\mathcal{Q}$ be a saturated monoid. 
Then for %every $c>0$ and 
every $i\geq 0$, $\iota^{(i)}_{c}: \mathcal{Q}_c^{(i)}\hookrightarrow \mathcal{Q}_{c}^{(i+1)}$ is exact (i.e.\ $\mathcal{Q}^{(i+1)}_{c}\cap (\mathcal{Q}^{(i)}_{c})^{gp}=\mathcal{Q}^{(i)}_{c}$).  
\end{lemma}

\begin{proof}
It suffices to show that $\mathcal{Q}^{(i+1)}_{c}\cap (\mathcal{Q}^{(i)}_{c})^{gp}\subseteq \mathcal{Q}^{(i)}_{c}$. 
Pick an element $a\in\mathcal{Q}^{(i+1)}_{c}\cap (\mathcal{Q}^{(i)}_{c})^{gp}$. Then $ca\in \mathcal{Q}^{(i)}_{c}$.  Since $\mathcal{Q}^{(i)}_{c}$ is saturated by Lemma \ref{PropertyQc}, it implies that $a\in \mathcal{Q}^{(i)}_{c}$, as desired. 
\end{proof}

If further $\mathcal{Q}$ is fine, one can learn more about $\mathbb{Z}[\iota^{(i)}_{c}]: \mathbb{Z}[\mathcal{Q}^{(i)}_{c}]\to \mathbb{Z}[\mathcal{Q}^{(i+1)}_{c}]$ using the exactness of $\iota^{(i)}_{c}$ assured by Lemma \ref{FineSharpSat}. 

\begin{lemma}
\label{injective1}
Let $\mathcal{Q}$ be a fine and saturated monoid. %Let $c$ and $i$ be non-negative integers with $c>0$. 
For every $i\geq 0$, set $G_{i}:=\mathcal{Q}^{(i+1)}_{c}/\mathcal{Q}^{(i)}_c$ (which is a finite abelian group by Lemma \ref{finite424} (2)) and $K_{i}:=\textnormal{Frac}(\mathbb{Z}[\mathcal{Q}^{(i)}_{c}])$. 
Then the following assertions hold. 
\begin{enumerate}
\item
For any $g\in G_{i}$, we have an isomorphism of $\mathbb{Z}[\mathcal{Q}_{c}^{(i)}]$-modules $
\mathbb{Z}[(\mathcal{Q}_{c}^{(i+1)})_{g}]\otimes_{\mathbb{Z}[\mathcal{Q}_{c}^{(i)}]}K_{i}\cong K_{i}$. 
\item
The base extension 
$K_{i}\to \mathbb{Z}[\mathcal{Q}^{(i+1)}_{c}]\otimes_{\mathbb{Z}[\mathcal{Q}^{(i)}_{c}]}K_{i}$ of $\mathbb{Z}[\iota^{(i)}_{c}]$ is isomorphic to 
the split injection  
$$
K_{i}\hookrightarrow (K_{i})^{\oplus |G_{i}|}\ ;\ a\mapsto (a,0,\ldots, 0)
$$
as a $K_{i}$-linear map. In particular, $
\dim_{K_{i}}\bigl(\mathbb{Z}[\mathcal{Q}^{(i+1)}_{c}]\otimes_{\mathbb{Z}[\mathcal{Q}^{(i)}_{c}]}K_{i}\bigr)=|\mathcal{Q}^{(i+1)}_{c}/\mathcal{Q}^{(i)}_{c}|$. 
\end{enumerate}
\end{lemma}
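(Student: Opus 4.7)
The strategy is to exploit the $G_{i}$-graded decomposition of $\mathbb{Z}[\mathcal{Q}^{(i+1)}_{c}]$ supplied by Proposition \ref{Ogus1.4.2.7}, and then analyze each graded piece after base change to $K_{i}$. The input needed to invoke that proposition is the exactness of $\iota^{(i)}_{c}: \mathcal{Q}^{(i)}_{c}\hookrightarrow \mathcal{Q}^{(i+1)}_{c}$, which is exactly Lemma \ref{FineSharpSat}(2). This yields the decomposition $\mathbb{Z}[\mathcal{Q}^{(i+1)}_{c}]=\bigoplus_{g\in G_{i}}\mathbb{Z}[(\mathcal{Q}^{(i+1)}_{c})_{g}]$ of $\mathbb{Z}[\mathcal{Q}^{(i)}_{c}]$-modules, in which the $g=0$ summand is $\mathbb{Z}[\mathcal{Q}^{(i)}_{c}]$ itself and the ring map $\mathbb{Z}[\iota^{(i)}_{c}]$ coincides with the canonical embedding into this summand.

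For part (1), fix $g\in G_{i}$ and choose any $y\in(\mathcal{Q}^{(i+1)}_{c})_{g}$. For any other $y'\in(\mathcal{Q}^{(i+1)}_{c})_{g}$, the construction of the cokernel of monoids (combined with integrality) produces $a,b\in \mathcal{Q}^{(i)}_{c}$ with $y+a=y'+b$, hence the relation $e^{y}e^{a}=e^{y'}e^{b}$ in $\mathbb{Z}[\mathcal{Q}^{(i+1)}_{c}]$. Since $\mathbb{Z}[\mathcal{Q}^{(i)}_{c}]$ is a domain (integrality via Lemma \ref{finite1}(1)), the elements $e^{a},e^{b}$ become units in $K_{i}$, so $e^{y'}\otimes 1=(e^{a}/e^{b})(e^{y}\otimes 1)$ inside $\mathbb{Z}[(\mathcal{Q}^{(i+1)}_{c})_{g}]\otimes_{\mathbb{Z}[\mathcal{Q}^{(i)}_{c}]}K_{i}$. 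Thus this module is cyclic over $K_{i}$. Non-vanishing of the generator follows because $\mathbb{Z}[\mathcal{Q}^{(i+1)}_{c}]$ is itself a domain (its group of fractions embeds into the torsion-free $\mathcal{Q}^{gp}\otimes_{\mathbb{Z}}\mathbb{Q}$), giving an injection $\mathbb{Z}[\mathcal{Q}^{(i+1)}_{c}]\otimes_{\mathbb{Z}[\mathcal{Q}^{(i)}_{c}]}K_{i}\hookrightarrow K_{i+1}$ under which $e^{y}\otimes 1$ maps to the nonzero element $e^{y}$. Hence $\mathbb{Z}[(\mathcal{Q}^{(i+1)}_{c})_{g}]\otimes_{\mathbb{Z}[\mathcal{Q}^{(i)}_{c}]}K_{i}\cong K_{i}$.

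For part (2), I would apply the flat base change $(-)\otimes_{\mathbb{Z}[\mathcal{Q}^{(i)}_{c}]}K_{i}$ to the graded decomposition and feed in part (1) summand by summand, obtaining $\mathbb{Z}[\mathcal{Q}^{(i+1)}_{c}]\otimes_{\mathbb{Z}[\mathcal{Q}^{(i)}_{c}]}K_{i}\cong K_{i}^{\oplus |G_{i}|}$; finiteness of $|G_{i}|$ is Lemma \ref{finite424}(3). Normalizing the $g=0$ isomorphism from part (1) by $e^{0}\otimes 1\mapsto 1$ identifies $\mathbb{Z}[\iota^{(i)}_{c}]\otimes_{\mathbb{Z}[\mathcal{Q}^{(i)}_{c}]}K_{i}$ with the split injection into the first coordinate, and the dimension count follows at once. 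The main point requiring care is the bookkeeping alignment of the $g=0$ isomorphism with the canonical identification of the $0$-th summand from Proposition \ref{Ogus1.4.2.7}(2)(d), so as to obtain the embedding in the exact form stated; beyond this, the argument is a direct combination of the cited structural results.
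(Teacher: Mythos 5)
Your proposal is correct and follows essentially the same route as the paper: both arguments rest on the $G_i$-graded decomposition supplied by Proposition \ref{Ogus1.4.2.7} (available because $\iota^{(i)}_{c}$ is exact by Lemma \ref{FineSharpSat}) together with the observation that any two elements of $(\mathcal{Q}^{(i+1)}_{c})_{g}$ differ by an element of $(\mathcal{Q}^{(i)}_{c})^{gp}$, whose exponentials become units in $K_{i}$. The only minor variation is in part (1): the paper exhibits an injection $\mathbb{Z}[\mathcal{Q}^{(i)}_{c}]\hookrightarrow\mathbb{Z}[(\mathcal{Q}^{(i+1)}_{c})_{g}]$ (translation by a fixed $y_{g}$) whose cokernel embeds into the $\mathbb{Z}[\mathcal{Q}^{(i)}_{c}]$-torsion module $\mathbb{Z}[(\mathcal{Q}^{(i)}_{c})^{gp}]/\mathbb{Z}[\mathcal{Q}^{(i)}_{c}]$, whereas you show directly that the base-changed graded piece is a nonzero cyclic module over the field $K_{i}$; the two finishes are equivalent.
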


\begin{proof}
In view of Lemma \ref{finite1} (2) and Lemma \ref{PropertyQc}, it suffices to show the assertions only for the case when $i=0$. 

(1): Let $y_{g}\in \mathcal{Q}^{(1)}_{c}$ be an element whose image in $\mathcal{Q}^{(1)}_{c}/\mathcal{Q}$ is equal to $g$. 
Then we obtain an injective homomorphism of $\mathcal{Q}$-modules 
\begin{equation}\label{actionQ-mod}
\iota_{g}: \mathcal{Q}\hookrightarrow (\mathcal{Q}^{(1)}_c)_{g}\ ; \ x\mapsto x+y_{g},
\end{equation}
which induces an injective $\mathbb{Z}[\mathcal{Q}]$-linear map $\mathbb{Z}[\iota_{g}]: \mathbb{Z}[\mathcal{Q}]\hookrightarrow \mathbb{Z}[(\mathcal{Q}^{(1)}_c)_{g}]$. 
Thus it suffices to show that $\coker (\mathbb{Z}[\iota_{g}])\otimes_{\mathbb{Z}[\mathcal{Q}]} K_{0}=(0)$, 
i.e., $\coker (\mathbb{Z}[\iota_{g}])$ is a torsion  $\mathbb{Z}[\mathcal{Q}]$-module. On the other hand, we also have a homomorphism of $\mathcal{Q}$-modules  
$$
(\mathcal{Q}^{(1)}_{c})_{g}\to \mathcal{Q}^{gp}\ ;\ y\mapsto y-y_{g}, 
$$
which induces an embedding of $\mathbb{Z}[\mathcal{Q}]$-modules $\coker (\mathbb{Z}[\iota_{g}])\hookrightarrow \mathbb{Z}[\mathcal{Q}^{gp}]/\mathbb{Z}[\mathcal{Q}]$. 
Since $\mathbb{Z}[\mathcal{Q}^{gp}]/\mathbb{Z}[\mathcal{Q}]$ is $\mathbb{Z}[\mathcal{Q}]$-torsion, the assertion follows.

(2): It immediately follows from the combination of Lemma \ref{FineSharpSat}, Proposition \ref{Ogus1.4.2.7} (2), and the assertion (1) of this lemma. 
\end{proof}

%Denote by $X_{Zar}$ the (small) Zariski site on the scheme $X$.

%\begin{definition}
%\label{LogSchemeStr1}
%Let $X$ be a scheme. We say that $X$ comes with a \textit{pre-logarithmic structure} (pre-log structure for short) if there is a sheaf of monoids $\mathcal{M}_X$ on $X_{Zar}$, together with a morphism of monoids $\alpha:\mathcal{M}_X \to \mathcal{O}_X$. Here, we view the structure sheaf $\mathcal{O}_X$ on $X_{Zar}$ as a multiplicative monoid. We say that a pre-log structure is a \textit{log structure} if $\alpha$ induces an isomorphism of sheaves of monoids $\alpha^{-1}(\mathcal{O}_X^\times) \cong \mathcal{O}_X^\times$.
%\end{definition}

\subsection{Local log-regular rings}\label{LogRings}
\subsubsection{Definition of local log-regular rings}
We review the definition and fundamental properties of local log-regular rings.
Unless otherwise stated, we always assume that the monoid structure of a commutative ring is specified by the multiplicative structure.

\begin{definition}[{\cite[Chapter {\bf III}, Definition 1.2.3]{Ogus18}}]
\label{Logring}
Let $R$ be a ring and let $\mathcal{Q}$ be a monoid with a homomorphism $\alpha : \mathcal{Q} \to R$ of monoids. Then we say that the triple $(R,\mathcal{Q},\alpha)$ is a \textit{log ring}. Moreover, we say that $(R,\mathcal{Q},\alpha)$ is a \textit{local log ring} if $(R,\mathcal{Q},\alpha)$ is a log ring, where $R$ is a local ring and $\alpha^{-1}(R^{\times}) = \mathcal{Q}^{*}$.
\end{definition}

In order to preserve the locality of a log structure, we need the locality of a ring map.
\begin{lemma}
\label{locallogextension}
Let $(R, \mathcal{Q}, \alpha)$ be a local log ring and let $(S, \fm_{S})$ be a local ring with a local ring map $\phi : R \to S$. Then $(S, \mathcal{Q}, \phi \circ \alpha)$ is also a local log ring.
\end{lemma}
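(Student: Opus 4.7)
The plan is to verify the two defining conditions of a local log ring for $(S,\mathcal{Q},\phi\circ\alpha)$: that $\phi\circ\alpha$ is a monoid homomorphism into the multiplicative monoid of $S$, and that $(\phi\circ\alpha)^{-1}(S^{\times})=\mathcal{Q}^{*}$. Since $S$ is already assumed to be local, the first half of the claim reduces to these two checks.

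First I would observe that the monoid homomorphism property is immediate: $\alpha:\mathcal{Q}\to R$ is a homomorphism into $(R,\cdot)$, and any (unital) ring map $\phi:R\to S$ preserves multiplication and the multiplicative unit, so $\phi\circ\alpha$ is a homomorphism from $\mathcal{Q}$ to $(S,\cdot)$.

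Next I would check $(\phi\circ\alpha)^{-1}(S^{\times})=\mathcal{Q}^{*}$ by containment in both directions. For $\mathcal{Q}^{*}\subseteq(\phi\circ\alpha)^{-1}(S^{\times})$: if $q\in\mathcal{Q}^{*}$, then the log-ring hypothesis $\alpha^{-1}(R^{\times})=\mathcal{Q}^{*}$ gives $\alpha(q)\in R^{\times}$, and any unital ring map sends units to units, so $\phi(\alpha(q))\in S^{\times}$. For the reverse containment, I would use the locality of $\phi$: given $q\in\mathcal{Q}$ with $\phi(\alpha(q))\in S^{\times}$, suppose for contradiction that $\alpha(q)\notin R^{\times}$; since $R$ is local, $\alpha(q)\in\fm_R$, and then $\phi(\alpha(q))\in\phi(\fm_R)\subseteq\fm_S$ by $\phi^{-1}(\fm_S)=\fm_R$, contradicting $\phi(\alpha(q))\in S^{\times}$. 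Hence $\alpha(q)\in R^{\times}$, which by the log-ring hypothesis forces $q\in\mathcal{Q}^{*}$.

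There is no real obstacle here; the argument is essentially bookkeeping. The only subtlety worth flagging is that the locality assumption on $\phi$ is genuinely needed for the reverse containment — without it, a unit of $S$ could pull back to a non-unit of $R$, breaking the equality $(\phi\circ\alpha)^{-1}(S^{\times})=\mathcal{Q}^{*}$.
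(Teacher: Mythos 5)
Your proof is correct and follows the same route as the paper, which simply observes that the locality of $\phi$ yields $(\phi\circ\alpha)^{-1}(S^{\times})=\mathcal{Q}^{*}$; you have merely written out both containments explicitly. No issues.
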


\begin{proof}
By the locality of $\phi$, we obtain the equality $ (\phi \circ \alpha)^{-1}(S^{\times}) = \mathcal{Q}^{*}$, as desired.
\end{proof}

Now we define \textit{log-regular rings} according to \cite{Ogus18}.

\begin{definition}
\label{LogSchemeStr2}
Let $(R,\mathcal{Q},\alpha)$ be a local log ring, where $R$ is Noetherian and $\overline{\mathcal{Q}}:=\mathcal{Q}/\mathcal{Q}^*$ is fine and saturated. Let $I_{\alpha}$ be the ideal of $R$ generated by the set $\alpha(\mathcal{Q}^{+})$. Then $(R,\mathcal{Q},\alpha)$ is called a \textit{log-regular ring} if the following conditions hold. 
\begin{enumerate}
\item
$R/I_{\alpha}$ is a regular local ring.

\item
$\dim R = \dim R/I_{\alpha} + \dim\mathcal{Q}$.
\end{enumerate}
\end{definition}

\begin{remark}\label{rmk2125}
Note that for a monoid $\mathcal{Q}$ such that $\overline{\mathcal{Q}}$ is fine and saturated, the natural projection $\pi: \mathcal{Q} \twoheadrightarrow \overline{\mathcal{Q}}$ splits (see \cite[Lemma 6.2.10]{GR22}). 
Thus, in the situation of Definition \ref{LogSchemeStr2}, $\alpha$ extends to the homomorphism of monoids $\overline{\alpha}: \overline{\mathcal{Q}}\to R$ along $\pi$. 
Namely, we obtain another local log-regular ring $(R, \overline{\mathcal{Q}}, \overline{\alpha})$ with the same underlying ring, where $\overline{\mathcal{Q}}$ is fine, sharp, and saturated. 
\end{remark}

In his monumental paper \cite{Ka89}, Kato considered log structures of schemes on the \'etale sites, and he then considered them on the Zariski sites \cite{Ka94}. However, we do not need any deep part of logarithmic geometry and the present paper focuses on the local study of schemes with log structures. We should remark that if $k$ is any fixed field and $\mathcal{Q} \subseteq \mathbb{N}^d$ is a fine and saturated monoid, then the monoid algebra $k[\mathcal{Q}]$ is known as an  \textit{affine normal semigroup ring} which is actively studied in combinatorial commutative algebra (see the book \cite{MS04}). The following theorem is a natural extension of the Cohen-Macaulay property for the classical toric singularities over a field proved by Hochster \cite{Ho72}.

\begin{theorem}[{\cite[Theorem 4.1]{Ka94}}]
\label{CMnormal}
Every local log-regular ring is Cohen-Macaulay and normal.
\end{theorem}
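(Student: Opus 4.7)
The plan is to reduce to the explicit structural form provided by Kato's structure theorem (Theorem \ref{CohenLogReg}) and then invoke classical results on toric rings. First, using Remark \ref{rmk2125}, I would replace $(R,\mathcal{Q},\alpha)$ with $(R,\overline{\mathcal{Q}},\overline{\alpha})$, which leaves the underlying ring $R$ unchanged but makes the monoid fine, sharp, and saturated. Next, I pass to the $\fm$-adic completion: since $R\to\widehat{R}$ is faithfully flat and both Cohen-Macaulayness and normality descend from $\widehat{R}$ to $R$ for Noetherian local rings (the first via preservation of depth and dimension, the second via the identification $\widehat{R}\cap\Frac(R)=R$ inside $\Frac(\widehat{R})$, which follows from faithful flatness), it suffices to prove the theorem assuming $R$ is complete.

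Kato's structure theorem then writes $R$ as $k\llbracket\overline{\mathcal{Q}}\oplus\mathbb{N}^{r}\rrbracket$ in the equal characteristic case or as $C(k)\llbracket\overline{\mathcal{Q}}\oplus\mathbb{N}^{r}\rrbracket/(p-f)$ in mixed characteristic, where $C(k)$ is a Cohen ring of the residue field $k$ and $f\in\fm$. I would then analyze the ambient ring $V\llbracket\overline{\mathcal{Q}}\oplus\mathbb{N}^{r}\rrbracket$ with $V:=k$ or $V:=C(k)$: by Hochster's classical theorem \cite{Ho72}, the affine normal semigroup ring $V[\overline{\mathcal{Q}}]$ is Cohen-Macaulay and normal when $V$ is a field, and the same holds over a DVR by flat base change from the prime field. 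Completing along the graded maximal monomial ideal and adjoining the formal variables $\mathbb{N}^{r}$ preserves Cohen-Macaulayness and normality, so $V\llbracket\overline{\mathcal{Q}}\oplus\mathbb{N}^{r}\rrbracket$ is a Cohen-Macaulay normal domain.

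In equal characteristic this finishes the argument. In mixed characteristic, since $f\in\fm$, the element $p-f$ is a non-zero-divisor in the Cohen-Macaulay ring $V\llbracket\overline{\mathcal{Q}}\oplus\mathbb{N}^{r}\rrbracket$, so the quotient $R$ remains Cohen-Macaulay. For normality I apply Serre's criterion $R_{1}+S_{2}$: the $S_{2}$-condition is immediate from Cohen-Macaulayness, and $R_{1}$ reduces to showing that the singular locus of $R$ has codimension at least $2$. By the saturatedness of $\overline{\mathcal{Q}}$, the toric ring $V\llbracket\overline{\mathcal{Q}}\oplus\mathbb{N}^{r}\rrbracket$ is regular in codimension one, with height-one primes corresponding to the facets of the cone associated to $\overline{\mathcal{Q}}$; one then inspects how the hypersurface cut out by $p-f$ intersects these facets.

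The main obstacle will be verifying the $R_{1}$ condition in mixed characteristic: one must track how the hypersurface $\{p=f\}$ meets the toric singular locus of $V\llbracket\overline{\mathcal{Q}}\oplus\mathbb{N}^{r}\rrbracket$ and confirm that no new codimension-one singularities are created. The cleanest route is a Jacobian-style computation exploiting the explicit form of $f$ from Kato's structure theorem, combined with the normality of the toric variety cut out by $\overline{\mathcal{Q}}$. The underlying principle is that $p-f$ is \emph{transverse} to the toric stratification, reflecting the dimension equality $\dim R=\dim R/I_{\alpha}+\dim\mathcal{Q}$ in Definition \ref{LogSchemeStr2}, which guarantees that $p-f$ consumes the Cohen ring direction without interfering with the monoid directions in codimension one.
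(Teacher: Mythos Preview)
The paper does not actually prove this theorem: it is stated as Kato's result with no proof environment, and the paper relies on it as a black box (citing \cite{Ka89}). So there is no ``paper's own proof'' to compare against. Your strategy of reducing to the complete case and invoking the structure theorem is the standard route (it is essentially how Ogus organizes the argument in \cite{Ogus18}), and your treatment of Cohen--Macaulayness in both characteristics and of normality in equal characteristic is fine.

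The genuine gap is exactly where you flag it: the $R_1$ condition in mixed characteristic. You describe it as a ``Jacobian-style computation'' and speak of transversality of $\{p=f\}$ to the toric stratification, but you do not carry anything out, and the heuristic about $p-f$ ``consuming the Cohen ring direction'' is not an argument. Height-one primes of $C(k)\llbracket\overline{\mathcal{Q}}\oplus\mathbb{N}^{r}\rrbracket/(p-f)$ containing $p$ need not lie over regular points of the ambient toric ring, and the Jacobian criterion is delicate here because the ambient ring is already singular in codimension $\geq 2$.

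A much cleaner route, already implicit in the paper's toolkit, avoids this computation entirely. By Lemma~\ref{monoidsplit} one has a split embedding $C(k)[\overline{\mathcal{Q}}\oplus\mathbb{N}^{r}]\hookrightarrow C(k)[\mathbb{N}^{d+r}]$; completing and reducing modulo $p-f$ (using Lemma~\ref{splitlemma1} and Remark~\ref{rmk301Wed}) exhibits $R$ as a module-theoretic direct summand of the \emph{regular} local ring $C(k)\llbracket\mathbb{N}^{d+r}\rrbracket/(p-f)$. Normality and Cohen--Macaulayness descend along pure (in particular split) ring maps, so both properties follow at once without any $R_1$ calculation. This is precisely the mechanism the paper exploits later in the proof of Theorem~\ref{log-splinter}.
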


Let $R$ be a ring and let $\mathcal{Q}$ be a fine sharp monoid. We denote by $R[\mathcal{Q}^{+}]$ the ideal of $R[\mathcal{Q}]$ generated by elements $\sum_{q \in \mathcal{Q}^{+}} a_{q}e^q$, where $a_{q}$ is an element of $R$. Then we denote by $R\llbracket\mathcal{Q}\rrbracket$ the adic completion of $R[\mathcal{Q}]$ with respect to the ideal $R[\mathcal{Q}^{+}]$. 

As to the structure of complete local log-regular rings, we have the following result analogous to the classical Cohen's structure theorem, originally proved in \cite{Ka94}. We borrow the presentation from \cite[Chapter {\bf III}, Theorem 1.11.2]{Ogus18}.

\begin{theorem}[Kato]
\label{CohenLogReg}
Let $(R,\mathcal{Q},\alpha)$ be a local log ring such that $R$ is Noetherian and $\mathcal{Q}$ is fine, sharp, and saturated. 
Let $k$ be the residue field of $R$ and $\fm_R$ its maximal ideal. 
Let $r$ be the dimension of $R/I_\alpha$.
Then the following assertions hold.
\begin{enumerate}
\item
Suppose that $R$ contains a field. Then $(R,\mathcal{Q},\alpha)$ is log-regular if and only if there exists a commutative diagram:
$$
\begin{CD}
\mathcal{Q} @>>> k\llbracket\mathcal{Q} \oplus \mathbb{N}^r\rrbracket \\
@V\alpha VV @V\psi VV \\
R @>>> \widehat{R}
\end{CD}
$$
where $\widehat{R}$ is the completion along the maximal ideal and $\psi$ is an isomorphism of rings. 

\item
Assume that $R$ is of mixed characteristic $p>0$. Let $C(k)$ be a Cohen ring of $k$. Then $(R,\mathcal{Q},\alpha)$ is log-regular if and only if there exists a commutative diagram: 
$$
\begin{CD}
\mathcal{Q} @>>> C(k)\llbracket\mathcal{Q} \oplus \mathbb{N}^r\rrbracket \\
@V\alpha VV @V\psi VV \\
R @>>> \widehat{R}
\end{CD}
$$
where $\widehat{R}$ is the completion along the maximal ideal and $\psi$ is a surjective ring map with $\ker(\psi)=(\theta)$ for some element $\theta \in \fm_{\widehat{R}}$ whose constant term is $p$. 
Moreover, for any element $\theta' \in \ker (\psi)$ whose constant term is $p$, $\ker (\psi) = (\theta')$ holds.
\end{enumerate}
\end{theorem}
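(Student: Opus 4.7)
The plan is to argue sufficiency by direct computation of $I_\alpha$ in the described model and necessity by constructing a surjection from the monoid-algebra completion via Cohen's structure theorem, then identifying the kernel by dimension comparison; by Remark \ref{rmk2125} we may assume throughout that $\mathcal{Q}$ is sharp. For the ``if'' direction of (1), if $\widehat{R}\cong k\llbracket\mathcal{Q}\oplus\mathbb{N}^r\rrbracket$ compatibly with $\alpha$, then $I_\alpha\widehat{R}$ is the ideal generated by $\{e^q : q\in\mathcal{Q}^+\}$, so $\widehat{R}/I_\alpha\widehat{R}\cong k\llbracket x_1,\ldots,x_r\rrbracket$ is regular local of dimension $r$; a standard Krull-dimension computation for monoid algebras of fine saturated sharp monoids gives $\dim\widehat{R}=\dim\mathcal{Q}+r$, and these properties descend to $R$. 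For (2), the same analysis modulo $(\theta)$ yields $\widehat{R}/I_\alpha\widehat{R}\cong C(k)\llbracket x_1,\ldots,x_r\rrbracket/(\bar\theta)$, where $\bar\theta$ has constant term $p$ and therefore lies outside $\fm^2$ of the $(r+1)$-dimensional regular local ring $C(k)\llbracket x_1,\ldots,x_r\rrbracket$; the quotient is hence regular of dimension $r$, and since $C(k)\llbracket\mathcal{Q}\oplus\mathbb{N}^r\rrbracket$ is a normal local domain (as the completion of an excellent normal local ring) of dimension $\dim\mathcal{Q}+r+1$, killing the nonzero element $\theta$ drops the dimension by one to $\dim\mathcal{Q}+r$.

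For the ``only if'' direction, Cohen's structure theorem supplies a coefficient field $k\hookrightarrow\widehat{R}$ in case (1) and a local ring map $C(k)\to\widehat{R}$ in case (2). Lift a regular system of parameters $\bar y_1,\ldots,\bar y_r$ of $R/I_\alpha$ to $\widetilde{y}_1,\ldots,\widetilde{y}_r\in\widehat{R}$ and define
$$
\psi : C\llbracket\mathcal{Q}\oplus\mathbb{N}^r\rrbracket\to\widehat{R},\quad e^q\mapsto\alpha(q),\ x_i\mapsto\widetilde{y}_i,
$$
where $C$ stands for $k$ or $C(k)$ as appropriate. Since the images of the $e^q$ ($q\in\mathcal{Q}^+$) together with the $\widetilde{y}_i$ generate $\fm_R$, $\psi$ is surjective. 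By Theorem \ref{CMnormal}, $\widehat{R}$ is a normal local domain, so $\ker\psi$ is prime. In case (1), the source is a local domain of dimension $\dim\mathcal{Q}+r=\dim\widehat{R}$, so $\ker\psi$ has height zero and is therefore trivial, giving an isomorphism.

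In case (2) the source has dimension $\dim\mathcal{Q}+r+1$, so $\ker\psi$ is a height-one prime. To produce a generator with constant term $p$, use that $\psi(p)=p\in\fm_{\widehat{R}}=(\alpha(\mathcal{Q}^+),\widetilde{y}_i)\widehat{R}$: write $p=\sum b_q\alpha(q)+\sum a_i\widetilde{y}_i$ in $\widehat{R}$, lift the coefficients to $B_q, A_i$ in the source via surjectivity, and put $\theta:=p-\sum B_q e^q-\sum A_i x_i$. Then $\theta\in\ker\psi$ and, under the augmentation $C(k)\llbracket\mathcal{Q}\oplus\mathbb{N}^r\rrbracket\to C(k)$ sending $e^q, x_i\mapsto 0$, $\theta$ has image $p$. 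The main obstacle -- showing $\ker\psi=(\theta)$ -- I resolve by re-applying the sufficiency direction just established: the triple $\bigl(C(k)\llbracket\mathcal{Q}\oplus\mathbb{N}^r\rrbracket/(\theta),\,\mathcal{Q},\,q\mapsto\overline{e^q}\bigr)$ satisfies the log-regularity axioms, hence by Theorem \ref{CMnormal} is a normal local domain of dimension $\dim\mathcal{Q}+r$; the induced surjection $C(k)\llbracket\mathcal{Q}\oplus\mathbb{N}^r\rrbracket/(\theta)\twoheadrightarrow\widehat{R}$ between local domains of the same dimension must have zero kernel, giving $\ker\psi=(\theta)$. Uniqueness is then immediate: any $\theta'\in\ker\psi=(\theta)$ with constant term $p$ satisfies $\theta'=u\theta$, and comparing constant terms yields $u(0)=1$, so $u\in 1+\fm$ is a unit.
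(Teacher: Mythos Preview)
Your argument is correct and follows the same essential strategy as the paper, though the paper is far more terse: it simply cites \cite[Chapter {\bf III}, Theorem 1.11.2]{Ogus18} for assertion (1) and the first part of (2), and only supplies a proof for the ``moreover'' clause. There, the paper's argument coincides with yours in spirit---show that $C(k)\llbracket\mathcal{Q}\oplus\mathbb{N}^r\rrbracket/(\theta')$ is a domain of dimension $\dim\mathcal{Q}+r=\dim\widehat{R}$, hence the surjection onto $\widehat{R}$ is an isomorphism---but it obtains domain-ness by invoking \cite[Chapter {\bf III}, Proposition 1.10.13]{Ogus18} directly, whereas you bootstrap through the already-established ``if'' direction together with Theorem~\ref{CMnormal}. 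Your route is slightly more self-contained and makes the logical dependence on Kato's normality theorem explicit; the paper's route is shorter but leans on an additional external reference. Your unit argument for passing from the specific $\theta$ to an arbitrary $\theta'$ is a clean alternative to the paper's approach of re-running the domain-and-dimension argument for each $\theta'$. One minor point: your opening reduction via Remark~\ref{rmk2125} is unnecessary, since the hypothesis already assumes $\mathcal{Q}$ is sharp.
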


\begin{proof}
The assertion (1) and the first part of (2) are \cite[Chapter {\bf III}, Theorem 1.11.2]{Ogus18}.
Pick an element $\theta' \in \ker (\psi)$ whose constant term is $p$.
%Note that $\psi : C(k)\llbracket \mathcal{Q} \oplus \mathbb{N}^r \rrbracket \to \widehat{R}$ is surjective and $\psi((\mathcal{Q} \oplus \mathbb{N}^r)^+) \widehat{R}$ is the maximal ideal of $\widehat{R}$.
%This implies $p \in \ker \psi + C(k)\llbracket(\mathcal{Q} \oplus \mathbb{N}^r)^+\rrbracket$.
%Since $f' \in \ker \psi$,  $p - f' \in C(k)\llbracket(\mathcal{Q} \oplus \mathbb{N}^r)^+\rrbracket$.
Note that $\theta'$ is a regular element that is not invertible.
By \cite[Chapter {\bf III}, Proposition 1.10.13]{Ogus18}, $C(k)\llbracket\mathcal{Q} \oplus \mathbb{N}^r \rrbracket/ (\theta')$ is a domain of $\dim \mathcal{Q} + r = \dim R = \dim \widehat{R}$. Thus $\ker (\psi) = (\theta')$ holds.\footnote{This argument is due to Ogus. See the proof of \cite[Chapter {\bf III}, Theorem 1.11.2 (2)]{Ogus18}.}
\end{proof}

The completion of a normal affine semigroup ring with respect to the ideal generated by elements of the semigroup is a typical example of local log-regular rings:

\begin{lemma}
\label{logregularexample}
Let $\mathcal{Q}$ be a fine, sharp and saturated monoid and let $k$ be a field. Then $(k\llbracket\mathcal{Q}\rrbracket, \mathcal{Q} , \iota)$ is a local log-regular ring, where $\iota : \mathcal{Q} \hookrightarrow k\llbracket\mathcal{Q}\rrbracket$ is the natural injection.
\end{lemma}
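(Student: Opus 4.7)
The plan is to verify the defining conditions of local log-regularity directly for the triple $(R, \mathcal{Q}, \iota)$, where I set $R := k\llbracket\mathcal{Q}\rrbracket$ and $\iota(q) = e^q$.

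First I would establish that $R$ is Noetherian and local, and that $(R, \mathcal{Q}, \iota)$ is a local log ring in the sense of Definition \ref{Logring}. Since $\mathcal{Q}$ is fine, $k[\mathcal{Q}]$ is a finitely generated $k$-algebra, hence Noetherian. Using sharpness, the quotient $k[\mathcal{Q}]/k[\mathcal{Q}^{+}]$ has $k$-basis $\{e^{q} : q \in \mathcal{Q}^{*}\} = \{1\}$, so it is isomorphic to $k$. Thus $k[\mathcal{Q}^{+}]$ is a maximal ideal of $k[\mathcal{Q}]$, and its adic completion $R$ is a Noetherian local ring with maximal ideal $\fm_{R} = k[\mathcal{Q}^{+}] R$. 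Moreover $\iota(0) = 1 \in R^{\times}$ while $\iota(q) = e^{q} \in \fm_{R}$ for every $q \in \mathcal{Q}^{+}$, giving $\iota^{-1}(R^{\times}) = \{0\} = \mathcal{Q}^{*}$. Sharpness also yields $\overline{\mathcal{Q}} = \mathcal{Q}$, which is already fine and saturated by hypothesis, so the preliminary conditions of Definition \ref{LogSchemeStr2} are in force.

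Next I would check the two numbered conditions of Definition \ref{LogSchemeStr2}. For condition (1), the ideal $I_{\iota}$ is generated by $\iota(\mathcal{Q}^{+}) = \{e^{q} : q \in \mathcal{Q}^{+}\}$, which is exactly a generating set for $\fm_{R}$; hence $R/I_{\iota} \cong k$ is a zero-dimensional regular local ring. For condition (2), it remains to show $\dim R = \dim \mathcal{Q}$. Using that completion preserves Krull dimension at a maximal ideal of a Noetherian ring, one has $\dim R = \dim k[\mathcal{Q}]_{k[\mathcal{Q}^{+}]} = \dim k[\mathcal{Q}]$, where the second equality uses that $k[\mathcal{Q}^{+}]$ is a maximal ideal in the affine $k$-domain $k[\mathcal{Q}]$. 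The standard identity $\dim k[\mathcal{Q}] = \mathrm{rank}(\mathcal{Q}^{gp}) = \dim \mathcal{Q}$ for a fine sharp saturated monoid then gives $\dim R = 0 + \dim \mathcal{Q} = \dim R/I_{\iota} + \dim \mathcal{Q}$, as required.

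Alternatively, once the log ring structure is in place, one may appeal to Theorem \ref{CohenLogReg} (1) with $r = \dim R/I_{\iota} = 0$: the identity map $\psi = \mathrm{id} : k\llbracket\mathcal{Q} \oplus \mathbb{N}^{0}\rrbracket = R = \widehat{R}$ provides the required commutative square, and the ``if'' direction of Kato's structure theorem yields log-regularity without a separate dimension computation. There is no substantial obstacle in either route; the only point that deserves a reference is the standard dimension formula for affine normal semigroup rings attached to fine sharp saturated monoids.
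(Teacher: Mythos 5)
Your proof is correct. The paper's own proof is exactly your closing ``alternative'': it cites \cite[Chapter {\bf I}, Proposition 3.6.1]{Ogus18} to get that $(k\llbracket\mathcal{Q}\rrbracket,\mathcal{Q},\iota)$ is a local log ring and then invokes the ``if'' direction of Theorem \ref{CohenLogReg} (1) with $\psi=\mathrm{id}$ and $r=0$, with no further computation. Your primary route instead verifies Definition \ref{LogSchemeStr2} directly: $R/I_{\iota}\cong k$ is regular of dimension $0$, and $\dim R=\dim k[\mathcal{Q}]=\mathrm{rank}(\mathcal{Q}^{gp})=\dim\mathcal{Q}$. That is a genuinely different and more self-contained argument; what it buys is independence from Kato's structure theorem (whose ``if'' direction is the nontrivial input in the paper's one-line proof), at the cost of needing the standard dimension identity for affine normal semigroup rings of fine sharp saturated monoids — for which you should supply a reference (e.g.\ \cite[Chapter {\bf I}]{Ogus18} or \cite{BG04}), and note that $\mathcal{Q}^{gp}$ is torsion-free (sharp plus saturated forces any torsion element into $\mathcal{Q}^{*}=0$), so that $k[\mathcal{Q}]$ is indeed a domain and the localization/completion steps behave as you claim. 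Both routes are sound; either would serve.
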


\begin{proof}
By \cite[Chapter {\bf I}, Proposition 3.6.1]{Ogus18}, $(k\llbracket\mathcal{Q}\rrbracket, \mathcal{Q}, \iota)$ is a local log ring. Now applying Theorem \ref{CohenLogReg}, it is a local log-regular ring. 
\end{proof}

\subsubsection{Log-regularity and strong $F$-regularity}
Strongly $F$-regular rings are one of the important classes appearing in the study of $F$-singularities.
Let us recall the definition.
\begin{definition}[Strong $F$-regularity]
Let $R$ be a Noetherian reduced $\mathbb{F}_p$-algebra that is $F$-finite. Let $F_*^eR$ be the same as $R$ as its underlying abelian groups with its $R$-module structure via restriction of scalars via the $e$-th iterated Frobenius endomorphism $F_R^e$ on $R$. Then we say that $R$ is \textit{strongly $F$-regular}, if for any element $c \in R$ that is not in any minimal prime of $R$, there exist an  $e>0$ and a map $\phi \in \Hom_R(F_*^eR,R)$ such that $\phi(F_*^ec)=1$.
\end{definition}

It is known that strongly $F$-regular rings are Cohen-Macaulay and normal (for example, see \cite[Proposition 4.4 and Theorem 4.6]{MP}). 
Let us show that log-regularity implies strong $F$-regularity (in positive characteristic cases). 

\begin{lemma}
\label{F-regularLog}
Let $(R,\mathcal{Q},\alpha)$ be a local log-regular ring of characteristic $p>0$ such that $R$ is $F$-finite. Then $R$ is strongly $F$-regular.
\end{lemma}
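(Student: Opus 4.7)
The overall plan is to pass to the completion using Kato's structure theorem, exhibit that completion as a module-linear direct summand of a formal power series ring via the monoid-algebra splitting from Lemma~\ref{monoidsplit}, deduce strong $F$-regularity for $\widehat R$, and finally descend to $R$ by exploiting $F$-finiteness. First I would apply Remark~\ref{rmk2125} to reduce to the case where $\mathcal{Q}$ is fine, sharp and saturated; the residue field $k$ of $R$ is $F$-finite because $R$ is, since $F_{*}R$ being a finite $R$-module forces $[k:k^{p}]<\infty$. By Theorem~\ref{CohenLogReg}(1), $\widehat R\cong k\llbracket\mathcal{Q}\oplus\mathbb{N}^{r}\rrbracket$ with $r=\dim R/I_{\alpha}$. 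Since $\mathcal{Q}\oplus\mathbb{N}^{r}$ is again fine, sharp and saturated, Lemma~\ref{monoidsplit} supplies an exact embedding $\mathcal{Q}\oplus\mathbb{N}^{r}\hookrightarrow\mathbb{N}^{d+r}$ together with a $k[\mathcal{Q}\oplus\mathbb{N}^{r}]$-linear splitting $\pi$ of the induced inclusion $k[\mathcal{Q}\oplus\mathbb{N}^{r}]\hookrightarrow k[\mathbb{N}^{d+r}]$.

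Next, Remark~\ref{rmk301Wed} ensures that $\pi$ carries the maximal monoid ideal into the maximal monoid ideal, so $\pi$ is continuous for the natural monoid-adic topologies and extends to a $k\llbracket\mathcal{Q}\oplus\mathbb{N}^{r}\rrbracket$-linear splitting of the inclusion $\widehat R\hookrightarrow S:=k\llbracket x_{1},\ldots,x_{d+r}\rrbracket$. Thus $\widehat R$ becomes a module-linear direct summand of the $F$-finite regular local ring $S$. Since $F$-finite regular rings are strongly $F$-regular, and strong $F$-regularity transfers to a module-linear summand (given a splitting $\phi\colon F_{*}^{e}S\to S$ with $\phi(F_{*}^{e}c)=1$ for $c\in\widehat R$, one composes with the projection $S\to\widehat R$ and restricts to $F_{*}^{e}\widehat R$ to produce the desired element of $\Hom_{\widehat R}(F_{*}^{e}\widehat R,\widehat R)$), I conclude that $\widehat R$ is strongly $F$-regular.

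Finally I descend from $\widehat R$ to $R$. Fix a nonzero $c\in R$; faithful flatness of $R\to\widehat R$ keeps $c$ nonzero in $\widehat R$, so there exist $e\geq 1$ and $\widehat\phi\in\Hom_{\widehat R}(F_{*}^{e}\widehat R,\widehat R)$ with $\widehat\phi(F_{*}^{e}c)=1$. Because $R$ is $F$-finite, $F_{*}^{e}R$ is a finitely presented $R$-module and the canonical map $F_{*}^{e}R\otimes_{R}\widehat R\to F_{*}^{e}\widehat R$ is an isomorphism, so the tensor-Hom identity yields $\Hom_{R}(F_{*}^{e}R,R)\otimes_{R}\widehat R\cong\Hom_{\widehat R}(F_{*}^{e}\widehat R,\widehat R)$. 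Letting $I\subseteq R$ denote the image of the evaluation-at-$F_{*}^{e}c$ map $\Hom_{R}(F_{*}^{e}R,R)\to R$, we find that $I\widehat R$ contains $1$ after tensoring, so $I\widehat R=\widehat R$; faithful flatness then forces $I=R$, and any preimage of $1$ provides the required splitting $\phi\in\Hom_{R}(F_{*}^{e}R,R)$ with $\phi(F_{*}^{e}c)=1$. I expect this descent to be the most delicate step, since it relies essentially on $F$-finiteness of $R$ to secure the finite presentation of $F_{*}^{e}R$ and the compatibility $F_{*}^{e}R\otimes_{R}\widehat R\cong F_{*}^{e}\widehat R$; without $F$-finiteness, descent of strong $F$-regularity under completion would require substantially more machinery.
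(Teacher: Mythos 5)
Your proof is correct, and its skeleton --- complete, realize the completion as a module-linear direct summand of a regular ring via Lemma \ref{monoidsplit}, then descend by faithful flatness --- is the same as the paper's. The two intermediate steps are handled by genuinely different means, however. The paper stays at the affine level: it uses the splitting $k[\mathcal{Q}\oplus\mathbb{N}^r]\to k[\mathbb{N}^d]$ to conclude from [HH89, Theorem 3.1] that the monoid algebra itself is strongly $F$-regular, and then ascends to $k\llbracket\mathcal{Q}\oplus\mathbb{N}^r\rrbracket$ by citing Aberbach's theorem on flat extensions of strongly $F$-regular rings. You instead complete the splitting itself (via Remark \ref{rmk301Wed} and, implicitly, Lemma \ref{splitlemma1}), so that $\widehat R$ becomes a direct summand of $k\llbracket x_1,\ldots,x_{d+r}\rrbracket$; this is precisely the maneuver the paper performs in its proof of Theorem \ref{log-splinter}, so it is consistent with the paper's toolkit and lets you bypass Aberbach's ascent result. (One small point to flag: the split projection need not carry the $n$-th power of the monomial maximal ideal exactly into the $n$-th power downstairs, only into a power bounded below linearly in $n$; since cofinal filtrations define the same completion this causes no harm, and the paper glosses over the same issue in the splinter proof.) For the final descent the paper again cites [HH89, Theorem 3.1], whereas you reprove faithfully flat descent of strong $F$-regularity by hand using finite presentation of $F_*^eR$, the identification $F_*^eR\otimes_R\widehat R\cong F_*^e\widehat R$, and base change of $\Hom$; this is the standard proof of that statement and is correct, with $F$-finiteness playing exactly the role you identify. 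In sum, your route is more self-contained at the cost of re-deriving two citations, while the paper's is shorter but leans on Aberbach's theorem.
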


\begin{proof}
The completion of $R$ with respect to its maximal ideal is isomorphic to the completion of $k[\mathcal{Q}\oplus \mathbb{N}^r]$, and $\mathcal{Q}$ is a fine, sharp and saturated monoid by Theorem \ref{CohenLogReg} and \cite[Chapter {\bf I}, Proposition 3.4.1]{Ogus18}. Then it follows from Lemma \ref{monoidsplit} that $\mathcal{Q}\oplus \mathbb{N}^r$ can be embedded into $\mathbb{N}^d$ for $d >0$, and $k[\mathcal{Q}\oplus \mathbb{N}^r] \to k[\mathbb{N}^d] \cong k[x_1,\ldots,x_d]$ splits as a $k[\mathcal{Q}\oplus \mathbb{N}^r]$-linear map. Applying \cite[Theorem 3.1]{HH89}, we see that $k[\mathcal{Q}\oplus \mathbb{N}^r]$ is
strongly $F$-regular. After completion, the complete local ring $k\llbracket\mathcal{Q}\oplus \mathbb{N}^r\rrbracket$ is strongly $F$-regular in view of \cite[Theorem 3.6]{Aberbach01}. Then by faithful flatness of $R \to k\llbracket\mathcal{Q}\oplus \mathbb{N}^r\rrbracket$, \cite[Theorem 3.1]{HH89} applies to yield strong $F$-regularity of $R$.
\end{proof}

Under the hypothesis in the following proposition, one can easily establish the finiteness of the torsion part of the divisor class group, which is the first assertion of Theorem \ref{torsiondivisorclass}.

\begin{proposition}
\label{torsiondivisorunramified}
Assume that $R \cong C(k)\llbracket\mathcal{Q}\rrbracket$, where $C(k)$ is a Cohen ring with $F$-finite residue field $k$ and $\mathcal{Q}$ is a fine, sharp, and saturated monoid. Let $\Cl(R)_{\rm{tor}}$ be the torsion subgroup of $\Cl(R)$. Then $\Cl(R)_{\rm{tor}} \otimes \mathbb{Z}_{(\ell)}$ is finite for all $\ell \ne p$, and vanishes for almost all $\ell \ne p$.
\end{proposition}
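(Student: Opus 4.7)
The strategy is to bootstrap from Polstra's finiteness theorem in characteristic $p>0$ to the mixed characteristic ring $R$ via the tilting correspondence for perfectoid towers developed earlier in the paper.

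\emph{Step 1 (Tilting the tower).} By Lemma \ref{logregularexample}, the ring $R=C(k)\llbracket\mathcal{Q}\rrbracket$ carries a canonical local log-regular structure. Apply the Gabber--Ramero construction (Construction \ref{logtower}) to obtain a perfectoid tower $(\{R_i\}_{i\geq 0},\{t_i\}_{i\geq 0})$ of local log-regular rings whose starting layer is $R_0=R$. Invoke Theorem \ref{TiltingLogRegular} to conclude that the tilted tower $(\{R_i^{s.\flat}\}_{i\geq 0},\{t_i^{s.\flat}\}_{i\geq 0})$ is again a perfectoid tower of local log-regular rings, and that under the identification provided by Main Theorem \ref{mt1} its starting layer matches $S:=k\llbracket\mathcal{Q}\rrbracket$.

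\emph{Step 2 (Positive characteristic side).} Since $k$ is $F$-finite, $S=k\llbracket\mathcal{Q}\rrbracket$ is $F$-finite and log-regular in characteristic $p$; by Lemma \ref{F-regularLog} it is strongly $F$-regular. Polstra's finiteness theorem then implies that $\Cl(S)_{\rm{tor}}$ is a finite abelian group. In particular its $\ell$-primary part is finite for every $\ell\neq p$ and vanishes for all but finitely many $\ell$.

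\emph{Step 3 (Kummer and \'etale cohomology comparison).} For a Noetherian normal domain $A$, one has $\Cl(A)\cong\Pic(U_A)$, where $U_A$ is the regular locus of $\Spec A$ (which contains every height-$1$ prime). For a prime $\ell\neq p$, the Kummer sequence on $U_A$ yields a surjection
\[
H^1_{\et}(U_A,\mu_{\ell^n})\twoheadrightarrow \Cl(A)[\ell^n].
\]
Apply this to both $A=R$ and $A=S$. Using Proposition \ref{TiltEtaleCohHensel} on each layer of the tower and its tilt, relate $H^1_{\et}(U_R,\mu_{\ell^n})$ to $H^1_{\et}(U_S,\mu_{\ell^n})$ (after passing to strict Henselizations and carefully tracking that the regular loci match across the tilting operation, which is possible because both towers consist of local log-regular rings with explicitly compatible monoid structures). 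Combined with Step 2, this yields finiteness of $\Cl(R)[\ell^n]$ uniformly in $n$, hence finiteness of $\Cl(R)_{\rm{tor}}\otimes\mathbb{Z}_{(\ell)}$, and vanishing for almost all $\ell$.

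\emph{Main obstacle.} The delicate point is Step 3: one must verify that the \'etale-cohomological comparison of Proposition \ref{TiltEtaleCohHensel}, which is naturally phrased for the layers $R_i$ and their tilts $R_i^{s.\flat}$, descends to a useful comparison between $R=R_0$ and $S$ at the level of the regular loci, and that this comparison is compatible with the Kummer sequence so as to control $\ell$-primary torsion simultaneously for all $n\geq 1$. Extracting a statement about $R$ itself (rather than about its strict Henselization or completion) also requires care.
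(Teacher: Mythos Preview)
Your proposal takes a far more elaborate route than the paper, and it has real gaps.

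The paper's argument is entirely elementary and uses no perfectoid machinery at all. The key observation is that in this ``unramified'' situation, $p$ is already a regular (indeed prime) element of $R$ with $R/pR\cong k\llbracket\mathcal{Q}\rrbracket$. One then uses: (i) $k\llbracket\mathcal{Q}\rrbracket$ is $F$-finite log-regular, hence strongly $F$-regular by Lemma~\ref{F-regularLog}, so $\Cl(R/pR)_{\rm tor}$ is finite by Polstra; and (ii) the classical result of Griffith--Weston \cite[Theorem~1.2]{GrWe94} that for $\ell\neq p$ the restriction map $\Cl(R)\to\Cl(R/pR)$ is injective on $\ell$-power torsion. That is the whole proof. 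The perfectoid/tilting apparatus is reserved for the genuinely harder ramified case (Theorem~\ref{torsiondivisorclass}(2)), where $R/pR$ need not be a domain and no such direct reduction is available.

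Your plan also has a concrete hypothesis mismatch: Construction~\ref{logtower} (and hence Theorem~\ref{TiltingLogRegular}) requires the residue field to be \emph{perfect}, whereas the proposition only assumes $k$ is $F$-finite. So Step~1 does not apply as stated. Moreover, even granting perfectness, Step~3 is not just ``delicate'' but genuinely incomplete: Proposition~\ref{TiltEtaleCohHensel} compares cohomology on an open $U$ containing $\Spec(R)\setminus V(p)$, and to link this to $\Cl(R)$ via Kummer you need $U$ to be locally factorial with complement of codimension $\geq 2$. In the paper's treatment of the general case this is exactly where the extra hypothesis that $\widehat{R^{\rm sh}}[\tfrac{1}{p}]$ is locally factorial enters; you have not shown (and it is not obvious) that such a $U$ exists here without that assumption.
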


\begin{proof}
Since $R \cong C(k)\llbracket\mathcal{Q}\rrbracket$, we have 
$$
R/pR \cong k\llbracket\mathcal{Q}\rrbracket,
$$
which is a local $F$-finite log-regular ring. There is an induced map $\Cl(R) \to \Cl(R/pR)$. By restriction, we have $\Cl(R)_{\rm{tor}} \to \Cl(R/pR)_{\rm{tor}}$. Then Lemma \ref{F-regularLog} together with Polstra's result \cite{Pol20} says that $\Cl(R/pR)_{\rm{tor}}$ is finite. Let $C_\ell$ be the maximal $\ell$-subgroup of $\Cl(R)_{\rm{tor}}$. Since $\ell \ne p$, we find that the map $\Cl(R)_{\rm{tor}} \to \Cl(R/pR)_{\rm{tor}}$ restricted to $C_\ell$ is injective in view of \cite[Theorem 1.2]{GrWe94}. In conclusion, $C_\ell$ is finite for all $\ell \ne p$, and $C_\ell$ vanishes for almost all $\ell \ne p$, as desired. 
\end{proof}

\subsection{Log-regularity and splinters}
\label{LRsplinter}
Local log-regular rings have another notable property; they are \textit{splinters}.
Let us recall the definition of splinters.

\begin{definition}
A Noetherian ring $A$ is a \textit{splinter} if every finite ring map $f:A \to B$ such that $\Spec(B) \to \Spec(A)$ is surjective admits an $A$-linear map $h:B \to A$ such that $h \circ f=\id_A$.
\end{definition}

%The Direct Summand Theorem asserts that regular rings are splinters. However, it is usually hard to know how splinters behave under various algebraic operations, such as completion, Henselization, polynomial extension, and so on (for examples, see \cite[Theorem B]{DaTu20} or \cite[Theorem C]{DaTu20}).
In general, it is not easy to see which algebraic operations preserve splinters.
In fact, it remains unsolved whether polynomial rings over a splinter are splinters (see \cite[Question 1']{DaTu20}).
Regarding these issues, Datta and Tucker proved remarkable results (\cite[Theorem B]{DaTu20}, \cite[Theorem C]{DaTu20}, or \cite[Example 3.2.1]{DaTu20}).
See also Murayama's work \cite{Murayama21} for the study of purity of ring extensions.
%For example, the following result was proved only recently \cite[Theorem C]{DaTu20}.

%\begin{theorem}[Datta-Tucker]
%Let $(R,\fm,k)$ be a Noetherian local ring and let $\widehat{R}$ be the $\fm$-adic completion such that the canonical map $R \to \widehat{R}$ has geometrically regular fibers. Then $R$ is a splinter if and only if $\widehat{R}$ is a splinter.
%\end{theorem}
%The paper \cite{DaTu20} contains many interesting results on splinters. 

In order to prove the splinter property, we need a lemma on splitting a map under completion.

\begin{lemma}
\label{splitlemma1}
Let $R$ be a ring and let $f:M \to N$ be an $R$-linear map. Consider a decreasing filtration by $R$-submodules $\{M_\lambda\}_{\lambda \in \Lambda}$ of $M$ and a decreasing filtration by $R$-submodules $\{N_\lambda\}_{\lambda \in \Lambda}$ of $N$ such that $f(M_\lambda) \subseteq N_\lambda$ for each $\lambda \in \Lambda$. Set
$$
\widehat{M}:=\varprojlim_{\lambda \in \Lambda} M/M_\lambda~\mbox{and}~\widehat{N}:=\varprojlim_{\lambda \in \Lambda} N/N_\lambda,
$$
respectively. Finally, assume that $f$ is a split injection that admits an $R$-linear map $g:N \to M$ such that $g \circ f=\id_M$, $g(N_\lambda) \subseteq M_\lambda$ for each $\lambda \in \Lambda$. Then $f$ extends to a split injection $\widehat{M} \to \widehat{N}$.
\end{lemma}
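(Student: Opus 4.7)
The plan is to use functoriality of the inverse limit. Since the filtrations $\{M_\lambda\}$ and $\{N_\lambda\}$ are decreasing and $f(M_\lambda)\subseteq N_\lambda$ for every $\lambda\in\Lambda$, the map $f$ descends to an $R$-linear map $f_\lambda\colon M/M_\lambda\to N/N_\lambda$ at each index. These $f_\lambda$ are compatible with the transition maps in the two projective systems, so they assemble into an $R$-linear map $\widehat{f}\colon\widehat{M}\to\widehat{N}$. Applying the same reasoning to $g$, which also respects the filtrations by hypothesis, one obtains an $R$-linear map $\widehat{g}\colon\widehat{N}\to\widehat{M}$.

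Next I would verify that $\widehat{g}\circ\widehat{f}=\mathrm{id}_{\widehat{M}}$. At each layer $\lambda$, the composite $g_\lambda\circ f_\lambda$ is the $R$-linear endomorphism of $M/M_\lambda$ induced by $g\circ f=\mathrm{id}_M$, which is the identity. By the universal property of $\varprojlim$, an endomorphism of $\widehat{M}$ that is the identity on each quotient $M/M_\lambda$ must itself be the identity; hence $\widehat{g}\circ\widehat{f}=\mathrm{id}_{\widehat{M}}$. This exhibits $\widehat{f}$ as a split injection with retraction $\widehat{g}$, which is what we want.

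There is no real conceptual obstacle; the whole argument is an instance of the functoriality of $\varprojlim$ applied to a filtration-preserving split pair. The only point worth flagging is that the hypothesis requires the retraction $g$ (and not merely $f$) to send $N_\lambda$ into $M_\lambda$. Without this assumption one would still obtain the map $\widehat{f}$, but no reason for its splitting to descend to the completions.
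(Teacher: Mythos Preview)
Your proof is correct and follows essentially the same approach as the paper: pass to the induced maps $M/M_\lambda \to N/N_\lambda \to M/M_\lambda$, note that each composite is the identity, and take inverse limits. The paper's version is just terser; your added remark about the necessity of the filtration hypothesis on $g$ is a nice clarification.
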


\begin{proof}
By assumption, there is an induced map
$$
M/M_\lambda \xrightarrow{\overline{f}} N/N_\lambda \xrightarrow{\overline{g}} M/M_\lambda
$$
which is an identity on $M/M_\lambda$. Taking inverse limits, we get an identity map $\widehat{M} \to \widehat{N} \to \widehat{M}$, which proves the lemma.
\end{proof}

The next result is originally due to Gabber and Ramero \cite[Theorem 17.3.12]{GR22},\footnote{One notices that the treatment of logarithmic geometry in \cite{GR22} is topos-theoretic, while \cite{Ka94} considers mostly the Zariski sites.} and we give an alternative and short proof, using the Direct Summand Theorem by Y.Andr\'e \cite{Andre18}.

\begin{theorem}
\label{log-splinter}
A local log-regular ring $(R,\mathcal{Q},\alpha)$ is a splinter.
\end{theorem}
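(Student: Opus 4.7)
The plan is to realize $R$ as an $R$-module direct summand of a regular local ring $S$, and then deduce the splinter property of $R$ from that of $S$, which is guaranteed by Andr\'e's Direct Summand Theorem. The argument proceeds in three stages: reduce to the $\fm$-adic completion; use Kato's structure theorem together with Lemma~\ref{monoidsplit} to exhibit $\widehat{R}$ as a split subring of a regular power series ring; and finally use $S$ to split finite extensions of $\widehat{R}$.

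First I would reduce to the case where $R$ is complete. For a finite ring map $f\colon R\to B$ with $\Spec(B)\twoheadrightarrow\Spec(R)$, the image of the evaluation map $\Hom_R(B,R)\to R$, $h\mapsto h(1_B)$, is an ideal $I\subseteq R$, and $f$ admits an $R$-linear retraction iff $1\in I$. Since $B$ is finitely presented over $R$, flatness of $R\to \widehat{R}$ identifies $I\widehat{R}$ with the corresponding image computed over $\widehat{R}$, so if $\widehat{R}$ is a splinter then $I\widehat{R}=\widehat{R}$, whence $I=R$ by faithful flatness. Hence it suffices to prove that $\widehat{R}$ is a splinter. By Remark~\ref{rmk2125} one may also assume $\mathcal{Q}$ is fine, sharp, and saturated.

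Next I would invoke Theorem~\ref{CohenLogReg} to present $\widehat{R}\cong A\llbracket\mathcal{Q}'\rrbracket/(\theta)$, where $\mathcal{Q}':=\mathcal{Q}\oplus\mathbb{N}^r$ is fine, sharp, and saturated, and where $(A,\theta)$ is either $(k,0)$ or $(C(k),\theta)$ with $\theta$ of constant term $p$. Lemma~\ref{monoidsplit} applied to $\mathcal{Q}'$ gives an exact embedding $\mathcal{Q}'\hookrightarrow\mathbb{N}^N$ together with an $A[\mathcal{Q}']$-linear retraction $\pi_0\colon A[\mathbb{N}^N]\to A[\mathcal{Q}']$ of the induced inclusion; by Remark~\ref{rmk301Wed}, $\pi_0$ carries the augmentation ideal of $A[\mathbb{N}^N]$ into that of $A[\mathcal{Q}']$, providing the filtration compatibility required by Lemma~\ref{splitlemma1}. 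That lemma extends $\pi_0$ to a split $A\llbracket\mathcal{Q}'\rrbracket$-linear injection $A\llbracket\mathcal{Q}'\rrbracket\hookrightarrow A\llbracket x_1,\ldots,x_N\rrbracket$. Because the extended retraction is $A\llbracket\mathcal{Q}'\rrbracket$-linear and unital, it sends $\theta$ to $\theta$, and descends modulo $(\theta)$ to a split injection $\widehat{R}\hookrightarrow S:=A\llbracket x_1,\ldots,x_N\rrbracket/(\theta)$. In the mixed-characteristic case $\theta$ retains constant term $p$, hence is part of a regular system of parameters of $A\llbracket x_1,\ldots,x_N\rrbracket$, so $S$ is regular; in equicharacteristic $S$ is manifestly a formal power series ring over $k$.

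Finally, for a finite ring map $\widehat{R}\to B$ with $\Spec(B)\twoheadrightarrow\Spec(\widehat{R})$, the base change $S\to B\otimes_{\widehat{R}}S$ is finite and surjective on $\Spec$, because its fiber above any prime of $S$ is a tensor product of a nonzero ring with a residue field, hence nonzero. Andr\'e's Direct Summand Theorem applied to $S$ then yields an $S$-linear retraction $\sigma\colon B\otimes_{\widehat{R}}S\to S$, and composing with the retraction $\pi\colon S\to\widehat{R}$ produces the $\widehat{R}$-linear map $b\mapsto\pi(\sigma(b\otimes 1))$, which retracts $\widehat{R}\to B$ by the $S$-linearity of $\sigma$ together with $\pi\circ(\widehat{R}\hookrightarrow S)=\id_{\widehat{R}}$. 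I expect the main technical obstacle to lie in the embedding step: one must ensure that the monoid-algebra splitting extends continuously to the completions and is compatible with the relation $\theta=0$. The former rests on the augmentation-preserving property recorded in Remark~\ref{rmk301Wed}, and the latter on the $A[\mathcal{Q}']$-linearity of $\pi_0$; neither would automatically survive an arbitrary choice of retraction.
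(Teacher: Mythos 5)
Your proof is correct and follows essentially the same route as the paper's: reduce to the completion, use Kato's structure theorem together with Lemma~\ref{monoidsplit}, Remark~\ref{rmk301Wed} and Lemma~\ref{splitlemma1} to exhibit $\widehat{R}$ as a direct summand of a complete regular local ring, and then invoke Andr\'e's Direct Summand Theorem. The only (harmless) deviation is in the last step, where you split a finite cover of $\widehat{R}$ by base-changing it to $S$ and retracting, whereas the paper passes through the absolute integral closure and a compositum inside $A^{+}$; both arguments work.
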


\begin{proof}
First, we prove the theorem when $R$ is complete. 
By Remark \ref{rmk2125}, we may assume that $\mathcal{Q}$ is fine, sharp, and saturated. 
By Theorem \ref{CohenLogReg}, we have
$$
R \cong k\llbracket\mathcal{Q} \oplus \mathbb{N}^r\rrbracket,~\mbox{or}~R \cong C(k)\llbracket\mathcal{Q} \oplus \mathbb{N}^r\rrbracket/(p-f),
$$
depending on whether $R$ contains a field or not. Let us consider the mixed characteristic case. By Lemma \ref{monoidsplit}, there is a split injection $C(k)[\mathcal{Q} \oplus \mathbb{N}^r] \to C(k)[\mathbb{N}^d]$ for some $d>0$, which comes from an injection $\delta:\mathcal{Q} \oplus \mathbb{N}^r \to \mathbb{N}^d$ that realizes $\delta(\mathcal{Q} \oplus \mathbb{N}^r)$ as an exact submonoid of $\mathbb{N}^d$. After dividing out by the ideal $(p-f)$, we find that the map
$$
C(k)\llbracket\mathcal{Q} \oplus \mathbb{N}^r\rrbracket/(p-f) \to C(k)\llbracket\mathbb{N}^d\rrbracket/(p-f)
$$
splits as a $C(k)\llbracket\mathcal{Q} \oplus \mathbb{N}^r\rrbracket/(p-f)$-linear map by Remark \ref{rmk301Wed} and Lemma \ref{splitlemma1}. 
Hence, $R$ becomes a direct summand of the complete regular local ring $A:=C(k)\llbracket x_1,\ldots,x_d\rrbracket/(p-f)$. By invoking \cite[Proposition 2.2.8]{DaTu20} and the Direct Summand Theorem \cite{Andre18}, we see that $R$ is a splinter. The case where $R=k \llbracket\mathcal{Q} \oplus \mathbb{N}^r \rrbracket$ can be treated similarly.

%Pick a map $\alpha:A \to R$ that splits $R \to A$. Consider a module-finite extension $R \to S$ such that $S$ is a domain. We want to show that this map splits. Now there is a commutative diagram:
%$$
%\begin{CD}
%R^+ @>>> A^+ \\
%@AAA @AAA \\
%S @>\gamma>> B \\
%@AAA @AAA \\
%R @>>> A \\
%\end{CD}
%$$
%where $R^+$ (resp. $A^+$) is the absolute integral closure of $R$ (resp. $A$), and $B$ is a subring of $A^+$ that is constructed as the chain of $S$ and $A$, thus being finite over $A$. By the Direct Summand Theorem \cite{Andre18}, there is a map $\beta:B \to A$ that splits $A \to B$. Therefore, the composite map $S \xrightarrow{\gamma} B \xrightarrow{\beta} A \xrightarrow{\alpha} R$ splits $R \to S$, as desired. 

Next let us consider the general case. Then the completion map $R \to \widehat{R}$ is faithfully flat and $\widehat{R}$ is a complete local log-regular ring (see Theorem \ref{CohenLogReg}). Hence applying the complete case as above and \cite[Proposition 2.2.8]{DaTu20} shows that $R$ is a splinter, as desired.
%Let $R \to S$ be a module-finite extension with $S$ being a domain, and let $\widehat{R}$ be as in Theorem \ref{CohenLogReg}. By applying the functor $(~) \otimes_R \widehat{R}$ to the exact sequence $0 \to R \to S \to S/R \to 0$, we get an exact sequence: $0 \to \widehat{R} \to S \otimes_R \widehat{R} \to S/R \otimes_R \widehat{R} \to 0$. We have proved that $\widehat{R}$ is a splinter, so the induced sequence
%$$
%0 \to \Hom_{\widehat{R}}(S/R \otimes_R \widehat{R},\widehat{R}) \to %\Hom_{\widehat{R}}(S \otimes_R \widehat{R},\widehat{R}) \to %\Hom_{\widehat{R}}(\widehat{R},\widehat{R}) \to 0
%$$
%is exact. By the faithful flatness of $\widehat{R}$ over $R$, the above exact sequence induces the exact sequence:
%$$
%0 \to \Hom_{R}(S/R,R) \to \Hom_{R}(S,R) \to \Hom_{R}(R,R) \to 0,
%$$
%and we conclude.
\end{proof}

\section{Perfectoid towers and small tilts}\label{sectionPerf}

In this section, we establish a tower-theoretic framework to deal with perfectoid objects using the notion of \textit{perfectoid towers}.
We first introduce the class of \textit{perfect towers} (Definition \ref{ptower}) in \S\ref{subsecPerf}, and then define \textit{inverse perfection of towers} (Definition \ref{def3.5}) in \S\ref{PureInsep}. These notions are tower-theoretic variants of perfect $\mathbb{F}_{p}$-algebras and inverse perfection of rings, respectively. 
In \S\ref{ssaxiomsPerf}, we give a set of axioms for perfectoid towers. 
%We also discuss a method of the elimination of certain torsionness of perfectoid towers. 
In \S\ref{subsecTilt}, we adopt the process of inverse perfection for perfectoid towers as a new tilting operation. 
Indeed, we verify the invariance of several good properties under the tilting; Main Theorem \ref{mt1} is discussed here.
%One obtains the two types of short exact sequences, which are called the \textit{first exact sequence} and the \textit{second exact sequence}. 
In \S\ref{perfd-perfd}, we describe the relationship between perfectoid towers and perfectoid rings. 
This subsection also includes an alternative characterization of perfectoid rings without $\mathbb{A}_{\textnormal{inf}}$. 
In \S\ref{smalltiltlog}, we calculate the tilts of perfectoid towers consisting of local log-regular rings.

\subsection{Perfect towers}\label{subsecPerf}
First of all, we consider the category of \emph{towers of rings}. 
\begin{definition}[Towers of rings]\label{towerdef}\ 
\begin{enumerate}
\item
A \textit{tower of rings} is a direct system of rings of the form 
\[\xymatrix{
R_{0}\ar[r]^{t_{0}}&R_{1}\ar[r]^{t_{1}}&R_{2}\ar[r]^{t_{2}}&\cdots\ar[r]^{t_{i-1}}&R_{i}\ar[r]^{t_{i}}&\cdots,
}\]
and we denote it by $(\{R_i\}_{i \geq 0}, \{t_i\}_{i \geq 0})$ or $\{ R_0 \xrightarrow{t_0} R_1\xrightarrow{t_1} \cdots \}$. 
\item
A morphism of towers of rings $f: (\{R_{i}\}_{i\geq 0}, \{t_{i}\}_{i\geq 0})\to (\{R'_{i}\}_{i\geq 0},\{t'_{i}\}_{i\geq 0})$ is defined as a collection of ring maps 
$\{f_{i}: R_{i}\to R'_{i}\}_{i\geq 0}$ that is compatible with the transition maps; in other words, $f$ represents the commutative diagram
\[\xymatrix{
R_{0}\ar[r]\ar[d]_{f_{0}}&R_{1}\ar[r]\ar[d]_{f_{1}}&R_{2}\ar[r]\ar[d]_{f_{2}}&\cdots\ar[r]&R_{i}\ar[r]\ar[d]_{f_{i}}&\cdots\\
R'_{0}\ar[r]&R'_{1}\ar[r]&R'_{2}\ar[r]&\cdots\ar[r]&R'_{i}\ar[r]&\cdots.
}\]
\end{enumerate}
\end{definition}

For a tower of rings $(\{R_{i}\}_{i\geq 0}, \{t_{i}\}_{i\geq 0})$, we often denote by $R_{\infty}$ an inductive limit $\varinjlim_{i\geq 0}R_{i}$. 
Clearly, an isomorphism of towers of rings $(\{R_{i}\}_{i\geq 0}, \{t_{i}\}_{i\geq 0})\to (\{R'_{i}\}_{i\geq 0}, \{t'_{i}\}_{i\geq 0})$ induces the isomorphism of rings 
$R_{\infty}\xrightarrow{\cong} R'_{\infty}$. 
For every $i\geq 0$, we regard $R_{i+1}$ as an $R_{i}$-algebra via the transition map $t_{i}$.

Recall that the direct perfection of an $\mathbb{F}_{p}$-algebra $R$, which we denote by $R^\textnormal{perf}$, is the direct limit of the tower $(\{R_i\}_{i\geq 0}, \{t_{i}\}_{i\geq 0})$ where $R_{i} = R$ and $t_{i} = F_{R}$ for every $i\geq 0$. 
We denote by $\phi_{R}: R \to R^{\textnormal{perf}}$ the natural map $R_{0} \to \varinjlim_{i\geq 0}R_i$.
If $R$ is reduced, this tower can be regarded as ring extensions obtained by adjoining $p^i$-th roots (cf.\ Example \ref{perfecttower}). 
We formulate such towers as follows, and call them \textit{perfect towers}.

\begin{definition}[Perfect towers]\label{ptower}
A \emph{perfect $\mathbb{F}_{p}$-tower} (or, \emph{perfect tower} as an abbreviated form) is a tower of rings that is isomorphic to a tower:  
\begin{equation}\label{FrobTower}
\xymatrix{
R \ar[r]^{F_{R}} & R \ar[r]^{F_{R}} & R \ar[r]^{F_R} & \cdots 
}
\end{equation}
where $R$ is a reduced $\mathbb{F}_p$-algebra. 
\end{definition}

\begin{example}\label{perfecttower}
Let $R$ be a reduced $\mathbb{F}_p$-algebra.
Let $R^{1/p^i}$ be the ring of $p^i$-th roots of elements of $R$ for every $i\geq 0$.\footnote{For more details of the ring of $p$-th roots of elements of a reduced ring, we refer to \cite{MP}}
Then the tower $R \xrightarrow{t_{0}} R^{1/p} \xrightarrow{t_1} R^{1/p^{2}} \xrightarrow{t_2} \cdots$ is a perfect tower. 
Indeed, we have an isomorphism  $F_i : R^{1/p^{i+1}} \to R^{1/p^{i}}$ ; $x \mapsto x^{p}$. 
By putting $F_{0,i+1}:=F_{0}\circ\cdots\circ F_{i}$, we obtain the following commutative ladder:
\[
\xymatrix{
%line1
R^{1/p^0} \ar[d]^{F_{0,0}} \ar[r]^{t_0}&
R^{1/p} \ar[d]^{F_{0,1}} \ar[r]^{t_1} &
\cdots \ar[r]^{t_{i - 1}} &
R^{1/p^i} \ar[r]^{t_i} \ar[d]^{F_{0, i}}& \cdots  \\ % \ar[r]^{\id_R} \ar[d]^{\id_R} &
R \ar[r]_{F_R} &
R \ar[r]_{F_R} & 
\cdots \ar[r]_{F_R} & 
R\ar[r]_{F_R}  & \cdots .
}
\]
Hence the assertion follows. 
\end{example}

%Let us define Noetherian towers.

%\begin{definition}\label{Noethtower}
%Let $(\{R_i \}_{i \geq 0} , \{ t_i\}_{i \geq 0})$ be a tower of rings.
%Then the tower is called a \textit{Noetherian tower} if $R_i$ is Noetherian for each $i \geq 0$.
%\end{definition}

%In \S \ref{NoethTilt}, we examine about certain Noetherian towers.

%We will discuss the relationship between perfect towers and some towers of characteristic $p>0$.

\subsection{Purely inseparable towers and inverse perfection}\label{PureInsep}
In this subsection, we define \textit{inverse perfection for towers}, which assigns a perfect tower to a tower by arranging a certain type of inverse limits of rings. 
For this, we introduce the following class of towers that admit distinguished inverse systems of rings.

\begin{definition}[Purely inseparable towers]
\label{invqperf}
Let $R$ be a ring, and let $I\subseteq R$ be an ideal. 
\begin{enumerate}
\item
A tower $(\{R_i\}_{i \geq 0},\{t_i\}_{i \geq 0})$ is called a \textit{$p$-purely inseparable tower arising from $(R, I)$} if it satisfies the following axioms. 
%%%%%%%%%%%%%%%%%%%%%%%%
\begin{itemize}
\item[(a)]
$R_{0}=R$ and $p\in I$. 
\item[(b)]
For any $i \geq 0$, the ring map $\overline{t_i}: R_i/IR_i\to R_{i+1}/IR_{i+1}$ induced by $t_i$ is injective. 
\item[(c)]
For any $i\geq 0$, the image of the Frobenius endomorphism on $R_{i+1}/IR_{i+1}$ is contained in the image of $\overline{t_{i}}: R_{i}/IR_{i}\to R_{i+1}/IR_{i+1}$.  
\end{itemize}
%%%%%%%%%%%%%%%%%%%%%%%%
\item\label{Frobprodef}
Let $(\{R_i\}_{i \ge 0}, \{t_i\}_{i \geq 0})$ be a $p$-purely inseparable tower arising from $(R, I)$. 
For any $i\geq 0$, we denote by $F_i :R_{i+1}/IR_{i+1} \to R_i/IR_i$ the ring map (which uniquely exists by the axioms (b) and (c)) such that the following diagram commutes:
\begin{equation}\label{frobeniusdecompose}
\vcenter{
\xymatrix{
R_{i+1}/IR_{i+1}\ar@{->}[rrd]_{F_i}\ar@{->}[rr]^{F_{R_{i+1}/IR_{i+1}}}&&R_{i+1}/IR_{i+1}\\
 &&R_{i}/IR_{i}\ar[u]^{\overline{t_{i}}}. 
}}
\end{equation}
We call $F_i$ \textit{the $i$-th Frobenius projection} (of $(\{R_i\}_{i \ge 0}, \{t_i\}_{i \geq 0})$ associated to $(R, I)$). 
\end{enumerate}
\end{definition}

Hereafter, we leave out `$p$-' from `$p$-purely inseparable towers' if no confusion occurs (i.e.\ we call them simply `purely inseparable towers'). 
Similarly, we omit the phrase in parentheses subsequent to `the $i$-th Frobenius projection' (but we should be careful in some situations; cf.\ Remark \ref{rm304SatN}). 

Throughout this paper, when a purely inseparable tower $(\{R_i\}_{i\geq 0}, \{t_i\}_{i\geq 0})$ is given and its starting layer $(R, I)$ is clear from the context, we denote $R_i/IR_i$ by $\overline{R_i}$ for every $i\geq 0$.

\begin{example}
\label{perfpurelytower}
%Let $R$ be a reduced $\mathbb{F}_p$-algebra.
Any perfect tower is a purely inseparable tower.
More precisely, $(\{ R \}_{i \geq 0},  \{  F_{R}  \}_{i \geq 0})$ appearing in Definition \ref{ptower} is a purely inseparable tower arising from $(R,(0))$.
%arising from some $\bigl(R,(0)\bigr)$.
%Let $(\{ R^{1/p^i} \}_{i \geq 0},  \{  \iota_i  \}_{i \geq 0})$ be the tower defined in Definition \ref{ptower}. 
Indeed, the axioms (a) and (c) are obvious, and the axiom (b) follows from reducedness of $R$. 
The $i$-th Frobenius projection is given by the identiry map on $R$. 
\end{example}

To develop the theory of perfectoid towers, we often use a combination of the diagram (\ref{frobeniusdecompose}) in Definition \ref{invqperf} and the diagram (\ref{frobeniusdecompose2}) in the following lemma. 

\begin{lemma}\label{secondCD}
Let $(\{R_{i}\}_{i \geq 0}, \{t_i\}_{i \geq 0})$ be a purely inseparable tower arising from some pair $(R,I)$. 
Then for every $i\geq 0$, the following assertions hold. 
\begin{enumerate}
\item
$\ker(F_{i})=\ker(F_{\overline{R_{i+1}}})$. 
In particular, $F_{i}$ is injective if and only if $\overline{R_{i+1}}$ is reduced. 
\item
Any element of $\overline{R_{i+1}}$ is a root of a polynomial of the form $X^p-\overline{t_i}(a)$ with $a\in \overline{R_i}$. 
In particular, the ring map $\overline{t_{i}}: \overline{R_i}\hookrightarrow \overline{R_{i+1}}$ is integral.

\item
The following diagram commutes:
\begin{equation}\label{frobeniusdecompose2}
\vcenter{
\xymatrix{
\overline{R_{i+1}}\ar@{->}[rrd]^{F_i}&& \\
\overline{R_i}\ar[u]^{\overline{t_{i}}}\ar@{->}[rr]_{F_{\overline{R_i}}}&&\overline{R_i}.
}}
\end{equation}
\end{enumerate}
\end{lemma}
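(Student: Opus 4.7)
The plan is to derive all three assertions as direct consequences of the defining relation $\overline{t_i} \circ F_i = F_{R_{i+1}/IR_{i+1}}$ recorded in diagram \eqref{frobeniusdecompose}, together with the injectivity of $\overline{t_i}$ guaranteed by axiom (b). No serious obstacle is anticipated; the content of the lemma is essentially a book-keeping exercise once one notices the interplay between these two facts.

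For (1), I would observe that by \eqref{frobeniusdecompose} the Frobenius endomorphism $F_{R_{i+1}/IR_{i+1}}$ factors as $\overline{t_i} \circ F_i$. Since $\overline{t_i}$ is injective, $\ker(\overline{t_i} \circ F_i) = \ker(F_i)$, which yields the claimed equality of kernels.

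For (2), given $x \in R_{i+1}/IR_{i+1}$, I would set $a := F_i(x) \in R_i/IR_i$. Then \eqref{frobeniusdecompose} gives
\[
\overline{t_i}(a) \;=\; \overline{t_i}(F_i(x)) \;=\; F_{R_{i+1}/IR_{i+1}}(x) \;=\; x^p,
\]
so $x$ is a root of the monic polynomial $X^p - \overline{t_i}(a) \in (R_i/IR_i)[X]$, and integrality of $\overline{t_i}$ follows at once.

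For (3), the strategy is to apply $\overline{t_i}$ to both sides of the desired identity and then invoke injectivity. Concretely, for $a \in R_i/IR_i$, using \eqref{frobeniusdecompose} and the fact that $\overline{t_i}$ is a ring map (hence commutes with the $p$-th power operation) I would compute
\[
\overline{t_i}\bigl(F_i(\overline{t_i}(a))\bigr) \;=\; F_{R_{i+1}/IR_{i+1}}(\overline{t_i}(a)) \;=\; \overline{t_i}(a)^p \;=\; \overline{t_i}(a^p) \;=\; \overline{t_i}\bigl(F_{R_i/IR_i}(a)\bigr).
\]
Cancelling the injective $\overline{t_i}$ yields $F_i \circ \overline{t_i} = F_{R_i/IR_i}$, establishing the commutativity of \eqref{frobeniusdecompose2}.
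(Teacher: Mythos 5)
Your proposal is correct and follows essentially the same route as the paper: all three parts are derived from the factorization $\overline{t_i}\circ F_i = F_{R_{i+1}/IR_{i+1}}$ combined with the injectivity of $\overline{t_i}$, and your computation for (3) is the paper's chain of equalities $\overline{t_i}\circ F_{R_i/IR_i} = F_{R_{i+1}/IR_{i+1}}\circ\overline{t_i} = \overline{t_i}\circ F_i\circ\overline{t_i}$ read in reverse.
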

\begin{proof}
Since $\overline{t_i}$ is injective, the commutative diagram (\ref{frobeniusdecompose}) yields the assertion (1). 
Moreover, (\ref{frobeniusdecompose}) also yields the equality $x^{p}-\overline{t_{i}}(F_i(x))=0$ for every $x\in \overline{R_{i+1}}$. 
Hence the assertion (2) follows. 
To prove (3), let us recall the following equalities
$$
\overline{t_i}\circ F_{\overline{R_i}} = F_{\overline{R_{i+1}}} \circ \overline{t_i} = \overline{t_i} \circ F_i \circ \overline{t_i},
$$
where the second one follows from the commutative diagram (\ref{frobeniusdecompose}).
Since $\overline{t_i}$ is injective, we obtain the equality $F_{\overline{R_i}} = F_i \circ \overline{t_i}$, as desired. 
\end{proof}

Lemma \ref{secondCD} (3) is essential for defining inverse perfection of towers (cf.\ Definition \ref{def3.5} (2)). 
Moreover, it provides a useful tool for studying direct perfection on each layer. 
Recall that for an $\mathbb{F}_{p}$-algebra homomorphism $f: R\to S$,  there exists a unique ring map $f^{\textnormal{perf}}: R^{\textnormal{perf}}\to S^{\textnormal{perf}}$ such that the following diagram commutes (the notations are explained just before Definition \ref{ptower}): 
\[
\xymatrix{
R\ar[r]^{f}\ar[d]_{\phi_{R}}& S\ar[d]^{\phi_S}\\
R^{\textnormal{perf}}\ar[r]^{f^\textnormal{perf}}& S^{\textnormal{perf}}. 
}
\]

\begin{corollary}\label{24525N}
Keep the notation as in Lemma \ref{secondCD}. 
Then $(\overline{t_{i}})^{\textnormal{perf}}: (\overline{R_i})^{\textnormal{perf}}\to (\overline{R_{i+1}})^{\textnormal{perf}}$ is an isomorphism of rings 
whose inverse map is $(F_i)^{\textnormal{perf}}: (\overline{R_{i+1}})^{\textnormal{perf}} \to (\overline{R_i})^{\textnormal{perf}}$ up to the Frobenius automorphisms.
\end{corollary}
\begin{proof}
By Lemma \ref{secondCD} (3), $F_{(\overline{R_{i + 1}})^{\textnormal{perf}}}$ is described as 
$(F_{\overline{R_{ i+1 }}})^{\textnormal{perf}} = (\overline{t_i})^{\textnormal{perf}} \circ  F^{\textnormal{perf}}_{i}$, and it is an automorphism.
Similarly, it follows from the commutative diagram (\ref{frobeniusdecompose}) that $F^{\textnormal{perf}}_{i} \circ (\overline{t_i})^{\textnormal{perf}}$ is the Frobenius automorphism of $(\overline{R_i})^{\textnormal{perf}}$. 
Hence the assertion follows. 
\end{proof}

Now we can introduce the notion of inverse perfection for towers.

\begin{definition}[Inverse perfection of towers]\label{def3.5}
Let $(\{R_i\}_{i \ge 0}, \{t_i\}_{i \geq 0})$ be a ($p$-)purely inseparable tower arising from some pair $(R, I)$. 

\begin{enumerate}
\item
For any $j\geq 0$, we define the \textit{j-th inverse quasi-perfection of $(\{R_i\}_{i \ge 0},\{t_i\}_{i \geq 0})$ associated to $(R, I)$} as a limit:
$$
(R_j)^{q.\textnormal{frep}}_{I}:=\varprojlim \{\cdots \to \overline{R_{j+i+1}} \xrightarrow{F_{j+i}} \overline{R_{j+i}} \to \cdots \xrightarrow{F_j} \overline{R_j}\}. 
$$
\item\label{tiqfrepdef}
For any $j\geq 0$, we define an injective ring map $(t_j)^{q.\textnormal{frep}}_I : (R_{j})^{q.\textnormal{frep}}_I \hookrightarrow (R_{j+1})^{q.\textnormal{frep}}_{I}$ by the rule: 
$$
(t_j)^{q.\textnormal{frep}}_I ((a_i)_{i \geq 0}) := (\overline{t_{j+i}}(a_i))_{i \geq 0}. 
$$
Moreover, we call the resulting tower $(\{(R_{i})^{q.\textnormal{frep}}_{I}\}_{i \geq 0} , \{ (t_{i})^{q.\textnormal{frep}}_{I}\}_{i \geq 0} )$ \textit{the inverse perfection of 
$(\{R_i\}_{i \ge 0}, \{t_i\}_{i \geq 0})$ associated to $(R, I)$}.  
\item
For any $j\geq 0$, we define a ring map $(F_j)^{q.\textnormal{frep}}_I : (R_{j+1})^{q.\textnormal{frep}}_{I} \to (R_j)^{q.\textnormal{frep}}_{I}$ by the rule:  
\begin{equation}\label{FIqfrep}
(F_j)^{q.\textnormal{frep}}_I((a_i)_{i \geq 0}) : = (F_{j+i}(a_i))_{i \geq 0}. 
\end{equation}
\item
For any $j\geq 0$ and for any $m\geq 0$, we denote by $\Phi^{(j)}_{m}$ the $m$-th projection map: 
$$
(R_{j})^{q.\textnormal{frep}}_{I}\to \overline{R_{j+m}}\ ;\  (a_i)_{i\geq 0}\mapsto a_{m}. 
$$
\end{enumerate}
If no confusion occurs, we also denote by $R^{q.\textnormal{frep}}_{j}$ (resp.\ $t^{q.\textnormal{frep}}_{j}$, resp.\ $F^{q.\textnormal{frep}}_j$) the symbol $(R_{j})^{q.\textnormal{frep}}_{I}$ (resp.\ $(t_j)^{q.\textnormal{frep}}_I$, resp.\ $(F_j)^{q.\textnormal{frep}}_{I}$) as an abbreviated form. 
\end{definition}

\begin{example}
Let $R$ be an $\mathbb{F}_{p}$-algebra. Set $R_{i}:=R$ and $t_{i}:=\id_{R}$ for every $i\geq 0$. Then the tower $(\{R_{i}\}_{i\geq 0}, \{t_{i}\}_{i\geq 0})$ is a purely inseparable tower arising from $(R, (0))$. 
Moreover, for every $j\geq 0$, the attached $j$-th inverse quasi-perfection is a limit 
$$
R_j^{q.\textnormal{frep}}=\varprojlim \{\cdots  \xrightarrow{F_R} R \xrightarrow{F_R} R\xrightarrow{F_R}R\}, 
$$
which is none other than the inverse perfection of $R$.
\end{example}

In the situation of Definition \ref{def3.5}, we have the commutative diagram:
%-----------------------
\begin{equation}\label{FrFtatt0}
\vcenter{
\xymatrix{
(R_{j+1})^{q.\textnormal{frep}}_{I}\ar@{->}[rrd]_{(F_j)^{q.\textnormal{frep}}_I}\ar@{->}[rr]^{F_{(R_{j+1})^{q.\textnormal{frep}}_I}}&&(R_{j+1})^{q.\textnormal{frep}}_{I}\\
&&(R_{j})^{q.\textnormal{frep}}_{I}\ar[u]_{(t_j)^{q.\textnormal{frep}}_I}.%&& (\mathcal{Q}^{(n)})_\mathbb{Q}
}}
\end{equation}
%----------------------
Hence the tower $(\{(R_{i})^{q.\textnormal{frep}}_{I}\}_{i \geq 0} , \{ (t_{i})^{q.\textnormal{frep}}_{I}\}_{i \geq 0} )$ is also a purely inseparable tower associated to $((R_0)^{q.\textnormal{frep}}_{I} , (0))$.

In the rest of this subsection, we fix a purely inseparable tower $(\{R_i\}_{i\geq 0},\{t_i\}_{i \geq 0})$ arising from some pair $(R, I)$. 
Keep in mind that the inverse perfection $(\{(R_{i})^{q.\textnormal{frep}}_{I}\}_{i \geq 0}, \{ (t_{i})^{q.\textnormal{frep}}_{I}\}_{i \geq 0} )$ is given in Definition \ref{def3.5} (2), and its Frobenius projections $\{(F_i)^{q.\textnormal{frep}}_I\}_{i\geq 0}$ are described in Definition \ref{def3.5} (3). 
Some basic properties of inverse quasi-perfection are contained in the following proposition.

\begin{proposition}
\label{invqperfprop}
The following assertions hold. 
\begin{enumerate}
\item
For any $j\geq0$, the following assertions hold. 
\begin{enumerate}
\item
Let $J\subseteq (R_j)^{q.\textnormal{frep}}_{I}$ be a finitely generated ideal such that $J^k\subseteq \ker(\Phi^{(j)}_0)$ for some $k>0$ (see Definition \ref{def3.5} (4) for $\Phi^{(j)}_{0}$). 
Then $(R_j)^{q.\textnormal{frep}}_{I}$ is $J$-adically complete and separated. 

\item
Let $x=(x_{i})_{i \geq 0}$ be an element of $(R_j)^{q.\textnormal{frep}}_{I}$. Then $x$ is a unit if and only if $x_{0} \in R_{j}/IR_{j}$ is a unit.

\item
The ring map $(F_j)^{q.\textnormal{frep}}_I$ is an isomorphism.
\end{enumerate}

\item
$(\{(R_{i})^{q.\textnormal{frep}}_{I}\}_{i \geq 0}, \{ (t_{i})^{q.\textnormal{frep}}_{I}\}_{i \geq 0} )$ is a perfect tower. 
In particular, each $(R_i)^{q.\textnormal{frep}}_{I}$ is reduced.
\end{enumerate}
\end{proposition}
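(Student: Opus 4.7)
The plan is to dispatch (3), (2), and (4) by direct computation from the definitions and Lemma \ref{secondCD}, then handle the subtler topological statement (1), and finally deduce the ``moreover'' assertion using (3). The main obstacle will be the completeness part of (1).

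For (3), an explicit inverse to $(F_j)^{q.\textnormal{frep}}_I$ is given by the index shift $(b_l)_{l\geq 0}\mapsto(b_{l+1})_{l\geq 0}$: the inverse-limit compatibility of $(b_l)$ translates directly to that of $(b_{l+1})$, and composing with $(F_j)^{q.\textnormal{frep}}_I$ recovers $(b_l)$; injectivity follows by the same shift argument. For (2), if $x$ has an inverse $y=(y_l)$ then $x_0y_0=1$ in $R_j/IR_j$. Conversely, given $y_0$ with $x_0y_0=1$, define $y_{l+1}:=x_{l+1}^{p-1}\overline{t_{j+l}}(y_l)$ inductively. Using $x_{l+1}^p=\overline{t_{j+l}}(x_l)$ (Lemma \ref{secondCD}(2)) gives $x_{l+1}y_{l+1}=\overline{t_{j+l}}(x_l y_l)=1$, and using $F_{j+l}\circ\overline{t_{j+l}}=F_{R_{j+l}/IR_{j+l}}$ (Lemma \ref{secondCD}(3)) gives $F_{j+l}(y_{l+1})=x_l^{p-1}y_l^p=(x_ly_l)^{p-1}y_l=y_l$. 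For (4), iterating Lemma \ref{secondCD}(2) yields $a_{l+k}^{p^k}=(\overline{t_{j+l+k-1}}\circ\cdots\circ\overline{t_{j+l}})(a_l)$ for $x=(a_l)$; if $x^n=0$, choose $k$ with $p^k\geq n$ to make the left side zero, and injectivity of each $\overline{t_{j+m}}$ forces $a_l=0$.

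For (1), the inverse-limit description endows $R:=(R_j)^{q.\textnormal{frep}}_I$ with a canonical linear topology having $\{K'_l:=\ker\Phi_l^{(j)}\}$ as fundamental open ideals, in which $R$ is complete and separated (being the inverse limit of the discrete rings $R_{j+l}/IR_{j+l}$). For $y=(a_l)\in K'_0$, iterating $a_{l+1}^p=\overline{t_{j+l}}(a_l)$ from $a_0=0$ gives $a_l^{p^l}=0$, so $y^{p^l}\in K'_l$. Writing $J=(x_1,\ldots,x_r)$ with each $x_i^k\in K'_0$, we get $x_i^{kp^l}\in K'_l$, and a pigeonhole argument on products of $N\geq rkp^l$ generators shows $J^N\subseteq K'_l$. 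Separatedness follows from $\bigcap_m J^m\subseteq\bigcap_l K'_l=0$. For completeness, the $J$-adic and $\{K'_l\}$-filtrations yield the same inverse limit via the cofinal inclusions, and the finite generation of $J$ allows the standard identification $R\cong\varprojlim_m R/J^m$.

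Finally, for the ``moreover'' claim, write $R_0':=(R_0)^{q.\textnormal{frep}}_I$, $\phi_l:=(F_l)^{q.\textnormal{frep}}_I$, and $\tau_l:=(t_l)^{q.\textnormal{frep}}_I$. By (3), each $\phi_l$ is an isomorphism, so $h_i:=\phi_{i-1}^{-1}\circ\cdots\circ\phi_0^{-1}$ defines an isomorphism $R_0'\xrightarrow{\cong}(R_i)^{q.\textnormal{frep}}_I$, with $h_0=\mathrm{id}$. The diagram (\ref{FrFtatt0}) gives $\tau_l\circ\phi_l=F_{(R_{l+1})^{q.\textnormal{frep}}_I}$, and the dual identity $\phi_l\circ\tau_l=F_{(R_l)^{q.\textnormal{frep}}_I}$ follows by applying Lemma \ref{secondCD}(3) componentwise; combined with the naturality $h_i\circ F_{R_0'}=F_{(R_i)^{q.\textnormal{frep}}_I}\circ h_i$, a direct calculation yields $\tau_i\circ h_i=h_{i+1}\circ F_{R_0'}$. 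Hence $\{h_i\}_i$ is an isomorphism of towers from $\{R_0'\xrightarrow{F_{R_0'}}R_0'\xrightarrow{F_{R_0'}}\cdots\}$ to $(\{(R_i)^{q.\textnormal{frep}}_I\}_i,\{\tau_i\}_i)$. Since $R_0'$ is a reduced $\mathbb{F}_p$-algebra by (4) and axiom (a), Corollary \ref{perfecttower} concludes that the latter tower is perfect.
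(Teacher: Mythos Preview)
Your arguments for (2), (3), (4), and the ``moreover'' clause are correct and closely parallel the paper's, with two minor but valid deviations: for (2) you construct the inverse of $x$ by an explicit recursion $y_{l+1}=x_{l+1}^{p-1}\overline{t_{j+l}}(y_l)$, whereas the paper simply notes that each multiplication-by-$x_i$ is a bijection compatible with the Frobenius projections and passes to the inverse limit; for (4) you unwind the tower directly via injectivity of the $\overline{t_{j+m}}$, while the paper uses (3) together with the identity $(F_0)^{q.\textnormal{frep}}_I\circ(t_0)^{q.\textnormal{frep}}_I=F_{(R_0)^{q.\textnormal{frep}}_I}$ to see that Frobenius on $(R_0)^{q.\textnormal{frep}}_I$ is injective. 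The ``moreover'' argument is the paper's, run with the inverse isomorphisms.

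Your argument for (1) has a gap in the completeness step. You have correctly shown that for each $l$ there exists $N$ with $J^N\subseteq K'_l$, but this is only a \emph{one-sided} comparison: there is no reason for any $K'_l$ to lie inside some $J^m$, so the two filtrations are not mutually cofinal and do not automatically yield the same topology or the same inverse limit. Your phrase ``the same inverse limit via the cofinal inclusions'' is therefore unjustified. What is actually needed is the lemma (this is \cite[Lemma~2.1.1]{FGK11}, which the paper cites at precisely this point) that if $R$ is complete and separated for a descending filtration $\{K'_l\}$ and $J$ is a \emph{finitely generated} ideal with $J^{N(l)}\subseteq K'_l$ for each $l$, then $R$ is $J$-adically complete. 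The finite generation enters essentially: given a $J$-adic Cauchy sequence, one must show its $\{K'_l\}$-limit lies in the correct $J^m$-coset, and this is done by writing the tail as $\sum_n\sum_s c_{n,s}g_s$ over a finite generating set $g_1,\dots,g_t$ of $J^m$ with coefficients $c_{n,s}\in J^{n-m}$, so that each series $\sum_n c_{n,s}$ converges in the $\{K'_l\}$-topology. Your closing appeal to finite generation is correct in spirit but does not substitute for this argument.
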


\begin{proof}
(1): Since $(\{(R_{j+i})^{q.\textnormal{frep}}\}_{i\geq 0}, \{(t_{j+i})^{q.\textnormal{frep}}_{I}\}_{i\geq 0})$ is the inverse perfection of $(\{R_{j+i}\}_{i\geq 0}, \{t_{j+i}\}_{i\geq 0})$, we are reduced to showing the assertions in the case when $j=0$.

(a): By definition, $(R_0)^{q.\textnormal{frep}}_{I}$ is complete and separated with respect to the linear topology induced by the descending filtration
$$
\ker(\Phi^{(0)}_0)\supseteq \ker(\Phi^{(0)}_1)\supseteq \ker(\Phi^{(0)}_2)\supseteq\cdots.
$$
Moreover, since $J^k\subseteq \ker(\Phi^{(0)}_0)$, we have $(J^k)^{[p^i]}\subseteq \ker(\Phi^{(0)}_i)$ for every $i\geq 0$ by the commutative diagram (\ref{frobeniusdecompose}).\footnote{The symbol $I^{[p^n]}$ for an ideal $I$ in an $\mathbb{F}_p$-algebra $A$ is the ideal generated by the elements $x^{p^n}$ for $x \in I$.} 
On the other hand, since $J^{k}$ is finitely generated, $(J^k)^{p^ir}\subseteq (J^k)^{[p^i]}$ for some $r>0$. 
Thus the assertion follows from \cite[Lemma 2.1.1]{FGK11}.

(b): It is obvious that $x_0 \in \overline{R_0}$ is a unit if $x \in (R_0)^{q.\textnormal{frep}}_I$ is a unit. Conversely, assume that $x_0 \in \overline{R_0}$ is a unit. 
Then for every $i\geq 0$, $x_{i}^{p^{i}}$ is a unit because it is the image of $x_0$ in $\overline{R_i}$. Hence $x_{i}$ is also a unit. 
Therefore, we have isomorphisms $R_{i}/IR_{i}\xrightarrow{\times x_{i}}R_{i}/IR_{i}$ ($i\geq 0$) that are compatible with the Frobenius projections. 
Thus we obtain the isomorphism between inverse limits $(R_{0})^{q.\textnormal{frep}}_{I}\xrightarrow{\times x}(R_{0})^{q.\textnormal{frep}}_{I}$, which yields the assertion.

(c): Consider the shifting map $s_{0}: (R_{0})^{q.\textnormal{frep}}_{I}\to (R_{1})^{q.\textnormal{frep}}_{I}$ defined by the rule 
$s_{0}((a_i)_{i \geq 0}):= (a_{i+1})_{i\geq 0}$. 
Then one can easily check that $s_{0}$ is the inverse map of $(F_{0})^{q.\textnormal{frep}}_{I}$.

(2): Define $F^{q.\textnormal{frep}}_{0,i} : (R_i)^{q.\textnormal{frep}}_I\to (R_{0})^{q.\textnormal{frep}}_I$ as the composite map $(F_0)^{q.\textnormal{frep}}_{I} \circ \cdots \circ (F_{i-1})^{q.\textnormal{frep}}_{I}$ (if $i\geq 1$) or the identity map (if $i=0$). 
Then the collection $\{F^{q.\textnormal{frep}}_{0,i}\}_{i\geq 0}$ gives a morphism of towers from 
$(\{(R_i)^{q.\textnormal{frep}}_I \}_{i \geq 0}  , \{(t_i)^{q.\textnormal{frep}}_I  \}_{i \geq 0})$ to  
$\bigl\{
(R_0)^{q.\textnormal{frep}}_I \xrightarrow{F_{(R_0)^{q.\textnormal{frep}}_I}}
(R_0)^{q.\textnormal{frep}}_I \xrightarrow{F_{(R_0)^{q.\textnormal{frep}}_I}}
\cdots
\bigr\}$. 
Hence by the assertion (c) of (1) and Lemma \ref{secondCD} (1), we complete the proof. 
\end{proof}

The operation of inverse quasi-perfection preserves the locality of rings and ring maps.

\begin{lemma}\label{localqinvperf}
Assume that $R_i$ is a local ring for any $i \geq 0$, and $I\neq R$. 
Then for any $j \geq 0$, the following assertions hold.
\begin{enumerate}
\item
The ring maps $t_j$, $\overline{t_{j}}$, and $F_{j}$ are local. 

\item
$(R_j)^{q.\textnormal{frep}}_{I}$ is a local ring.

\item
The ring map $(t_j)^{q.\textnormal{frep}}_I : (R_j)^{q.\textnormal{frep}}_{I} \to (R_{j+1})^{q.\textnormal{frep}}_{I}$ is local.

\end{enumerate}
\end{lemma}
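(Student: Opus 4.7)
My plan is to address (1), (2), and (3) in order, leveraging the characterization of units in $(R_j)^{q.\textnormal{frep}}_I$ proved in Proposition \ref{invqperfprop}(2) and the structural lemmas on $\overline{t_i}$ and $F_i$.

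For (1), the first task is to show by induction on $i$ that $IR_i$ is a proper ideal of $R_i$. For the base $i = 0$, this is the hypothesis $I \neq R_0$. For the inductive step, if $IR_i \neq R_i$, then $R_i/IR_i \neq 0$, and axiom (b) supplies an injective (unital) ring map $\overline{t_i}: R_i/IR_i \hookrightarrow R_{i+1}/IR_{i+1}$, forcing $R_{i+1}/IR_{i+1} \neq 0$ and hence $IR_{i+1} \neq R_{i+1}$. Consequently, for every $i$ the ring $R_i/IR_i$ is local with maximal ideal $\mathfrak{m}_{R_i}/IR_i$. By Lemma \ref{secondCD}(2), $\overline{t_i}$ is integral and injective, so by lying-over (applied to the unique maximal ideal of the local source) it is a local map. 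Lifting this along the quotient maps then gives $t_i^{-1}(\mathfrak{m}_{R_{i+1}}) = \mathfrak{m}_{R_i}$, since $I \subseteq \mathfrak{m}_{R_0}$ propagates through the tower. Finally, the locality of $F_i$ follows from the commutative diagram (\ref{frobeniusdecompose}): if $\bar y \in R_{i+1}/IR_{i+1}$ is a unit, then $\overline{t_i}(F_i(\bar y)) = \bar y^p$ is a unit and, by locality of $\overline{t_i}$, so is $F_i(\bar y)$; conversely, if $F_i(\bar y)$ were a unit, $\bar y^p = \overline{t_i}(F_i(\bar y))$ would be a unit, forcing $\bar y$ to be a unit.

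For (2), Proposition \ref{invqperfprop}(2) says that an element $x = (x_i)_{i \geq 0} \in (R_j)^{q.\textnormal{frep}}_I$ is a unit if and only if $x_0$ is a unit in $R_j/IR_j$. Since $R_j/IR_j$ is local by (1), the set of non-units of $(R_j)^{q.\textnormal{frep}}_I$ is precisely $(\Phi^{(j)}_0)^{-1}(\mathfrak{m}_{R_j}/IR_j)$, which is an ideal as the preimage of an ideal under a ring map. A ring whose non-units form an ideal is local, so $(R_j)^{q.\textnormal{frep}}_I$ is local with maximal ideal $\mathfrak{m}_{(R_j)^{q.\textnormal{frep}}_I} = (\Phi^{(j)}_0)^{-1}(\mathfrak{m}_{R_j}/IR_j)$.

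For (3), I want to check that $((t_j)^{q.\textnormal{frep}}_I)^{-1}(\mathfrak{m}_{(R_{j+1})^{q.\textnormal{frep}}_I}) = \mathfrak{m}_{(R_j)^{q.\textnormal{frep}}_I}$. Unwinding the description of maximal ideals from (2), an element $(a_i)_{i \geq 0}$ lies in the left-hand side exactly when $\overline{t_j}(a_0) \in \mathfrak{m}_{R_{j+1}}/IR_{j+1}$, while it lies in $\mathfrak{m}_{(R_j)^{q.\textnormal{frep}}_I}$ exactly when $a_0 \in \mathfrak{m}_{R_j}/IR_j$. By the locality of $\overline{t_j}$ established in (1), these two conditions coincide, proving (3). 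The main subtlety is really the bookkeeping for the induction in (1); once the local character of $\overline{t_i}$ is secured by integrality and lying-over, the rest is a direct application of Proposition \ref{invqperfprop}(2).
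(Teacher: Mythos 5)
Your proof is correct. For part (2) you argue exactly as the paper does, via Proposition \ref{invqperfprop} (2) and the observation that the non-units form the ideal $(\Phi^{(j)}_0)^{-1}(\mathfrak{m}_{R_j}/IR_j)$. Where you diverge is in parts (1) and (3). For (1), the paper never invokes lying-over: it observes that both composites $F_j\circ\overline{t_j}$ and $\overline{t_j}\circ F_j$ are Frobenius endomorphisms (by the two diagrams (\ref{frobeniusdecompose}) and (\ref{frobeniusdecompose2})), hence local, and then uses the elementary fact that if $g\circ f$ is local then so is $f$; applying this to each composite yields locality of $\overline{t_j}$ and of $F_j$ simultaneously, and $t_j$ follows by factoring the local map $R_j\twoheadrightarrow R_j/IR_j\xrightarrow{\overline{t_j}}R_{j+1}/IR_{j+1}$ through it. Your route instead gets $\overline{t_j}$ from Lemma \ref{secondCD} (2) plus lying-over for injective integral extensions, and then $F_j$ from one of the diagrams; this is slightly heavier machinery but equally valid, and your explicit induction showing $IR_i\neq R_i$ makes precise a point the paper leaves implicit (it is forced by axiom (b), since an injective unital map out of a nonzero ring cannot land in the zero ring). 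For (3), the paper bootstraps: it notes that by (2) the inverse perfection is itself a purely inseparable tower of local rings (arising from $((R_0)^{q.\textnormal{frep}}_I,(0))$) and applies (1) to it, whereas you verify the equality of preimages of maximal ideals directly from the formula for $(t_j)^{q.\textnormal{frep}}_I$ and the description of the maximal ideal from (2); your computation is more self-contained, the paper's is shorter.
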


\begin{proof}
As in Proposition \ref{invqperfprop} (1), it suffices to show the assertions in the case when $j=0$.

(1): Since the diagrams (\ref{frobeniusdecompose}) and (\ref{frobeniusdecompose2}) are commutative, $F_0\circ \overline{t_0}$ and $\overline{t_0}\circ F_0$ are local. 
Hence $\overline{t_0}$ and $F_0$ are local. 
In particular, the composition $R_{0}\twoheadrightarrow \overline{R_0}\xrightarrow{\overline{t_0}}\overline{R_1}$ is local. 
Since this map factors through $t_0$, $t_0$ is also local, as desired.

(2):
Let $\fm_0$ be the maximal ideal of $R_{0}$. 
Consider the ideal $(\fm_0)^{q.\textnormal{frep}}_{I} = \{ (x_{i})_{i \geq 0} \in (R_0)^{q.\textnormal{frep}}_{I}~|~x_{0} \in \fm_0/IR_0 \}$, where $\fm_0/IR_0$ is the maximal ideal of $\overline{R_0}$. 
Then by Proposition \ref{invqperfprop} (1)-(b), $(\fm_0)^{q.\textnormal{frep}}_{I}$ is a unique maximal ideal of $(R_0)^{q.\textnormal{frep}}_{I}$. Hence the assertion follows. 

(3):
By the assertion (2), $( \{ (R_i)^{q.\textnormal{frep}}_I  \}_{i \geq 0},  \{  (t_i)^{q.\textnormal{frep}}_I  \}_{i \geq 0} )$ is a purely inseparable tower of local rings. 
Hence by the assertion (1), $(t_0)^{q.\textnormal{frep}}_I$ is local. 
\end{proof}

A purely inseparable tower also satisfies the following amusing property. This is well-known in positive characteristic, in which case $R_i \to R_{i+1}$ gives a universal homeomorphism (i.e.\ the induced morphism of schemes $\Spec R_{i+1}\to \Spec R_i$ is a universally homeomorphism). See also Corollary \ref{finiteetaleequivalence}. 

\begin{lemma}
\label{categoryfiniteetale}
For every $i \ge 0$, assume that $R_{i}$ is $I$-adically Henselian.\footnote{This condition is realized if $R_{0}$ is $I$-adically Henselian and each $t_{i}: R_{i}\to R_{i+1}$ is integral. } 
%\item
%The Frobenius projection $F_i$ (cf.\ Definition \ref{invqperf} (2)) is surjective. 
%\end{enumerate}
Then the ring map $t_i$ induces an equivalence of categories: 
$$
{\bFEt}(R_i) \xrightarrow{\cong} {\bFEt}(R_{i+1}),
$$
where ${\bFEt}(A)$ is the category of finite \'etale $A$-algebras for a ring $A$.
\end{lemma}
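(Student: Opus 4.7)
The plan is to factor the claimed equivalence through reduction modulo $I$ and then invoke the topological invariance of the \'etale site for universal homeomorphisms. More precisely, I would build the following commutative square of functors:
\[
\xymatrix{
\bFEt(R_i)\ar[r]^{t_i^{*}}\ar[d]_{\pi_i^{*}} & \bFEt(R_{i+1})\ar[d]^{\pi_{i+1}^{*}}\\
\bFEt(R_i/IR_i)\ar[r]_{\overline{t_i}^{*}} & \bFEt(R_{i+1}/IR_{i+1}),
}
\]
where $\pi_j\colon R_j\twoheadrightarrow R_j/IR_j$ is the natural surjection. My goal is then to show that each of the three sides other than the top one is an equivalence, which will force the top arrow to be one as well.

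For the two vertical arrows, I would appeal to the classical equivalence of \'etale sites in the Henselian setting: since $R_j$ is $I$-adically Henselian, base change along $\pi_j$ yields an equivalence $\bFEt(R_j)\xrightarrow{\cong}\bFEt(R_j/IR_j)$ (see, e.g., the standard reference for Henselian pairs and their \'etale sites, such as \cite[Tag~09ZL]{stacks-project} or the discussion in \cite{GR22}). This step is essentially formal once the Henselian hypothesis is in place, so it should not cause any trouble.

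The main technical point is the bottom horizontal arrow. The key input is Lemma \ref{secondCD}(2): since $p\in I$ (axiom (a)), the rings $R_i/IR_i$ and $R_{i+1}/IR_{i+1}$ are $\mathbb{F}_p$-algebras, and every $x\in R_{i+1}/IR_{i+1}$ satisfies $x^p=\overline{t_i}(a)$ for some $a\in R_i/IR_i$. Combined with axiom (b), this means $\overline{t_i}$ is an injective integral ring map whose cokernel is annihilated by Frobenius; equivalently, the induced map
$\Spec(R_{i+1}/IR_{i+1})\to \Spec(R_i/IR_i)$ is a universal homeomorphism (it is integral, radicial, and surjective, with the surjectivity coming from lying-over for integral injective maps).

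With the universal homeomorphism in hand, I would then invoke the topological invariance of the \'etale site (cf.\ SGA~4, Exp.~VIII, Th.~1.1, or \cite[Tag~04DZ]{stacks-project}) to conclude that base change along $\overline{t_i}$ induces an equivalence $\bFEt(R_i/IR_i)\xrightarrow{\cong}\bFEt(R_{i+1}/IR_{i+1})$. The hardest conceptual step is identifying $\overline{t_i}$ as a universal homeomorphism from the tower axioms, but once Lemma \ref{secondCD}(2) is invoked this is essentially a one-line check. The remainder of the argument then consists in assembling the diagram above and concluding that $t_i^{*}$ is an equivalence, which in turn implies (since the restriction is essentially surjective, fully faithful, and compatible with $t_i$) the stated equivalence ${\bFEt}(R_i) \xrightarrow{\cong} {\bFEt}(R_{i+1})$.
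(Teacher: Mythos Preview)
Your proposal is correct and matches the paper's approach: reduce modulo $I$ using the Henselian hypothesis (the paper also cites \cite[Tag~09ZL]{Stacks}) for the vertical arrows, then invoke topological invariance of the \'etale site for the bottom arrow. The only cosmetic difference is that the paper further passes to the reduced quotients $R_j/\sqrt{IR_j}$ and checks that $(\overline{t_i})_{\rm red}$ is a universal homeomorphism via \cite[Tag~0BRA]{Stacks}, whereas you verify directly that $\overline{t_i}$ is integral, radicial, and surjective on spectra; your shortcut is valid and slightly more streamlined.
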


\begin{proof}
By Corollary \ref{24525N}, we obtain the commutative diagram of rings: 
\begin{equation}
\label{standardcomm3}
\vcenter{\xymatrix{
R_{i}\ar[d]_{\pi_i} \ar[r]^{t_i}&R_{i+1}\ar[d]^{\pi_{i+1}}\\
\overline{R_i} \ar[d]_{\phi_{\overline{R_i}}} \ar[r]^{\overline{t_i}}& \overline{R_{i+1}} \ar[d]^{\phi_{\overline{R_{i+1}}}} \\
(\overline{R_i})^{\textnormal{perf}} \ar[r]^{(\overline{t_i})^{\textnormal{perf}}}& (\overline{R_{i+1}})^{\textnormal{perf}}
}}
\end{equation}
where $\pi_j$ ($j\in \{i, i+1\}$) is the natural projection, and the bottom map is an isomorphism. 
Since the Frobenius endomorphism on any $\mathbb{F}_{p}$-algebra gives a universal homeomorphism (\cite[Tag 0CC6]{Stacks}), 
so does $\phi_{\overline{R_j}}$ by \cite[Tag 01YW]{Stacks} and \cite[Tag 01YZ]{Stacks}. 
Hence $\phi_{\overline{R_j}}$ induces an equivalence of categories of finite \'etale algebras over respective rings in view of \cite[Tag 0BQN]{Stacks}. 
The same assertion holds for $\pi_{j}$ by the lifting property of a henselian pair (\cite[Tag 09ZL]{Stacks}). 
By going around the diagram $(\ref{standardcomm3})$, we finish the proof. 
\end{proof}

\subsection{Axioms for perfectoid towers}\label{ssaxiomsPerf}
\subsubsection{Remarks on torsion}
In the subsequent \S\ref{ss-sec.ptp}, we introduce the class of \emph{perfectoid towers} as a generalization of perfect towers. 
For this purpose, we need to deal with a purely inseparable tower arising from $(R, I)$ in the case when $I=(0)$ at least, and hence plenty of $I$-torsion elements. 
Thus we begin by giving several preliminary lemmas on torsion of modules over rings.

\begin{definition}\label{def1231SatN}
Let $R$ be a ring, and let $M$ be an $R$-module. 
\begin{enumerate}
\item
Let $x\in R$ be an element. We say that an element $m\in M$ is \emph{$x$-torsion} if $x^{n}m=0$ for some $n>0$. 
We denote by $M_{x\textnormal{-tor}}$ the $R$-submodule of $M$ consisting of all $x$-torsion elements in $M$. 
\item
Let $I\subseteq R$ be an ideal. 
We say that an element $m\in M$ is \emph{$I$-torsion} if $m$ is $x$-torsion for every $x\in I$. 
We denote by $M_{I\textnormal{-tor}}$ the $R$-submodule of $M$ consisting of all $I$-torsion elements in $M$. 
Note that $M_{(x)\textnormal{-tor}} = M_{x\textnormal{-tor}}= M_{x^{n}\textnormal{-tor}}$ for every $n>0$.

\item
For an element $x\in R$ (resp.\ an ideal $I\subseteq R$), we say that \emph{$M$ has bounded $x$-torsion} (resp.\ \emph{bounded $I$-torsion}) if there exists some $l>0$ such that 
$x^{l}M_{x\textnormal{-tor}}=(0)$ ($I^{l}M_{I\textnormal{-tor}}=(0)$).

\item
For an ideal $I\subseteq R$, we denote by $\varphi_{I, M}: M_{I\textnormal{-tor}}\to M/IM$ the composition of natural $R$-linear maps: 
\begin{equation}\label{94SunN}
M_{I\textnormal{-tor}}\hookrightarrow M\twoheadrightarrow M/IM. 
\end{equation}
\end{enumerate}
\end{definition}

First we record the following fundamental lemma.

\begin{lemma}\label{102Sun}
Let $R$ be a ring, and let $M$ be an $R$-module. Let $x\in R$ be an element. Then for every $n>0$, we have 
$$
M_{x\textnormal{-tor}}\cap x^{n}M= x^{n}M_{x\textnormal{-tor}}.
$$ 
\end{lemma}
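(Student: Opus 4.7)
The plan is to prove the two inclusions $M_{x\textnormal{-tor}}\cap x^{n}M \supseteq x^{n}M_{x\textnormal{-tor}}$ and $M_{x\textnormal{-tor}}\cap x^{n}M \subseteq x^{n}M_{x\textnormal{-tor}}$ separately. Neither will require any machinery beyond the definition of $x$-torsion from Definition \ref{def1231SatN}.

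For the inclusion $\supseteq$, I would take an arbitrary element $m\in M_{x\textnormal{-tor}}$. By definition there is some $k>0$ with $x^{k}m=0$, and then $x^{k}(x^{n}m)=x^{n}(x^{k}m)=0$, so $x^{n}m\in M_{x\textnormal{-tor}}$. Since $x^{n}m$ obviously lies in $x^{n}M$, this shows $x^{n}M_{x\textnormal{-tor}}\subseteq M_{x\textnormal{-tor}}\cap x^{n}M$.

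For the reverse inclusion $\subseteq$, I would pick $y\in M_{x\textnormal{-tor}}\cap x^{n}M$ and write $y=x^{n}m$ for some $m\in M$ (using $y\in x^{n}M$). Since $y$ is $x$-torsion, there exists $k>0$ with $x^{k}y=0$, that is, $x^{n+k}m=0$. This exhibits $m$ itself as an $x$-torsion element, so $m\in M_{x\textnormal{-tor}}$ and hence $y=x^{n}m\in x^{n}M_{x\textnormal{-tor}}$. Combining the two inclusions yields the claimed equality.

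There is essentially no obstacle here: the statement is formal and the key observation is simply that if $x^{n}m$ is killed by some power of $x$, then so is $m$ (by multiplying the annihilation relation by a suitable power of $x$ on the other side, or rather, by reading off that $x^{n+k}m=0$ already witnesses $m\in M_{x\textnormal{-tor}}$). The lemma is evidently intended as a preparatory fact for subsequent arguments about bounded torsion and the map $\varphi_{I,M}$ in Definition \ref{def1231SatN} (4).
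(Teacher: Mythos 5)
Your proof is correct and follows exactly the same route as the paper: the containment $x^{n}M_{x\textnormal{-tor}}\subseteq M_{x\textnormal{-tor}}\cap x^{n}M$ is immediate, and for the converse one writes an element of the intersection as $x^{n}m_{0}$ and observes that $x^{l+n}m_{0}=0$ forces $m_{0}\in M_{x\textnormal{-tor}}$. No differences worth noting.
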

\begin{proof}
Pick an element $m\in M_{x\textnormal{-tor}}\cap x^{n}M$. Then $m=x^{n}m_{0}$ for some $m_{0}\in M$, and $x^{l}m=0$ for some $l>0$. Hence $x^{l+n}m_{0}=0$, which implies that 
$m_{0}\in M_{x\textnormal{-tor}}$ and thus $m\in x^{n}M_{x\textnormal{-tor}}$. The containment $x^{n}M_{x\textnormal{-tor}}\subseteq M_{x\textnormal{-tor}}\cap x^{n}M$ is clear. 
\end{proof}

\begin{corollary}\label{96TueN}
Keep the notation as in Lemma \ref{102Sun}, and suppose further that $xM_{x\textnormal{-tor}}=(0)$. 
Then the map $\varphi_{(x), M}: M_{x\textnormal{-tor}}\to M/xM$ (see Definition \ref{def1231SatN} (4)) is injective. 
\end{corollary}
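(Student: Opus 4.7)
The plan is to unwind the definition of $\varphi_{(x),M}$ and invoke Lemma \ref{102Sun} with $n=1$. By construction, $\varphi_{(x),M}$ is the composition $M_{x\textnormal{-tor}}\hookrightarrow M \twoheadrightarrow M/xM$, so its kernel is precisely $M_{x\textnormal{-tor}}\cap xM$. Applying Lemma \ref{102Sun} with $n=1$ identifies this intersection with $xM_{x\textnormal{-tor}}$, and the additional hypothesis $xM_{x\textnormal{-tor}}=(0)$ makes the kernel vanish. This gives injectivity of $\varphi_{(x),M}$.

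There is essentially no obstacle: the whole argument is a one-line chase of definitions combined with the case $n=1$ of Lemma \ref{102Sun}. The only thing to be careful about is not to confuse $M_{x\textnormal{-tor}}\cap xM$ with $xM_{x\textnormal{-tor}}$ \emph{a priori} (the containment $\supseteq$ is trivial, but the reverse containment is exactly what Lemma \ref{102Sun} provides and where the hypothesis that $m$ itself is $x$-torsion is used to force the preimage to lie in $M_{x\textnormal{-tor}}$).
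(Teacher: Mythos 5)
Your proof is correct and is exactly the argument the paper intends: the paper's own proof is the one-liner ``It is clear from Lemma \ref{102Sun}'', and your unwinding of $\ker(\varphi_{(x),M})=M_{x\textnormal{-tor}}\cap xM = xM_{x\textnormal{-tor}}=(0)$ via the $n=1$ case of that lemma is precisely the intended chain.
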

\begin{proof}
It is clear from Lemma \ref{102Sun}. 
\end{proof}

Lemma \ref{102Sun} is also applied to show a half part of the following useful result. 

\begin{lemma}\label{1227TueN}
Keep the notation as in Lemma \ref{102Sun}, and suppose further that $M$ has bounded $x$-torsion. 
Let $\widehat{M}$ be the $x$-adic completion of $M$, and let $\psi: M\to \widehat{M}$ be the natural map. 
Then the restriction $\psi_{\textnormal{tor}}: M_{x\textnormal{-tor}}\to (\widehat{M})_{x\textnormal{-tor}}$ of $\psi$ is an isomorphism of $R$-modules. 
\end{lemma}
\begin{proof}
By assumption, there exists some $l > 0$ such that $x^{l}M_{x\textnormal{-tor}} = (0)$. 
On the other hand, $\ker (\psi_{\textnormal{tor}})=M_{x\textnormal{-tor}}\cap \bigcap_{n=0}^{\infty}x^{n}M$ is contained in $M_{x\textnormal{-tor}}\cap x^lM$, which is equal to $x^{l}M_{x\textnormal{-tor}}$ by Lemma \ref{102Sun}. 
Hence $\psi_{\textnormal{tor}}$ is injective. 

Let us prove the surjectivity. Let $\widehat{N}$ denote the $x$-adic completion of $N$ for every $R$-module $N$. Then we obtain the commutative diagram of $R$-modules: 
\begin{equation}\label{421SunN}
\vcenter{
\xymatrix{
0\ar[r]& M_{x\textnormal{-tor}}\ar[d]_{\psi_{M_{x\textnormal{-tor}}}} \ar[r]^{\iota} & M \ar[d]^{\psi}\ar[r]^{\pi\ \ \ \ \ } &  M/M_{x\textnormal{-tor}} \ar[d]\ar[r] & 0 \\
0\ar[r]& \widehat{M_{x\textnormal{-tor}} }\ar[r]^{\widehat{\iota}} & \widehat{M} \ar[r]^{\widehat{\pi}\ \ \ \ \ } &  \widehat{M/M_{x\textnormal{-tor}}} \ar[r]& 0
}}
\end{equation}
where $\iota$ is the inclusion map and $\pi$ is the natural projection. 
Since $\psi\circ\iota$ factors through $\psi_{\textnormal{tor}}$, it suffices to show that $(\widehat{M})_{x\textnormal{-tor}} \subseteq \im(\widehat{\iota} \circ \psi_{M_{x\textnormal{-tor}}})$. 
First, $\psi_{M_{x\textnormal{-tor}}}$ is bijective because it is isomorphic to the canonical isomorphism 
$M_{x\textnormal{-tor}}/(x^{l}) \xrightarrow{\cong} \widehat{ M_{x\textnormal{-tor}} }/(x^{l})$. 
To show that $(\widehat{M})_{x\textnormal{-tor}} \subseteq \im(\widehat{\iota})$, note that the top row of (\ref{421SunN}) forms an exact sequence, and it consists of $R$-modules that have bounded $x$-torsion. 
Then by \cite[Tag 0923]{Stacks} and right exactness of derived completion functors, $\ker (\widehat{\pi}) = \im (\widehat{\iota})$ (in fact, the bottom sequence is also exact because $\psi_{\textnormal{tor}}$ is injective). 
Moreover, since $\widehat{M/M_{x\textnormal{-tor}}}$ is $x$-torsion free by \cite[Chapter \textbf{II}, Lemma 1.1.5]{FK18}, $(\widehat{M})_{x\textnormal{-tor} }\subseteq \ker (\widehat{\pi})$.  
Hence the assertion follows. 
\end{proof}

The following lemma is used for proving Main Theorem \ref{mt1} (cf.\ Lemma \ref{lem228242027}). 

\begin{lemma}\label{1230FriN}
Let $R$ be a ring, and let $M$ be an $R$-module. Let $x\in R$ be an element. Then for every $n>0$, we have 
\begin{equation}\label{1229ThuN}
\Ann_{M/x^{n}M}(x)\subseteq\im(\varphi_{(x^n), M})+x^{n-1}(M/x^{n}M). 
\end{equation}
\end{lemma}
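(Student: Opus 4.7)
The plan is to prove the inclusion by a direct diagram-chase: starting from a representative of an element in $\Ann_{M/x^{n}M}(x)$, produce an $x^{n}$-torsion element and an element of $x^{n-1}M$ whose sum agrees with the original representative modulo $x^{n}M$.

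Concretely, let $\overline{m}\in\Ann_{M/x^{n}M}(x)$ and choose a lift $m\in M$ of $\overline{m}$. By assumption $xm\in x^{n}M$, so we can write $xm = x^{n}m'$ for some $m'\in M$. Then the element $m - x^{n-1}m'\in M$ satisfies $x(m - x^{n-1}m')=0$, so it lies in $M_{x\textnormal{-tor}}$. The key (but elementary) observation is that $M_{x\textnormal{-tor}}=M_{(x^n)\textnormal{-tor}}$, since an element is killed by some power of $x$ if and only if it is killed by some power of $x^{n}$. Hence $m-x^{n-1}m'$ is in the domain of $\varphi_{(x^{n}),M}$, and taking classes modulo $x^{n}M$ gives the decomposition
\[
\overline{m} \;=\; \varphi_{(x^{n}),M}(m - x^{n-1}m') \;+\; x^{n-1}\overline{m'},
\]
which realizes $\overline{m}$ as an element of $\im(\varphi_{(x^n),M})+x^{n-1}(M/x^{n}M)$, as required.

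There is essentially no real obstacle here; the statement is a quick consequence of the definition of $\varphi_{(x^{n}),M}$ together with the identification $M_{(x^{n})\textnormal{-tor}}=M_{x\textnormal{-tor}}$. The only point worth flagging is making sure to invoke this identification explicitly, since $\varphi_{(x^{n}),M}$ is defined on $(x^{n})$-torsion whereas the construction above produces an element killed by $x$ itself.
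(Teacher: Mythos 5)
Your proof is correct and is essentially identical to the paper's: both lift $\overline{m}$ to $m$ with $xm=x^{n}m'$, observe that $m-x^{n-1}m'$ is killed by $x$ and hence lies in $M_{(x^{n})\textnormal{-tor}}$, and read off the decomposition modulo $x^{n}M$. Your explicit remark that $M_{x\textnormal{-tor}}=M_{(x^{n})\textnormal{-tor}}$ is a harmless (and correct) elaboration of a point the paper leaves implicit.
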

\begin{proof}
Pick an element $m\in M$ such that $xm\in x^{n}M$. Then $x(m-x^{n-1}m')=0$ for some $m'\in M$. 
In particular, $m-x^{n-1}m'\in M_{x^{n}\textnormal{-tor}}$. 
Hence $m\mod x^{n}M$ lies in the right-hand side of (\ref{1229ThuN}), as desired. 
\end{proof}

In the case when $M=R$, we can regard $M_{I\textnormal{-tor}}$ as a (possibly) non-unital subring of $R$. 
This point of view provides valuable insights. For example, ``reducedness'' for $R_{I\textnormal{-tor}}$ induces a good property on boundedness of torsions. 

\begin{lemma}\label{2294N}
Let $(R, I)$ be a pair such that $R_{I\textnormal{-tor}}$ does not contain any non-zero nilpotent element of $R$. Then $IR_{I\textnormal{-tor}}=(0)$. 
\end{lemma}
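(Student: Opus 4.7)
The plan is to argue element-wise: given $x \in I$ and $r \in R_{I\textnormal{-tor}}$, I want to conclude that $xr = 0$. The only real tool available is the hypothesis that no nonzero element of $R_{I\textnormal{-tor}}$ is nilpotent in $R$, so I would try to show that $xr$ is (i) nilpotent and (ii) still lives in $R_{I\textnormal{-tor}}$, and then invoke the hypothesis.

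Step one: since $R_{I\textnormal{-tor}}$ is an $R$-submodule of $R$ by construction (Definition \ref{def1231SatN} (2)), and $r \in R_{I\textnormal{-tor}}$, the product $xr$ lies in $R_{I\textnormal{-tor}}$ as well. Step two: by definition of $I$-torsion, there exists $n > 0$ with $x^n r = 0$. Step three: compute
\[
(xr)^n \;=\; x^n r^n \;=\; (x^n r)\, r^{n-1} \;=\; 0,
\]
so $xr$ is nilpotent in $R$. Step four: by the hypothesis that $R_{I\textnormal{-tor}}$ contains no nonzero nilpotent element of $R$, this forces $xr = 0$. Since $x \in I$ and $r \in R_{I\textnormal{-tor}}$ were arbitrary, $I R_{I\textnormal{-tor}} = (0)$.

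There is essentially no obstacle; the only point worth checking carefully is that $R_{I\textnormal{-tor}}$ really is closed under multiplication by arbitrary elements of $R$ (not just elements of $I$), which is immediate from its definition as an $R$-submodule. The whole statement is a one-line consequence of the observation that products of the form $xr$ with $x \in I$ and $r$ already $I$-torsion are automatically nilpotent.
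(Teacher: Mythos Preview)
Your proof is correct and follows essentially the same approach as the paper's: pick $x\in I$ and $a\in R_{I\textnormal{-tor}}$, use $x^n a=0$ to get $(xa)^n=0$, and conclude $xa=0$ from the hypothesis. The only difference is that you explicitly record the (trivial) step that $xr\in R_{I\textnormal{-tor}}$, which the paper uses implicitly.
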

\begin{proof}
It suffices to show that $xR_{I\textnormal{-tor}}=0$ for every $x\in I$. 
Pick an element $a\in R_{I\textnormal{-tor}}$. Then for a sufficiently large $n>0$, $x^na=0$. 
Hence $(xa)^{n}=x^na\cdot a^{n-1}=0$. 
Thus we have $xa=0$ by assumption, as desired. 
\end{proof}

\begin{corollary}\label{cor306MonN}
Let $(\{R_i\}_{i\geq 0},\{t_i\}_{i \geq 0})$ be a purely inseparable tower arising from some pair $(R, I)$. Then for every $i\geq0$ and every ideal $J\subseteq (R_{i})^{q.\textnormal{frep}}_{I}$, we have $J((R_i)^{q.\textnormal{frep}}_{I})_{J\textnormal{-tor}}=(0)$. 
\end{corollary}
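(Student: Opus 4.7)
The plan is to reduce the statement directly to Lemma \ref{2294N}, using reducedness of the inverse quasi-perfection.

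First, I would recall that by Proposition \ref{invqperfprop} (4), the ring $(R_i)^{q.\textnormal{frep}}_{I}$ is reduced. Consequently, for any subset of $(R_i)^{q.\textnormal{frep}}_{I}$---in particular the (possibly non-unital) subring $((R_i)^{q.\textnormal{frep}}_{I})_{J\textnormal{-tor}}$---no non-zero nilpotent element can be contained in it, since no non-zero nilpotent element exists at all. Thus the pair $\bigl((R_i)^{q.\textnormal{frep}}_{I}, J\bigr)$ satisfies the hypothesis of Lemma \ref{2294N}.

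Applying Lemma \ref{2294N} to this pair yields the conclusion $J\cdot((R_i)^{q.\textnormal{frep}}_{I})_{J\textnormal{-tor}}=(0)$, which is exactly the statement. There is essentially no obstacle: the only substantive input is the reducedness of $(R_i)^{q.\textnormal{frep}}_{I}$, which has already been established, and the rest is a one-line invocation of the preceding lemma. The point of recording this corollary is presumably to have it packaged for use in the proof of Main Theorem \ref{mt1} and the subsequent tilting results.
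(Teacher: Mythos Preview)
Your proposal is correct and follows exactly the same approach as the paper: invoke reducedness of $(R_i)^{q.\textnormal{frep}}_{I}$ from Proposition \ref{invqperfprop} (4), then apply Lemma \ref{2294N}. The paper's proof is the one-line version of what you wrote.
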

\begin{proof}
Since $(R_i)^{q.\textnormal{frep}}_{I}$ is reduced by Proposition \ref{invqperfprop} (2), the assertion follows from Lemma \ref{2294N}. 
\end{proof}

Furthermore, we can treat $R_{I\textnormal{-tor}}$ as a positive characteristic object (in the situation of our interest), even if $R$ is not an $\mathbb{F}_{p}$-algebra. 

\begin{lemmadefinition}\label{2295N}
Let $(R, I)$ be a pair such that $p\in I$ and $IR_{I\textnormal{-tor}}=(0)$. 
Then the multiplicative map:  
\begin{equation}\label{1213TueN}
R_{I\textnormal{-tor}}\to R_{I\textnormal{-tor}};\ x\mapsto x^{p}
\end{equation}
is also additive. We denote by $F_{R_{I\textnormal{-tor}}}$ the map (\ref{1213TueN}). 
\end{lemmadefinition}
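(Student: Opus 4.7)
The plan is to verify the Frobenius-type additivity by a direct binomial computation, after first confirming that the domain and codomain make sense. Concretely, I will (i) check that the $p$-th power map sends $R_{I\textnormal{-tor}}$ to itself, (ii) expand $(x+y)^{p}$ via the binomial theorem, and (iii) show that every mixed term is annihilated, using precisely the two hypotheses $p\in I$ and $IR_{I\textnormal{-tor}}=(0)$.

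First I would record that $R_{I\textnormal{-tor}}$ is an $R$-submodule of $R$ that is also closed under multiplication, so in particular it is closed under addition and under arbitrary products of its elements; hence for $x\in R_{I\textnormal{-tor}}$ the element $x^{p}$ lies again in $R_{I\textnormal{-tor}}$ (so the map is well-defined). Multiplicativity of $x\mapsto x^{p}$ is immediate from commutativity of $R$. For additivity, pick $x,y\in R_{I\textnormal{-tor}}$ and write
\[
(x+y)^{p}=x^{p}+y^{p}+\sum_{k=1}^{p-1}\binom{p}{k}x^{k}y^{p-k}.
\]
For each $1\le k\le p-1$, the integer $\binom{p}{k}$ is divisible by $p$, say $\binom{p}{k}=p\,m_{k}$ with $m_{k}\in\mathbb{Z}$. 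The element $m_{k}x^{k}y^{p-k}$ lies in $R_{I\textnormal{-tor}}$ by step (i), and $p\in I$ by hypothesis, so
\[
\binom{p}{k}x^{k}y^{p-k}=p\cdot\bigl(m_{k}x^{k}y^{p-k}\bigr)\in IR_{I\textnormal{-tor}}=(0).
\]
Therefore $(x+y)^{p}=x^{p}+y^{p}$, as required.

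There is no genuine obstacle here; the only subtle point is the interpretation of $R_{I\textnormal{-tor}}$ as a (possibly non-unital) subring on which the ideal $I\subseteq R$ acts as zero, which is exactly what Lemma \ref{2294N} has set up. Once that observation is in place, the usual proof that the Frobenius endomorphism is additive on an $\mathbb{F}_{p}$-algebra goes through verbatim, with the role of ``$p=0$'' played by ``$p\in I$ annihilates $R_{I\textnormal{-tor}}$''. This also explains the remark following the statement: although $R$ need not be an $\mathbb{F}_{p}$-algebra, its $I$-torsion part behaves like one under $x\mapsto x^{p}$.
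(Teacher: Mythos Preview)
Your proof is correct and follows exactly the approach the paper indicates: the paper's own proof is the single sentence ``It immediately follows from the binomial theorem,'' and you have simply written that out in full, together with the routine check that $x\mapsto x^{p}$ preserves $R_{I\textnormal{-tor}}$.
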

\begin{proof}
It immediately follows from the binomial theorem. 
\end{proof}

\subsubsection{Perfectoid towers and pillars}\label{ss-sec.ptp}

Now, we define perfectoid towers.

\begin{definition}{(Perfectoid towers)}
\label{stilt}
Let $R$ be a ring, and let $I_{0}\subseteq R$ be an ideal. 
A tower $(\{R_{i}\}_{i\geq 0}, \{t_{i}\}_{i\geq 0})$ is called a \textit{(p-)perfectoid tower arising from} $(R, I_{0})$ if it is a $p$-purely inseparable tower arising from $(R,I_0)$ (cf.\ Definition \ref{invqperf} (1)) 
and satisfies the following additional axioms.
\begin{itemize}
\item[(d)]
For every $i\geq 0$, the $i$-th Frobenius projection $F_i : R_{i+1}/I_{0}R_{i+1} \to R_{i}/I_{0}R_{i}$  (cf.\ Definition \ref{invqperf} (2)) is surjective. 
\item[(e)]
For every $i \geq 0$, $R_i$ is an $I_0$-adically Zariskian ring (i.e.\ $I_0R_i$ is contained in the Jacobson radical of $R_i$).
\item[(f)]
$I_{0}$ is a principal ideal, and $R_{1}$ contains a principal ideal $I_{1}$ that satisfies the following axioms. 
\begin{itemize}
\item[(f-1)]
$I_1^p = I_0R_1$. 
\item[(f-2)] 
For every $i \geq 0$, $\ker (F_i)=I_1(R_{i+1}/I_{0}R_{i+1})$.
\end{itemize}
\item[(g)]
For every $i\geq 0$, $I_{0}(R_{i})_{I_{0}\textnormal{-tor}}=(0)$. Moreover, there exists a (unique) bijective map 
$(F_{i})_{\textnormal{tor}}: (R_{i+1})_{I_{0}\textnormal{-tor}}\to (R_{i})_{I_{0}\textnormal{-tor}}$ such that the diagram: 
\begin{equation}\label{24519N}
\vcenter{
\xymatrix{
(R_{i+1})_{I_0\textnormal{-tor}}\ar[d]_{(F_i)_{\textnormal{tor}}}\ar[r]^{\varphi_{I_{0}, R_{i+1}}}&R_{i+1}/I_{0}R_{i+1}\ar[d]^{F_{i}}\\
(R_{i})_{I_{0}\textnormal{-tor}}\ar[r]_{\varphi_{I_{0}, R_i}}&R_{i}/I_{0}R_{i}
}}
\end{equation}
commutes (see Definition \ref{def1231SatN} for the notation; see also Corollary \ref{96TueN}). 
\end{itemize}
\end{definition}

\begin{remark}\label{remark2022705}
If $I_0$ is generated by an element whose image in $R_i$ is a non-zerodivisor for every $i \geq 0$, then the axiom (g) is satisfied automatically. If $R_1$ is reduced and $I_{0}=(0)$, then the axiom (g) follows from the axioms (d) and (f). 
Consequently, if every $t_i$ is injective and $\varinjlim_{i\geq 0}R_i$ is a domain, one can ignore the axiom (g) when checking that $(\{R_i\}_{i \geq 0}, \{t_i\}_{i \geq 0})$ is a perfectoid tower. 
\end{remark}

We have some examples of perfectoid towers.
\begin{example}
\begin{enumerate}
\item (\cf \cite[Definition 4.4]{Sh11})
Let $(R, \fm, k)$ be a $d$-dimensional unramified regular local ring of mixed characteristic $p>0$ whose residue field is perfect. Then we have
$$
R \cong W(k)\llbracket x_2, \ldots, x_d\rrbracket.
$$
For every $i\geq 0$, set $R_i := R[p^{1/p^i}, x_2^{1/p^i}, \ldots, x_d^{1/p^i}]$, and let $t_i : R_i \to R_{i+1}$ be the inclusion map. 
Then the tower $(\{ R_i \}_{i \geq 0}, \{ t_i \}_{i \geq 0})$ is a perfectoid tower arising from $(R, (p))$. Indeed, the Frobenius projection $F_i : R_{i+1}/pR_{i+1} \to R_{i}/pR_i$ is given as the $p$-th power map.\footnote{The axiom (f-2) follows from the normality of $R_i$. The other axioms are clearly satisfied. }

\item
For some generalization of (1), one can build a perfectoid tower arising from a complete local log-regular ring. For details, see \S\ref{smalltiltlog}.
\item
We note that $t_i$ (resp. $F_i$) of a perfectoid tower is not necessarily the inclusion map (resp. the $p$-th power map). For instance, let $R$ be a reduced $\mathbb{F}_p$-algebra. Set $R_i := R$, $t_i := F_R$, and $F_i := \id_R$ for every $i \geq 0$. Then $(\{ R_i \}_{ i \geq 0} , \{ t_i \}_{i \geq 0})$ is a perfectoid tower arising from $(R, (0))$.
\end{enumerate}
\end{example}

The class of perfectoid towers is a generalization of perfect towers. 
\begin{lemma}\label{perfdperftower}
Let $(\{R_{i}\}_{i\geq 0}, \{t_{i}\}_{i\geq 0})$ be a tower of $\mathbb{F}_p$-algebras. Then the following conditions are equivalent. 
\begin{enumerate}
\item
$(\{R_{i}\}_{i\geq 0}, \{t_{i}\}_{i\geq 0})$ is a perfect $\mathbb{F}_{p}$-tower (cf. Definition \ref{ptower}).
\item
$(\{R_{i}\}_{i\geq 0}, \{t_{i}\}_{i\geq 0})$ is a $p$-perfectoid tower arising from $(R_{0}, (0))$. 
\end{enumerate}
\end{lemma}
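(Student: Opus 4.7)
The plan is to prove both implications axiom-by-axiom, leveraging the observation that when $I_{0}=(0)$, axiom (g) is equivalent to each Frobenius projection $F_i$ being bijective, which in turn lets one identify the tower with an iterated-Frobenius system on a reduced $\mathbb{F}_{p}$-algebra.

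For (1)$\Rightarrow$(2), I take $(\{R_{i}\}_{i\geq 0},\{t_{i}\}_{i\geq 0})$ to be a perfect tower. By Example \ref{perfpurelytower}, it is automatically a $p$-purely inseparable tower arising from $(R_{0},(0))$, which handles axioms (a), (b), and (c). By Lemma \ref{Lemma5171619}(1) every Frobenius projection $F_i$ is an isomorphism; this immediately gives (d), and with the choice $I_{1}:=(0)\subseteq R_{1}$ it gives (f), since $I_{1}^{p}=(0)=I_{0}R_{1}$ and $\ker(F_i)=(0)=I_{1}R_{i+1}$. Axiom (e) is vacuous because $I_{0}=(0)$, and axiom (g) (interpreted as below) again reduces to the bijectivity of the $F_i$.

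For (2)$\Rightarrow$(1), assume the tower is $p$-perfectoid arising from $(R_{0},(0))$. Since $(R_{i})_{(0)\textnormal{-tor}}=R_{i}$ and $\varphi_{(0),R_{i}}=\id_{R_{i}}$, the commutative square in axiom (g) forces $(F_i)_{\textnormal{tor}}=F_i$, so the axiom is equivalent to the bijectivity of $F_i$ for every $i\geq 0$. Combining this with the identity $F_{0}\circ t_{0}=F_{R_{0}}$ from Lemma \ref{secondCD}(3) and the injectivity of $t_{0}$ from axiom (b), I conclude that $F_{R_{0}}$ is injective; in a ring of characteristic $p$ this forces $R_{0}$ to be reduced. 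Setting $\psi_{0}:=\id_{R_{0}}$ and $\psi_{i}:=F_{0}\circ F_{1}\circ\cdots\circ F_{i-1}\colon R_{i}\xrightarrow{\cong}R_{0}$ for $i\geq 1$, I check via naturality of Frobenius and Lemma \ref{secondCD}(3) that $\psi_{i+1}\circ t_{i}=\psi_{i}\circ (F_{i}\circ t_{i})=\psi_{i}\circ F_{R_{i}}=F_{R_{0}}\circ\psi_{i}$. Hence $\{\psi_{i}\}_{i\geq 0}$ is an isomorphism of towers from $(\{R_{i}\}_{i\geq 0},\{t_{i}\}_{i\geq 0})$ to $\{R_{0}\xrightarrow{F_{R_{0}}}R_{0}\xrightarrow{F_{R_{0}}}\cdots\}$, and the latter is a perfect tower by Corollary \ref{perfecttower}. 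I do not foresee any serious obstacle beyond correctly unpacking axiom (g) in the case $I_{0}=(0)$.
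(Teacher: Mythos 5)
Your proof is correct and follows essentially the same route as the paper's: for (1)$\Rightarrow$(2) it reduces to Example \ref{perfpurelytower} and Lemma \ref{Lemma5171619}, and for (2)$\Rightarrow$(1) it shows each $F_i$ is bijective and then builds the commutative ladder onto $\{R_0\xrightarrow{F_{R_0}}R_0\xrightarrow{F_{R_0}}\cdots\}$, concluding with Corollary \ref{perfecttower}. The only divergence is that you extract the bijectivity of $F_i$ from axiom (g) (correctly unpacked, since $(R_i)_{(0)\textnormal{-tor}}=R_i$ and $\varphi_{(0),R_i}=\mathrm{id}$), whereas the paper cites (d), (f-2) and Lemma \ref{secondCD}(1); your route is if anything the more watertight one, and your explicit verification that $R_0$ is reduced (needed to invoke Corollary \ref{perfecttower}) is a welcome detail the paper leaves implicit.
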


\begin{proof}
First we verify the implication $(1) \Rightarrow (2)$. For this, we may assume that $(\{R_{i}\}_{i\geq 0}, \{t_{i}\}_{i\geq 0})$ is of the form $R\xrightarrow{F_{R}} R \xrightarrow{F_{R}} R \xrightarrow{F_R} \cdots$ (see Definition \ref{ptower}). 
By Example \ref{perfpurelytower}, it is a purely inseparable tower arising from  $(R, (0))$. 
The axiom (e) in Definition \ref{stilt} is obvious.
Moreover, the Frobenius projection $F_i$ (cf.\ Example  \ref{perfpurelytower}) is an isomorphism for any $ i \geq 0$. 
Hence the axioms (d), (f), and (g) are also satisfied, which yields the assertion.

%Assume that $(\{R_{i}\}_{i\geq 0}, \{t_{i}\}_{i\geq 0})$ is a perfect $\mathbb{F}_{p}$-tower. Then the axioms (a), (b), (e), (f-1), (g) are obvious.
%Thus it suffices to show that there exists an isomorphism $F_i : R^{1/ p^{i+1}} \to R^{1 / p^i}$ such that $\iota_i \circ F_i =F_{R^{1/p^{i+1}}} $ for any $ i \geq 0$.
%We define the ring map $F_i : R^{1/p^{i+1}} \to R^{1/p^i}$ as the colimit of the following ladder:
%
%
%Then $F_i$ is an isomorphism.
%Thus the assertion $(1) \Rightarrow (2)$ holds.
Conversely, assume that $(\{R_{i}\}_{i\geq 0}, \{t_{i}\}_{i\geq 0})$ is a perfectoid tower arising from $(R_0, (0))$.
Since $F_{i}$ is identified with $(F_i)_\textnormal{tor}$ in the axiom (g), the injectivity of $(F_i)_\textnormal{tor}$ implies that $F_{i}$ is injective. 
In other words, $R_i$ is reduced by Lemma \ref{secondCD} (1). 
Furthermore, $F_i$ is an isomorphism by the axiom (d) or the surjectivity of $(F_i)_{\textnormal{tor}}$. 
Hence we obtained the desired isomorphism of towers:
\begin{equation}\label{6430TueN}
\vcenter{
\xymatrix{
R_0 \ar[d]^{\id_{R_0}} \ar[r]^{t_0}&
R_1 \ar[d]^{F_{0}} \ar[r]^{t_1}&
R_2 \ar[d]^{F_{0}\circ F_1} \ar[r]^{t_2} &
R_3 \ar[d]^{F_{0}\circ F_1\circ F_2} \ar[r]^{t_3} & \cdots \\
R_0 \ar[r]_{F_{R_0}} &
R_0 \ar[r]_{F_{R_0}} &
R_0 \ar[r]_{F_{R_0}} &
R_0 \ar[r]_{F_{R_0}} & \cdots.
}}
\end{equation}
Thus the assertion follows. 
\end{proof}

For a perfectoid tower $(\{R_{i}\}_{i\geq 0}, \{t_{i}\}_{i\geq 0})$ arising from $(R, I_{0})$, an ideal $I_{1}\subseteq R_{1}$ appearing in the axiom (f) in Definition \ref{stilt} is unique. Indeed, it contains $I_{0}R_{1}$, and  its image via the projection 
$R_{1}\to \overline{R_1}$ is a fixed ideal $\ker(F_{0})$. 

\begin{definition}
We call $I_{1}$ the \emph{first perfectoid pillar of $(\{R_{i}\}_{i\geq 0}, \{t_{i}\}_{i\geq 0})$ arising from $(R, I_{0})$}. 
\end{definition}

The relationship between $I_{0}$ and $I_{1}$ can be observed also in higher layers (see Proposition \ref{fn-generating} below). 
In the rest of this section, we fix a perfectoid tower $(\{R_{i}\}_{i \geq 0}, \{t_i\}_{i \geq 0})$ arising from some pair $(R, I_{0})$, and let $I_{1}$ denote the first perfectoid pillar. 

\begin{proposition}
\label{fn-generating}
The following assertions hold. 
\begin{enumerate}
\item
For a sequence of principal ideals $\{I_i \subseteq R_i\}_{i \geq 2}$, the following conditions are equivalent. 
\begin{enumerate}
\item
$F^{-1}_{i}({I_i}\overline{R_i})=I_{i+1}\overline{R_{i+1}}$ for every $i\geq 0$. 

\item
$F_i(I_{i+1}\overline{R_{i+1}}) = I_i\overline{R_{i}}$ for every $i\geq 0$. 

\end{enumerate}

\item
Each one of the equivalent conditions in (1) implies that $I_{i+1}^p = I_{i}R_{i+1}$ for every $i\geq 0$.

\item
There exists a unique sequence of principal ideals $\{I_i \subseteq R_i\}_{i \geq 0}$ that satisfies one of the equivalent conditions in (1). Moreover, there exists a sequence of elements 
$\{\overline{f_i}\in \overline{R_i}\}_{i\geq 0}$ such that $I_{i}\overline{R_i}=(\overline{f_i})$ and $F_i(\overline{f_{i+1}})=\overline{f_i}$ for every $i\geq 0$. 
\end{enumerate}
\end{proposition}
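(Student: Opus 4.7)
The plan is to establish the three statements in a mutually supporting way, since (2) is needed to close the converse in (1), while (1) supplies the relation $F_i(\overline{f_{i+1}})\cdot\overline{R_i}=I_i\overline{R_i}$ that drives the induction for (2). Accordingly I would prove them in the order: (a)$\Rightarrow$(b) of (1); then (2); then (b)$\Rightarrow$(a); then (3).

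The implication (a)$\Rightarrow$(b) is immediate because the Frobenius projection $F_i$ is surjective by axiom (d), so $F_i(F_i^{-1}(J))=J$ for every ideal $J$ of $\overline{R_i}$. Choose generators $f_i$ of $I_i$ and write $\overline{f_i}$ for their images in $\overline{R_i}$. Either (a) or (b) implies the equality of principal ideals $(F_i(\overline{f_{i+1}}))=(\overline{f_i})$, hence $F_i(\overline{f_{i+1}})=u\overline{f_i}$ for some unit $u\in\overline{R_i}$. Since $R_i$ is $I_0$-adically Zariskian by axiom (e), $u$ lifts to a unit $\widetilde{u}\in R_i$.

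For (2) I would induct on $i$. The base case $i=0$ is axiom (f-1). Assuming $I_j^p=I_{j-1}R_j$ for all $j\leq i$, one iterates to get $I_i^{p^i}=I_0R_i$, hence $I_0R_{i+1}=(t_i(f_i))^{p^i}R_{i+1}$ as principal ideals. From Lemma \ref{secondCD}(3) one has $\overline{t_i}\circ F_i=F_{\overline{R_{i+1}}}$, so $\overline{f_{i+1}}^{\,p}=\overline{t_i}(u\overline{f_i})$ and therefore $f_{i+1}^p-t_i(\widetilde{u})t_i(f_i)=\gamma(t_i(f_i))^{p^i}$ for some $\gamma\in R_{i+1}$. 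Factoring gives
\[
f_{i+1}^p=t_i(f_i)\bigl(t_i(\widetilde{u})+\gamma(t_i(f_i))^{p^i-1}\bigr).
\]
For $i\geq 1$, the element $(t_i(f_i))^{p^i-1}$ lies in $I_iR_{i+1}$, whose $p^i$-th power is $I_0R_{i+1}$; hence $I_iR_{i+1}\subseteq\sqrt{I_0R_{i+1}}\subseteq\operatorname{Jac}(R_{i+1})$ by axiom (e). Consequently the second factor is a unit (since its reduction mod the Jacobson radical is $t_i(\widetilde{u})$, a unit), and $(f_{i+1}^p)=(t_i(f_i))=I_iR_{i+1}$, completing the induction. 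This Nakayama-style unit extraction is the main technical obstacle.

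With (2) in hand, (b)$\Rightarrow$(a) closes quickly: given $x\in F_i^{-1}(I_i\overline{R_i})$, by (b) pick $y\in I_{i+1}\overline{R_{i+1}}$ with $F_i(y)=F_i(x)$; then $x-y\in\ker F_i=I_1\overline{R_{i+1}}$ by (f-2). Iterating (2) yields $I_{i+1}^{p^i}=I_1R_{i+1}$, so $I_1R_{i+1}\subseteq I_{i+1}$, giving $x\in I_{i+1}\overline{R_{i+1}}$. For (3), I would construct the sequence inductively: given $I_i=(f_i)$, use surjectivity of $F_i$ (axiom (d)) to pick $\overline{g}\in\overline{R_{i+1}}$ with $F_i(\overline{g})=\overline{f_i}$, lift $\overline{g}$ to some $f_{i+1}\in R_{i+1}$, and set $I_{i+1}:=(f_{i+1})$; condition (b) for this $i$ is then tautological. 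Uniqueness follows from (2) combined with Lemma \ref{1011TueN}: at each step $I_{i+1}$ is the unique principal ideal satisfying $I_{i+1}^p=I_iR_{i+1}$ and $I_{i+1}\overline{R_{i+1}}=F_i^{-1}(I_i\overline{R_i})$. The promised elements $\overline{f_i}$ are precisely the images in $\overline{R_i}$ of the chosen generators $f_i$, and the relation $F_i(\overline{f_{i+1}})=\overline{f_i}$ holds by construction.
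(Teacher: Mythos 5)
Your overall architecture matches the paper's: (a)$\Rightarrow$(b) from surjectivity of $F_i$ (axiom (d)); the key relation between $I_{i+1}^p$ and $I_i$ modulo $I_0$ extracted from $\overline{t_i}\circ F_i=F_{\overline{R_{i+1}}}$; an induction for (2) that kills the $I_0$-ambiguity using axiom (e); (b)$\Rightarrow$(a) via $\ker F_i=I_1\overline{R_{i+1}}\subseteq I_{i+1}\overline{R_{i+1}}$; and existence/uniqueness in (3) by dependent choice plus Lemma \ref{1011TueN}. The only structural difference is that the paper runs step (2) entirely with ideals, establishing $I_{i+1}^p+I_0R_{i+1}=I_iR_{i+1}$ and then applying Nakayama's lemma to the cyclic module $I_iR_{i+1}/I_{i+1}^p$, whereas you work with chosen generators and extract an explicit unit factor.

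That element-level route contains one unjustified step: from the equality of principal ideals $(F_i(\overline{f_{i+1}}))=(\overline{f_i})$ in $\overline{R_i}$ you conclude $F_i(\overline{f_{i+1}})=u\overline{f_i}$ with $u$ a \emph{unit}. In a general commutative ring two generators of the same principal ideal need not be associates: the ideal equality only gives $F_i(\overline{f_{i+1}})=c\,\overline{f_i}$ and $\overline{f_i}=d\,F_i(\overline{f_{i+1}})$ for some $c,d$, whence $(1-cd)\overline{f_i}=0$; promoting $c$ to a unit would require something like $\Ann_{\overline{R_i}}(\overline{f_i})\subseteq\operatorname{Jac}(\overline{R_i})$, which is not among the axioms ($\overline{R_i}$ is neither local nor a domain, and $\overline{f_i}$ is typically nilpotent there). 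The gap is local and repairable: run your factoring argument in both directions, once with $c$ to get $(f_{i+1}^p)\subseteq I_iR_{i+1}$, and once with $d$ to get $t_i(f_i)\bigl(1-\gamma'\,t_i(f_i)^{p^i-1}\bigr)=\widetilde{d}\,f_{i+1}^p$, where the parenthesized factor is a unit by exactly your Jacobson-radical observation; alternatively, phrase the induction with ideals and Nakayama as the paper does. Everything else — in particular using (2) to obtain $I_1R_{i+1}=I_{i+1}^{p^i}\subseteq I_{i+1}$ in (b)$\Rightarrow$(a), and the appeal to Lemma \ref{1011TueN} together with (2) for uniqueness in (3) — is correct.
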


\begin{proof}
(1): 
Since the implication $(\textnormal{a})\Rightarrow (\textnormal{b})$ follows from the axiom (d) in Definition \ref{stilt}, it suffices to show the converse. 
Assume that the condition (b) is satisfied. 
Then for every $i\geq 0$, the compatibility 
$\overline{t_i}\circ F_i=F_{\overline{R_{i+1}}}$ implies 
\begin{equation}\label{eq302Thu}
I_{i+1}^p\overline{R_{i+1}}=I_i\overline{R_{i+1}}
\end{equation}
because $I_{i+1}$ is principal. 
In particular, $\ker (F_i)=I_{1}\overline{R_{i+1}}\subseteq I_{i+1}\overline{R_{i+1}}$ (cf.\ the axiom (f-2)). 
On the other hand, by the surjectivity of $F_i$ and the assumption again, we have $F_i(F^{-1}_i({I_i}\overline{R_i}))={I_i}\overline{R_i}=F_i(I_{i+1}\overline{R_{i+1}})$. 
Hence 
$$
F^{-1}_i({I_i}\overline{R_i})\subseteq {I_{i+1}}\overline{R_{i+1}}+\ker (F_i)
\subseteq I_{i+1}\overline{R_{i+1}}\subseteq F^{-1}_i({I_i}\overline{R_i}), 
$$
which yields the assertion. 

(2):
Let us deduce the assertion from (\ref{eq302Thu}) by induction. 
By definition, $I^{p}_{1}=I_0R_1$. 
We then fix some $i\geq1$. Suppose that for every $1\leq k\leq i$, $I^{p}_{k}=I_{k-1}R_{k}$. 
Then $I_0R_{i}= I^{p^i}_i$. In particular, $R_i$ is $I_i$-adically Zariskian by the axiom (e). 
Moreover, by (\ref{eq302Thu}), we have the equalities of $R_i$-modules: 
$$
I_iR_{i+1}=I_{i+1}^p + I_0R_{i+1}=I_{i+1}^p + I_i^{p^i-1}(I_iR_{i+1}). 
$$
Hence by Nakayama's lemma, we obtain $I^p_{i+1}=I_iR_{i+1}$ as desired.

(3): By the axiom of (dependent) choice, the existence follows from the axiom (d) in Definition \ref{stilt}. 
Let us show the uniqueness of $\{I_i \subseteq R_i\}_{i \geq 0}$ that satisfies the condition (a) in (1). 
For every $i\geq 0$, $I_{i}R_{i+1} \subseteq I_{i+1}$ by (2), and hence $I_{i+1}$ is the inverse image of $F_{i}^{-1}(I_{i}\overline{R_{i}})$ via the projection $R_{i+1}\to \overline{R_{i+1}}$. Since $I_{0}$ is fixed, the assertion follows. 
\end{proof}

\begin{definition}
In the situation of Proposition \ref{fn-generating} (3), 
we call $I_{i}$ the \emph{$i$-th perfectoid pillar of $(\{R_{i}\}_{i\geq 0}, \{t_{i}\}_{i\geq 0})$ arising from $(R_{0}, I_{0})$}.  
\end{definition}

The following property of perfectoid pillars is applied to prove our main result. 

\begin{lemma}\label{1018TueN}
Let $\{I_i\}_{i\geq 0}$ denote the system of perfectoid pillars of $(\{R_{i}\}_{i\geq 0}, \{t_{i}\}_{i\geq 0})$, and 
let $\pi_i: R_i/I_0R_i\to R_i/I_iR_i$ $(i\geq 0)$ be the natural projections. 
Then for every $i\geq 0$, there exists a unique isomorphism of rings: 
$$
F'_i: R_{i+1}/I_{i+1}R_{i+1}\xrightarrow{\cong}R_i/I_iR_i
$$
such that 
$\pi_i \circ F_i=F'_i\circ \pi_{i+1}$. 
\end{lemma}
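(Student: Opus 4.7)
The plan is to obtain $F'_i$ by showing that the surjection $\pi_i\circ F_i : R_{i+1}/I_0R_{i+1}\twoheadrightarrow R_i/I_iR_i$ factors through $\pi_{i+1}$, and that the induced map on quotients is the desired isomorphism. Everything reduces to an application of Proposition \ref{fn-generating} via the first isomorphism theorem; no major obstacle is anticipated.

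First I would check that $\pi_{i+1}: R_{i+1}/I_0R_{i+1}\to R_{i+1}/I_{i+1}R_{i+1}$ really makes sense, i.e.\ that $I_0R_{i+1}\subseteq I_{i+1}R_{i+1}$. By Proposition \ref{fn-generating}(2), one has $I_k R_{k+1}=I_{k+1}^p\subseteq I_{k+1}$ for every $k\geq 0$, so iterating gives
$$
I_0R_{i+1}\subseteq I_1R_{i+1}\subseteq\cdots\subseteq I_{i+1}R_{i+1}.
$$
In particular $\pi_{i+1}$ is surjective, which already yields the uniqueness of $F'_i$.

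Next I would construct $F'_i$. By condition (b) of Proposition \ref{fn-generating}(1), $F_i$ sends $I_{i+1}(R_{i+1}/I_0R_{i+1})=\ker(\pi_{i+1})$ onto $I_i(R_i/I_0R_i)=\ker(\pi_i)$, so $\pi_i\circ F_i$ factors through $\pi_{i+1}$ as a ring map $F'_i: R_{i+1}/I_{i+1}R_{i+1}\to R_i/I_iR_i$ with $\pi_i\circ F_i=F'_i\circ \pi_{i+1}$. Surjectivity of $F'_i$ is immediate from surjectivity of both $\pi_i$ and $F_i$ (the latter being axiom (d) of Definition \ref{stilt}).

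Finally, for injectivity I would compute $\ker(\pi_i\circ F_i)$: an element $\bar r\in R_{i+1}/I_0R_{i+1}$ lies in it iff $F_i(\bar r)\in I_i(R_i/I_0R_i)$, i.e.\ iff $\bar r\in F_i^{-1}(I_i(R_i/I_0R_i))$, which by condition (a) of Proposition \ref{fn-generating}(1) is $I_{i+1}(R_{i+1}/I_0R_{i+1})=\ker(\pi_{i+1})$. Hence the induced map $F'_i$ on $R_{i+1}/I_0R_{i+1}\big/\ker(\pi_{i+1})\cong R_{i+1}/I_{i+1}R_{i+1}$ is injective, completing the proof.
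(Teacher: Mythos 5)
Your proposal is correct and follows essentially the same route as the paper: the paper's proof is precisely the observation that $\ker(\pi_i\circ F_i)=F_i^{-1}(I_i(R_i/I_0R_i))=I_{i+1}(R_{i+1}/I_0R_{i+1})=\ker(\pi_{i+1})$ together with surjectivity of $F_i$ and $\pi_i$. Your additional verification that $I_0R_{i+1}\subseteq I_{i+1}$ (so that $\pi_{i+1}$ is defined) is a reasonable bit of extra care that the paper leaves implicit.
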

\begin{proof}
Since $F_i$ and $\pi_i$ are surjective, the assertion immediately follows from $\ker(\pi_i\circ F_i)=F_i^{-1}({I_i}(R_i/I_0R_i))=I_{i+1}(R_{i+1}/I_0R_{i+1})$. 
\end{proof}

\subsection{Tilts of perfectoid towers}\label{subsecTilt}
\subsubsection{Invariance of some properties}
Here we establish tilting operation for perfectoid towers. For this, we first introduce the notion of \emph{small tilt}, which originates in \cite{Sh11}. 
\begin{definition}[Small tilts]\label{defstilt}
Let $(\{R_i\}_{i\geq 0},\{t_i\}_{i \geq 0})$ be a perfectoid tower arising from some pair $(R, I_0)$.  
\begin{enumerate}
\item
For any $j\geq 0$, we define the \textit{j-th small tilt} of $(\{R_{i}\}_{i \geq 0}, \{t_i\}_{i \geq 0})$ associated to $(R,I_0)$ as the $j$-th inverse quasi-perfection of $(\{R_{i}\}_{i \geq 0}, \{t_i\}_{i \geq 0})$ associated to $(R, I_0)$ and denote it by $(R_j)_{I_0}^{s.\flat}$.
%in distinction from $(R_j)_{I_0}^{q.\textnormal{frep}}$. 
\item
Let the notation be as in Lemma \ref{1018TueN}. 
Then we define $I^{s.\flat}_i:=\ker (\pi_{i}\circ \Phi_0^{(i)})$ for every $i\geq 0$.  
\end{enumerate}
\end{definition}
%By definition, $\Phi_{0}^{i}$ induces an isomorphism $R^{s.\flat}_{i}/I^{s.\flat}_{i}\xrightarrow{\cong }R_{i}/I_{i}$. 

Note that the ideal $I^{s.\flat}_{i}\subseteq R^{s.\flat}_{i}$ has the following property. 

\begin{lemma}\label{129SunN}
Keep the notation as in Definition \ref{defstilt}. 
Then for every $i\geq 0$ and $j\geq 0$, we have $\Phi^{(j)}_i(I^{s.\flat}_j)=I_{j+i}\overline{R_{j+i}}$. 
\end{lemma}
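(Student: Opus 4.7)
The plan is to argue by unwinding the definitions: an element $(a_k)_{k\geq 0}\in R_j^{s.\flat}$ belongs to $I^{s.\flat}_j$ exactly when $a_0\in I_j(R_j/I_0R_j)$ and $F_{j+k}(a_{k+1})=a_k$ for every $k\geq 0$, so the target identity amounts to showing that the $i$-th projection of such compatible sequences traces out precisely the ideal $I_{j+i}(R_{j+i}/I_0R_{j+i})$. The key input is Proposition \ref{fn-generating}, which gives the equivalence of the conditions $F_{j+k}^{-1}(I_{j+k}(R_{j+k}/I_0R_{j+k}))=I_{j+k+1}(R_{j+k+1}/I_0R_{j+k+1})$ (preimage form) and $F_{j+k}(I_{j+k+1}(R_{j+k+1}/I_0R_{j+k+1}))=I_{j+k}(R_{j+k}/I_0R_{j+k})$ (surjectivity form) for the system of perfectoid pillars. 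Both forms will be used, one for each inclusion.

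For the inclusion $\Phi^{(j)}_i(I^{s.\flat}_j)\subseteq I_{j+i}(R_{j+i}/I_0R_{j+i})$, I would fix $(a_k)_{k\geq 0}\in I^{s.\flat}_j$ and induct on $i$. The base case $i=0$ is immediate from the definition of $I^{s.\flat}_j$. For the inductive step, the compatibility $F_{j+i}(a_{i+1})=a_i\in I_{j+i}(R_{j+i}/I_0R_{j+i})$ together with the preimage form of Proposition \ref{fn-generating} gives $a_{i+1}\in F_{j+i}^{-1}(I_{j+i}(R_{j+i}/I_0R_{j+i}))=I_{j+i+1}(R_{j+i+1}/I_0R_{j+i+1})$, completing the induction.

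For the reverse inclusion $I_{j+i}(R_{j+i}/I_0R_{j+i})\subseteq \Phi^{(j)}_i(I^{s.\flat}_j)$, I would construct an explicit lift. Given $b\in I_{j+i}(R_{j+i}/I_0R_{j+i})$, define $a_i:=b$ and for $0\leq k<i$ set $a_k:=F_{j+k}\circ F_{j+k+1}\circ\cdots\circ F_{j+i-1}(b)$; the surjectivity form of Proposition \ref{fn-generating} applied successively ensures each $a_k$ lies in $I_{j+k}(R_{j+k}/I_0R_{j+k})$, and in particular $a_0\in I_j(R_j/I_0R_j)$. For $k\geq i$, I would build $a_{k+1}$ recursively as a preimage of $a_k$ under $F_{j+k}$; the surjectivity of the restriction $F_{j+k}:I_{j+k+1}(R_{j+k+1}/I_0R_{j+k+1})\twoheadrightarrow I_{j+k}(R_{j+k}/I_0R_{j+k})$ (again the surjectivity form) lets me choose such preimages, yielding a compatible sequence $(a_k)_{k\geq 0}\in R_j^{s.\flat}$ with $a_0\in I_j(R_j/I_0R_j)$ and $a_i=b$.

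Since both inclusions follow directly from Proposition \ref{fn-generating} once the compatibility constraints are written out, there is no real obstacle; the only mild subtlety is remembering to use dependent choice (as in the proof of Proposition \ref{fn-generating}(3)) to produce the infinite tail of lifts $\{a_k\}_{k>i}$.
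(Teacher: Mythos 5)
Your proof is correct and rests on the same two ingredients as the paper's: the equivalence of the ``preimage'' and ``image'' forms of Proposition \ref{fn-generating} (1) for the perfectoid pillars, and the surjectivity of the projection maps out of the inverse limit (which the paper invokes as surjectivity of $\Phi^{(j)}_0$ and which your tail-lifting construction with dependent choice reproves by hand). The paper phrases the recursion ideal-theoretically via $\Phi^{(j)}_0=F_j\circ\Phi^{(j)}_1$ rather than element-wise, but this is only a presentational difference.
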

\begin{proof}
Since $\Phi^{(j)}_{0}$ is surjective, we have $\Phi^{(j)}_{0}(I^{s.\flat}_{j})=I_{j}\overline{R_j}$. 
On the other hand, since $\Phi^{(j)}_0=F_j\circ\Phi^{(j)}_1$, we have 
$$
F_j^{-1}(\Phi^{(j)}_{0}(I^{s.\flat}_{j}))=\Phi^{(j)}_{1}(I^{s.\flat}_{j})+\ker (F_{j})=\Phi^{(j)}_{1}(I^{s.\flat}_{j}). 
$$
Hence by the condition (a) in Proposition \ref{fn-generating} (1), $\Phi^{(j)}_{1}(I^{s.\flat}_{j})=I_{j+1}\overline{R_{j+1}}$. 
By repeating this procedure recursively, we obtain the assertion. 
\end{proof}

The next lemma provides some completeness of the small tilts attached to a perfectoid tower of characteristic $p>0$ (see also Remark \ref{303FrN}).

\begin{lemma}
\label{stiltcompletion}
Let $(\{R_i\}_{i \geq 0}, \{t_i\}_{i \geq 0})$ be a perfectoid tower arising from $(R,(0))$.
Then, for any element $f \in R$ and any $j \geq 0$, the inverse limit $\vpl \{ \cdots \xrightarrow{\overline{F_{j+1}}} R_{j+1}/fR_{j+1} \xrightarrow{\overline{F_j}} R_j/fR_j \}$ is isomorphic to the $f$-adic completion of $R_j$.
\end{lemma}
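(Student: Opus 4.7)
The plan is to exploit the special structure of perfectoid towers arising from $(R, (0))$. First, I would observe that by Lemma \ref{perfdperftower} such a tower is a perfect $\mathbb{F}_p$-tower, so each $R_i$ is a reduced $\mathbb{F}_p$-algebra. Then each Frobenius projection $F_i \colon R_{i+1} \to R_i$ should be shown to be a ring isomorphism: surjectivity is axiom (d), while for injectivity I would use axiom (f-2), namely $\ker(F_i) = I_1 R_{i+1}$, together with the fact that the principal ideal $I_1 \subseteq R_1$ satisfies $I_1^p = I_0 R_1 = (0)$ by axiom (f-1) and therefore vanishes in the reduced ring $R_1$.

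Next, writing $f_i \in R_i$ for the image of $f$ under the structural maps, the commutative diagram (\ref{frobeniusdecompose}) gives $\overline{t_i}(F_i(f_{i+1})) = f_{i+1}^p = t_i(f_i^p)$, so injectivity of $t_i$ (axiom (b)) yields $F_i(f_{i+1}) = f_i^p$. I would then set $H_i := F_j \circ F_{j+1} \circ \cdots \circ F_{j+i-1} \colon R_{j+i} \to R_j$ (with $H_0 := \id_{R_j}$); each $H_i$ is an isomorphism of rings satisfying $H_i(f_{j+i}) = f_j^{p^i}$, so it descends to an isomorphism
$$\overline{H_i} \colon R_{j+i}/f_{j+i} R_{j+i} \xrightarrow{\cong} R_j / f_j^{p^i} R_j.$$

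Finally, the identity $H_{i+1} = H_i \circ F_{j+i}$ ensures that $\{\overline{H_i}\}_{i \geq 0}$ constitutes a morphism from the inverse system $\{R_{j+i}/f_{j+i} R_{j+i}\}_i$ (with transition maps $\overline{F_{j+i}}$) to the inverse system $\{R_j/f_j^{p^i} R_j\}_i$ (with natural projections). Since each $\overline{H_i}$ is an isomorphism, passing to inverse limits would yield
$$\vpl_i R_{j+i}/f_{j+i} R_{j+i} \;\cong\; \vpl_i R_j/f_j^{p^i} R_j,$$
and because $\{p^i\}_{i \geq 0}$ is cofinal in $\mathbb{N}_{\geq 1}$, the right-hand side is precisely the $f$-adic completion of $R_j$. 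The only real point requiring care is the identification of $F_i$ as a ring isomorphism; once this is in hand, everything reduces to bookkeeping with this compatible family of isomorphisms.
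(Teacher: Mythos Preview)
Your proposal is correct and follows essentially the same approach as the paper: both arguments rest on the fact that each Frobenius projection $F_i$ is a ring isomorphism when $I_0=(0)$, and use this to identify the inverse system $\{R_{j+i}/fR_{j+i}\}_{i\ge 0}$ (transition maps $\overline{F_{j+i}}$) with the system $\{R_j/f^{p^i}R_j\}_{i\ge 0}$ (natural projections). The paper packages this identification by passing through $(R_0)^{s.\flat}_{(0)}\cong R_0$ and the element $\mathbf{f}=(\ldots,F_0^{-1}(f),f)$, whereas you write down the composite isomorphisms $H_i=F_j\circ\cdots\circ F_{j+i-1}$ directly; this is only a cosmetic difference in bookkeeping, and your route is arguably cleaner.
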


\begin{proof}
It suffices to show the assertion when $j=0$. 
Let $(\{R'_{i}\}_{i\geq 0}, \{t'_i\}_{i\geq 0})$ denote the standard perfect tower (\ref{FrobTower}) arising from $R$. 
By Lemma \ref{perfdperftower}, (\ref{6430TueN}) gives a canonical isomorphism $(\{R_i\}_{i \geq 0}, \{t_i\}_{i \geq 0})\xrightarrow{\cong} (\{R'_{i}\}_{i\geq 0}, \{t'_i\}_{i\geq 0})$. 
If we put $J_{0}= fR'_{0}$, then $R'_i/J_{0}R'_i = R/f^{p^{i}}R$ for every $i\geq 0$. 
Hence we have canonical isomorphisms: 
$$
\vpl \{ \cdots \xrightarrow{\overline{F_{1}}} R_{1}/fR_{1} \xrightarrow{\overline{F_0}} R_0/fR_0 \} \xrightarrow{\cong} \varprojlim_{n \geq 0} R/f^{p^n}R \xrightarrow{\cong} \varprojlim_{n \geq 0}R / f^nR, 
$$
as desired. 
\end{proof}

\begin{example}\label{widehatrmk}
Let $S$ be a perfect $\mathbb{F}_{p}$-algebra. 
Pick an arbitrary $f\in S$, and let $\widehat{S}$ denote the $f$-adic completion. 
Applying the above proof to the tower 
$$
S\xrightarrow{\textnormal{id}_{S}}S\xrightarrow{\textnormal{id}_{S}}S\xrightarrow{\textnormal{id}_{S}}\cdots, 
$$
we obtain a canonical isomorphism $\varprojlim_{\textnormal{Frob}}S/fS\xrightarrow{\cong}\widehat{S}$. 
\end{example}

\begin{remark}\label{303FrN}
In the situation of Lemma \ref{stiltcompletion}, assume further that $\overline{t_i} : R_i/fR_i \to R_{i+1}/fR_{i+1}$ is injective for every $i\geq 0$. 
Then $(\{R_i\}_{i \geq 0},\{t_i\}_{i \geq 0})$ is a purely inseparable tower arising from $(R,(f))$ with Frobenius projections $\{\overline{F_i}:  R_{i+1}/fR_{i+1} \to R_i / fR_i\}_{i \geq 0}$. 
Furthermore, it satisfies the axioms (d), (f), and (g) in Definition \ref{stilt}. 
To check this, we may assume that $(\{R_i\}_{i \geq 0},\{t_i\}_{i \geq 0})$ is the standard perfect tower (\ref{FrobTower}). 
Then $\overline{F_i}$ is the natural projection $R/f^{p^{i+1}}R\twoheadrightarrow R/f^{p^{i}}R$. It is clearly surjective, and its kernel is $f^{p^{i}}(R/f^{p^{i+1}}R)$. 
Let $I_i$ be the ideal of $R_i$ generated by $f \in R_i$ ($=R$). 
Then $I_0R_{i} = f^{p^i}R$ and $I_1R_{i+1} = f^{p^i}R$. Hence $I^p_1 = I_{0}R_{1}$ and $\ker(\overline{F_i}) = I_1\overline{R_{i+1}} $. 
Finally, note that $(R_i)_{I_0\textnormal{-tor}} = R_{f\textnormal{-tor}}$. 
Then $I_{0}(R_{i})_{I_0\textnormal{-tor}} = f^{p^i}R_{f\textnormal{-tor}} = (0)$ by Lemma \ref{2294N}, and we can take $\id_{R_{f\textnormal{-tor}}}$ as the bijection 
$(\overline{F_i})_{\textnormal{tor}}$ fitting into the diagram (\ref{24519N}). 
\end{remark}

Now we define \emph{tilts of perfectoid towers}.

\begin{definition}[Tilts of perfectoid towers]\label{tiltperfdtower}
Let $(\{R_{i}\}_{i \geq 0}, \{t_i\}_{i \geq 0})$ be a perfectoid tower arising from some pair $(R,I)$. 
Then we define \textit{the tilt of $(\{R_{i}\}_{i \geq 0}, \{t_i\}_{i \geq 0})$} associated to $(R, I)$ as the inverse perfection of $(\{R_{i}\}_{i \geq 0}, \{t_i\}_{i \geq 0})$ associated to $(R, I)$, 
and denote it by $(\{(R_{i})^{s.\flat}_{I}\}_{i \geq 0}, \{(t_i)^{s.\flat}_{I}\}_{i \geq 0})$.
%in distinction from 
%$(\{(R_{i})^{q.\textnormal{frep}}_{I}\}_{i \geq 0}, \{(t_i)^{q.\textnormal{frep}}_{I}\}_{i \geq 0})$. 
If no confusion occurs, we also denote by $R^{s.\flat}_{i}$ (resp.\ $t^{s.\flat}_{i}$) the symbol $(R_{i})^{s.\flat}_{I}$ (resp.\ $(t_i)^{s.\flat}_I$) as an abbreviated form. \end{definition}

After discussing several basic properties of this tilting operation, we illustrate how to compute the tilts of perfectoid towers in some specific cases; when they consist of \textit{log-regular rings} (see Theorem \ref{TiltingLogRegular} and Example \ref{examplelogstilt}). 

We should remark that all results on the perfection of purely inseparable towers (established in \S\ref{PureInsep}) can be applied to the tilts of perfectoid towers.

%In the rest of this subsection, we fix a perfectoid tower $(\{R_i\}_{i\geq 0}, \{t_i\}_{i\geq 0})$ arising from some pair $(R, I_{0})$, and denote by $\{I_i\}_{i\geq 1}$ the system of perfectoid pillars. 
%Moreover, we also denote by $(\{R^{s.\flat}_{i}\}_{i\geq 0}, \{t^{s.\flat}_{i}\}_{i\geq 0})$ the tilt $(\{(R_{i})^{s.\flat}_I\}_{i \geq 0}, \{(t_i)^{s.\flat}_I\}_{i \geq 0})$ as an abbreviated form. 

Let us state Main Theorem \ref{mt1} in a more refined form. 
This is an important tool when one wants to see that a certain correspondence holds between Noetherian rings of mixed characteristic and those of positive characteristic.

\begin{theorem}
\label{exactstilt}
Let $(\{R_i\}_{i\geq 0}, \{t_i\}_{i\geq 0})$ be a perfectoid tower arising from some pair $(R, I_{0})$, and let $\{I_i\}_{i\geq 0}$ be the system of perfectoid pillars. 
Let $(\{R^{s.\flat}_{i}\}_{i\geq 0}, \{t^{s.\flat}_{i}\}_{i\geq 0})$ denote the tilt associated to $(R, I_{0})$. 
Then the following assertions hold. 
\begin{enumerate}
\item
For every $j\geq 0$ and every element $f^{s.\flat}_{j}\in R^{s.\flat}_{j}$, the following conditions are equivalent. 
\begin{enumerate}
\item
$f_{j}^{s.\flat}$ is a generator of $I_{j}^{s.\flat}$. 
\item
For every $i\geq 0$, $\Phi^{(j)}_{i}(f^{s.\flat}_{j})$ is a generator of $I_{j+i}\overline{R_{j+i}}$. 
\end{enumerate}
In particular, $I^{s.\flat}_j$ is a principal ideal, and $(I^{s.\flat}_{j+1})^{p}=I^{s.\flat}_jR_{j+1}^{s.\flat}$. 
\item
We have isomorphisms of (possibly) non-unital rings $(R_j^{s.\flat})_{I^{s.\flat}_0\textnormal{-tor}}\cong (R_j)_{I_{0}\textnormal{-tor}}$ that are compatible with $\{t_j\}_{j\geq 0}$ and $\{t^{s.\flat}_j\}_{j\geq 0}$. 
\end{enumerate}
\end{theorem}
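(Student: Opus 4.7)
The direction (a)$\Rightarrow$(b) is immediate from Lemma \ref{129SunN}. For (b)$\Rightarrow$(a), I use the identification $I^{s.\flat}_j=\varprojlim_i I_{j+i}(R_{j+i}/I_0R_{j+i})$ (a consequence of Lemma \ref{129SunN}) and reduce principality to the surjectivity of multiplication by $f^{s.\flat}_j$ on $R^{s.\flat}_j$. From the short exact sequence of inverse systems
\[
0\to K_i\to R_{j+i}/I_0R_{j+i}\xrightarrow{\cdot\overline{f_{j+i}}}I_{j+i}(R_{j+i}/I_0R_{j+i})\to 0
\]
with $K_i:=\Ann_{R_{j+i}/I_0R_{j+i}}(\overline{f_{j+i}})$, the problem reduces (via the $\varprojlim^1$ sequence, using surjectivity of Frobenius projections from axiom (d)) to the Mittag-Leffler condition on $\{K_i\}_i$. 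Lemma \ref{1230FriN} (with $x=f_{j+i}$, $n=p^{j+i}$) gives $K_i\subseteq \varphi_{I_0,R_{j+i}}((R_{j+i})_{I_0\textnormal{-tor}})+\overline{f_{j+i}}^{p^{j+i}-1}(R_{j+i}/I_0R_{j+i})$; under one Frobenius projection, the second summand vanishes (since $p^{j+i}-1\geq p^{j+i-1}$ forces $\overline{f_{j+i-1}}^{p^{j+i}-1}=0$ in $R_{j+i-1}/I_0R_{j+i-1}$), while axiom (g)'s compatibility square stabilizes the image of the first summand under further projections. The companion identity $(I^{s.\flat}_{j+1})^p=I^{s.\flat}_jR^{s.\flat}_{j+1}$ is a straightforward component-wise calculation using $\overline{t_n}(\overline{f_n})=\overline{f_{n+1}}^p$ (from $\overline{t_n}\circ F_n=F_{R_{n+1}/I_0R_{n+1}}$ and $F_n(\overline{f_{n+1}})=\overline{f_n}$), which yields $t^{s.\flat}_j(f^{s.\flat}_j)=(f^{s.\flat}_{j+1})^p$ in $R^{s.\flat}_{j+1}$.

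\textbf{Part (2).} By Part (1), $I^{s.\flat}_0R^{s.\flat}_j=((f^{s.\flat}_j)^{p^j})$, so Corollary \ref{cor306MonN} identifies $(R^{s.\flat}_j)_{I^{s.\flat}_0\textnormal{-tor}}$ with the set of compatible sequences $(y_i)_i\in R^{s.\flat}_j$ satisfying $\overline{f_{j+i}}^{p^j}y_i=0$ for all $i$. I define $\Psi_j:(R_j)_{I_0\textnormal{-tor}}\to R^{s.\flat}_j$ by $z\mapsto(\varphi_{I_0,R_{j+i}}(z_i))_i$, where $z_0=z$ and $z_{i+1}:=(F_{j+i})_{\textnormal{tor}}^{-1}(z_i)$ (well-defined by axiom (g)); injectivity follows from Corollary \ref{96TueN}. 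That $\Psi_j(z)$ lies in the torsion part uses the commutation $(F_{j+i})_{\textnormal{tor}}(f_{j+i+1}a)=f_{j+i}(F_{j+i})_{\textnormal{tor}}(a)$, which follows from axiom (g) together with multiplicativity of $F_{j+i}$; iterating it reduces $f_{j+i}^{p^j}z_i=0$ to $f_j^{p^j}z=0$, and this holds since $f_j^{p^j}$ is an associate of a generator of $I_0$ and $I_0(R_j)_{I_0\textnormal{-tor}}=0$ by axiom (g). For surjectivity, given $(y_i)$ in the torsion part, Lemma \ref{1230FriN} (with $x=f_{j+i+1}^{p^j}$, $n=p^{i+1}$) yields $y_{i+1}=\varphi_{I_0,R_{j+i+1}}(w_{i+1})+\overline{f_{j+i+1}}^{p^{j+i+1}-p^j}r_{i+1}$; applying $F_{j+i}$ kills the error term (since $\overline{f_{j+i}}^{p^{j+i+1}-p^j}=0$, using $p^{j+i+1}-p^j\geq p^{j+i}$), so $y_i=\varphi_{I_0,R_{j+i}}((F_{j+i})_{\textnormal{tor}}(w_{i+1}))$. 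Thus each $y_i$ lies in the image of $\varphi_{I_0,R_{j+i}}$; setting $z_i:=\varphi^{-1}_{I_0,R_{j+i}}(y_i)$ produces a $(F)_{\textnormal{tor}}$-compatible sequence by Frobenius-compatibility of $(y_i)$ and injectivity of $\varphi$, so $(y_i)=\Psi_j(z_0)$. Compatibility with $\{t_i\}$ and $\{t^{s.\flat}_i\}$ follows from the construction.

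\textbf{Main obstacle.} The crux is the surjectivity step in Part (2): the Lemma \ref{1230FriN} decomposition by itself does not pin down $y_i$; one must combine it with Frobenius-compatibility and the nilpotency $\overline{f_n}^{p^n}=0$ modulo $I_0$ to force the error term to die under a single Frobenius projection, thereby confining each $y_i$ to the image of $\varphi_{I_0,R_{j+i}}$. This same mechanism underlies the Mittag-Leffler argument in Part (1), so both assertions hinge on a common technical core built from Lemma \ref{1230FriN} and axiom (g).
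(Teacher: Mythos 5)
Your argument is correct and is essentially the paper's own proof: the paper packages your common technical core (for both the Mittag--Leffler step in (1) and the surjectivity chase in (2)) as Lemma \ref{lem228242027}, whose proof rests on exactly the decomposition $\Ann_{\overline{R_i}}(\overline{I_i})=\im(\varphi_{I_0,R_i})\oplus\overline{I_i}^{\,p^i-1}$ obtained from Lemma \ref{1230FriN}, the vanishing of $F_i$ on $\overline{I_{i+1}}^{\,p^{i+1}-1}$, and the bijectivity of $(F_i)_{\textnormal{tor}}$ from axiom (g). I see no gaps.
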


We give its proof in the subsequent \S\ref{SSSectMT1}. Before that, let us observe that this theorem induces many good properties of tilting. 
In the rest of this subsection, we keep the notation as in Theorem \ref{exactstilt}.

\begin{lemma}\label{1215ThuN}
For every $i\geq 0$, $R^{s.\flat}_i$ is $I^{s.\flat}_{0}$-adically complete and separated. 
\end{lemma}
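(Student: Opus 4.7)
The plan is to deduce the completeness by invoking Proposition \ref{invqperfprop}(1) applied to the finitely generated ideal $J := I_0^{s.\flat} R_i^{s.\flat}$, so the key is to verify the hypothesis $J^k \subseteq \ker(\Phi^{(i)}_0)$ for some $k \geq 1$.

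First, I would use Theorem \ref{exactstilt}(1), which has just been stated, to pick a generator $f^{s.\flat}_0$ of $I^{s.\flat}_0$ and to record the two structural facts: each $I^{s.\flat}_j$ is principal, and $(I^{s.\flat}_{j+1})^p = I^{s.\flat}_j R^{s.\flat}_{j+1}$. Iterating this relation along the transition maps yields
$$I_0^{s.\flat} R_i^{s.\flat} = (I_i^{s.\flat})^{p^i},$$
so in particular the ideal $J$ is principal (hence finitely generated), and the $I_0^{s.\flat}$-adic and $I_i^{s.\flat}$-adic linear topologies on $R_i^{s.\flat}$ coincide.

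Next I would compute the image of $I_i^{s.\flat}$ under the $0$-th projection: by Lemma \ref{129SunN}, $\Phi^{(i)}_0(I_i^{s.\flat}) = I_i(R_i/I_0 R_i)$. Applying Proposition \ref{fn-generating}(2) inductively, $I_i^{p^i} \subseteq I_0 R_i$ inside $R_i$, and therefore
$$\Phi^{(i)}_0\bigl((I_i^{s.\flat})^{p^i}\bigr) = I_i^{p^i}(R_i/I_0 R_i) = 0,$$
which means $(I_i^{s.\flat})^{p^i} = I_0^{s.\flat} R_i^{s.\flat} \subseteq \ker(\Phi^{(i)}_0)$. Thus the finitely generated ideal $J = I_0^{s.\flat} R_i^{s.\flat}$ itself already satisfies $J^1 \subseteq \ker(\Phi^{(i)}_0)$.

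Finally, Proposition \ref{invqperfprop}(1) applies directly to $J$ and yields that $R_i^{s.\flat}$ is $I_0^{s.\flat}$-adically complete and separated, as desired. The only subtle point — which is not really an obstacle once Theorem \ref{exactstilt}(1) is in hand — is matching the $I_0^{s.\flat}$-adic topology to the descending filtration $\{\ker(\Phi^{(i)}_m)\}_{m\geq 0}$ used implicitly in Proposition \ref{invqperfprop}(1); this is precisely what the identity $I_0^{s.\flat} R_i^{s.\flat} = (I_i^{s.\flat})^{p^i}$ and the containment in $\ker(\Phi^{(i)}_0)$ encode.
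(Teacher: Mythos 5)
Your proof is correct and follows the same route as the paper: the paper's own argument is exactly ``$I_0^{s.\flat}R_i^{s.\flat}$ is principal by Theorem \ref{exactstilt}, so Proposition \ref{invqperfprop} (1) applies,'' leaving the containment $I_0^{s.\flat}R_i^{s.\flat}\subseteq\ker(\Phi^{(i)}_0)$ implicit, which you verify explicitly (and correctly) via Lemma \ref{129SunN} and Proposition \ref{fn-generating} (2). No gaps.
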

\begin{proof}
By Theorem \ref{exactstilt}, the ideal $I_0^{s.\flat}R^{s.\flat}_i\subseteq R^{s.\flat}_i$ is principal. Hence one can apply Proposition \ref{invqperfprop} (1)-(a) to deduce the assertion. 
\end{proof}

To discuss perfectoidness for the tilt $(\{R^{s.\flat}_{i}\}_{i\geq 0}, \{t^{s.\flat}_{i}\}_{i\geq 0})$, we introduce the following maps.

\begin{definition}\label{Fsflat}
For every $i\geq 0$, we define a ring map $(F_{i})^{s.\flat}_{I_0} : (R_{i+1})^{s.\flat}_{I_0}/I^{s.\flat}_0(R_{i+1})^{s.\flat}_{I_0} \to (R_{i})^{s.\flat}_{I_0}/I_0^{s.\flat}(R_{i})^{s.\flat}_{I_0}$ by the rule: 
$$
(F_{i})^{s.\flat}_{I_0}(\alpha_{i+1}\mod I^{s.\flat}_{0}(R_{i+1})_{I_0}^{s.\flat}) = (F_i)^{q.\textnormal{frep}}_{I_0}(\alpha_{i+1})\mod I^{s.\flat}_{0}(R_i)_{I_0}^{s.\flat}
$$
where $\alpha_{i+1}\in (R_{i+1})_{I_0}^{s.\flat}$. 
If no confusion occurs, we also denote by $F^{s.\flat}_{i}$  the symbol $(F_{i})^{s.\flat}_{I_0}$ 
as an abbreviated form. 
\end{definition}
\begin{remark}\label{rm304SatN}
Although the symbols $(\ \cdot\ )^{s.\flat}$ and 
$(\ \cdot\ )^{q.\textnormal{frep}}$ had been used interchangeably before Definition \ref{Fsflat}, $(F_{i})^{s.\flat}_{I_0}$ differs from $(F_i)^{q.\textnormal{frep}}_{I_0}$ in general.
\end{remark}

The following lemma is an immediate consequence of Theorem \ref{exactstilt} (1), but quite useful.
\begin{lemma}\label{921WedN}
For every $j\geq 0$, $\Phi^{(j)}_0$ induces an isomorphism 
\begin{equation}\label{1212MonN}
\overline{\Phi^{(j)}_0}: R^{s.\flat}_j/I^{s.\flat}_0R^{s.\flat}_j\xrightarrow{\cong} R_j/I_{0}R_j;\ \  a\mod I^{s.\flat}_0R^{s.\flat}_j \mapsto 
\Phi^{(j)}_{0}(a). 
\end{equation}
Moreover, $\{\overline{\Phi^{(i)}_0}\}_{i\geq 0}$ is compatible with 
$\{t_i\}_{i\geq 0}$ (resp.\ $\{F_{R^{s.\flat}_i/I^{s.\flat}_0R^{s.\flat}_i}\}_{i\geq 0}$, resp.\ $\{F^{s.\flat}_i\}_{i\geq 0}$) and $\{t^{s.\flat}_i\}_{i\geq 0}$ (resp.\ $\{F_{R_{i}/I_{0}R_{i}}\}_{i\geq 0}$, resp.\ $\{F_i\}_{i\geq 0}$). 
\end{lemma}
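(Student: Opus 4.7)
Plan. The lemma has two parts: (a) that $\overline{\Phi^{(j)}_0}$ is a well-defined ring isomorphism, and (b) that the family $\{\overline{\Phi^{(i)}_0}\}_{i\ge 0}$ is compatible with each of the three listed pairs of maps. I would handle (a) via Theorem~\ref{exactstilt}~(1), and (b) by direct inspection of the definitions.

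For well-definedness, the plan is to iterate the identity $(I^{s.\flat}_{k+1})^p = I^{s.\flat}_k R^{s.\flat}_{k+1}$ from Theorem~\ref{exactstilt}~(1) to get $I^{s.\flat}_0 R^{s.\flat}_j = (I^{s.\flat}_j)^{p^j}$, and then to choose a compatible pillar system $\{f_k\}$ via Proposition~\ref{fn-generating}~(3) together with a generator $f^{s.\flat}_j$ of $I^{s.\flat}_j$ for which $\Phi^{(j)}_i(f^{s.\flat}_j) = f_{j+i}$, as allowed by the characterization in Theorem~\ref{exactstilt}~(1). The identity $\overline{t_k}(f_k) = f_{k+1}^p$, which follows from $\overline{t_k}\circ F_k = F_{R_{k+1}/I_0 R_{k+1}}$ evaluated at $f_{k+1}$, then shows that the $0$-th projection of the generator $(f^{s.\flat}_j)^{p^j}$ of $I^{s.\flat}_0 R^{s.\flat}_j$ is $f_j^{p^j}$, which generates $I_j^{p^j}(R_j/I_0 R_j) = I_0 R_j/I_0 R_j = 0$. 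Hence $\Phi^{(j)}_0$ kills $I^{s.\flat}_0 R^{s.\flat}_j$ and descends to a well-defined map $\overline{\Phi^{(j)}_0}$. Surjectivity follows from axiom~(d): each $F_{j+i}$ being surjective, one lifts any $\bar a \in R_j/I_0 R_j$ inductively to a compatible sequence in $R^{s.\flat}_j$.

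For injectivity, I would identify $\ker\Phi^{(j)}_0$ with $\varprojlim_i(f_{j+i}^{p^j})$ via the inverse-limit description of $R^{s.\flat}_j$, while $I^{s.\flat}_0 R^{s.\flat}_j = ((f^{s.\flat}_j)^{p^j})$ is the image of multiplication by $(f^{s.\flat}_j)^{p^j}$. Taking $\varprojlim_i$ of the short exact sequence of inverse systems
\[
0 \to \mathrm{Ann}_{R_{j+i}/I_0 R_{j+i}}(f_{j+i}^{p^j}) \to R_{j+i}/I_0 R_{j+i} \xrightarrow{\,\cdot f_{j+i}^{p^j}\,} (f_{j+i}^{p^j}) \to 0
\]
yields an identification $\ker\Phi^{(j)}_0/I^{s.\flat}_0 R^{s.\flat}_j \cong \varprojlim^{1}_i \mathrm{Ann}_{R_{j+i}/I_0 R_{j+i}}(f_{j+i}^{p^j})$, so injectivity reduces to showing this $\varprojlim^{1}$ vanishes. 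This is the main obstacle. To handle it, one would combine axiom~(g) and Theorem~\ref{exactstilt}~(2) (to get a Frobenius-equivariant description of the ``torsion contribution'' to each annihilator) with the nilpotency $f_{j+i-1}^{p^{j+i-1}}=0$ in $R_{j+i-1}/I_0 R_{j+i-1}$ and the inequality $p^{j+i}-p^j\ge p^{j+i-1}$ (to kill the ``non-torsion contribution'' under the transition maps), showing that the images of the Frobenius projections restricted to the annihilators stabilize, so Mittag--Leffler yields the desired vanishing.

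The compatibility statements are routine verifications. For the transitions, the formula $t^{s.\flat}_i((a_k)_k) = (\overline{t_{i+k}}(a_k))_k$ gives $\Phi^{(i+1)}_0\circ t^{s.\flat}_i = \overline{t_i}\circ\Phi^{(i)}_0$ at the $0$-th coordinate, which descends to the required commutative square. For the Frobenius endomorphisms on the quotients $R_i/I_0 R_i$ and $R^{s.\flat}_i/I^{s.\flat}_0 R^{s.\flat}_i$, compatibility is automatic since both are $\mathbb{F}_p$-algebras and $\overline{\Phi^{(i)}_0}$ is a ring homomorphism. For the Frobenius projections, Definition~\ref{Fsflat} together with the formula~\eqref{FIqfrep} for $(F_i)^{q.\mathrm{frep}}_{I_0}$ gives $\Phi^{(i)}_0\circ (F_i)^{q.\mathrm{frep}}_{I_0} = F_i\circ\Phi^{(i+1)}_0$, which descends to the claimed square.
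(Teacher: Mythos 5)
Your proposal is correct in substance, but it takes a more hands-on route to injectivity than the paper does. The paper's proof is short: surjectivity is axiom (d), and for injectivity it observes that the shifted system $(\{R_{j+i}\}_{i\geq 0},\{t_{j+i}\}_{i\geq 0})$ is itself a perfectoid tower arising from $(R_j, I_0R_j)$ whose perfectoid pillars are $\{I_iR_{j+i}\}_{i\geq 0}$ (by condition (b) of Proposition \ref{fn-generating} (1)); its $0$-th small tilt is $R_j^{s.\flat}$ and the associated ideal $J_0^{s.\flat}$ is exactly $\ker\Phi_0^{(j)}$, so Theorem \ref{exactstilt} (1) applied to this shifted tower immediately identifies $\ker\Phi_0^{(j)}$ with $f_0^{s.\flat}R_j^{s.\flat}=I_0^{s.\flat}R_j^{s.\flat}$. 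What you do instead is unwind that citation: your identification $I_0^{s.\flat}R_j^{s.\flat}=(I_j^{s.\flat})^{p^j}$, your exact sequence of inverse systems, and your $\varprojlim^1$-vanishing for $\Ann_{\overline{R_{j+i}}}(\overline{f_{j+i}}^{\,p^j})$ are precisely the content of the proof of Theorem \ref{exactstilt} (1) (via Lemma \ref{lem228242027}) run for the shifted tower, with the pillar $\overline{I_i}$ replaced by $\overline{I_i}R_{j+i}=(\overline{f_{j+i}}^{\,p^j})$. Your sketch of the vanishing has the right ingredients — the decomposition of the annihilator into the $I_0$-torsion part (where the Frobenius projections are bijective by axiom (g)/Theorem \ref{exactstilt} (2)) plus a nilpotent part killed by the transition maps via the exponent estimate $p^{j+i}-p^j\geq p^{j+i-1}$ — so the argument closes, but since the needed $\varprojlim^1$ statement is literally Lemma \ref{lem228242027} (1) for the shifted tower, citing the theorem as the paper does saves you from redoing that bookkeeping. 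The three compatibility checks are handled the same way in both arguments and your verifications are fine.
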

\begin{proof}
By the axiom (d) in Definition \ref{stilt}, (\ref{1212MonN}) is surjective. 
Let us check the injectivity.
By Theorem \ref{exactstilt} (1), $I^{s.\flat}_{0}$ is generated by an element $f^{s.\flat}_{0}\in R^{s.\flat}_{0}$ such that $\Phi^{(0)}_{i}(f^{s.\flat}_{0})$ is a generator of $I_{i}\overline{R_i}$ ($i\geq 0$). 
Note that $(\{R_{j+i}\}_{i\geq 0}, \{t_{j+i}\}_{i\geq 0})$ is a perfectoid tower arising from $(R_{j}, I_{0}R_{j})$. 
Moreover, $\{I_{i}R_{j+i}\}_{i\geq 0}$ is the system of perfectoid pillars associated to $(R_{j}, I_{0}R_{j})$ (cf. the condition (b) in Proposition \ref{fn-generating} (1)). 
Put $J_{0}:=I_{0}R_{j}$. 
Then by Theorem \ref{exactstilt} (1) again, we find that $J^{s.\flat}_{0}=f^{s.\flat}_{0}R^{s.\flat}_{j}=I^{s.\flat}_{0}R_{j}^{s.\flat}$. 
Since $J^{s.\flat}_{0}=\ker \Phi^{(j)}_0$, we obtain the first assertion. 

One can deduce that $\{\overline{\Phi^{(i)}_0}\}_{i\geq 0}$ is compatible with the Frobenius projections from the commutativity of 
(\ref{frobeniusdecompose}), because the other compatibility assertions immediately follow from the construction. 
\end{proof}

\begin{remark}\label{rMonNm306}
Theorem \ref{exactstilt} (2) and Lemma \ref{921WedN} can be interpreted as a correspondence of homological invariants between $R_i$ and $R_i^{s.\flat}$ by using Koszul homologies. 
Indeed, for any generator $f_0$ (resp. $f_0^{s.\flat}$) of $I_0$ (resp. $I_0^{s.\flat}$), the Koszul homology $H_q(f_0^{s.\flat}; R_i^{s.\flat})$ is isomorphic to $H_q(f_0; R_i)$ for any $q \geq 0$ as an abelian group.\footnote{Note that $(R_i)_{I_0\textnormal{-tor}}=\Ann_{R_i}(I_0)$ by the axiom (g), and $(R_i^{s.\flat})_{I_0^{s.\flat}\textnormal{-tor}}=\Ann_{R_i^{s.\flat}}(I_0^{s.\flat})$ by Corollary \ref{cor306MonN}. } 
\end{remark}

Now we can show the invariance of several properties of perfectoid towers under tilting. 
The first one is perfectoidness, which is most important in our framework. 

\begin{proposition}
\label{stiltstilt}
$(\{R_{i}^{s.\flat}\}_{i \geq 0},\{t_{i}^{s.\flat}\}_{i \geq 0})$ is a perfectoid tower arising from $(R_0^{s.\flat}, I^{s.\flat}_0)$. 
\end{proposition}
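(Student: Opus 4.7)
The plan is to verify the axioms (a)--(g) of Definition \ref{stilt} for the tower $(\{R_i^{s.\flat}\}_{i\geq 0}, \{t_i^{s.\flat}\}_{i\geq 0})$ with respect to the principal ideal $I_0^{s.\flat}\subseteq R_0^{s.\flat}$, systematically transporting each axiom from the original tower via the compatible family of isomorphisms $\overline{\Phi^{(i)}_0}: R_i^{s.\flat}/I_0^{s.\flat}R_i^{s.\flat}\xrightarrow{\cong} R_i/I_0R_i$ supplied by Lemma \ref{921WedN}. A preliminary observation is that $R_0^{s.\flat}$ is an $\mathbb{F}_p$-algebra, so $p=0\in I_0^{s.\flat}$, giving (a); moreover, by Lemma \ref{921WedN} these isomorphisms intertwine $\overline{t_i^{s.\flat}}$ with $\overline{t_i}$ and the map $F_i^{s.\flat}$ of Definition \ref{Fsflat} with $F_i$, so the latter serves as the Frobenius projection attached to the tilt (the required factorization $\overline{t_i^{s.\flat}}\circ F_i^{s.\flat}=F_{R_{i+1}^{s.\flat}/I_0^{s.\flat}R_{i+1}^{s.\flat}}$ is obtained by reducing the commutative diagram (\ref{FrFtatt0}) modulo $I_0^{s.\flat}$). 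Consequently axioms (b), (c), (d) reduce immediately to the corresponding axioms for the original tower.

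For axiom (e), I would invoke Lemma \ref{1215ThuN}, which gives $I_0^{s.\flat}$-adic completeness (hence Zariskianness) on each layer. For axiom (f), Theorem \ref{exactstilt}(1) directly supplies principal generators of $I_0^{s.\flat}$ and of $I_1^{s.\flat}$, together with the identity $(I_1^{s.\flat})^p = I_0^{s.\flat}R_1^{s.\flat}$; what remains is the kernel condition $\ker(F_i^{s.\flat}) = I_1^{s.\flat}(R_{i+1}^{s.\flat}/I_0^{s.\flat}R_{i+1}^{s.\flat})$. Transporting along $\overline{\Phi^{(i+1)}_0}$, the left-hand side corresponds to $\ker(F_i) = I_1(R_{i+1}/I_0R_{i+1})$ by the original (f-2), while the right-hand side is sent to the image in $R_{i+1}/I_0R_{i+1}$ of $I_1^{s.\flat}$ along the composition $R_1^{s.\flat}\hookrightarrow R_{i+1}^{s.\flat}\twoheadrightarrow R_{i+1}/I_0R_{i+1}$. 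By the compatibility of the projection maps with the transitions $\{t_j^{s.\flat}\}$ and $\{t_j\}$, and by Lemma \ref{129SunN} applied with $(j,i)=(1,0)$, this image equals $I_1(R_{i+1}/I_0R_{i+1})$, as required.

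For axiom (g), the vanishing $I_0^{s.\flat}(R_i^{s.\flat})_{I_0^{s.\flat}\text{-tor}}=0$ is direct from Corollary \ref{cor306MonN} (since each $R_i^{s.\flat}$ is reduced by Proposition \ref{invqperfprop}(4)). For the bijection $(F_i^{s.\flat})_{\mathrm{tor}}$ and the required compatibility with $\varphi_{I_0^{s.\flat},\,\cdot}$, I would use that by Proposition \ref{invqperfprop}(3) the map $(F_i)^{q.\mathrm{frep}}_{I_0}: R_{i+1}^{s.\flat}\to R_i^{s.\flat}$ is already a ring isomorphism, hence restricts to an isomorphism on $I_0^{s.\flat}$-torsion submodules; since $F_i^{s.\flat}$ is by definition the reduction of this isomorphism modulo $I_0^{s.\flat}$, the square in axiom (g) commutes automatically.

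I expect the kernel identification in axiom (f-2) to be the most delicate step, since it involves tracking the ideal $I_1^{s.\flat}$ across multiple layers and then through the reduction modulo $I_0^{s.\flat}$; all other axioms either transport trivially along Lemma \ref{921WedN} or follow directly from Theorem \ref{exactstilt}, Lemma \ref{1215ThuN}, Proposition \ref{invqperfprop}, and Corollary \ref{cor306MonN}.
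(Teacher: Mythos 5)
Your proposal is correct, and for axioms (a)--(f) it follows the paper's proof essentially verbatim: both arguments transport (b), (c), (d) through the isomorphisms $\overline{\Phi^{(i)}_0}$ of Lemma \ref{921WedN}, obtain (e) from Lemma \ref{1215ThuN}, and obtain (f) from Theorem \ref{exactstilt} (1) together with the identification $\ker(F_i^{s.\flat})=(\overline{\Phi^{(i+1)}_0})^{-1}(I_1\overline{R_{i+1}})=I_1^{s.\flat}R_{i+1}^{s.\flat}\bmod I_0^{s.\flat}$; your route through Lemma \ref{129SunN} with $j=1$ is the same computation read from the other side.

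Where you genuinely diverge is axiom (g). The paper invokes Theorem \ref{exactstilt} (2) to produce a bijection $(F_i^{s.\flat})_{\mathrm{tor}}$ satisfying $(t_i^{s.\flat})_{\mathrm{tor}}\circ(F_i^{s.\flat})_{\mathrm{tor}}=F_{(R_{i+1}^{s.\flat})_{I_0^{s.\flat}\textnormal{-tor}}}$ and then checks commutativity of the square by a chain of identities and the injectivity of $\overline{t_i^{s.\flat}}$. You instead restrict the isomorphism $(F_i)^{q.\mathrm{frep}}_{I_0}$ itself, which makes the square commute tautologically in view of Definition \ref{Fsflat}. This is a legitimate and arguably cleaner alternative, but one step is stated too loosely: a ring isomorphism does \emph{not} automatically restrict to the $I_0^{s.\flat}$-torsion submodules, because $(F_i)^{q.\mathrm{frep}}_{I_0}$ is not a map of $R_0^{s.\flat}$-algebras (it intertwines the structure maps only up to Frobenius). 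You must check that it carries $I_0^{s.\flat}R_{i+1}^{s.\flat}$-torsion into $I_0^{s.\flat}R_i^{s.\flat}$-torsion. This does hold: writing $u$ for the image in $R_i^{s.\flat}$ of a generator $f_0^{s.\flat}$ of $I_0^{s.\flat}$, one has $(F_i)^{q.\mathrm{frep}}_{I_0}(t_i^{s.\flat}(u))=u^p$ (by Lemma \ref{secondCD} (3) applied to the tilt, or from diagram (\ref{FrFtatt0}) and the injectivity of $t_i^{s.\flat}$), so the isomorphism converts $f_0^{s.\flat}$-torsion into $(f_0^{s.\flat})^p$-torsion, which coincides with $f_0^{s.\flat}$-torsion. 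With that one line added, your treatment of (g) is complete and bypasses Theorem \ref{exactstilt} (2) entirely, at the cost of this small extra verification.
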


\begin{proof}
By Lemma \ref{921WedN} and Remark \ref{303FrN}, $(\{R_{i}^{s.\flat}\}_{i \geq 0},\{t_{i}^{s.\flat}\}_{i \geq 0})$ is a purely inseparable tower arising from $(R_0^{s.\flat}, I^{s.\flat}_0)$ that also satisfies the axioms (d), (f), and (g). 
Moreover, the axiom (e) holds by Lemma \ref{1215ThuN}. 
Hence the assertion follows. 
\end{proof}

Next we focus on finiteness properties. ``Small" in the name of small tilts comes from the following fact. 

\begin{proposition}
\label{smalltiltproperty2}
For every $j\geq 0$, the following assertions hold. 
\begin{enumerate}
\item
If $t_{j}: R_{j}\to R_{j+1}$ is module-finite, then so is $t^{s.\flat}_{j}: R^{s.\flat}_{j}\to R^{s.\flat}_{j+1}$. 
Moreover, the converse holds true when $R_j$ is $I_0$-adically complete and separated. 

\item
If $R_j$ is a Noetherian ring, then so is $R_j^{s.\flat}$. 
Moreover, the converse holds true when $R_j$ is $I_0$-adically complete and separated. 
\item
Assume that $R_j$ is a Noetherian local ring, and a generator of $I_0R_{j}$ is regular.
Then the dimension of $R_j$ is equal to that of $R_j^{s.\flat}$. 

\end{enumerate}
\end{proposition}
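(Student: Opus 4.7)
The three parts rest on a common dictionary: the isomorphism $\overline{\Phi^{(i)}_0} : R^{s.\flat}_i/I^{s.\flat}_0 R^{s.\flat}_i \xrightarrow{\cong} R_i/I_0 R_i$ of Lemma \ref{921WedN}, compatible with the transition maps, together with the $I^{s.\flat}_0$-adic completeness and separatedness of each $R^{s.\flat}_i$ (Lemma \ref{1215ThuN}) and the principality of $I^{s.\flat}_0$ (Theorem \ref{exactstilt} (1)). For (1), if $t_j$ is module-finite then so is $\overline{t_j}$, and hence so is $\overline{t^{s.\flat}_j}$; the plan is then to lift finitely many generators of $R^{s.\flat}_{j+1}/I^{s.\flat}_0 R^{s.\flat}_{j+1}$ to elements $\xi_1,\dots,\xi_n \in R^{s.\flat}_{j+1}$ and to verify by topological Nakayama that they generate $R^{s.\flat}_{j+1}$ over $R^{s.\flat}_j$: successive approximation produces coefficients that, thanks to the principality of $I^{s.\flat}_0$, assemble into convergent series in a generator $f^{s.\flat}_0$ of $I^{s.\flat}_0$, the series converging by completeness of $R^{s.\flat}_j$ and having the prescribed value by separatedness of $R^{s.\flat}_{j+1}$. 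The converse runs in the mirror direction, using the assumed completeness and separatedness of $R_j$ together with the $I_0$-adically Zariskian property of $R_{j+1}$ (axiom (e)) to make the approximation converge to the given element.

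For (2), I will invoke the standard result that a ring which is $I$-adically complete and separated for a finitely generated ideal $I$ is Noetherian whenever the quotient by $I$ is Noetherian (see e.g.\ \cite[Tag 05GH]{Stacks}). Applying this with $A = R^{s.\flat}_j$ and $I = I^{s.\flat}_0$, whose hypotheses are supplied by Lemma \ref{1215ThuN} and Theorem \ref{exactstilt} (1), and transporting Noetherianness of $R^{s.\flat}_j/I^{s.\flat}_0 R^{s.\flat}_j \cong R_j/I_0 R_j$ from $R_j$ through Lemma \ref{921WedN}, yields the forward direction; the converse is the same principle applied to $(R_j, I_0)$. For (3), part (2) together with Lemma \ref{localqinvperf} make $R^{s.\flat}_j$ into a Noetherian local ring. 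The regularity of a generator $f_0$ of $I_0 R_j$ is equivalent to $(R_j)_{I_0\textnormal{-tor}} = 0$; Theorem \ref{exactstilt} (2) transports this to $(R^{s.\flat}_j)_{I^{s.\flat}_0\textnormal{-tor}} = 0$, so any generator $f^{s.\flat}_0$ of $I^{s.\flat}_0 R^{s.\flat}_j$ is a non-zero-divisor. Krull's principal ideal theorem then yields $\dim R_j = \dim R_j/(f_0) + 1$ and $\dim R^{s.\flat}_j = \dim R^{s.\flat}_j/(f^{s.\flat}_0) + 1$, and Lemma \ref{921WedN} identifies the two quotients, so the dimensions coincide.

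The principal obstacle appears in the converse of (1): unlike $R^{s.\flat}_j$, the ring $R_{j+1}$ is not a priori $I_0$-adically complete, only Zariskian, so the topological Nakayama step on the mixed-characteristic side must be arranged so that the formal series produced by successive approximation has a genuine limit inside $R_{j+1}$. This is handled by working with the finitely generated $R_j$-submodule spanned by the chosen lifts $\tilde{y}_1,\dots,\tilde{y}_n$, exploiting its $I_0$-adic completeness as a module over the complete ring $R_j$, and using the Zariskian property to apply Nakayama to the resulting cokernel. Everything else reduces to a clean application of the tilt dictionary.
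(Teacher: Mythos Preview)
Your approach is essentially the paper's: the same dictionary (Lemma~\ref{921WedN}, Lemma~\ref{1215ThuN}, Theorem~\ref{exactstilt}) and the same standard inputs. The paper packages your hand-rolled topological Nakayama for (1) as a citation to \cite[Theorem 8.4]{Ma86}, invokes \cite[Tag 05GH]{Stacks} for (2) exactly as you do, and for (3) uses Theorem~\ref{exactstilt} to transport regularity of a generator and then the one-step dimension drop, matching your use of Krull's principal ideal theorem together with Lemma~\ref{921WedN}.

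The one place your argument does not close is the converse of (1). Your proposed fix---form the finitely generated $R_j$-submodule $N=\sum R_j\tilde{y}_i\subseteq R_{j+1}$, use its completeness, then apply Nakayama to the cokernel via the Zariskian property of $R_{j+1}$---breaks at two points. First, $N$ is only an $R_j$-submodule, not an ideal of $R_{j+1}$, so $R_{j+1}/N$ carries no $R_{j+1}$-module structure and the Zariskian property of $R_{j+1}$ says nothing about it; to apply Nakayama over the Zariskian ring $R_j$ instead, you would need $R_{j+1}/N$ to be finitely generated over $R_j$, which is exactly what you are trying to prove. Second, the $I_0$-adic completeness of $N$ over $R_j$ is not automatic without Noetherianness of $R_j$, which part (1) does not assume. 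What your successive-approximation argument (and the paper's bare citation of \cite[Theorem 8.4]{Ma86}) actually requires is that $R_{j+1}$ be $I_0$-adically \emph{separated}: then the remainder $x-\sum a_i\tilde{y}_i\in\bigcap_m f_0^m R_{j+1}$ is forced to vanish. The paper does not spell this out either, so you have correctly located the delicate step; but your workaround does not supply the missing ingredient.
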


\begin{proof}
(1): 
By Lemma \ref{921WedN}, $\overline{t_j}: R_j/I_0R_j\to R_{j+1}/I_0R_{j+1}$ is module-finite if and only if 
$\overline{t^{s.\flat}_j}: R_j^{s.\flat}/I_0^{s.\flat}R_j^{s.\flat}\to R_{j+1}^{s.\flat}/I_0^{s.\flat}R_{j+1}^{s.\flat}$ is so. Thus by Lemma  \ref{1215ThuN} and \cite[Theorem 8.4]{Ma86}, the assertion follows. 

(2):
One can prove this assertion by applying Lemma \ref{1215ThuN}, Lemma \ref{921WedN}, and \cite[Tag 05GH]{Stacks}.

(3):
By Theorem \ref{exactstilt}, $I_0^{s.\flat}R^{s.\flat}_j$ is also generated by a regular element.
Thus we obtain the equalities $\dim R_j = \dim R_j/I_0R_j +1$ and $\dim R_j^{s.\flat} = \dim R_j^{s.\flat}/I_0^{s.\flat}R_j^{s.\flat} + 1$.
By combining these equalities with Lemma \ref{921WedN}, we deduce assertion. 
\end{proof}

Proposition \ref{smalltiltproperty2} (2) says that ``Noetherianness'' for a perfectoid tower is preserved under tilting. 

%The third is the invariance of the Noetherianness of perfectoid towers.
%One of the reasons for establishing the theory of perfectoid towers is to develop a method of applying perfectoid theory to Noetherian rings.
%Now, 
%Before that, we give the definition of Noetherian towers.

\begin{definition}\label{Noethtower}
We say that $(\{R_i \}_{i \geq 0} , \{ t_i\}_{i \geq 0})$ is \textit{Noetherian} if $R_i$ is Noetherian for each $i \geq 0$.
\end{definition}

\begin{corollary}
If $(\{R_i \}_{i \geq 0} , \{ t_i\}_{i \geq 0})$ is Noetherian, then so is the tilt $(\{R_i^{s.\flat}\}_{i \geq 0}, \{t_i^{s.\flat}\}_{i \geq 0} )$.
Moreover, the converse holds true when $R_i$ is $I_0$-adically complete and separated for each $i \geq 0$. 
\end{corollary}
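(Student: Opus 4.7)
The plan is to apply Proposition \ref{smalltiltproperty2} (2) layerwise. The definition of Noetherianness for a tower (Definition \ref{Noethtower}) is purely pointwise: a tower is Noetherian precisely when each $R_i$ is Noetherian as a ring. Since Proposition \ref{smalltiltproperty2} (2) already transfers the Noetherian property between $R_j$ and its $j$-th small tilt $R_j^{s.\flat}$ at a single layer, the corollary follows by quantifying over $j\geq 0$.

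For the forward implication, I fix an arbitrary index $j\geq 0$. Under the assumption that $(\{R_i\}_{i\geq 0}, \{t_i\}_{i\geq 0})$ is Noetherian, the ring $R_j$ is Noetherian, and Proposition \ref{smalltiltproperty2} (2) directly yields that $R_j^{s.\flat}$ is Noetherian as well. As this holds for every $j$, the tilt $(\{R_i^{s.\flat}\}_{i\geq 0}, \{t_i^{s.\flat}\}_{i\geq 0})$ is Noetherian in the sense of Definition \ref{Noethtower}.

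For the converse, I add the hypothesis that each $R_i$ is $I_0$-adically complete and separated; this exactly matches the hypothesis of the second half of Proposition \ref{smalltiltproperty2} (2). Fix again $j\geq 0$. Given the Noetherianness of $R_j^{s.\flat}$, the proposition implies that $R_j$ itself is Noetherian. Quantifying over $j$ concludes the argument.

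There is no essential obstacle: the corollary is purely formal given Proposition \ref{smalltiltproperty2} (2). The only point to remember is that the completeness assumption is genuinely needed at every layer in the converse direction, reflecting the fact that the small tilt is constructed via an inverse limit which only sees $R_j$ up to its $I_0$-adic completion (cf.\ Lemma \ref{1215ThuN}).
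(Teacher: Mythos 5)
Your proposal is correct and matches the paper's argument exactly: the corollary is an immediate layerwise application of Proposition \ref{smalltiltproperty2} (2), with the completeness hypothesis invoked only for the converse, just as you describe.
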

\begin{proof}
It immediately follows from Proposition \ref{smalltiltproperty2} (2). 
\end{proof}

%The second is the equivalence of categories of finite \'etale algebras over each layer of perfectoid towers and of their tilts.

Finally, let us consider perfectoid towers of henselian rings. Then we obtain the equivalence of categories of finite \'etale algebras over each layer.

\begin{proposition}\label{finiteetaleequivalence}
Assume that $R_{i}$ is $I_{0}$-adically Henselian for any $i \geq 0$. 
Then we obtain the following equivalences of categories:
$$
\bFEt(R_i^{s.\flat}) \xrightarrow{\cong} \bFEt(R_i).
$$
\end{proposition}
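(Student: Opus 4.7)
The plan is to establish the equivalence as a three-step composition, factoring through the common quotient ring modulo the defining ideal. Specifically, for each $i \geq 0$, I will construct the equivalence $\bFEt(R_i^{s.\flat}) \xrightarrow{\cong} \bFEt(R_i)$ as the composite
\[
\bFEt(R_i^{s.\flat}) \xrightarrow{\cong} \bFEt(R_i^{s.\flat}/I_0^{s.\flat} R_i^{s.\flat}) \xrightarrow{\cong} \bFEt(R_i/I_0 R_i) \xrightarrow{\cong} \bFEt(R_i).
\]

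For the first equivalence, I would invoke Lemma \ref{1215ThuN}, which tells us that $R_i^{s.\flat}$ is $I_0^{s.\flat}$-adically complete and separated. In particular, the pair $(R_i^{s.\flat}, I_0^{s.\flat} R_i^{s.\flat})$ is henselian, so \cite[Tag 09ZL]{Stacks} produces the desired equivalence between finite étale algebras over $R_i^{s.\flat}$ and those over its quotient modulo $I_0^{s.\flat}$. The third equivalence is entirely parallel: it uses the standing hypothesis that $R_i$ is $I_0$-adically Henselian, again via \cite[Tag 09ZL]{Stacks}.

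For the middle equivalence, I would directly appeal to Lemma \ref{921WedN}, which furnishes the ring isomorphism
\[
\overline{\Phi^{(i)}_0}\colon R_i^{s.\flat}/I_0^{s.\flat} R_i^{s.\flat} \xrightarrow{\cong} R_i/I_0 R_i,
\]
and any isomorphism of rings induces an equivalence between the respective categories of finite étale algebras by base change.

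There is no substantive obstacle here; the statement is essentially a formal consequence of results already in hand. The only point worth mentioning is that if one wishes to package these $\bFEt$-equivalences compatibly with the transition maps $t_i^{s.\flat}$ and $t_i$, one should check that the base-change functor along $\overline{\Phi^{(i)}_0}$ is compatible with those along $\overline{\Phi^{(i+1)}_0}$, but this is immediate from the compatibility of $\{\overline{\Phi^{(i)}_0}\}_{i \geq 0}$ with $\{t_i^{s.\flat}\}_{i \geq 0}$ and $\{t_i\}_{i \geq 0}$ recorded in Lemma \ref{921WedN}.
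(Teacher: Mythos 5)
Your proof is correct and is exactly the argument the paper gives: the paper's proof simply cites Lemma \ref{1215ThuN} (so that $(R_i^{s.\flat}, I_0^{s.\flat}R_i^{s.\flat})$ is a henselian pair), Lemma \ref{921WedN} (the isomorphism modulo the defining ideals), and \cite[Tag 09ZL]{Stacks}, which is precisely your three-step factorization. No issues.
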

\begin{proof}
This follows from Lemma \ref{1215ThuN}, Lemma \ref{921WedN} and \cite[Tag 09ZL]{Stacks}.
%By Theorem \ref{exactstilt} and \cite[Tag 09ZL]{Stacks}, we obtain the desired equivalences of categories.
\end{proof}

\subsubsection{Proof of Main Theorem \ref{mt1}}\label{SSSectMT1}
We keep the notation as above. Furthermore, we set 
$\overline{I_i}:=I_i\overline{R_i}$ for every $i\geq 0$. 
To prove Theorem \ref{exactstilt}, we investigate some relationship between $(R_{i})_{I_{0}\textnormal{-tor}}$ and $\Ann_{\overline{R_{i}}}(\overline{I_{i}})$. 
First recall that we can regard $(R_{i})_{I_{0}\textnormal{-tor}}$ as a non-unital subring of $\overline{R_i}$ by Corollary \ref{96TueN}. 
Moreover, the map $\overline{t_{i}}$ naturally restricts to $(R_{i})_{I_{0}\textnormal{-tor}}\hookrightarrow (R_{i+1})_{I_{0}\textnormal{-tor}}$, as follows.

\begin{lemma}\label{1120SunN}
For every $i\geq 0$, let $(t_i)_{\textnormal{tor}}: (R_{i})_{I_{0}\textnormal{-tor}}\to (R_{i+1})_{I_{0}\textnormal{-tor}}$ be the restriction of $t_{i}$. Then the following assertions hold. 
\begin{enumerate}
\item
$(t_i)_{\textnormal{tor}}$ is the unique map such that 
$\varphi_{I_{0}, R_{i+1}}\circ (t_i)_{\textnormal{tor}}=\overline{t_{i}}\circ\varphi_{I_{0}, R_i}$. 
\item
$(t_i)_{\textnormal{tor}}\circ (F_i)_{\textnormal{tor}}=(F_{i+1})_{\textnormal{tor}}\circ (t_{i+1})_{\textnormal{tor}}=F_{(R_{i+1})_{I_0\textnormal{-tor}}}$. 
\end{enumerate}
\end{lemma}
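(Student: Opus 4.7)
My plan is to exploit two facts repeatedly: first, the map $\varphi_{I_0,R_{i+1}}$ is injective (since $I_{0}(R_{i+1})_{I_{0}\textnormal{-tor}}=(0)$ by axiom (g) in Definition~\ref{stilt}, so Corollary~\ref{96TueN} applies); second, $\varphi_{I_{0},\,\cdot\,}$ is natural in the ring argument, so the square
\[
\xymatrix{
(R_i)_{I_0\textnormal{-tor}} \ar[d]_{(t_i)_{\textnormal{tor}}} \ar[r]^{\varphi_{I_0,R_i}} & R_i/I_0R_i \ar[d]^{\overline{t_i}} \\
(R_{i+1})_{I_0\textnormal{-tor}} \ar[r]_{\varphi_{I_0,R_{i+1}}} & R_{i+1}/I_0R_{i+1}
}
\]
commutes by construction. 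Both assertions will then follow by chasing these diagrams together with the defining property of $(F_i)_{\textnormal{tor}}$ (from axiom (g)) and the two Frobenius decompositions \eqref{frobeniusdecompose} and \eqref{frobeniusdecompose2}.

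For (1), the commutative square above gives the identity $\varphi_{I_{0}, R_{i+1}}\circ (t_i)_{\textnormal{tor}}=\overline{t_{i}}\circ\varphi_{I_{0}, R_i}$ immediately. Uniqueness of a map $(R_i)_{I_0\textnormal{-tor}} \to (R_{i+1})_{I_0\textnormal{-tor}}$ satisfying this relation is then forced by the injectivity of $\varphi_{I_0,R_{i+1}}$.

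For (2), I will verify each of the two equalities after post-composing with the injective map $\varphi_{I_{0},R_{i+1}}$. For the first equality, the computation runs
\[
\varphi_{I_{0},R_{i+1}}\circ (t_i)_{\textnormal{tor}}\circ (F_i)_{\textnormal{tor}}
= \overline{t_i}\circ \varphi_{I_{0},R_i}\circ (F_i)_{\textnormal{tor}}
= \overline{t_i}\circ F_i\circ \varphi_{I_{0},R_{i+1}}
= F_{R_{i+1}/I_0R_{i+1}}\circ \varphi_{I_{0},R_{i+1}},
\]
using (1), axiom (g), and the triangle \eqref{frobeniusdecompose} in turn; since $F_{R_{i+1}/I_0R_{i+1}}\circ\varphi_{I_0,R_{i+1}}$ equals $\varphi_{I_0,R_{i+1}}\circ F_{(R_{i+1})_{I_0\textnormal{-tor}}}$ by the very definition of the $p$-th power map on the torsion subring (Lemma-Definition~\ref{2295N}), injectivity of $\varphi_{I_0,R_{i+1}}$ yields $(t_i)_{\textnormal{tor}}\circ (F_i)_{\textnormal{tor}} = F_{(R_{i+1})_{I_0\textnormal{-tor}}}$.

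The second equality is analogous, substituting the triangle \eqref{frobeniusdecompose2} for \eqref{frobeniusdecompose}: apply axiom (g) at level $i+1$, then (1) again, and finally Lemma~\ref{secondCD}(3) which gives $F_{i+1}\circ \overline{t_{i+1}}=F_{R_{i+1}/I_0R_{i+1}}$. I do not anticipate any genuine obstacle here; the only point that requires a little care is making sure to cite axiom (g) for both the existence of each $(F_j)_{\textnormal{tor}}$ and the injectivity of each $\varphi_{I_0,R_j}$, since both uses are essential and both indices ($i$ and $i+1$) appear.
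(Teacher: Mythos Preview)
Your proof is correct and follows essentially the same approach as the paper: both arguments use the injectivity of $\varphi_{I_0,R_{i+1}}$ (from axiom~(g) and Corollary~\ref{96TueN}) to reduce the identities among the torsion maps to the known compatibility $\overline{t_i}\circ F_i = F_{i+1}\circ\overline{t_{i+1}} = F_{R_{i+1}/I_0R_{i+1}}$ coming from \eqref{frobeniusdecompose} and Lemma~\ref{secondCD}(3). The paper simply phrases this more tersely by saying that one may regard $(t_i)_{\textnormal{tor}}$ and $(F_i)_{\textnormal{tor}}$ as restrictions of $\overline{t_i}$ and $F_i$, whereas you spell out the post-composition argument explicitly.
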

\begin{proof}
Since $\varphi_{I_{0}, R_i}$ is injective by Corollary \ref{96TueN}, the assertion (1) is clear from the construction. 
Hence we can regard $(t_i)_\textnormal{tor}$ and $(F_{i})_\textnormal{tor}$ as the restrictions of $\overline{t_{i}}$ and $F_{i}$,  respectively. 
Thus the assertion (2) follows from the compatibility $\overline{t_{i}}\circ F_{i}=F_{i+1}\circ\overline{t_{i+1}}=F_{\overline{R_{i+1}}}$ induced by Lemma \ref{secondCD} (3). 
\end{proof}

The map $\varphi_{I_{0}, R_i}: (R_{i})_{I_{0}\textnormal{-tor}}\hookrightarrow R_{i}/I_{0}R_{i}$ restricts to $\Ann_{R_{i}}(I_{i})\hookrightarrow \Ann_{\overline{R_{i}}}(\overline{I_{i}})$. On the other hand, $\Ann_{R_{i}}(I_{i})$ turns out to be equal to $(R_{i})_{I_{0}\textnormal{-tor}}$ by the following lemma. 

\begin{lemma}\label{1121MonN}
For every $i\geq 0$, $I_{i}(R_i)_{I_0\textnormal{-tor}}=0$. In particular, 
$\im(\varphi_{I_0, R_i})\subseteq \Ann_{\overline{R_{i}}}(\overline{I_{i}})$. 
\end{lemma}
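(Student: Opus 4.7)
I would prove the first equality by induction on $i\geq 0$; the ``in particular'' containment then follows immediately because $\varphi_{I_0, R_i}$ is $R_i$-linear (it factors through $\overline{R_i}=R_i/I_0R_i$), so $I_i(R_i)_{I_0\textnormal{-tor}}=0$ passes to $\overline{I_i}\cdot\im(\varphi_{I_0,R_i})=0$. The base case $i=0$ is precisely the first clause of axiom (g) in Definition \ref{stilt}. For the inductive step, assuming $I_i(R_i)_{I_0\textnormal{-tor}}=0$, I would fix $y\in(R_{i+1})_{I_0\textnormal{-tor}}$ and $f\in I_{i+1}$ and aim to show $fy=0$; note first that $fy\in(R_{i+1})_{I_0\textnormal{-tor}}$ because $I_0 y=0$ by axiom (g).

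The key step is to apply the Frobenius projection $F_i:R_{i+1}/I_0R_{i+1}\to R_i/I_0R_i$ to $\overline{fy}$. Multiplicativity of $F_i$ gives $F_i(\overline{fy})=F_i(\overline{f})\cdot F_i(\overline{y})$. By Proposition \ref{fn-generating} (condition (b) in part (1) applied to the system of perfectoid pillars), $F_i(\overline{f})\in\overline{I_i}$; by the commutative square of axiom (g), $F_i(\overline{y})=\varphi_{I_0,R_i}((F_i)_\textnormal{tor}(y))$ lies in the image of the torsion part. Pushing the inductive hypothesis through the $R_i$-linear map $\varphi_{I_0,R_i}$ then yields $\overline{I_i}\cdot\im(\varphi_{I_0,R_i})=0$, and hence $F_i(\overline{fy})=0$.

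To upgrade this vanishing to $fy=0$, I would invoke the commutative square of axiom (g) once more: it gives $\varphi_{I_0,R_i}((F_i)_\textnormal{tor}(fy))=F_i(\overline{fy})=0$. Since $I_0$ is principal (axiom (f)) and annihilates $(R_i)_{I_0\textnormal{-tor}}$ (axiom (g)), Corollary \ref{96TueN} applies to make $\varphi_{I_0,R_i}$ injective on the torsion part, forcing $(F_i)_\textnormal{tor}(fy)=0$. The bijectivity of $(F_i)_\textnormal{tor}$ built into axiom (g) then upgrades this to $fy=0$, completing the induction. The main subtlety is precisely this last step: the ring map $F_i$ itself has nontrivial kernel $\overline{I_1}\overline{R_{i+1}}$, so knowing $F_i(\overline{fy})=0$ would be too weak without the torsion bijection; axiom (g) is exactly the extra structure that lets annihilation descend from the Frobenius projection to the torsion submodule itself, and thereby propagate cleanly through the tower.
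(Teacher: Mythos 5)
Your proof is correct, but it takes a genuinely different route from the paper's. The paper argues directly, with no induction on $i$: by Lemma \ref{1120SunN} (2) and axiom (g), the $p$-th power map $F_{(R_i)_{I_0\textnormal{-tor}}}=(t_i)_{\textnormal{tor}}\circ(F_i)_{\textnormal{tor}}$ is injective, so the non-unital ring $(R_i)_{I_0\textnormal{-tor}}$ contains no nonzero nilpotent of $R_i$; since $I_i^{p^i}=I_0R_i$ (Proposition \ref{fn-generating} (2) iterated), one has $(R_i)_{I_0\textnormal{-tor}}=(R_i)_{I_i\textnormal{-tor}}$, and Lemma \ref{2294N} then yields $I_i(R_i)_{I_0\textnormal{-tor}}=(0)$ in one stroke. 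You instead propagate the base case $i=0$ up the tower: you use Proposition \ref{fn-generating} to place $F_i(\overline{f})$ in $\overline{I_i}$, the axiom (g) square to identify $F_i(\overline{y})$ with the image of $(F_i)_{\textnormal{tor}}(y)$, and then the injectivity of $\varphi_{I_0,R_i}$ (Corollary \ref{96TueN}) together with the bijectivity of $(F_i)_{\textnormal{tor}}$ to descend the vanishing from $\overline{R_i}$ back to the torsion submodule of $R_{i+1}$ --- all steps that are available, so the induction closes. The paper's argument is shorter and isolates the conceptual point (reducedness of the torsion part forces annihilation, via Lemma \ref{2294N}), while yours is more explicit about how axiom (g) transports information between layers; your closing remark about why $F_i(\overline{fy})=0$ alone would be insufficient, and why the torsion bijection is the essential extra input, is exactly the right diagnosis and is implicitly the same role axiom (g) plays in the paper's version.
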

\begin{proof}
By Lemma \ref{1120SunN} (2) and the axiom (g) in Definition \ref{stilt}, we find that $F_{(R_i)_{I_0\textnormal{-tor}}}$ is injective. 
In other words, $(R_{i})_{I_{0}\textnormal{-tor}}$ does not contain any non-zero nilpotent element. 
Moreover, $(R_{i})_{I_{0}\textnormal{-tor}}=(R_{i})_{I_i\textnormal{-tor}}$. Hence the assertion follows from Lemma \ref{2294N}. 
\end{proof}

The following lemma is essential for proving Theorem \ref{exactstilt}. 

\begin{lemma}\label{lem228242027}
For every $i\geq 0$, $F_{i}$ restricts to a $\mathbb{Z}$-linear map $\Ann_{\overline{R_{i+1}}}(\overline{I_{i+1}})\to \Ann_{\overline{R_i}}(\overline{I_{i}})$. Moreover, the resulting inverse system $\{\Ann_{\overline{R_i}}(\overline{I_i})\}_{i\geq 0}$ has the following properties. 
\begin{enumerate}
\item
For every $j\geq 0$, 
$\varprojlim_{i\geq 0}^1\Ann_{\overline{R_{j+i}}}(\overline{I_{j+i}})=(0)$. 
\item
There are isomorphisms of $\mathbb{Z}$-linear maps $\varprojlim_{i\geq 0}\Ann_{\overline{R_{j+i}}}(\overline{I_{j+i}})\cong (R_j)_{I_{0}\textnormal{-tor}}$ $(j\geq 0)$ that are multiplicative, and compatible with 
$\{t^{s,\flat}_{j}\}_{j\geq 0}$ and $\{t_j\}_{j\geq 0}$. 
\end{enumerate}
\end{lemma}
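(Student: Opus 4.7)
The first assertion (that $F_i$ restricts to $\Ann_{\overline{R_{i+1}}}(\overline{I_{i+1}})\to\Ann_{\overline{R_i}}(\overline{I_i})$) will follow immediately from Proposition \ref{fn-generating}: the equality $F_i(\overline{I_{i+1}})=\overline{I_i}$ together with the fact that $F_i$ is a ring map implies that for $x\in\Ann_{\overline{R_{i+1}}}(\overline{I_{i+1}})$ and $y=F_i(z)\in\overline{I_i}$ with $z\in\overline{I_{i+1}}$, one has $yF_i(x)=F_i(zx)=0$; $\mathbb{Z}$-linearity is then automatic.

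For (1) and (2), the plan is to identify the stable image of the inverse system $\{M_i:=\Ann_{\overline{R_{j+i}}}(\overline{I_{j+i}})\}_{i\geq 0}$ under the transitions $\psi_i:=F_{j+i}$ with $N_i:=\im(\varphi_{I_0,R_{j+i}})$. Using Proposition \ref{fn-generating} (3), fix compatible generators $\overline{f_k}$ of $\overline{I_k}$ (so $F_{k-1}(\overline{f_k})=\overline{f_{k-1}}$), and lift each to a generator $f_k\in R_k$ of the principal ideal $I_k$. Iterating $I_m^p=I_{m-1}R_m$ yields $I_k^{p^k}=I_0R_k$; hence $f_k^{p^k}$ generates $I_0R_k$, and Lemma \ref{1230FriN} applied with $M=R_k$, $x=f_k$, and $n=p^k$ gives
\[
\Ann_{\overline{R_k}}(\overline{I_k})\subseteq\im(\varphi_{I_0,R_k})+\overline{f_k}^{\,p^k-1}\overline{R_k}.
\]
Applying $F_{k-1}$, the second summand lands in $\overline{f_{k-1}}^{\,p^k-1}\overline{R_{k-1}}$, which vanishes because $\overline{f_{k-1}}^{\,p^{k-1}}=0$ in $\overline{R_{k-1}}$ (from $I_{k-1}^{p^{k-1}}=I_0R_{k-1}$) and $p^k-1\geq p^{k-1}$ for every $p\geq 2$. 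Thus $\psi_i(M_{i+1})\subseteq N_i$; axiom (g) of Definition \ref{stilt} yields a commutative square that identifies the restriction $\psi_i\colon N_{i+1}\to N_i$ with the bijection $(F_{j+i})_{\textnormal{tor}}$, so combined with $N_{i+1}\subseteq M_{i+1}$ (Lemma \ref{1121MonN}) we obtain the equality $\psi_i(M_{i+1})=N_i$.

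The stable image of $M_i$ is therefore $N_i$, reached already in one step, so the Mittag--Leffler condition is satisfied; this yields $\varprojlim^1_{i} M_i=0$ and $\varprojlim_{i} M_i=\varprojlim_{i} N_i$. Since every transition $N_{i+1}\to N_i$ is bijective, the $0$-th projection $\varprojlim_{i} N_i\xrightarrow{\cong}N_0$ composed with $\varphi^{-1}_{I_0,R_j}\colon N_0\to(R_j)_{I_0\textnormal{-tor}}$ gives the desired $\mathbb{Z}$-linear isomorphism $\varprojlim_{i} M_i\cong(R_j)_{I_0\textnormal{-tor}}$. Multiplicativity is inherited from the fact that $\varphi_{I_0,R_j}$ is a (non-unital) ring map and that multiplication in $\varprojlim_{i} M_i$ is componentwise.

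For the last compatibility statement, the central identity $\overline{t_{j+i}}(\alpha_i)=\overline{t_{j+i}}(F_{j+i}(\alpha_{i+1}))=F_{\overline{R_{j+i+1}}}(\alpha_{i+1})=\alpha_{i+1}^p$ (from diagram (\ref{frobeniusdecompose})) simultaneously shows that $t^{s.\flat}_j((\alpha_i)_{i\geq 0})=(\alpha_{i+1}^p)_{i\geq 0}$ lies componentwise in the corresponding annihilators, and that its $0$-th coordinate $\alpha_1^p=\overline{t_j}(\alpha_0)$ is matched, under $\varphi_{I_0,R_{j+1}}^{-1}$, with $(t_j)_{\textnormal{tor}}(\varphi_{I_0,R_j}^{-1}(\alpha_0))$. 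The main technical obstacle will be the decomposition and nilpotency estimate in the second paragraph; the rest reduces to standard inverse-limit formalism combined with Lemmas \ref{1120SunN} and \ref{1121MonN}.
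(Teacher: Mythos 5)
Your proof is correct and follows essentially the same route as the paper's: the decomposition $\Ann_{\overline{R_k}}(\overline{I_k})=\im(\varphi_{I_0,R_k})+\overline{I_k}^{\,p^k-1}$ obtained from Lemmas \ref{1230FriN} and \ref{1121MonN}, the observation that $F_{k-1}$ annihilates $\overline{I_k}^{\,p^k-1}$ because $\overline{I_{k-1}}^{\,p^{k-1}}=0$, and the bijectivity of $(F_i)_{\textnormal{tor}}$ from axiom (g) are exactly the paper's ingredients. The only (harmless) difference is that you conclude $\varprojlim^1=0$ by verifying the Mittag--Leffler condition on the stable images, whereas the paper upgrades the decomposition to a direct sum and applies the six-term $\varprojlim$--$\varprojlim^1$ sequence to the resulting short exact sequence of inverse systems.
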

\begin{proof}
Since $F_{i}(\overline{I_{i+1}})=\overline{I_{i}}$, $F_{i}$ restricts to a $\mathbb{Z}$-linear map $(F_{i})_{\textnormal{ann}}: \Ann_{\overline{R_{i+1}}}(\overline{I_{i+1}})\to \Ann_{\overline{R_i}}(\overline{I_{i}})$. 
Let $\varphi_i: (R_{i})_{I_{0}\textnormal{-tor}}\hookrightarrow \Ann_{\overline{R_{i}}}(\overline{I_{i}})$ be the restriction of $\varphi_{I_0, R_i}$. By Lemma \ref{1230FriN} and Lemma \ref{1121MonN}, we can write 
$\Ann_{\overline{R_i}}(\overline{I_i})=\im(\varphi_i)+\overline{I_{i}}^{p^i-1}$. 
Moreover, $\im(\varphi_i)\cap\overline{I_{i}}^{p^i-1}=(0)$ by Lemma \ref{102Sun} and Lemma \ref{1121MonN}. 
Hence we have the following ladder with exact rows: 
\begin{equation}\label{20220824}
\vcenter{
\xymatrix{
0\ar[r]&(R_{i+1})_{I_{0}\textnormal{-tor}}\ar[r]^{\varphi_{i+1}}\ar[d]^{(F_{i})_{\textnormal{tor}}}&\Ann_{\overline{R_{i+1}}}(\overline{I_{i+1}})\ar[r]\ar[d]&\overline{I_{i+1}}^{p^{i+1}-1}\ar[r]\ar[d]&0\\
0\ar[r]&(R_i)_{I_{0}\textnormal{-tor}}\ar[r]^{\varphi_i}&\Ann_{\overline{R_i}}(\overline{I_i})\ar[r]&\overline{I_{i}}^{p^i-1}\ar[r]&0
}}
\end{equation}
where the second and third vertical maps are the restrictions of $F_{i}$. 
Since $F_i(\overline{I_{i+1}}^{p^{i+1}-1})=0$, both functors $\varprojlim_{i\geq 0}$ and 
$\varprojlim^{1}_{i\geq 0}$ assign $(0)$ to the inverse system $\{\overline{I_{j+i}}^{p^{j+i}-1}\}_{i\geq 0}$. 
Moreover, since $(F_{i})_\textnormal{tor}$ is bijective, $\varprojlim_{i\geq 0}(R_{j+i})_{I_{0}\textnormal{-tor}}\cong (R_{j})_{I_{0}\textnormal{-tor}}$ and 
$\varprojlim^{1}_{i\geq 0}(R_{j+i})_{I_{0}\textnormal{-tor}}=(0)$. 
Hence we find that $\varprojlim_{i\geq 0}^{1}\Ann_{\overline{R_{j+i}}}(\overline{I_{j+i}})=(0)$, 
which is the assertion (1). 
Furthermore, we obtain the isomorphisms of $\mathbb{Z}$-modules: 
\begin{equation}\label{1130WedN}
(R_j)_{I_{0}\textnormal{-tor}}\xleftarrow{(\Phi^{(j)}_{0})_{\textnormal{tor}}}\varprojlim_{i\geq 0}(R_{j+i})_{I_{0}\textnormal{-tor}}\xrightarrow{\varprojlim_{i\geq 0}\varphi_{j+i}}
\varprojlim_{i\geq 0}\Ann_{\overline{R_{j+i}}}(\overline{I_{j+i}})
\end{equation}
(where $(\Phi^{(j)}_{0})_{\textnormal{tor}}$ denotes the $0$-th projection map), which are also multiplicative. 
Let us deduce (2) from it. 
Since we have $t_{j}^{s.\flat}=\varprojlim_{i\geq 0}\overline{t_{j+i}}$ by definition, the maps $\varprojlim_{i\geq 0}\varphi_{j+i}$ $(j\geq 0)$ 
are compatible with $\{\varprojlim_{i\geq 0}(t_{j+i})_{\textnormal{tor}}\}_{j\geq 0}$ (induced by Lemma \ref{1120SunN} (2)) and $\{t^{s.\flat}_j\}_{j\geq 0}$ by Lemma \ref{1120SunN} (1). 
On the other hand, the projections $(\Phi^{(j)}_{0})_\textnormal{tor}$ $(j\geq 0)$ are compatible with $\{\varprojlim_{i\geq 0}(t_{j+i})_{\textnormal{tor}}\}_{j\geq 0}$ and  $\{(t_j)_{\textnormal{tor}}\}_{j\geq 0}$. 
Hence the assertion follows. 
\end{proof}

Let us complete the proof of Theorem \ref{exactstilt}.

\begin{proof}[Proof of Theorem \ref{exactstilt}]
(1): 
The implication (a) $\Rightarrow$ (b) follows from Lemma \ref{129SunN}. 
Let us show the converse (b)$\Rightarrow$(a). 
For every $i\geq 0$, put $\overline{f_{j+i}}:=\Phi^{(j)}_{i}(f^{s.\flat}_{j})$, and let $\pi_{i}$ and $F'_{i}$ be as in Lemma \ref{1018TueN}. 
Then, by the assumption, we have the following commutative ladder with exact rows: 
\[\xymatrix{
0\ar[r]&(\overline{f_{i+1}})\ar[rr]^{\iota_{i+1}}\ar[d]&& \overline{R_{i+1}}\ar[r]^{\pi_{i+1}}\ar[d]^{F_i}&R_{i+1}/I_{i+1}\ar[r]\ar[d]^{F'_i}&0\\
0\ar[r]&(\overline{f_i})\ar[rr]^{\iota_i}&& \overline{R_i}\ar[r]^{\pi_i}&R_i/I_{i}\ar[r]&0
}\]
where $\iota_i$ is the inclusion map. 
Let us consider the exact sequence obtained by taking inverse limits for all columns of the above ladder. 
Then, since each $F'_{i}$ is an isomorphism, the map $\varprojlim_{i\geq 0}\pi_{j+i}: R^{s.\flat}_{j}\to\varprojlim_{i\geq 0}R_{j+i}/I_{j+i}$ is isomorphic to $\pi_{j}\circ\Phi^{(j)}_{0}$. 
Thus we find that $I^{s.\flat}_j=\im (\varprojlim_{i\geq 0}\iota_{j+i})$. 
Let us show that the ideal $\im (\varprojlim_{i\geq 0}\iota_{j+i})\subseteq R^{s.\flat}_{j}$ is generated by $f^{s.\flat}_{j}$. 
For $i\geq 0$, let $\mu_i: \overline{R_i}\to (\overline{f_i})$ be the $\overline{R_i}$-linear map induced by multiplication by $\overline{f_i}$. 
Then we obtain the commutative ladder: 
\[\xymatrix{
\overline{R_{i+1}}\ar[d]^{F_{i}}\ar[r]^{\mu_{i+1}}&(\overline{f_{i+1}})\ar[r]^{\iota_{i+1}}\ar[d]&\overline{R_{i+1}}\ar[d]^{F_{i}}\\
\overline{R_i}\ar[r]^{\mu_i}&(\overline{f_i})\ar[r]^{\iota_i}&\overline{R_i}. 
}\]
Then, since $\ker \mu_{i}=\Ann_{\overline{R_i}}(\overline{I_i})$ for every $i\geq 0$, $\varprojlim_{i\geq 0}\mu_{j+i}$ is surjective by Lemma \ref{lem228242027} (1). 
Hence we have $\im(\varprojlim_{i\geq 0}\iota_{j+i})=\im(\varprojlim_{i\geq 0}(\iota_{j+i}\circ\mu_{j+i}))$, where the right hand side is the ideal of $R_{j}^{s.\flat}$ generated by $f^{s.\flat}_{j}$. 
Thus we obtain the desired implication.   
Finally, note that by Proposition \ref{fn-generating} (3), we can take a system of elements $\{f^{s.\flat}_j\in R^{s.\flat}_j\}_{j\geq 0}$ satisfying the condition (b) such that $(f^{s.\flat}_{j+1})^{p}=f_{j}^{s.\flat}$ $(j\geq 0)$. 

(2): We have 
$I^{s.\flat}_j(R_j^{s.\flat})_{I^{s.\flat}_j\textnormal{-tor}}=(0)$ by Corollary \ref{cor306MonN}. 
Hence by the assertion (1), 
$$
(R_j^{s.\flat})_{I^{s.\flat}_0\textnormal{-tor}}=
(R_j^{s.\flat})_{I^{s.\flat}_j\textnormal{-tor}}=
\Ann_{R^{s.\flat}_{j}}(I^{s.\flat}_{j})= \ker(\varprojlim_{i\geq 0}\mu_{j+i})= \varprojlim_{i\geq 0}\Ann_{\overline{R_{j+i}}}(\overline{I_{j+i}}). 
$$
Thus by Lemma \ref{lem228242027} (2), we obtain an isomorphism $(R_j^{s.\flat})_{I^{s.\flat}_0\textnormal{-tor}}\cong (R_j)_{I_{0}\textnormal{-tor}}$ with the desired property. 
\end{proof}

\subsection{Relation with perfectoid rings}\label{perfd-perfd}
In the rest of this paper, for a ring $R$, we use the following notation. 
Set the inverse limit 
$$
R^{\flat} := \varprojlim\{\cdots \to R/pR \to R/pR \to \cdots \to R/pR\}, 
$$
where each transition map is the Frobenius endomorphism on $R/pR$.
It is called the \textit{tilt} (or \textit{tilting}) of $R$. 
Moreover, we denote by $W(R)$ the ring of $p$-typical Witt vectors over $R$. 
If $R$ is $p$-adically complete and separated, we denote by $\theta_{R}: W(R^\flat)\to R$ the ring map such that the diagram: 
\begin{equation}\label{1218SunN}
\vcenter{\xymatrix{
W(R^\flat)\ar[r]^{\theta_{R}}\ar[d]&R\ar[d]\\
R^{\flat}\ar[r]&R/pR
}}
\end{equation}
(where the vertical maps are induced by reduction modulo $p$ and the bottom map is the first projection) commutes.
%; 
%see \cite[??]{BMS18} for the existence and the uniqueness of $\theta_{R}$. 

Recall the definition of perfectoid rings. 

\begin{definition}{(\cite[Definition 3.5]{BMS18})}
\label{integralperfectoid}
A ring $S$ is \textit{perfectoid} if the following conditions hold. 
\begin{enumerate}
\item
$S$ is $\varpi$-adically complete and separated for some element $\varpi \in S$ such that $\varpi^{p}$ divides $p$.
\item
The Frobenius endomorphism on $S/pS$ is surjective.
\item
The kernel of $\theta_{S}: W(S^{\flat}) \to S$ is principal.
\end{enumerate}
\end{definition}

We have a connection between perfectoid towers and perfectoid rings. 
To see this, we use the following characterization of perfectoid rings. 

\begin{theorem}[cf.\ {\cite[Corollary 16.3.75]{GR22}}]\label{FontainePerfectoid}
Let $S$ be a ring. Then $S$ is a perfectoid ring if and only if $S$ contains an element $\varpi$ with the following properties. 
\begin{enumerate}
\item
$\varpi^{p}$ divides $p$, and $S$ is $\varpi$-adically complete and separated. 
\item
The ring map $S/\varpi S \to S/\varpi^pS$ induced by the Frobenius endomorphism on $S/\varpi^pS$ is an isomorphism. 
\item
The multiplicative map 
\begin{equation}\label{torsFrob}
S_{\varpi\textnormal{-tor}}\to S_{\varpi\textnormal{-tor}}\ ;\ s\mapsto s^{p}
\end{equation}
is bijective. 
\end{enumerate}
\end{theorem}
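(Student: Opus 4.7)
The plan is to prove both implications separately, with the forward direction being mostly bookkeeping against standard facts about perfectoid rings and the substantive work concentrated in the backward direction, where we must produce a principal generator of $\ker\theta_S$ from the three given conditions.

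For the forward direction, assume $S$ is perfectoid in the sense of Definition \ref{integralperfectoid} with distinguished element $\varpi$. Condition (1) is part of the definition, so nothing is to be done. For condition (2), I would use the structure of $\theta_S: W(S^\flat)\to S$: choose a compatible system $(\varpi_n)_{n\geq 0}\in S^\flat$ lifting $\varpi\in S/pS$ (whose existence follows from surjectivity of Frobenius on $S/pS$), set $\varpi^\flat$ accordingly, and deduce from the principality of $\ker\theta_S$ and the concrete form of its generator (of Witt-vector type $\xi=[\varpi^\flat]+p\cdot u$ with $u$ a unit) that Frobenius induces an isomorphism $S/\varpi S\xrightarrow{\cong}S/\varpi^p S$. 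For condition (3), note that an element $s\in S_{\varpi\textnormal{-tor}}$ corresponds to an element of $\ker\theta_S$ multiplied by something killing the principal generator, and principality together with perfectness of $S^\flat$ upgrades the Frobenius on $\varpi$-torsion to a bijection.

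For the backward direction, assume $\varpi$ satisfies (1)--(3); I need to verify the three axioms of Definition \ref{integralperfectoid}. The completeness axiom is immediate from (1). For surjectivity of Frobenius on $S/pS$, I would iterate (2): the Frobenius factors as $S/\varpi\hookrightarrow S/\varpi^p\to S/\varpi$, so by induction on $n$ one obtains isomorphisms $S/\varpi\xrightarrow{\cong}S/\varpi^{p^n}$, and hence surjectivity of Frobenius on $S/\varpi^n S$ for every $n$. Since $p$ lies in $(\varpi^p)\subseteq(\varpi)$ and $S$ is $\varpi$-adically complete, a standard successive-approximation argument in the $\varpi$-adic topology upgrades this to surjectivity of Frobenius on $S/pS$.

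The real content is verifying that $\ker\theta_S$ is principal. My plan is to construct a candidate generator explicitly using the tower $\{S/\varpi^{p^n}S\}_{n\geq 0}$ with Frobenius-induced transition maps: using (2) iteratively, lift $\varpi$ to a compatible sequence defining an element $\varpi^\flat\in S^\flat$; then the Teichm\"uller lift $[\varpi^\flat]\in W(S^\flat)$ maps under $\theta_S$ to an element of the form $\varpi\cdot(\text{unit})$ modulo a controlled error, and a distinguished Witt vector $\xi$ (arising either as $\xi=p-\sum_{i\geq 1}p_i[\varpi_i^\flat]$ or as $[\varpi^\flat]-\varpi$ up to unit, depending on normalization) will lie in $\ker\theta_S$. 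The hard step, and the main obstacle, is showing that $\xi$ in fact generates the whole kernel: this is where condition (3) enters crucially. One writes any $\eta\in\ker\theta_S$ as $\eta=\xi\zeta+\eta'$ where $\eta'$ has strictly smaller `level' in the $\xi$-adic filtration on $W(S^\flat)$; the iterative procedure terminates modulo $p$-torsion in $S^\flat$ and more generally modulo $\varpi$-torsion, and here the bijectivity of the $p$-th power on $S_{\varpi\textnormal{-tor}}$ (which transfers, via Theorem \ref{exactstilt} (2) combined with Corollary \ref{cor306MonN}, to good control on $S^\flat_{\varpi^\flat\textnormal{-tor}}$) ensures that no `ghost' torsion obstructs the division algorithm. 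Completeness of $W(S^\flat)$ in the $(\xi,p)$-adic topology then allows one to take the limit and realize $\eta\in(\xi)$.
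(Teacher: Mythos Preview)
Your proposal diverges from the paper's proof in both directions.

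For the backward direction (conditions (1)--(3) imply $S$ perfectoid), the paper does not argue at all: it simply cites \cite[Corollary 16.3.75]{GR22}. Your attempt at a direct proof is therefore genuinely additional content, but the sketch for principality of $\ker\theta_S$ is too thin to stand. Your invocation of Theorem \ref{exactstilt} (2) presupposes that the constant tower on $S$ is a perfectoid tower; while this can in fact be checked directly from (1)--(3) without passing through Example \ref{eg1226MonN} (so the apparent circularity is avoidable), you do not say so. Even granting that, the passage from torsion control on $S^\flat$ to principality of $\ker\theta_S$ via a ``division algorithm'' is left entirely unspecified; this is precisely the hard core of the Gabber--Ramero result you would be reproving.

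For the forward direction, property (3), your sketch (``an element $s\in S_{\varpi\textnormal{-tor}}$ corresponds to an element of $\ker\theta_S$ multiplied by something killing the principal generator'') is not a recognizable argument and does not obviously yield bijectivity of \eqref{torsFrob}. The paper's route is entirely different and much cleaner: setting $\widetilde{S}:=S/S_{\varpi\textnormal{-tor}}$, it invokes the cartesian square from \cite[\S 2.1.3]{KS20}
\[
\xymatrix{
S\ar[r]\ar[d]&(S/\varpi S)_{\mathrm{red}}\ar[d]\\
\widetilde{S}\ar[r]&(\widetilde{S}/\varpi\widetilde{S})_{\mathrm{red}}
}
\]
to identify $S_{\varpi\textnormal{-tor}}$ with the kernel of the right vertical map as a non-unital ring. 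Since $(S/\varpi S)_{\mathrm{red}}$ is a perfect $\mathbb{F}_p$-algebra and the target is reduced, this kernel is stable under both Frobenius and its inverse, which gives bijectivity of \eqref{torsFrob} immediately. Your approach via $W(S^\flat)$ never touches this structure.
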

\begin{proof}
(``if'' part): It follows from \cite[Corollary 16.3.75]{GR22}. 

(``only if'' part): Let $\varpi\in S$ be as in Definition \ref{integralperfectoid}. Then, such $\varpi$ clearly has the property (1) (in Theorem \ref{FontainePerfectoid}), and also has the property (2) by \cite[Lemma 3.10 (i)]{BMS18}. 
To show the remaining part, we set $\widetilde{S}:=S/S_{\varpi\textnormal{-tor}}$. 
By \cite[\S 2.1.3]{KS20}, the diagram of rings: 
\[\xymatrix{
S\ar[rr]^{\pi_{2}}\ar[d]_{\pi_{1}}&&(S/\varpi S)_{\textnormal{red}}\ar[d]^{\pi_{4}}\\
\widetilde{S}\ar[rr]_{\pi_{3}}&&(\widetilde{S}/\varpi\widetilde{S})_{\textnormal{red}}
}\]
(where $\pi_{i}$ is the canonical projection map for $i=1,2,3,4$) is cartesian. Hence $S_{\varpi\textnormal{-tor}}$ ($=\ker (\pi_{1})$) is isomorphic to $\ker (\pi_{4})$ as a (possibly) non-unital ring. 
Since $(S/\varpi S)_{\textnormal{red}}$ is a perfect $\mathbb{F}_{p}$-algebra, it admits the Frobenius endomorphism and the inverse Frobenius. 
Moreover, $\ker (\pi_{4})$ is closed under these operations because $(\widetilde{S}/\varpi\widetilde{S})_{\textnormal{red}}$ is reduced. 
Consequently, it follows that one has a bijection (\ref{torsFrob}). Hence $\varpi$ has the property (3), as desired.  
\end{proof}

\begin{remark}\label{rmk1218SunN}
In view of the above proof, the ``only if'' part of Theorem \ref{FontainePerfectoid} can be refined as follows. 
For a perfectoid ring $S$, an element $\varpi\in S$ such that $p\in \varpi^{p}S$ and $S$ is $\varpi$-adically complete and separated 
satisfies the properties (2) and (3) in Theorem \ref{FontainePerfectoid}. 
\end{remark}

\begin{corollary}
\label{smalltiltproperty1}
Let $(\{R_{i}\}_{i \geq 0},\{t_i\}_{i \geq 0})$ be a perfectoid tower arising from some pair $(R_0,I_0)$. 
Let $\widehat{R_{\infty}}$ denote the $I_{1}$-adic completion of $R_{\infty}$. 
Then $\widehat{R_\infty}$ is a perfectoid ring. %(where $f_1$ is an element defined as in Definition \ref{stilt} (e)).
\end{corollary}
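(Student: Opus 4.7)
The strategy is to apply Gabber--Ramero's characterization, Theorem \ref{FontainePerfectoid}, to $S := \widehat{R_{\infty}}$ with the distinguished element $\varpi := f_1$, where $f_1 \in R_1$ is a generator of the first perfectoid pillar $I_1$, and $f_0$ denotes a generator of $I_0$. By the axiom (f-1), $\varpi^{p} = f_1^{p}$ generates $f_0 R_1$; since $p \in (f_0)$, this yields $\varpi^{p} \mid p$ in $\widehat{R_\infty}$. Moreover, $\widehat{R_{\infty}}$ is $\varpi$-adically complete and separated by construction, so condition (1) of Theorem \ref{FontainePerfectoid} holds.

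For condition (2), set $\overline{R_{\infty}} := R_{\infty}/I_0 R_{\infty} = \varinjlim_{i} R_i/I_0 R_i$, so that $\widehat{R_\infty}/\varpi^{p}\widehat{R_\infty} = \overline{R_\infty}$ and $\widehat{R_\infty}/\varpi\widehat{R_\infty} = \overline{R_{\infty}}/I_1 \overline{R_{\infty}}$. I would show that the Frobenius induces an isomorphism $\overline{R_\infty}/I_1 \overline{R_\infty} \xrightarrow{\cong} \overline{R_\infty}$. Surjectivity follows from axioms (c) and (d): given $c \in R_j/I_0 R_j$, the axiom (d) furnishes $a \in R_{j+1}/I_0 R_{j+1}$ with $F_j(a) = c$, whence $a^p = \overline{t_j}(c)$ represents $c$ in $\overline{R_\infty}$. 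For injectivity, given $a \in R_{i+1}/I_0 R_{i+1}$ whose image has vanishing $p$-th power in $\overline{R_\infty}$, the axiom (b) forces $a^p = 0$ already in $R_{i+1}/I_0 R_{i+1}$; by the defining relation of the Frobenius projection, $a^p = \overline{t_i}(F_i(a))$, so injectivity of $\overline{t_i}$ gives $F_i(a) = 0$, and the axiom (f-2) yields $a \in I_1(R_{i+1}/I_0 R_{i+1}) \subseteq I_1 \overline{R_\infty}$.

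The delicate part is condition (3): bijectivity of the $p$-th power map on $(\widehat{R_\infty})_{\varpi\textnormal{-tor}}$. I would first work inside $R_\infty$. Since $f_1^p$ and $f_0$ are associates in $R_1$, we have $(R_i)_{f_1\textnormal{-tor}} = (R_i)_{I_0\textnormal{-tor}}$ for each $i \geq 1$; by the axiom (g), $f_0 (R_i)_{I_0\textnormal{-tor}} = 0$, and hence $f_1^p$ annihilates $(R_\infty)_{f_1\textnormal{-tor}}$, giving uniformly bounded $f_1$-torsion. Combining Lemma \ref{1120SunN}(2) with the bijectivity of $(F_i)_{\textnormal{tor}}$ from the axiom (g), the assignment $x \mapsto (F_i)_{\textnormal{tor}}^{-1}(x)$ provides a direct-limit inverse showing that the $p$-th power map on $(R_\infty)_{I_0\textnormal{-tor}}$ is bijective. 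It remains to identify $(R_\infty)_{f_1\textnormal{-tor}}$ with $(\widehat{R_\infty})_{f_1\textnormal{-tor}}$: injectivity is Corollary \ref{1227TueN}, while surjectivity uses that on any $f_1^p$-killed submodule the $f_1$-adic topology is discrete, so that any $f_1$-torsion element of $\widehat{R_\infty}$ is represented by an eventually constant Cauchy sequence, which then lies in $(R_\infty)_{f_1\textnormal{-tor}}$.

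I expect condition (3) to be the principal obstacle, particularly the transfer of bijectivity from $(R_\infty)_{I_0\textnormal{-tor}}$ to $(\widehat{R_\infty})_{\varpi\textnormal{-tor}}$. Conditions (1) and (2) follow fairly directly from the axioms of a perfectoid tower, whereas controlling $\varpi$-torsion under completion crucially relies on the bounded-torsion consequence of the axiom (g) together with a careful Cauchy-sequence argument exploiting the discrete $f_1$-adic topology on the torsion submodule.
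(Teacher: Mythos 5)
Your proof is correct and follows essentially the same route as the paper: both apply the Gabber--Ramero characterization (Theorem \ref{FontainePerfectoid}) to a generator of $I_1\widehat{R_\infty}$, exploiting the fact that the Frobenius on $R_\infty/I_0R_\infty$ is identified with the colimit of the Frobenius projections $F_i$. The paper's own proof is a two-line appeal to ``the axioms''; your explicit verification of conditions (2) and (3) --- in particular the bounded-torsion argument transferring $\varpi$-torsion from $R_\infty$ to its $I_1$-adic completion --- supplies precisely the details the paper leaves implicit.
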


\begin{proof}
Since we have $\varinjlim_{i\geq 0}F_{R_{i}/I_{0}R_{i}}=(\varinjlim_{i\geq 0}\overline{t_{i}})\circ (\varinjlim_{i\geq 0}F_i)$ and 
$\varinjlim_{i\geq 0}\overline{t_{i}}$ is a canonical isomorphism, 
the Frobenius endomorphism on $\widehat{R_{\infty}}/I_0\widehat{R_{\infty}}$ can be identified with $\varinjlim_{i\geq 0}F_{i}$. 
Hence one can immediately deduce from the axioms in Definition \ref{stilt} that any generator of $I_{1}\widehat{R_{\infty}}$ 
has the all properties assumed on $\varpi$ in Theorem \ref{FontainePerfectoid}. 
\end{proof}

In view of Theorem \ref{FontainePerfectoid}, one can regard perfectoid rings as a special class of perfectoid towers. 

\begin{example}\label{eg1226MonN}
Let $S$ be a perfectoid ring. 
Let $\varpi\in S$ be such that $p\in \varpi^{p} S$ and $S$ is $\varpi$-adically complete and separated. 
Set $S_{i}=S$ and $t_{i}=\id_{S}$ for every $i\geq 0$, and $I_{0}=\varpi^{p}S$. 
Then by Remark \ref{rmk1218SunN}, the tower $(\{S_{i}\}_{i\geq 0}, \{t_{i}\}_{i\geq 0})$ is a perfectoid tower arising from $(S, I_{0})$. 
In particular, $I_{0}S_{I_{0}\textnormal{-tor}}=(0)$, and $F_{S_{I_0\textnormal{-tor}}}$ is bijective. 
\end{example}

Moreover, we can treat more general rings in a tower-theoretic way. 

\begin{example}[Zariskian preperfectoid rings]\label{eg1227TueN}
Let $R$ be a ring that contains an element $\varpi$ such that $p\in \varpi^{p} R$, $R$ is $\varpi$-adically Zariskian, and $R$ has bounded $\varpi$-torsion. 
Assume that the $\varpi$-adic completion $\widehat{R}$ is a perfectoid ring. 
Set $R_{i}=R$ and $t_{i}=\id_{R}$ for every $i\geq 0$, and $I_{0}=\varpi^{p}R$. 
Then the tower $(\{R_{i}\}_{i\geq 0}, \{t_{i}\}_{i\geq 0})$ is a perfectoid tower arising from $(R, I_{0})$. 
Indeed, the axioms (a) and (e) are clear from the assumption. Moreover, since $\widehat{R}$ is perfectoid and $R/\varpi^{p}R\cong \widehat{R}/\varpi^{p}\widehat{R}$, the axioms (b), (c), (d) and (f) hold by Example \ref{eg1226MonN}. 
Similarly, the axiom (g) holds by Lemma \ref{1227TueN} (the map $\psi_{\textnormal{tor}}: R_{I_{0}\textnormal{-tor}}\to (\widehat{R})_{I_{0}\textnormal{-tor}}$ is also an isomorphism of non-unital rings). 
\end{example}

Recall that we have two types of tilting operation at present; one is defined for perfectoid rings, and the other is for perfectoid towers. 
The following result asserts that they are compatible.

\begin{lemma}\label{lem1218SunN}
Let $(\{ R_i^{s.\flat} \}_{i \geq 0} , \{ t_i^{s.\flat} \}_{i \geq 0} )$ be the tilt of $(\{R_{i}\}_{i \geq 0},\{t_i\}_{i \geq 0})$ associated to $(R_{0}, I_0)$. 
Let $\widehat{R_{\infty}^{s.\flat}}$ be the $I^{s.\flat}_0$-adic completion of $R_{\infty}^{s.\flat}:=\varinjlim_{i\geq 0}R_i^{s.\flat}$. 
Let $( I_{0}R_{\infty} )^{\flat}$ be the ideal of $R^{\flat}_{\infty}$ that is the inverse image of $I_{0}R_{\infty} \mod pR_{\infty}$ via the first projection. 
Then there exist canonical isomorphisms 
$$
R_\infty^{\flat}\xleftarrow{\cong}  \varprojlim_{\textnormal{Frob}} R_{\infty}^{s.\flat} /I_{0}^{s.\flat}R^{s.\flat}_{\infty}  \xrightarrow{\cong}\widehat{R_{\infty}^{s.\flat}}
$$
under which $( I_0R_\infty )^{\flat} \subseteq R^{\flat}_{\infty}$ corresponds to $I^{s.\flat}_{0}\widehat{R^{s.\flat}_\infty} \subseteq \widehat{R_{\infty}^{s.\flat}}$. 

\end{lemma}
\begin{proof}
Note that $R_{\infty}^{s.\flat}$ is perfect. 
By Lemma \ref{921WedN} and Example \ref{widehatrmk}, we obtain the following commutative diagram of rings: 
\begin{equation}
\xymatrix{
\varprojlim_{\textnormal{Frob}}R_\infty /I_0R_\infty \ar[d] & \ar[d]\ar[l]_{\cong} \varprojlim_{\textnormal{Frob}} R_{\infty}^{s.\flat} /I_{0}^{s.\flat}R^{s.\flat}_{\infty} \ar[r]^{\ \ \ \ \ \ \cong}&  \widehat{R_{\infty}^{s.\flat}}\ar[d] \\ 
R_\infty/I_0R_\infty& \ar[l]_{\cong}R^{s.\flat}_\infty/I^{s.\flat}_0R^{s.\flat}_\infty\ar@{=}[r]&R^{s.\flat}_\infty/I^{s.\flat}_0R^{s.\flat}_\infty
}
\end{equation}
where the vertical arrows denote the first projection maps. 
By \cite[Lemma 3.2 (i)]{BMS18}, we can identify $R^{\flat}_{\infty}$ with $\varprojlim_{\textnormal{Frob}}R_{\infty}/I_{0}R_{\infty}$, and the ideal $( I_{0}R_{\infty} )^{\flat}\subseteq R_\infty^\flat$ corresponds to the kernel of the leftmost vertical map. 
Thus, since the kernel of the rightmost vertical map is $I_{0}^{s.\flat}\widehat{R_{\infty}^{s.\flat}}$, the assertion follows. 
\end{proof}

\subsection{Examples: complete local log-regular rings}\label{smalltiltlog}
\subsubsection{Calculation of the tilts}
As an example of tilts of Noetherian perfectoid towers, we calculate them for certain towers of local log-regular rings. Firstly, we review a perfectoid tower constructed in \cite{GR22}. 

\begin{construction}\label{logtower}

Let $(R,\mathcal{Q},\alpha)$ be a \textit{complete} local log-regular ring with perfect residue field of characteristic $p>0$.
Assume that $\mathcal{Q}$ is fine, sharp, and saturated (see Remark \ref{rmk2125}). Let $I_{\alpha}\subseteq R$ be the ideal defined in Definition \ref{LogSchemeStr2}. Set $A:=R/I_\alpha$. Let $(f_1,\ldots,f_r)$ be a sequence of elements of $R$ whose image in $A$ is \emph{maximal} (see Definition \ref{maximal}).
Since the residue field of $R$ is perfect, $r$ is the dimension of $A$ (see \S \ref{AppendixA}).
For every $i\geq 0$, we consider the ring 
$$
A_i:=A[T_1,\ldots, T_r]/(T_1^{p^i}-\overline{f_1},\ldots, T_r^{p^i}-\overline{f_r}),
$$
where each $\overline{f_j}$ denotes the image of $f_j$ in $A$ ($j=1,\ldots, r$). Notice that $A_i$ is regular by Theorem \ref{criterion-regularity}. 
Moreover, we set $\mathcal{Q}^{(i)}:= \mathcal{Q}^{(i)}_p$ (see Definition \ref{cthpowermonoid}).
%\{\gamma\in \mathcal{Q}_\mathbb{Q}\ |\ \gamma^{p^n}\in \mathcal{Q}\}.
Furthermore, we define 
\begin{equation}\label{eq305SunN1}
R'_i:=\mathbb{Z}[\mathcal{Q}^{(i)}]\otimes_{\mathbb{Z}[\mathcal{Q}]}R,\ R''_i:=R[T_1,\ldots, T_r]/(T_1^{p^i}-f_1,\ldots, T_r^{p^i}-f_r),
\end{equation}
and 
\begin{equation}\label{eq305SunN2}
R_i:=R'_i\otimes_{R}R''_i.
\end{equation}
Let $t_i :R_i \rightarrow R_{i+1}$ be the ring map that is naturally induced by the inclusion map $\iota^{(i)}: \mathcal{Q}^{(i)}\hookrightarrow \mathcal{Q}^{(i+1)}$. 
Since $R''_{i+1}$ is a free $R''_i$-module, $t_i$ is universally injective by Lemma \ref{FineSharpSat} and the condition (e) in Proposition \ref{Ogus1.4.2.7} (2).

\end{construction}

\begin{proposition}
\label{claimlog}
Keep the notation as in Construction \ref{logtower}.
Let $\alpha_i: \mathcal{Q}^{(i)}\to R_i$ be the natural map. 
Then $(R_i, \mathcal{Q}^{(i)}, \alpha_i)$ is a local log-regular ring. 
%Furthermore, for every $i \geq 0$, the following diagram commutes:
%\begin{equation}\label{5311016}
%\vcenter{
%\xymatrix{
%C(k_i)\llbracket  \mathcal{Q}_0^{(i)} \oplus \mathbb{N}^r \rrbracket \ar@{^(->}[r] \ar@{->>}[d]& C(k_{i+1})\llbracket  \mathcal{Q}_0^{(i+1)} \oplus \mathbb{N}^r \rrbracket \ar@{->>}[d] \\
%R_i \ar@{^(->}[r]^{t_i} & R_{i+1},
%}}
%\end{equation}
%where the top arrow is the inclusion map.

\end{proposition}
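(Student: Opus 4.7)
The plan is to verify the defining conditions of a local log-regular ring (Definitions \ref{Logring} and \ref{LogSchemeStr2}) for $(R_i,\mathcal{Q}^{(i)},\alpha_i)$. First, $\mathcal{Q}^{(i)}$ is fine, sharp, and saturated by Lemma \ref{finite424}(1) and Lemma \ref{FineSharpSat}(1), so $\overline{\mathcal{Q}^{(i)}}=\mathcal{Q}^{(i)}$ is automatically fine and saturated. By Lemma \ref{finite424}(2), $R'_i$ is module-finite over $R$, and $R''_i$ is $R$-free of rank $p^{ri}$, hence $R_i=R'_i\otimes_R R''_i$ is module-finite over $R$; in particular $R_i$ is Noetherian and $\fm_R$-adically complete (as a finite module over the complete local ring $R$).

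The first substantive step is to show that $R_i$ is local by analyzing the special fiber $R_i\otimes_R k$, where $k=R/\fm_R$. Because each $\overline{f_j}$ lies in $\fm_A$, we have $f_j\in\fm_R$, so $R''_i\otimes_R k=k[T_1,\ldots,T_r]/(T_1^{p^i},\ldots,T_r^{p^i})$ is local Artinian with residue field $k$. In $R'_i\otimes_R k=k\otimes_{\mathbb{Z}[\mathcal{Q}]}\mathbb{Z}[\mathcal{Q}^{(i)}]$, every element $e^\gamma$ with $\gamma\in\mathcal{Q}^{(i)+}$ satisfies $(e^\gamma)^{p^i}=e^{p^i\gamma}=0$ since $p^i\gamma\in\mathcal{Q}^+$, so every non-unit lies in the nilpotent ideal $(e^\gamma:\gamma\in\mathcal{Q}^{(i)+})$ and this ring is likewise local Artinian with residue field $k$. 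Consequently $R_i\otimes_R k\cong(R'_i\otimes_R k)\otimes_k(R''_i\otimes_R k)$ is local Artinian. Since $R$ is Henselian (being complete) and $R\to R_i$ is module-finite, $R_i$ decomposes as a finite product of Henselian local rings indexed by the maximal ideals of $R_i\otimes_R k$; there being just one, $R_i$ itself is local.

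Next I would identify the log-quotient. The ideal $I_{\alpha_i}$ is generated by the images in $R_i$ of $e^\gamma$ for $\gamma\in\mathcal{Q}^{(i)+}$, so killing it in $R'_i=R\otimes_{\mathbb{Z}[\mathcal{Q}]}\mathbb{Z}[\mathcal{Q}^{(i)}]$ replaces the second factor with $\mathbb{Z}$ and yields $R\otimes_{\mathbb{Z}[\mathcal{Q}]}\mathbb{Z}=R/I_\alpha=A$. Tensoring with $R''_i$ over $R$ gives
\[
R_i/I_{\alpha_i}\;\cong\;A\otimes_R R''_i\;=\;A[T_1,\ldots,T_r]/(T_1^{p^i}-\overline{f_1},\ldots,T_r^{p^i}-\overline{f_r})\;=\;A_i,
\]
which is regular local by Theorem \ref{criterion-regularity}. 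For the dimension formula, the split injection $\mathbb{Z}[\mathcal{Q}]\hookrightarrow\mathbb{Z}[\mathcal{Q}^{(i)}]$ furnished by Lemma \ref{FineSharpSat}(2) and Proposition \ref{Ogus1.4.2.7} produces an injection $R\hookrightarrow R_i$; combined with module-finiteness, going-up and incomparability yield $\dim R_i=\dim R$. Iterating Lemma \ref{finite1}(3) gives a monoid isomorphism $\mathcal{Q}^{(i)}\cong\mathcal{Q}$, so $\dim\mathcal{Q}^{(i)}=\dim\mathcal{Q}$, and $\dim A_i=\dim A$ by the same argument for $A\hookrightarrow A_i$. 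Combining with log-regularity of $R$:
\[
\dim R_i\;=\;\dim R\;=\;\dim A+\dim\mathcal{Q}\;=\;\dim(R_i/I_{\alpha_i})+\dim\mathcal{Q}^{(i)}.
\]
Finally, $\mathcal{Q}^{(i)}$ is sharp, so $(\mathcal{Q}^{(i)})^*=\{0\}$, and for $\gamma\in\mathcal{Q}^{(i)+}$ one has $\alpha_i(\gamma)\in I_{\alpha_i}\subseteq\fm_{R_i}$, giving $\alpha_i^{-1}(R_i^\times)=\{0\}=(\mathcal{Q}^{(i)})^*$.

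The main obstacle is the locality of $R_i$: one must combine the Henselian property of $R$ with the twin nilpotency observations in the special fiber, namely that $T_j$ is nilpotent there (which forces $f_j\in\fm_R$ and explains the role of a maximal sequence inside $\fm_A$) and that $e^\gamma$ is nilpotent there (which uses $p^i\gamma\in\mathcal{Q}^+$ and explains the role of the $p$-divisible enlargement $\mathcal{Q}^{(i)}$). Every other verification is a formal manipulation of tensor products together with the facts on integral extensions and monoid algebras assembled in Section \ref{SectLog}.
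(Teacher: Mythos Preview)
Your argument is correct. The paper itself does not give a proof here; it simply refers the reader to \cite[17.2.5]{GR22}. You instead supply a self-contained verification, so the two differ in that you actually carry out the work rather than citing it. Your route---computing the special fiber $R_i\otimes_R k$ to see locality via the Henselian property of $R$, identifying $R_i/I_{\alpha_i}$ with $A_i$ by unwinding the tensor product, and reading off the dimension equality from finiteness and the monoid isomorphism $\mathcal{Q}^{(i)}\cong\mathcal{Q}$---is exactly the natural direct approach and matches in spirit what is done in \cite{GR22}. The payoff of your write-up is that it makes the paper more self-contained and exposes precisely where each hypothesis (sharpness of $\mathcal{Q}$, maximality of the sequence $\overline{f_1},\ldots,\overline{f_r}$, completeness of $R$) enters; the paper's citation keeps the exposition short but hides these dependencies. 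One minor remark: your parenthetical that $R_i$ is $\fm_R$-adically complete is true and useful later (e.g.\ for Lemma~\ref{structurelog}) but is not needed for log-regularity itself, which only requires Noetherianness.
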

\begin{proof}
%For the first assertion, 
We refer the reader to \cite[17.2.5]{GR22}. 
%The second assertion follows from \cite[Lemma 17.2.4]{GR22}.
\end{proof}

By construction, we obtain the tower of rings $(\{R_i\}_{i \geq 0},\{t_i\}_{i \geq 0})$ (see Definition \ref{towerdef}).

\begin{proposition}
\label{smalltilt}
Keep the notation as in Construction \ref{logtower}.
Then the tower $(\{R_i\}_{i \geq 0}, \{t_i\}_{i \geq 0})$ of local log-regular rings defined above is a perfectoid tower arising from $(R,(p))$.
\end{proposition}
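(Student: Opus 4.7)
The plan is to verify the seven axioms (a)--(g) of Definition \ref{invqperf} and Definition \ref{stilt} with $(R_0, I_0) = (R, (p))$. Axioms (a) and (e) are immediate: $R_0 = R$ and $p \in (p)$, and each $R_i$ is a Noetherian local ring by Proposition \ref{claimlog}, hence $p$-adically Zariskian. Axiom (b), the injectivity of $\overline{t_i}\colon R_i/pR_i \hookrightarrow R_{i+1}/pR_{i+1}$, follows from the universal injectivity of $t_i$ noted at the end of Construction \ref{logtower}.

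For axioms (c) and (d), I would work with explicit generators. Over $R_i$, the ring $R_{i+1}$ is generated by the monoid elements $e^{\gamma}$ for $\gamma \in \mathcal{Q}^{(i+1)}$ and by the classes $T_j^{(i+1)}$ whose $p$-th power maps to the generator $T_j^{(i)}$ of $R''_i$. On these generators, Frobenius lands in the image of $\overline{t_i}$, namely $(e^{\gamma})^p = e^{p\gamma}$ with $p\gamma \in \mathcal{Q}^{(i)}$ (Lemma \ref{finite1}(3)) and $(T_j^{(i+1)})^p = \overline{t_i}(T_j^{(i)})$, which gives (c). The Frobenius projection $F_i$ then acts on these generators by $e^\gamma \mapsto e^{p\gamma}$ and $T_j^{(i+1)} \mapsto T_j^{(i)}$. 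Surjectivity (axiom (d)) follows: the $p$-times map $\mathcal{Q}^{(i+1)} \to \mathcal{Q}^{(i)}$ is bijective by Lemma \ref{finite1}(3), and residue-field scalars have $p$-th roots because $k$ is perfect; hence every generator of $R_i/pR_i$ lifts to a $p$-th root in $R_{i+1}/pR_{i+1}$, and the fact that Frobenius is additive mod $p$ propagates this to all elements.

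Axiom (g) is handled by Remark \ref{remark2022705}: each $R_i$ is a normal domain (Theorem \ref{CMnormal}) and the $t_i$ are injective, so $R_\infty = \varinjlim R_i$ is a domain in which the nonzero element $p$ is regular.

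The main obstacle is axiom (f). Using Kato's structure theorem in mixed characteristic (Theorem \ref{CohenLogReg}(2)) together with the maximal sequence reviewed in the Appendix, I would produce an element $\pi \in R_1$ whose $p$-th power generates $pR_1$, and set $I_1 := (\pi)$; then (f-1) holds by construction. For (f-2), the inclusion $I_1 \cdot (R_{i+1}/pR_{i+1}) \subseteq \ker(F_i)$ is immediate from $\pi^p \in pR_{i+1}$. For the reverse, any $x \in R_{i+1}$ with $x^p \in pR_{i+1}$ satisfies $(x/\pi)^p \in R_{i+1}$ (as $\pi^p$ and $p$ differ by a unit), so $x/\pi \in \Frac(R_{i+1})$ is integral over the normal ring $R_{i+1}$ and hence lies in $R_{i+1}$, giving $x \in \pi R_{i+1}$. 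The hard part is the construction of $\pi$ itself: in the presentation $R \cong C(k)\llbracket \mathcal{Q} \oplus \mathbb{N}^r \rrbracket/(\theta)$ with $\theta$ of constant term $p$, the element $p$ need not be a literal $p$-th power in $R$, and extracting $\pi \in R_1$ whose $p$-th power is a unit multiple of $p$ should rely on the Gabber--Ramero treatment of maximal sequences in \cite{GR22}.
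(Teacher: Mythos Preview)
Your plan matches the paper's proof: both verify axioms (a)--(g) one by one, with (a), (b), (e), (g) handled exactly as you describe (universal injectivity of $t_i$ for (b), locality for (e), Remark \ref{remark2022705} for (g)). For (c), (d), and (f) the paper simply cites \cite[(17.2.10), Lemma 17.2.11, Theorem 17.2.14]{GR22}, whereas you sketch direct arguments; your generator-level computations for (c) and (d) are the right idea, though a fully rigorous version would need to say a word about passing from topological generators to arbitrary elements in the power-series description of $R_i/pR_i$. Your normality argument for (f-2) is a genuinely nicer alternative to the paper's citation of \cite[Theorem 17.2.14 (iii)]{GR22}: once $\pi^p = pu$ with $u$ a unit is known, the inclusion $\ker(F_i) \subseteq \pi(R_{i+1}/pR_{i+1})$ follows from integral closedness of the normal domain $R_{i+1}$, which is already available from Theorem \ref{CMnormal} and Proposition \ref{claimlog}. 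Both you and the paper agree that the existence of such $\pi \in R_1$ is the nontrivial input requiring \cite{GR22}. One small omission: you do not treat the equal-characteristic case $p = 0$ in $R$, where one takes $I_1 = (0)$ and (f-2) reduces to the reducedness of $R_{i+1}$; the paper covers this via \cite[Theorem 17.2.14 (i)]{GR22}.
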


\begin{proof}
We verify (a)-(g) in Definition \ref{invqperf} and Definition \ref{stilt}.
The axiom (a) is trivial. Since $t_i$ is universally injective, the axiom (b) follows. 
The axioms (c) and (d) follow from \cite[(17.2.10) and Lemma 17.2.11]{GR22}. 
Since $R$ is of residual characteristic $p$, the axiom (e) follows from the locality. 
Since $t_i$ is injective and $R_i$ is a domain for any $i \geq 0$, the axiom (g) holds by Remark \ref{remark2022705}.
Finally, let us check that the axiom (f) holds. In the case when $p=0$, it follows from \cite[Theorem 17.2.14 (i)]{GR22}. Otherwise, there exists an element $\varpi\in R_{1}$ that satisfies $\varpi^p = pu$ for some unit $u \in R_1$ by \cite[Theorem 17.2.14 (ii)]{GR22}.
Set $I_1 := (\varpi)$.
Then the axiom (f-1) holds.
%Otherwise, there exists a regular element $\varpi\in R_{1}$ that satisfies the axiom (f-1) by \cite[Theorem 17.2.14 (ii)]{GR22}. 
Moreover, the axiom (f-2) follows from \cite[Theorem 17.2.14 (iii)]{GR22}.
Thus the assertion follows.
\end{proof}

For calculating the tilt of the perfectoid tower constructed above, the following lemma is quite useful. 

\begin{lemma}\label{structurelog}
Keep the notation as in Proposition \ref{claimlog}. 
Let $k$ be the residue field of $R$. 
Then there exists a family of ring maps $\{ \phi_i : C(k)\llbracket  \mathcal{Q}^{(i)} \oplus (\mathbb{N}^r)^{(i)} \rrbracket \to R_i \}_{ i \geq 0 }$ which is compatible with the log structures of $\{(R_i, \mathcal{Q}^{(i)}, \alpha_i)\}_{i\geq 0}$ such that the following diagram commutes for every $i \geq 0$:
\begin{equation}\label{5311016}
\vcenter{
\xymatrix{
C(k)\llbracket  \mathcal{Q}^{(i)} \oplus (\mathbb{N}^r)^{(i)} \rrbracket \ar@{^(->}[r] \ar@{->>}[d]^{\phi_i} & C(k)\llbracket  \mathcal{Q}^{(i+1)} \oplus (\mathbb{N}^r)^{(i+1)} \rrbracket \ar@{->>}[d]^{\phi_{i+1}} \\
R_i \ar@{^(->}[r]^{t_i} & R_{i+1}
}}
\end{equation}
(where the top arrow is the natural inclusion). 
Moreover, there exists an element $\theta \in C(k)\llbracket  \mathcal{Q} \oplus \mathbb{N}^r \rrbracket$ whose constant term is $p$ such that the kernel of $\phi_i$ is generated by $\theta$ for every $i \geq 0$.
\end{lemma}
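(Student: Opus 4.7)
The plan is to build $\phi_0$ via Kato's structure theorem with the explicit choice that its $\mathbb{N}^r$-generators map to $f_1,\ldots,f_r$, to extend to $\phi_i$ for $i \geq 1$ by the universal property of completed monoid algebras, and finally to pin down every $\ker(\phi_i)$ by applying the ``moreover'' clause of Theorem \ref{CohenLogReg} (2) at each level. For $\phi_0$, Theorem \ref{CohenLogReg} (2) applied to the complete local log-regular ring $R$ provides a surjection $\phi_0 : C(k)\llbracket \mathcal{Q} \oplus \mathbb{N}^r\rrbracket \twoheadrightarrow R$ whose kernel is principal, generated by some $\theta$ of constant term $p$. Since by hypothesis the images of $(f_1,\ldots,f_r)$ in $A=R/I_\alpha$ form a regular system of parameters of that regular local ring (``maximal'' in the sense of the Appendix), the Kato surjection can be arranged so that $\phi_0(e^{e_j}) = f_j$ for each $j$. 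Since $k$ is perfect and each $R_i$ is generated over $R$ by $p^i$-th roots of elements of $\mathfrak{m}_R$, the residue field of every $R_i$ is again $k$, which allows the same Cohen ring $C(k)$ to appear at every level.

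For $i \geq 1$, the map of monoids $\mathcal{Q}^{(i)} \oplus (\mathbb{N}^r)^{(i)} \to R_i$ sending $q$ to $\alpha_i(q)$ and $\tfrac{e_j}{p^i}$ to $T_j \in R''_i$, combined with the composite $C(k) \to R \to R_i$, yields a ring map $C(k)[\mathcal{Q}^{(i)} \oplus (\mathbb{N}^r)^{(i)}] \to R_i$. Because $R_i$ is finite over $R$ (by Lemma \ref{finite424} (2) for $R'_i$ together with the monic presentation of $R''_i$) and is local by Proposition \ref{claimlog}, it is $\mathfrak{m}_{R_i}$-adically complete; since the augmentation ideal of the source maps into $\mathfrak{m}_{R_i}$, the map extends to the completion to give the desired $\phi_i$. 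Commutativity of (\ref{5311016}) is a direct check on the monoid generators: the inclusion $\mathcal{Q}^{(i)} \hookrightarrow \mathcal{Q}^{(i+1)}$ is compatible with $\alpha_i$ and $\alpha_{i+1}$ via $t_i$, while $\tfrac{e_j}{p^i} \in (\mathbb{N}^r)^{(i)}$ is the $p$-th power of $\tfrac{e_j}{p^{i+1}} \in (\mathbb{N}^r)^{(i+1)}$, so $\phi_{i+1}$ sends it to $(T_j^{(i+1)})^p$, which matches $t_i(T_j^{(i)})$ where $T_j^{(i)}$ denotes the $p^i$-th root of $f_j$ in $R_i$.

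To identify $\ker(\phi_i)$, let $\iota : C(k)\llbracket \mathcal{Q} \oplus \mathbb{N}^r\rrbracket \to C(k)\llbracket \mathcal{Q}^{(i)} \oplus (\mathbb{N}^r)^{(i)}\rrbracket$ be the natural map induced by the monoid inclusions. Then $\iota(\theta)$ retains constant term $p$, and $\phi_i(\iota(\theta)) = t_{0,i}(\phi_0(\theta)) = 0$ by iterating (\ref{5311016}), where $t_{0,i} := t_{i-1} \circ \cdots \circ t_0$. Moreover $\phi_i$ is surjective because its image contains $t_{0,i}(R)$ together with all $\alpha_i(q)$ for $q \in \mathcal{Q}^{(i)}$ and the $T_j$'s, which generate $R_i = R'_i \otimes_R R''_i$ over $R$. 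Since $(R_i, \mathcal{Q}^{(i)}, \alpha_i)$ is a complete local log-regular ring whose monoid is fine, sharp, and saturated (Lemmas \ref{finite424} (1), \ref{FineSharpSat} (1)) and $(\mathbb{N}^r)^{(i)} \cong \mathbb{N}^r$ as such monoids (Lemma \ref{finite1} (3)), the map $\phi_i$ fits the setup of Theorem \ref{CohenLogReg} (2), and its ``moreover'' clause forces $\ker(\phi_i) = (\iota(\theta))$. The main anticipated obstacle is precisely this uniform choice of $\theta$: it relies on arranging the initial $\phi_0$ so that its $\mathbb{N}^r$-generators hit $f_1,\ldots,f_r$, after which the single element $\theta$ propagates through every layer via the natural inclusions.
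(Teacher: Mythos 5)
Your proposal is correct and follows essentially the same route as the paper: identify the residue field of every $R_i$ with $k$, normalize the Kato surjection so that the $\mathbb{N}^r$-generators map to the $p^i$-th roots of $f_1,\ldots,f_r$, and pin down every kernel by pushing a single $\theta$ of constant term $p$ through the natural inclusions and invoking the ``moreover'' clause of Theorem \ref{CohenLogReg} (2). The only (harmless) difference is that you build $\phi_i$ for $i\geq 1$ directly from $\phi_0$ via the universal property of the completed monoid algebra and check surjectivity by hand, whereas the paper applies Kato's theorem at each level and then arranges the resulting $\psi_i$ to be compatible.
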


\begin{proof}
First we remark the following. 
Let $k_i$ be the residue field of $R_{i}$. 
Then by Lemma \ref{localqinvperf} (1) and Lemma \ref{secondCD} (2), the transition maps induce a purely inseparable extension $k \hookrightarrow k_i$. 
Moreover, this extension is trivial because $k$ is perfect. 
Therefore, we can identify $k_i$ (resp. the Cohen ring of $R_i$) with $k$ (resp. $C(k)$).

Next, let us show the existence of a family of ring maps $\{ \phi_i \}_{i \geq 0}$ with the desired compatibility. 
Since $(R_i, \mathcal{Q}^{(i)} , \alpha_i)$ is a complete local log-regular ring, we can take a surjective ring map $\psi_i: C(k)\llbracket \mathcal{Q}^{(i)} \oplus \mathbb{N}^r\rrbracket\to R_{i}$ as in Theorem \ref{CohenLogReg}; its kernel is generated by an element $\theta_{i}$ whose constant term is $p$, and the diagram: 
\[
\xymatrix{
\mathcal{Q}^{(i)} \ar[r] \ar[rd]_{\alpha_i} & C(k)\llbracket \mathcal{Q}^{(i)} \oplus \mathbb{N}^r \rrbracket \ar@{->>}[d]^{\psi_i} \\
 & R_i
}\]
commutes. 
For $j=1,\ldots, r$, let us denote  by $f_j^{1/p^i}$ the image of $T_j\in R[T_{1},\ldots, T_{r}]$ in $R_i$ (see (\ref{eq305SunN1}) and (\ref{eq305SunN2})). 
Note that the sequence $f_{1}^{1/p^{i}}, \ldots, f_r^{1/p^i}$ in $R_{i}$ becomes a regular system of parameters of $R_i/I_{\alpha_i}$ by the reduction modulo $I_{\alpha_i}$ (see \cite[17.2.3]{GR22} and \cite[17.2.5]{GR22}).
Thus, for the set of the canonical basis $\{ {\bf e}_1, \ldots, {\bf e}_r \}$ of $\mathbb{N}^r$, we may assume $\psi_i (e^{{\bf e}_j}) = f_j^{1/p^i}$ by the construction of $\psi_i$ (see the proof of \cite[Chapter III, Theorem 1.11.2]{Ogus18}). 
Hence we can choose $\{\psi_{i}\}_{i\geq 0}$ so that the diagram: 
\begin{equation}\label{751711}
\vcenter{
\xymatrix{
C(k)\llbracket \mathcal{Q}^{(i)} \oplus \mathbb{N}^r \rrbracket \ar@{^(->}[r] \ar@{->>}[d]_{\psi_i} &
C(k)\llbracket  \mathcal{Q}^{(i+1)} \oplus \mathbb{N}^r \rrbracket \ar@{->>}[d]^{\psi_{i+1}} \\
R_i \ar@{^(->}[r]^{t_i} &  R_{i+1}
}}
\end{equation}
commutes. 
Thus it suffices to define $\phi_i : C(k)\llbracket\mathcal{Q}^{(i)} \oplus (\mathbb{N}^r)^{(i)}\rrbracket \to R_i$ as the composite map of the isomorphism 
$C(k)\llbracket  \mathcal{Q}^{(i)} \oplus (\mathbb{N}^r)^{(i)} \rrbracket  \xrightarrow{\cong} C(k)\llbracket \mathcal{Q}^{(i)} \oplus \mathbb{N}^r \rrbracket$ obtained by Lemma \ref{finite1} (3) and $\psi_{i}$. 

Finally, note that the image of $\theta_0\in \ker (\psi_{0})$ in $C(k)\llbracket \mathcal{Q}^{(i)} \oplus \mathbb{N}^r \rrbracket$ is contained in $\ker (\psi_{i})$, and its constant term is still $p$. 
Thus, by the latter assertion of Theorem \ref{CohenLogReg} (2), $\ker(\psi_i)$ is generated by $\theta_0$. 
Hence by taking $\theta_{0}$ as $\theta$, we complete the proof. 
\end{proof}

Let us consider the monoids $ \mathcal{Q}^{(i)} $ for an integral sharp monoid $\mathcal{Q}$. 
Since there is the natural inclusion $\iota^{(i)} : \mathcal{Q}^{(i)} \hookrightarrow \mathcal{Q}^{(i+1)}$ for any $i \geq 0$, we obtain a direct system of monoids $(\{ \mathcal{Q}^{(i)} \}_{i \geq 0} , \{ \iota^{(i)} \}_{i \geq 0})$. 
Moreover, the $p$-times map on $\mathcal{Q}^{(i+1)}$ gives a factorization:
\[
\xymatrix{
\mathcal{Q}^{(i+1)} \ar^{\times p}[r] \ar@{->>}_{\times p}[rd] & \mathcal{Q}^{(i+1)} \\
& \ar@{^(->}[u]_{\iota^{(i)}} \mathcal{Q}^{(i)}.
}\]
From this discussion, we define the small tilt of $\{ \mathcal{Q}^{(i)} \}_{i \geq 0}$.
\begin{definition}
Let $\mathcal{Q}$ be an integral monoid, and 
let $(\{ \mathcal{Q}^{(i)} \}_{i \geq 0}, \{\iota^{(i)}\}_{i \geq 0} )$ be as above. 
Then for an integer $j\geq0$, we define the \textit{$j$-th small tilt of $(\{ \mathcal{Q}^{(i)} \}_{i \geq 0}, \{\iota^{(i)}\}_{i \geq 0} )$} as the inverse limit
\begin{equation}\label{eq304SatN}
\mathcal{Q}^{s.\flat}_{j} := \varprojlim \{\cdots \to \mathcal{Q}^{(j+1)} \to \mathcal{Q}^{(j)}\},
\end{equation}
where the transition map $\mathcal{Q}^{(i+1)} \to \mathcal{Q}^{(i)}$ is the $p$-times map of monoids.
\end{definition}

Now we can derive important properties of the tilt of the perfectoid tower given in Construction \ref{logtower}. 
\begin{theorem}\label{TiltingLogRegular}
Keep the notation as in Lemma \ref{structurelog}.
Then the following assertions hold.
\begin{enumerate}
\item
The tower $(\{ (R_i)_{(p)}^{s.\flat} \}_{i \geq 0}, \{  (t_i)_{(p)}^{s.\flat} \}_{i \geq 0} )$ is isomorphic to $( \{ k\llbracket \mathcal{Q}^{(i)} \oplus (\mathbb{N}^r)^{(i)}\rrbracket \}_{i \geq 0}, \{u_i\}_{i \geq 0})$, where $u_i$ is the ring map induced by the natural inclusion $\mathcal{Q}^{(i)} \oplus (\mathbb{N}^r)^{(i)} \hookrightarrow \mathcal{Q}^{(i+1)} \oplus (\mathbb{N}^r)^{(i+1)}$.

\item
For every $j \ge 0$, there exists a homomorphism of monoids $\alpha^{s.\flat}_{j} : \mathcal{Q}^{s.\flat}_j \to (R_j)^{s.\flat}_{(p)}$ such that $((R_j)^{s.\flat}_{(p)}, \mathcal{Q}^{s.\flat}_{j}, \alpha^{s.\flat}_{j})$ is a local log-regular ring. 
\item
For every $j \ge 0$,  $(t_j)^{s.\flat}_{(p)} : (R_j)^{s.\flat}_{(p)} \to (R_{j+1})^{s.\flat}_{(p)}$ is module-finite and $(R_j)^{s.\flat}_{(p)}$ is $F$-finite.

\end{enumerate}
\end{theorem}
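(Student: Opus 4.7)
The plan is to reduce the problem to equal characteristic via Lemma \ref{structurelog} and then identify each tilt explicitly with a complete monoid algebra. Set $T_i := k\llbracket\mathcal{Q}^{(i)}\oplus(\mathbb{N}^r)^{(i)}\rrbracket$ with $u_i \colon T_i \hookrightarrow T_{i+1}$ the natural inclusion. The first step is to observe that $(\{T_i\}_{i\geq 0},\{u_i\}_{i\geq 0})$ is itself a perfect tower, hence a perfectoid tower arising from $(T_0,(0))$ by Lemma \ref{perfdperftower}: since $k$ is perfect and the $p$-times map $\mathcal{Q}^{(i+1)}\to\mathcal{Q}^{(i)}$ (similarly for $(\mathbb{N}^r)^{(i)}$) is bijective by Lemma \ref{finite1} (3), the Frobenius on $T_{i+1}$ factors through $u_i$ as a bijective Frobenius projection $G_i \colon T_{i+1} \xrightarrow{\cong} T_i$.

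For part (1), reducing modulo $p$ in Lemma \ref{structurelog} yields a compatible family of isomorphisms $\bar{\phi}_i \colon T_i/\bar{\theta}\,T_i \xrightarrow{\cong} R_i/pR_i$, where $\bar{\theta} := \theta \bmod p$ lies in the maximal ideal of $T_0$ because $\theta$ has constant term $p$. The uniqueness of Frobenius projections in the presence of the injective transition maps $\bar{u}_i$ and $\bar{t}_i$ (Definition \ref{invqperf} (2)) forces the induced map $\bar{G}_i$ on $\{T_i/\bar{\theta}\,T_i\}$ to correspond to $F_i$ under $\bar{\phi}$. Passing to inverse limits gives
\[
(R_i)^{s.\flat}_{(p)} \;=\; \varprojlim_{j} R_{i+j}/pR_{i+j} \;\cong\; \varprojlim_{j} T_{i+j}/\bar{\theta}\,T_{i+j},
\]
and applying Lemma \ref{stiltcompletion} to the perfect tower $\{T_i\}$ with $f = \bar{\theta}$ identifies the latter limit with the $\bar{\theta}$-adic completion of $T_i$. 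Since $T_i$ is a Noetherian complete local ring with $\bar{\theta}$ in its maximal ideal, $T_i$ is already $\bar{\theta}$-adically complete, so $(R_i)^{s.\flat}_{(p)} \cong T_i$, with the transition maps matching $u_i$ by the commutative square of Lemma \ref{structurelog}.

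For part (2), Lemma \ref{finite1} (3) shows that the first projection gives an isomorphism of monoids $\mathcal{Q}^{s.\flat}_j \xrightarrow{\cong} \mathcal{Q}^{(j)}$, under which the tilting of the family $\{\alpha_{j+i}\}$ defines the desired $\alpha^{s.\flat}_j$. Tracing $q \in \mathcal{Q}^{(j)}$ through the identification of part (1)---using that the isomorphism $T_j \xrightarrow{\cong} \varprojlim T_{j+i}/\bar{\theta}\,T_{j+i}$ supplied by Lemma \ref{stiltcompletion} sends $e^q$ to $(e^{q/p^i} \bmod \bar{\theta})_{i\geq 0}$, where $q/p^i \in \mathcal{Q}^{(j+i)}$ is the unique element with $p^i(q/p^i) = q$---shows that $\alpha^{s.\flat}_j$ corresponds to the natural embedding $q \mapsto e^{(q,0)} \in T_j$. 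Since $T_j/I_{\alpha^{s.\flat}_j} \cong k\llbracket(\mathbb{N}^r)^{(j)}\rrbracket$ is regular local of dimension $r$ and the dimension formula $\dim T_j = \dim \mathcal{Q}^{(j)} + r$ holds (cf.\ Lemma \ref{logregularexample}), it follows that $((R_j)^{s.\flat}_{(p)}, \mathcal{Q}^{s.\flat}_j, \alpha^{s.\flat}_j)$ is a local log-regular ring.

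Finally, for part (3), under the identification of part (1) the transition $(t_j)^{s.\flat}_{(p)}$ becomes $u_j$, which is module-finite by Lemma \ref{finite424} (2); and $T_j$ is $F$-finite because $\mathcal{Q}^{(j)}$ is fine by Lemma \ref{finite424} (1), so $T_j$ is a quotient of a formal power series ring $k\llbracket y_1,\ldots,y_s\rrbracket$ over the perfect field $k$, and such a power series ring is $F$-finite. The main technical obstacle is the compatibility of the two systems of Frobenius projections $\bar{G}_i$ and $F_i$ under $\bar{\phi}$, together with the careful tracking of the log structure through Lemma \ref{stiltcompletion}; both rest on the uniqueness property of Frobenius projections granted by injectivity of the transition maps.
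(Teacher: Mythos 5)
Your proposal is correct and takes essentially the same route as the paper's proof: reduce to equal characteristic via Lemma \ref{structurelog}, recognize $(\{k\llbracket\mathcal{Q}^{(i)}\oplus(\mathbb{N}^r)^{(i)}\rrbracket\},\{u_i\})$ as a perfect tower whose Frobenius projections are compatible (by uniqueness) with the $F_i$ modulo $\overline{\theta}$, and identify the tilt via Lemma \ref{stiltcompletion}. The only cosmetic differences are that the paper verifies log-regularity in (2) by citing Theorem \ref{CohenLogReg} (1) rather than checking the regularity of $T_j/I_{\alpha^{s.\flat}_j}$ and the dimension formula directly, and deduces module-finiteness in (3) from Proposition \ref{smalltiltproperty2} (1) rather than from the explicit identification with $u_j$.
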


\begin{proof}
$(1)$: 
By Lemma \ref{structurelog}, each $R_i$ is isomorphic to $C(k)\llbracket \mathcal{Q}^{(i)} \oplus (\mathbb{N}^r)^{(i)} \rrbracket/(p-f)C(k)\llbracket \mathcal{Q}^{(i)} \oplus (\mathbb{N}^r)^{(i)} \rrbracket$ where $f$ is an element of $C(k)\llbracket \mathcal{Q} \oplus \mathbb{N}^r\rrbracket$ which has no constant term.
%%%%%
Set $S_{i} := k\llbracket \mathcal{Q}^{(i)} \oplus (\mathbb{N}^r)^{(i)}\rrbracket$ for any $i \geq 0$ and let $u_i: S_i \hookrightarrow S_{i+1}$ be the inclusion map induced by the natural inclusion $\mathcal{Q}^{(i)} \oplus (\mathbb{N}^r)^{(i)} \hookrightarrow \mathcal{Q}^{(i+1)} \oplus (\mathbb{N}^r)^{(i+1)}$.
Then the tower $(\{S_i\}_{i \geq 0},\{u_i\}_{i \geq 0})$ is a perfect tower. 
Indeed, each $S_i$ is reduced by Theorem \ref{CMnormal}; 
moreover, by the perfectness of $k$ and Lemma \ref{finite1} (3), the Frobenius endomorphism on $S_{i+1}$ factors through a surjection $G_i: S_{i+1}\to S_{i}$. 
In particular, $(\{S_i\}_{i \geq 0},\{u_i\}_{i \geq 0})$ is a perfectoid tower arising from $(S_{0}, (0))$ and $G_{i}$ is the $i$-th Frobenius projection (cf.\ Lemma \ref{perfdperftower}).

Put $\overline{f} := f \mod pC(k)\llbracket \mathcal{Q} \oplus \mathbb{N}^r  \rrbracket \in S_0$. Then each $S_{i}$ is $\overline{f}$-adically complete and separated by \cite[Lemma 2.1.1]{FGK11}. 
Moreover, the commutative diagram (\ref{5311016}) yields the commutative squares  ($i\geq 0$): 
\[
\xymatrix{
S_{i+1}/\overline{f}S_{i+1}  \ar[d]^{\overline{G_i}}\ar^{\cong}[r] & R_{i+1}/pR_{i+1}   \ar[d]^{F_i} \\
S_{i}/\overline{f}S_i   \ar^{\cong}[r] & R_{i}/pR_i   
}\]
that are compatible with $\{\overline{u_i}: S_{i}/\overline{f}S_{i}\to S_{i+1}/\overline{f}S_{i+1}\}_{i\geq 0}$ and $\{\overline{t_i}\}_{i\geq 0}$. 
Hence by Lemma \ref{stiltcompletion}, we obtain the isomorphisms 
\begin{equation}
\label{isomorphism2}
(R_j)^{s.\flat}_{(p)} \xleftarrow{\cong} 
\vpl\{ \cdots \xrightarrow{\overline{G_{j+1}}} S_{j+1}/\overline{f}S_{j+1}  \xrightarrow{\overline{G_{j}}} S_j/\overline{f}S_j  \} \xrightarrow{\cong} S_j \ \ \ \ \ \  (j\geq 0)
\end{equation}
that are compatible with the transition maps of the towers. 
Thus the assertion follows.

(2): Considering the inverse limit of the composite maps $\mathcal{Q}^{(j+i)}\xrightarrow{\alpha_{j+i}}R_{j+i}\twoheadrightarrow R_{j+i}/pR_{j+i}$ ($i\geq 0$), we obtain a homomorphism of monoids $\alpha^{s.\flat}_{j} : \mathcal{Q}^{s.\flat}_{j} \to (R_j)^{s.\flat}_{(p)}$. 
On the other hand, let $\overline{\alpha_{j}} : \mathcal{Q}^{(j)} \to S_{j}$ be the natural inclusion. 
Then, since $S_j$ is canonically isomorphic to $k\llbracket\mathcal{Q}^{(j)} \oplus \mathbb{N}^r\rrbracket$, $(S_j, \mathcal{Q}^{(j)}, \overline{\alpha_j} )$ is a local log-regular ring by Theorem \ref{CohenLogReg} (1). 
Thus it suffices to show that $((R_j)^{s.\flat}_{(p)}, \mathcal{Q}^{s.\flat}_{j}, \alpha^{s.\flat}_{j})$ is isomorphic to $(S_j, \mathcal{Q}^{(j)}, \overline{\alpha_j})$ as a log ring. 
Since the transition maps in (\ref{eq304SatN}) are isomorphisms by Lemma \ref{finite1} (3), we obtain the isomorphisms of monoids 
\begin{equation}
\label{isomorphism3}
\mathcal{Q}^{s.\flat}_{j}\xleftarrow{\id_{\mathcal{Q}^{s.\flat}_{j}}}\mathcal{Q}^{s.\flat}_{j} \xrightarrow{\cong} \mathcal{Q}^{(j)}\ \ \ \ \ \ (j\geq 0). 
\end{equation}
Then one can connect (\ref{isomorphism3}) to (\ref{isomorphism2}) to construct a commutative diagram using $\alpha^{s.\flat}_j$ and $\overline{\alpha_j}$. 
Hence the assertion follows. 

$(3)$: By Lemma \ref{finite424} (1), $t_j: R_j\to R_{j+1}$ is module-finite. 
Hence by Proposition \ref{smalltiltproperty2} (1), $(t_j)^{s.\flat}_{(p)}: (R_j)^{s.\flat}_{(p)}\to (R_{j+1})^{s.\flat}_{(p)}$ is also module-finite. 
Finally let us show that $(R_j)^{s.\flat}_{(p)}$ is $F$-finite. 
By the assertion (2), $(R_j)^{s.\flat}_{(p)}$ is a complete Noetherian local ring, and the residue field is $F$-finite because it is perfect.
Thus the assertion follows from \cite[Theorem 8.4]{Ma86}.
\end{proof}

%\begin{remark}
%If a complete local ring $(R, \fm, k)$ of characteristic $p>0$ has $F$-finite residue field, then $R$ itself is $F$-finite. Therefore, $((R_j)^{s.\flat}_{(p)}, \mathcal{Q}^{s.\flat}_{j}, \alpha^{s.\flat}_{j})$ in Theorem \ref{TiltingLogRegular} is $F$-finite for every $j \geq 0$, since its residue field $k$ is perfect.
%\end{remark}

\begin{example}
\label{examplelogstilt}
\begin{enumerate}
\item
A tower of regular local rings which is treated in \cite{CK19} and \cite{KS20} is a perfectoid tower in our sense.
Let $(R, \fm , k)$ be a $d$-dimensional regular local ring whose residue field $k$ is perfect and let $x_1, \ldots, x_d$ be a regular sequence of parameters.
Let ${\bf e}_1,\ldots, {\bfe}_d$ be the canonical basis of $\mbN^d$.
Then $(R,\mathbb{N}^d, \alpha)$ is a local log-regular ring where $\alpha : \mathbb{N}^d \to R$ is a homomorphism of monoids which maps ${\bfe}_i$ to $x_i$.
Furthermore, assume that $R$ is $\fm$-adically complete.
Then, by Cohen's structure theorem, $R$ is isomorphic to
$$
W(k)\llbracket x_1,\ldots,x_d\rrbracket/(p-f)
$$
where $f=x_1$ or $f \in (p,x_1,\ldots,x_d)^2$ (the former case is called \textit{unramified}, and the latter is called \textit{ramified}).
Let us construct a perfectoid tower arising from $(R, (p))$ along Construction \ref{logtower}.
Since $k$ is perfect, ${\bf \Omega}_{k}$ is zero by the short exact sequences (\ref{modifieddifferential}) and the definition of itself.
%and  (\ref{modifieddifferential2}).
This implies that the image of the empty subset of $R$ in $k$ forms a maximal sequence.
%$R$ has no sequences of non-zero elements whose image in $k$ is maximal.
Hence $R''_i$ in Construction \ref{logtower} is equal to $R$.
Moreover, $(\mathbb{N}^d)^{(i)}$ is generated by 
$
\frac{1}{p^i}\bfe_1, \ldots, \frac{1}{p^i}\bfe_d.
$
Thus, applying Construction \ref{logtower}, we obtain
$$
R_i =
R_i' = 
\mathbb{Z}[(\mathbb{N}^d)^{( i )}] \otimes_{\mathbb{Z}[\mathbb{N}^d]} R \cong 
R[T_1,\ldots, T_d]/(T_1^{p^i}-x_1, \ldots, T_d^{p^i}-x_d) \cong
W(k)\llbracket  x_1^{1/p^i}, \ldots, x_d^{1/p^i}  \rrbracket/(p-f).
$$
Set the natural injection $t_i : R_i \to R_{i+1}$ for any $i \geq 0$.
Then, by Proposition $\ref{smalltilt}$, $(\{R_i\}_{i \geq 0}, \{ t_i \}_{i\geq 0})$ is a perfectoid tower arising from $(R,(p))$.
By Theorem \ref{TiltingLogRegular}, its tilt 
$(\{(R_i)^{s.\flat}_{(p)}\}_{i\geq 0}, \{(t_i)^{s.\flat}_{(p)}\}_{i\geq 0})$ is isomorphic to the tower $k\llbracket \mbN^d \rrbracket\hookrightarrow 
k\llbracket (\mbN^d)^{(1)} \rrbracket \hookrightarrow
k\llbracket (\mbN^d)^{(2)} \rrbracket \hookrightarrow \cdots$, 
which can be written as  
$$
k\llbracket x_1, \ldots, x_d\rrbracket \hookrightarrow k\llbracket x_1^{1/p}, \ldots, x_d^{1/p}\rrbracket \hookrightarrow  
k\llbracket x_1^{1/p^2}, \ldots, x_d^{1/p^2}\rrbracket \hookrightarrow \cdots.
$$

\item
Consider the surjection:
$$
S:=W(k)\llbracket x,y,z,w\rrbracket/(xy-zw) \twoheadrightarrow R:=W(k)\llbracket x,y,z,w\rrbracket/(xy-zw,p-w)=W(k)\llbracket x,y,z\rrbracket/(xy-pz).
$$
where $k$ is a perfect field. Let $\mathcal{Q} \subseteq \mathbb{N}^4$ be a saturated submonoid generated by 
$$
(1,1,0,0),(0,0,1,1),(1,0,0,1),\text{ and }(0,1,1,0).
$$
Then $S$ admits a homomorphism of monoids $\alpha_S:\mathcal{Q} \to S$ by letting $(1,1,0,0) \mapsto x, (0,0,1,1) \mapsto y, (1,0,0,1) \mapsto z$ and $(0,1,1,0) \mapsto w$. With this, $(S,\mathcal{Q},\alpha_S)$ is a local log-regular ring. The composite map $\alpha_R:\mathcal{Q} \to S \to R$ makes $R$ into a local log ring. Indeed, we can write $R\cong W(k)\llbracket\mathcal{Q}\rrbracket/(p-e^{(0,1,1,0)})$, hence $(R,\mathcal{Q},\alpha_R)$ is log-regular by Theorem \ref{CohenLogReg}. %Put $R_0 := R$. 

Next, note that $R/I_{\alpha_R} \cong k$.
Then, for the same reason in (1), $R''_i$ is equal to $R$.
Moreover, $\mathcal{Q}^{(i)}$ is generated by
$$
\Bigl(\frac{1}{p^i},\frac{1}{p^i},0,0\Bigr),
\Bigl(0,0,\frac{1}{p^i},\frac{1}{p^i}\Bigr),
\Bigl(\frac{1}{p^i},0,0\frac{1}{p^i}\Bigr),
\Bigl(0,\frac{1}{p^i},\frac{1}{p^i},0\Bigr)
.
$$
Thus, applying Construction \ref{logtower}, we obtain
\[
\begin{array}{rcl}
R_i & =  & R\llbracket \mathcal{Q}^{(i)} \rrbracket  \\
%\mathbb{Z}[\mathcal{Q}^{( n )}] \otimes_{\mathbb{Z}[\mathcal{Q}]} R \cong 
& \cong  & W(k)\llbracket \mathcal{Q}^{(i)} \rrbracket/(p-e^{(0,1,1,0)}) \\
& \cong &
%W(k)\llbracket x^{1/p^i},y^{1/p^i},z^{1/p^i},w^{1/p^i} \rrbracket/(x^{k/p^i}y^{k/p^i}-z^{k/p^i}w^{k/p^i}~|~1 \leq k \leq p^i)+(p-w).
W(k)\llbracket x^{1/p^i},y^{1/p^i},z^{1/p^i},w^{1/p^i} \rrbracket/(x^{1/p^i}y^{1/p^i}-z^{1/p^i}w^{1/p^i}, p-w).
\end{array}
\]
%Moreover, since $(R_n, \mathcal{Q}^{(n)}, \alpha_n)$ is a local log-regular ring by , we obtain the isomorphisms 
%$$
%R_n \cong 
%W(k)\llbracket \mathcal{Q}^{(n)} \rrbracket/(p-w) \cong 
%W(k)\llbracketx^{\frac{1}{p^n}},y^{\frac{1}{p^n}},z^{\frac{1}{p^n}},w^{\frac{1}{p^n}} \rrbracket/(x^{\frac{k}{p^n}}y^{\frac{k}{p^n}}-z^{\frac{k}{p^n}}w^{\frac{k}{p^n}}~|~1 \leq k \leq p^n)+(p-w).
%$$
%It is easy to see that the direct system $\{R_n\}_{n \geq 0}$ satisfies the axioms (a), (b), (c), (d), (e), (f), (g) by taking $f_0 = p = w$ and $f_1 = w^{\frac{1}{p}}$. Thus we obtain the isomorphism
Set a natural injection $t_i : R_i \to R_{i+1}$.
Then, by Proposition \ref{smalltilt}, $(\{R_i\}_{i \geq 0}, \{ t_i \}_{i \geq 0})$ is a perfectoid tower arising from $(R,(p))$. Hence 
$$
R_\infty = \vil_{i \geq 0} R_i \cong 
\displaystyle\bigcup_{i \geq 0}W(k)\llbracket x^{1/p^i},y^{1/p^i},z^{1/p^i},w^{1/p^i} \rrbracket/(x^{1/p^i}y^{1/p^i}-z^{1/p^i}w^{1/p^i}, p-w), 
$$
and its $p$-adic completion is perfectoid. 
Moreover, one can calculate the tilt $(\{R_i^{s.\flat}\}_{i\geq 0}, \{t_i^{s.\flat}\}_{i\geq 0})$ to be $k\llbracket\mathcal{Q}\rrbracket \hookrightarrow
k\llbracket\mathcal{Q}^{(1)}\rrbracket\hookrightarrow
k\llbracket\mathcal{Q}^{(2)}\rrbracket\hookrightarrow\cdots$ 
by Theorem \ref{TiltingLogRegular}, or, more explicitly, 
$$
k\llbracket x,y,z,w \rrbracket/(xy-zw)\hookrightarrow 
k\llbracket x^{1/p},y^{1/p},z^{1/p},w^{1/p} \rrbracket/(x^{1/p}y^{1/p}-z^{1/p}w^{1/p})\hookrightarrow\cdots.
$$

\end{enumerate}
\end{example}

\subsubsection{Towers of split maps and sousperfectoid rings}
Let us recall that Hansen and Kedlaya introduced a new class of topological rings that guarantees sheafiness on the associated adic spectra (see \cite[Definition 7.1]{HK20}).

\begin{definition}
%Let $A$ be a Huber ring such that a prime $p \in A$ is topologically nilpotent. 
Let $A$ be a complete and separated Tate ring such that a prime $p \in A$ is topologically nilpotent.
We say that $A$ is \textit{sousperfectoid}, if there exists a perfectoid ring $B$ in the sense of Fontaine (see \cite[Definition 2.13]{HK20}) with a continuous $A$-linear map $f:A \to B$ that splits in the category of topological $A$-modules. That is, there is a continuous $A$-linear map $\sigma:B \to A$ such that $\sigma \circ f=\id_A$.
\end{definition}

Let us show that a perfectoid tower consisting of split maps induces sousperfectoid rings. 
In view of Theorem \ref{log-splinter}, one can apply this result to the towers discussed above. 
See \cite{NS19} for detailed studies on algebraic aspects of Tate rings.

\begin{proposition}\label{prop1347314}
Let $\tower$ be a perfectoid tower arising from some pair $(R,(f_0))$.
Assume that $f_0$ is regular, $R$ is $f_0$-adically complete and separated, and $t_i$ splits as an $R_i$-linear map for every $i \geq 0$.
We equip $R[\frac{1}{f_{0}}]$ with the linear topology in such a way that $\{f_0^nR\}_{n \geq 1}$ defines a fundamental system of open neighborhoods at $0 \in R[\frac{1}{f_0}]$. 
Then $R[\frac{1}{f_0}]$ is a sousperfectoid Tate ring. In particular, it is stably uniform.
\end{proposition}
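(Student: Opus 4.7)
The plan is to realize $R[\tfrac{1}{f_0}]$ as a continuous $R[\tfrac{1}{f_0}]$-linear direct summand of a suitable perfectoid Tate ring. Set $R_\infty := \varinjlim_{i} R_i$ and let $\widehat{R_\infty}$ denote its $f_0$-adic completion. Axiom (f-1) gives $I_1^p = f_0 R_1$, so the $I_1$-adic and $f_0$-adic topologies on $R_\infty$ coincide; hence $\widehat{R_\infty}$ is perfectoid by Corollary \ref{smalltiltproperty1}, with $f_0$ sitting inside the $p$-th power of a pseudo-uniformizer. Consequently, $B := \widehat{R_\infty}[\tfrac{1}{f_0}]$ is a perfectoid Tate ring in the sense of Fontaine, and the natural ring map $\iota : R[\tfrac{1}{f_0}] \to B$ induced by $R \hookrightarrow R_\infty \to \widehat{R_\infty}$ is continuous.

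The main step is to produce a continuous $R[\tfrac{1}{f_0}]$-linear splitting of $\iota$. By hypothesis, for every $i \geq 0$ there is an $R_i$-linear map $\sigma_i : R_{i+1} \to R_i$ with $\sigma_i \circ t_i = \id_{R_i}$. Set $\rho_0 := \id_R$ and define inductively $\rho_{i+1} := \rho_i \circ \sigma_i : R_{i+1} \to R$, which is $R$-linear and satisfies $\rho_{i+1} \circ t_i = \rho_i \circ \sigma_i \circ t_i = \rho_i$. This compatibility implies that $\{\rho_i\}_{i \geq 0}$ assembles into an $R$-linear splitting $\rho_\infty : R_\infty \to R$ of the canonical inclusion $R \hookrightarrow R_\infty$.

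Because $\rho_\infty$ is $R$-linear, $\rho_\infty(f_0^n R_\infty) \subseteq f_0^n R$ for every $n \geq 0$, so $\rho_\infty$ is uniformly continuous for the $f_0$-adic topologies. By $f_0$-adic completeness and separatedness of $R$, $\rho_\infty$ extends uniquely to a continuous $R$-linear splitting $\widehat{\rho_\infty} : \widehat{R_\infty} \to R$ of the structure map $R \to \widehat{R_\infty}$. Inverting $f_0$ yields a continuous $R[\tfrac{1}{f_0}]$-linear splitting $B \to R[\tfrac{1}{f_0}]$ of $\iota$, so $R[\tfrac{1}{f_0}]$ is sousperfectoid. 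Stable uniformity then follows from the general fact that sousperfectoid Tate rings are stably uniform \cite{HK20}.

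The main obstacle I anticipate is ensuring compatibility of the splittings across the tower, since a priori the chosen $\sigma_i$ need not fit together in any coherent way; the inductive prescription $\rho_{i+1} := \rho_i \circ \sigma_i$ resolves this cleanly and is the technical heart of the argument. A secondary subtle point is confirming that $\widehat{R_\infty}$ serves as an integral model for a Fontaine-perfectoid Tate ring after inverting $f_0$, but this is immediate from Corollary \ref{smalltiltproperty1} combined with the equivalence of the $f_0$-adic and $I_1$-adic topologies guaranteed by axiom (f-1).
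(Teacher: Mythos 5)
Your proof is correct and follows essentially the same strategy as the paper: exhibit $R[\frac{1}{f_0}]$ as a continuous $R[\frac{1}{f_0}]$-linear direct summand of the Fontaine-perfectoid Tate ring $\widehat{R_\infty}[\frac{1}{f_0}]$, with the splitting obtained by completing a retraction $R_\infty\to R$ (the paper's Lemma \ref{splitlemma1} plus Lemma \ref{splitmodule}). The one point where you diverge is that you build the retraction of $R\to R_\infty$ directly, by checking that $\rho_{i+1}:=\rho_i\circ\sigma_i$ gives a family compatible with the transition maps and hence a map out of the colimit, whereas the paper's Lemma \ref{splitmodule} obtains the same conclusion from surjectivity of $\varprojlim_i\Hom_{R_0}(R_i,R_0)\to\Hom_{R_0}(R_0,R_0)$ via the snake lemma and \cite[Lemma 4.1]{Schenzel14}; your construction is more elementary and equally valid.
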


In order to prove this, we need the following lemma.

\begin{lemma}\label{splitmodule}
Keep the notations and assumptions as in Proposition \ref{prop1347314}.
Then the natural map $R_0 \to \varinjlim_{i\geq 0}R_i$ splits as an $R_0$-linear map. 
\end{lemma}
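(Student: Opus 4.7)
The plan is to construct an $R_0$-linear retraction of the canonical map $\iota_\infty : R_0 \to R_\infty := \varinjlim_{i\geq 0} R_i$ by assembling the stage-by-stage splittings into a compatible cocone and invoking the universal property of the direct limit. By hypothesis, for each $i \geq 0$ we may choose an $R_i$-linear section $\sigma_i : R_{i+1} \to R_i$ with $\sigma_i \circ t_i = \id_{R_i}$. Writing $t_{0,n} := t_{n-1} \circ \cdots \circ t_0 : R_0 \to R_n$ (with $t_{0,0} = \id_{R_0}$) for the natural $R_0$-algebra structure on $R_n$, and regarding each $\sigma_i$ as $R_0$-linear by restriction of scalars along $t_{0,i}$, I would set $\sigma_{0,0} := \id_{R_0}$ and
$$\sigma_{0,n} := \sigma_0 \circ \sigma_1 \circ \cdots \circ \sigma_{n-1} : R_n \to R_0 \quad (n\geq 1),$$
which is $R_0$-linear as a composition of $R_0$-linear maps.

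The key observation, which I would verify by a one-line telescoping, is the compatibility with the transition maps:
$$\sigma_{0,n+1} \circ t_n \;=\; (\sigma_0 \circ \cdots \circ \sigma_{n-1}) \circ (\sigma_n \circ t_n) \;=\; \sigma_0 \circ \cdots \circ \sigma_{n-1} \;=\; \sigma_{0,n}.$$
Thus $\{\sigma_{0,n}\}_{n\geq 0}$ is a cocone on the direct system $(\{R_i\}_{i\geq 0},\{t_i\}_{i\geq 0})$ with apex $R_0$, and the universal property yields a unique $R_0$-linear map $\sigma_\infty : R_\infty \to R_0$ that agrees with each $\sigma_{0,n}$ on $R_n$. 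To verify that $\sigma_\infty$ splits $\iota_\infty$, I would observe that $\iota_\infty$ factors as $R_0 \xrightarrow{t_{0,n}} R_n \to R_\infty$ for every $n$; the same telescoping gives $\sigma_{0,n} \circ t_{0,n} = \id_{R_0}$, which forces $\sigma_\infty \circ \iota_\infty = \id_{R_0}$.

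The argument is entirely formal: the $f_0$-regularity, the $f_0$-adic completeness of $R$, and the perfectoid tower axioms from Proposition \ref{prop1347314} are not used in this intermediate lemma — these hypotheses are reserved for the later step of extending $\sigma_\infty$ to the $f_0$-adic completions and deducing sousperfectoidness of $R[\tfrac{1}{f_0}]$. Consequently I do not anticipate a genuine obstacle. The only subtlety to monitor is the distinction between $R_i$-linearity and $R_0$-linearity: it is essential that the hypothesis provides an $R_i$-linear section $\sigma_i$ (not merely an $R_0$-linear one), because the identity $\sigma_i \circ t_i = \id_{R_i}$ — used at every step of the telescoping — depends on the target being exactly $R_i$ and not some quotient or submodule thereof.
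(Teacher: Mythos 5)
Your proof is correct, and it takes a genuinely different (and more elementary) route than the paper's. You build the retraction explicitly: choose an $R_i$-linear section $\sigma_i$ of each $t_i$, telescope the compositions $\sigma_{0,n}=\sigma_0\circ\cdots\circ\sigma_{n-1}$ into a compatible cocone, and invoke the universal property of $\varinjlim$; the identities $\sigma_n\circ t_n=\id_{R_n}$ give both the cocone compatibility and $\sigma_\infty\circ\iota_\infty=\id_{R_0}$, and $R_0$-linearity is inherited by restriction of scalars. The paper instead dualizes: it forms the split short exact sequences $0\to R_0\to R_i\to R_i/R_0\to 0$, applies $\Hom_{R_0}(-,R_0)$, observes via the snake lemma that the transition maps of the inverse system $\{\Hom_{R_0}(R_i/R_0,R_0)\}$ are surjective (a Mittag--Leffler condition), takes inverse limits to get surjectivity of $\Hom_{R_0}(R_\infty,R_0)\to\Hom_{R_0}(R_0,R_0)$ (citing Schenzel for identifying the limit with $\Hom$ out of the colimit), and then picks a preimage of $\id_{R_0}$. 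The two arguments are morally dual — an element of $\varprojlim\Hom_{R_0}(R_i,R_0)$ lifting $\id_{R_0}$ is exactly your compatible family $\{\sigma_{0,n}\}$ — but yours constructs the lift directly rather than deducing its existence from exactness of the inverse limit, so it avoids the snake-lemma and $\varprojlim$-exactness machinery entirely. You are also right that none of the hypotheses on $f_0$ are needed here; they enter only in the subsequent completion step via Lemma \ref{splitlemma1} (and note that the $R_0$-linearity of your $\sigma_\infty$ automatically gives $\sigma_\infty(f_0^nR_\infty)\subseteq f_0^nR_0$, which is what that lemma requires).
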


\begin{proof}
%[Proof of Claim \ref{splitmodule}]
We use the fact that each $t_i: R_i \to R_{i+1}$ splits as an $R_i$-linear map by assumption.
This implies that the short exact sequence of $R$-modules
$$
0 \to R_0 \to R_i \to R_i/R \to 0
$$
splits for any $i\geq 0$. It induces a commutative diagram of $R$-modules
$$
\begin{CD}
0 @>>> \Hom_{R_0}(R_{i+1}/R_0,R_0) @>>> \Hom_{R_0}(R_{i+1},R_0) @>>> \Hom_{R_0}(R_0,R_0) @>>> 0\\
@. @V\alpha_iVV @V\beta_iVV @| \\
0 @>>> \Hom_{R_0}(R_i/R_0,R_0) @>>> \Hom_{R_0}(R_i,R_0) @>>> \Hom_{R_0}(R_0,R_0) @>>> 0
\end{CD}
$$
where each horizontal sequence is split exact, and each vertical map forms an inverse system induced by $t_i : R_i \to R_{i+1}$. In particular, $\beta_i$ is surjective and it thus follows from the snake lemma that $\alpha_i$ is surjective as well. By taking inverse limits, we obtain the short exact sequence:
$$
0 \to \varprojlim_{i \geq 0} \Hom_{R_0}(R_i/R_0,R_0) \to \varprojlim_{i \geq 0} \Hom_{R_0}(R_i,R_0) \xrightarrow{h} \Hom_{R_0}(R_0,R_0) \to 0.
$$
It follows from \cite[Lemma 4.1]{Schenzel14} that $h$ is the canonical surjection $\Hom_{R_0}(R_\infty,R_0) \twoheadrightarrow \Hom_{R_0}(R_0,R_0)$. Then choosing an inverse image of $\id_{R_0} \in \Hom_{R_0}(R_0,R_0)$ gives a splitting of $R_0 \to R_\infty$.
\end{proof}

\begin{proof}[Proof of Proposition \ref{prop1347314}]
We have constructed an infinite extension $R \to R_\infty$ such that if $\widehat{R_\infty}$ is the $f_0$-adic completion, then the associated Tate ring $\widehat{R_\infty}[\frac{1}{f_0}]$ is a perfectoid ring in the sense of Fontaine by Theorem \ref{smalltiltproperty1} and \cite[Lemma 3.21]{BMS18}.%by \cite[Theorem 17.2.14]{GR22}. 

By Lemma \ref{splitlemma1} and  Lemma \ref{splitmodule}, it follows that the map $R[\frac{1}{f_0}] \to \widehat{R_\infty}[\frac{1}{f_0}]$ splits in the category of topological $R[\frac{1}{f_0}]$-modules (notice that $R$ is $f_0$-adically complete and separated). Thus, $R[\frac{1}{f_0}]$ is a sousperfectoid Tate ring. The combination of \cite[Corollary 8.10]{HK20}, \cite[Proposition 11.3]{HK20} and \cite[Lemma 11.9]{HK20} allows us to conclude that $R[\frac{1}{f_0}]$ is stably uniform.
\end{proof}

As a corollary, one can obtain the stable uniformity for complete local log-regular rings (see also Construction \ref{logtower} and Theorem \ref{log-splinter}).

\begin{corollary}
Let $(R,\mathcal{Q},\alpha)$ is a complete local log-regular ring of mixed characteristic with perfect residue field. We equip $R[\frac{1}{p}]$ with the structure of a complete and separated Tate ring in such a way that $\{p^nR\}_{n \geq 1}$ defines a fundamental system of open neighborhoods at $0 \in R[\frac{1}{p}]$. Then $R[\frac{1}{p}]$ is stably uniform.
\end{corollary}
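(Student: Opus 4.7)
The plan is to reduce the corollary to an application of Proposition \ref{prop1347314}, applied to the perfectoid tower arising from $(R,(p))$ produced in Construction \ref{logtower}. By Remark \ref{rmk2125}, I would first replace $\mathcal{Q}$ with $\overline{\mathcal{Q}}$ so that $\mathcal{Q}$ becomes fine, sharp, and saturated, which is the setting required by Construction \ref{logtower}. Proposition \ref{smalltilt} then guarantees that the resulting tower $(\{R_i\}_{i\geq 0},\{t_i\}_{i\geq 0})$ is a perfectoid tower arising from $(R,(p))$, and Proposition \ref{claimlog} identifies its layers $(R_i,\mathcal{Q}^{(i)},\alpha_i)$ as complete local log-regular rings.

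Next I would verify the three hypotheses of Proposition \ref{prop1347314} with $f_0=p$. Regularity of $p$ in $R$ is immediate from Theorem \ref{CMnormal}, which asserts that $R$ is a normal (hence integral) domain. The $p$-adic completeness and separatedness of $R$ follow from the $\mathfrak{m}$-adic completeness of $R$ combined with $p\in \mathfrak{m}$: Krull's intersection theorem supplies separatedness, and the standard fact that finitely generated ideals are closed in the $\mathfrak{m}$-adic topology of a complete Noetherian local ring upgrades this to completeness.

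The main obstacle is verifying that every transition map $t_i:R_i\to R_{i+1}$ admits an $R_i$-linear retraction. To establish this I would argue that $t_i$ is a module-finite injection: module-finiteness follows from Lemma \ref{finite424}(2) (which handles the monoid-algebra side $\mathbb{Z}[\mathcal{Q}^{(i)}]\to \mathbb{Z}[\mathcal{Q}^{(i+1)}]$) together with the free extension $R''_i\hookrightarrow R''_{i+1}$ built into Construction \ref{logtower}, and injectivity was recorded in that construction. Consequently $R_{i+1}$ is integral over the domain $R_i$, so $\Spec(R_{i+1})\to \Spec(R_i)$ is surjective by lying-over. Theorem \ref{log-splinter} then provides the splinter property of each $R_i$, which delivers the required $R_i$-linear splitting of $t_i$.

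With all three hypotheses in hand, Proposition \ref{prop1347314} concludes that $R[\frac{1}{p}]$ is sousperfectoid, and in particular stably uniform, which completes the proof.
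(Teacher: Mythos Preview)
Your proposal is correct and follows essentially the same approach as the paper: the paper presents this corollary as an immediate consequence of Proposition~\ref{prop1347314}, with the parenthetical pointer ``see also Construction~\ref{logtower} and Theorem~\ref{log-splinter}'' indicating exactly the ingredients you have spelled out. You have simply made explicit the verification of the hypotheses (regularity of $p$, $p$-adic completeness, and the splitting of each $t_i$ via the splinter property of the log-regular layers $R_i$) that the paper leaves to the reader.
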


\section{Applications to \'etale cohomology of Noetherian rings}
In this section, we establish several results on \'etale cohomology of Noetherian rings, as applications of the theory of perfectoid towers developed in \S\ref{sectionPerf}. 
In \S\ref{comparisontilt}, for a ring that admits a certain type of perfectoid tower, we prove that finiteness of \'etale cohomology groups on the positive characteristic side carries over to the mixed characteristic side (Proposition \ref{TiltEtaleCohHensel}). In \S\ref{divisorclassLR}, we apply this result to a problem on divisor class groups of log-regular rings. 

We prepare some notation. Let $X$ be a scheme and let $X_{\et}$ denote the category of schemes that are \'etale over $X$, and for any \'etale $X$-scheme $Y$, we specify the covering $\{Y_i \to Y\}_{i \in I}$ so that $Y_i$ is \'etale over $Y$ and the family $\{Y_i\}_{i \in I}$ covers surjectively $Y$. For an abelian sheaf $\mathcal{F}$ on $X_{\et}$, we denote by $H^i(X_{\et},\mathcal{F})$ the value of the $i$-th derived functor of $U \in X_{\et} \mapsto \Gamma(U,\mathcal{F})$. For the most part of applications, we consider \textit{torsion} sheaves, such as $\mathbb{Z}/n \mathbb{Z}$ and $\mu_n$ for $n \in \mathbb{N}$. However, for the multiplicative group scheme $\mathbb{G}_m$, we often use the following isomorphism:
$$
H^1(X_{\et},\mathbb{G}_m) \cong \Pic(X).
$$
For the basics on \'etale cohomology, we often use \cite{LF15} or \cite{Mi80} as references.

\subsection{Tilting \'etale cohomology groups}\label{comparisontilt}
%\subsubsection{Preliminaries on \'etale cohomology}
Let $A$ be a ring with an ideal $J$, let $\widehat{A}$ be the $J$-adic completion of $A$, and let $U \subseteq \Spec(A)$ be an open subset. Then we define the $J$-adic completion of $U$ to be the open subset $\widehat{U} \subseteq \Spec(\widehat{A})$, which is the inverse image of $U$ via $\Spec(\widehat{A}) \to \Spec(A)$. We will use the following result for deriving results on the behavior of \'etale cohomology under the tilting operation as well as some interesting results on the divisor class groups of Noetherian normal domains (see Proposition \ref{DivPicard} and Proposition \ref{Basechange1}).

\begin{theorem}[Fujiwara-Gabber]
\label{HenselianEtale}
Let $(A,J)$ be a Henselian pair with $X:=\Spec(A)$ and let $\widehat{A}$ be the $J$-adic completion of $A$. Then the following assertions hold.
\begin{enumerate}
\item
For any abelian torsion sheaf $\mathscr{F}$ on $X_{\et}$, we have $\mathbf{R}\Gamma(\Spec(A)_{\et},\mathscr{F}) \simeq \mathbf{R}\Gamma(\Spec(A/J)_{\et},\mathscr{F}|_{\Spec(A/J)})$.

\item
Assume that $J$ is finitely generated. Then for any abelian torsion sheaf $\mathscr{F}$ on $X_{\et}$ and any open subset $U \subseteq X$ such that $X \setminus V(J) \subseteq U$, we have $\mathbf{R}\Gamma(U_{\et},\mathscr{F}) \simeq \mathbf{R}\Gamma(\widehat{U}_{\et},\mathscr{F})$.
\end{enumerate}
\end{theorem}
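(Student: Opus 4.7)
The plan is to follow Gabber's rigidity strategy for Henselian pairs. For assertion (1), I would first reduce to the case of a constructible abelian torsion sheaf by writing any torsion sheaf as a filtered colimit of constructible ones and using that both sides commute with such colimits on the quasi-compact, quasi-separated scheme $\Spec(A)$. A further dévissage then reduces the problem to the case of locally constant constructible sheaves of the form $\mathbb{Z}/n\mathbb{Z}$ on a finite étale cover.

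The core input is the equivalence $\bFEt(A) \simeq \bFEt(A/J)$ for a Henselian pair $(A, J)$, in the spirit of Lemma \ref{categoryfiniteetale}; this ensures that finite étale covers of $\Spec(A)$ are classified by those of $\Spec(A/J)$. Combined with $\Gamma(\Spec(A), \mathscr{F}) = \Gamma(\Spec(A/J), \mathscr{F}|_{\Spec(A/J)})$ for a locally constant constructible sheaf (matching connected components by lifting idempotents), this settles $H^0$. For higher cohomology I would proceed by induction on the degree, using the long exact sequence attached to a short exact sequence of torsion sheaves together with a Čech-type argument on hypercovers by finite étale covers that lift from $A/J$. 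The technical heart is showing that the resulting restriction morphism at the derived level is a quasi-isomorphism.

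For (2), I would deduce it from (1) applied to both $(A, J)$ and $(\widehat{A}, J\widehat{A})$: the completion is again a Henselian pair, and the finite generation of $J$ ensures $\widehat{A}/J\widehat{A} \cong A/J$, so (1) identifies both sides with $\mathbf{R}\Gamma(\Spec(A/J)_{\et}, \mathscr{F}|_{\Spec(A/J)})$ in the case $U = X$. For a general open $U \supseteq X \setminus V(J)$, the excision triangle
\begin{equation*}
\mathbf{R}\Gamma_{X \setminus U}(X_{\et}, \mathscr{F}) \to \mathbf{R}\Gamma(X_{\et}, \mathscr{F}) \to \mathbf{R}\Gamma(U_{\et}, \mathscr{F}) \to
\end{equation*}
together with its analog on $\Spec(\widehat{A})$ reduces the claim to a comparison of local cohomology supported on $X \setminus U \subseteq V(J)$, which is again controlled by (1) applied to sheaves supported on $V(J)$. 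The finite generation of $J$ is needed so that $V(J\widehat{A}) \cong V(J)$ as subspaces and so that the completion map interacts well with the excision diagram.

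The main obstacle is the step that genuinely requires Gabber's theorem, namely establishing (1) in positive cohomological degrees for a constructible sheaf. The dévissage reduces matters to constant coefficients on finite étale covers, but even there one cannot simply invoke a standard spectral sequence; one must compare Čech cohomology with respect to hypercovers adapted to $A$ versus $A/J$, exploiting the lifting properties of the Henselian pair in a controlled way. A full argument relies on the machinery developed by Fujiwara and Gabber, and I would treat the statement as a black box inherited from that line of work rather than attempt to reprove it in this paper.
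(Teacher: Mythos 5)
Your bottom line coincides with the paper's: the paper gives no proof of this theorem, only the citations to Gabber's affine analogue of proper base change \cite{G94} for (1) and to Fujiwara's formal base change theorem (\cite[Theorem 7.1.1]{F95} in the Noetherian case, \cite[Exp.\ XX, 4.4]{Book} in general) for (2), and you likewise end by treating the statement as a black box. For (1), your d\'evissage to constructible sheaves and the use of $\bFEt(A)\simeq\bFEt(A/J)$ is indeed the standard opening of Gabber's argument, and you correctly locate the real difficulty in positive cohomological degrees.

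However, your sketched deduction of (2) from (1) has a genuine gap. Writing $Z=X\setminus U\subseteq V(J)$, the excision triangle does reduce the comparison of $\mathbf{R}\Gamma(U_{\et},\mathscr{F})$ with $\mathbf{R}\Gamma(\widehat{U}_{\et},\mathscr{F})$ to a comparison of $\mathbf{R}\Gamma_{Z}(X_{\et},\mathscr{F})$ with $\mathbf{R}\Gamma_{\widehat{Z}}(\Spec(\widehat{A})_{\et},\mathscr{F})$, but this local cohomology is \emph{not} ``controlled by (1) applied to sheaves supported on $V(J)$.'' By definition $\mathbf{R}\Gamma_{Z}(X_{\et},-)$ is the fibre of $\mathbf{R}\Gamma(X_{\et},-)\to\mathbf{R}\Gamma(U_{\et},-)$, so invoking it to compute $\mathbf{R}\Gamma(U_{\et},-)$ is circular; it depends on $i^{!}\mathscr{F}$, i.e.\ on the behaviour of $\mathscr{F}$ on a punctured neighbourhood of $Z$, not merely on $\mathscr{F}|_{V(J)}$. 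The simplest instance already exhibits the problem: for $A$ strictly Henselian local with $J=\fm_A$, assertion (1) only says that $\mathbf{R}\Gamma(X_{\et},\mathscr{F})$ is concentrated in degree $0$, whereas (2) asserts that the \'etale cohomology of the punctured spectrum is invariant under completion --- a substantially deeper statement (it is the key input for absolute purity and for Proposition \ref{TiltEtaleCohHensel} in this paper). Closing this gap is exactly the content of Fujiwara's theory of tubular neighbourhoods, which is why (2) must be cited from \cite{F95} and \cite{Book} independently of \cite{G94} rather than derived from (1).
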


\begin{proof}
The first statement is known as \textit{Affine analog of proper base change} in \cite{G94}, while the second one is known as \textit{Formal base change theorem} which is \cite[Theorem 7.1.1]{F95} in the Noetherian case, and \cite[XX, 4.4]{Book} in the non-Noetherian case.
\end{proof}

We will need the tilting invariance of (local) \'etale cohomology from \cite[Theorem 2.2.7]{KS20}. To state the theorem and establish a variant of it, we give some notations.

\begin{definition}\label{tiltopen}
Let $(A, I)$ and $(B, J)$ be pairs such that there exists an isomorphism of rings $\Phi: A/I\xrightarrow{\cong} B/J$. 
Then for any open subset $U\subseteq \Spec (B)$ containing $\Spec (B)\setminus V(J)$, we define an open subset 
$F_{A, \Phi}(U)\subseteq \Spec(A)$ as the complement of the closed subset $\Spec(\Phi)\bigl(\Spec (B)\setminus U\bigr)\subseteq \Spec (A)$. 
\end{definition}

One can define small tilts of Zariski-open subsets. 

\begin{definition}\label{openstilt}
Let $(\{R_{i}\}_{i\geq 0}, \{t_{i}\}_{i\geq 0})$ be a perfectoid tower arising from some pair $(R, I_0)$, and 
let $(\{R^{s.\flat}_{i}\}_{i\geq 0}, \{t^{s.\flat}_{i}\}_{i\geq 0})$ be the tilt associated to $(R, I_0)$. 
Recall that we then have an isomorphism of rings $\overline{\Phi^{(i)}_{0}}: R^{s.\flat}_{i}/I^{s.\flat}_{0}R^{s.\flat}_{i}\xrightarrow{\cong}R_{i}/I_{0}R_{i}$ for every $i\geq 0$. 
For every $i\geq 0$ and every open subset $U\subseteq \Spec(R_{i})$ containing $\Spec(R_{i})\setminus V(I_{0}R_{i})$, we define 
$$
U^{s.\flat}_{I_0}:=F_{R^{s.\flat}_{i}, \overline{\Phi^{(i)}_0}}(U). 
$$
We also denote $U^{s.\flat}_{I_{0}}$ by $U^{s.\flat}$ as an abbreviated form. 
\end{definition}

Note that by the compatibility described in Lemma \ref{921WedN}, the operation $U\rightsquigarrow U^{s.\flat}$ is compatible with the base extension along the transition maps of a perfectoid tower. 

Let us give some examples of $U^{s.\flat}$.

\begin{example}[Punctured spectra of regular local rings]\label{pctSpec}
Keep the notation as in Example \ref{examplelogstilt} (1). In this situation, the isomorphism $\overline{\Phi^{(0)}_{0}}: R^{s.\flat}_{0}/I^{s.\flat}_{0}\xrightarrow{\cong} R_{0}/I_{0}$ in Definition \ref{openstilt} can be written as  
\begin{equation}\label{isomregular}
k\llbracket x_1, \ldots, x_d \rrbracket/(p^{s.\flat})\xrightarrow{\cong}R/pR, 
\end{equation}
where $p^{s.\flat}\in k\llbracket x_1, \ldots, x_d \rrbracket$ is some element. 
Set $U:=\Spec(R) \setminus V(\mathfrak{m})$. 
Then, since the maximal ideal $\overline{\mathfrak{m}}\subseteq R/pR$ corresponds to the (unique) maximal ideal of $k\llbracket x_1, \ldots, x_d \rrbracket/(p^{s.\flat})$, we have 
$$
U^{s.\flat}\cong\Spec(k\llbracket x_{1},\ldots, x_{d}\rrbracket) \setminus V((x_{1},\ldots, x_{d})). 
$$
\end{example}

\begin{example}[Tilting for preperfectoid rings]\label{tiltcorrespond}
Keep the notation as in Example \ref{eg1227TueN}. 
Then by Lemma \ref{lem1218SunN}, $\overline{\Phi^{(0)}_{0}}: R^{s.\flat}_{0}/I^{s.\flat}_{0}\xrightarrow{\cong} R_{0}/I_{0}$ is identified with the isomorphism: 
\begin{equation}\label{1217SatN}
\overline{\theta_{\widehat{R}}}: (\widehat{R})^{\flat}/I_{0}^{\flat}(\widehat{R})^{\flat}\xrightarrow{\cong} \widehat{R}/I_{0}\widehat{R}
\end{equation}
which is induced by the bottom map in the diagram (\ref{1218SunN}). 
In this case, we denote $F_{R^{\flat}, \overline{\Phi^{(0)}_{0}}}(U)$ by $U^{\flat}$  in distinction from $U^{s.\flat}$. 
\end{example}

The comparison theorem we need, due to \v{C}esnavi\v{c}ius and Scholze \cite[Theorem 2.2.7]{KS20}, is stated as follows. 

\begin{theorem}[\v{C}esnavi\v{c}ius-Scholze]
\label{tiltingCoh}
Let $A$ be a $\varpi$-adically Henselian ring with bounded $\varpi$-torsion for an element $\varpi \in A$ such that $p \in \varpi^pA$. Assume that the $\varpi$-adic completion of $A$ is perfectoid.  Let $U \subseteq \Spec(A)$ be a Zariski-open subset such that $\Spec(A) \setminus V(\varpi A) \subseteq U$, and let $U^\flat \subseteq \Spec(A^\flat)$ be its tilt (see Example \ref{tiltcorrespond}). 
\begin{enumerate}
\item
For every torsion abelian group $G$, we have $\mathbf{R}\Gamma(U_{\et},G) \cong \mathbf{R}\Gamma(U^\flat_{\et},G)$ in a functorial manner with respect to $A$, $U$, and $G$.

\item
Let $Z$ be the complement of $U \subseteq \Spec(A)$. Then for a torsion abelian group $G$, we have $\mathbf{R}\Gamma_Z(\Spec(A)_{\et},G) \cong \mathbf{R}\Gamma_Z(\Spec(A^\flat)_{\et},G)$.
\end{enumerate}
\end{theorem}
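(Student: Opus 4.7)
The plan is to reduce both assertions to computations on the common closed subscheme $\Spec(A/\varpi A)$, which is canonically identified with $\Spec(A^\flat/\varpi^\flat A^\flat)$ via the tilting isomorphism (\ref{1217SatN}), using Fujiwara-Gabber's affine analog of proper base change (Theorem \ref{HenselianEtale}) as the main engine. The crucial observation is that the complement $Z := \Spec(A) \setminus U$ is contained in $V(\varpi A)$, so $Z$ lives entirely inside the ``special fibre'' where the tilting isomorphism holds. As a preliminary step, since $\varpi A$ is finitely generated and $(A, \varpi A)$ is Henselian, Theorem \ref{HenselianEtale}(2) gives $\RGamma(U_\et, G) \simeq \RGamma(\widehat U_\et, G)$, and likewise on the tilt side (using that the $\varpi^\flat$-adic Henselian-ness of $A^\flat$ is inherited from the perfect tower structure); hence we may assume $A$ is $\varpi$-adically complete and perfectoid.

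To prove assertion (2), note that the closed subsets $Z$ and $Z^\flat$ correspond via the homeomorphism induced by (\ref{1217SatN}), and both sides of (2) should be identified with $\RGamma_Z(\Spec(A/\varpi A)_\et, G)$. This uses the affine analog of proper base change \emph{with supports}: for a torsion sheaf, local cohomology with support in a closed subset contained in the defining ideal of a Henselian pair depends only on the quotient by that ideal. Assertion (1) then follows from (2) via the localization triangles
\begin{equation*}
\RGamma_Z(\Spec(A)_\et, G) \to \RGamma(\Spec(A)_\et, G) \to \RGamma(U_\et, G) \xrightarrow{+1}
\end{equation*}
and its tilted counterpart, because the middle terms are identified with $\RGamma(\Spec(A/\varpi A)_\et, G)$ by Theorem \ref{HenselianEtale}(1) (applied to both Henselian pairs $(A,\varpi A)$ and $(A^\flat,\varpi^\flat A^\flat)$) and the left terms are identified by (2); the cones then agree. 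Functoriality in $A$, $U$, and $G$ can be traced through each step.

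The main obstacle is the version-with-supports of Fujiwara-Gabber used in assertion (2): one must ensure that $\RGamma_Z(\Spec(A)_\et, G)$ depends only on the closed immersion $Z \hookrightarrow \Spec(A/\varpi A)$, independently of the ambient ring $A$. In the present possibly non-Noetherian setting, this requires the general form of the theorem such as \cite[XX, 4.4]{Book}, together with care about torsion coefficients and a verification that the tilting isomorphism of sheaves arising from the tilting equivalence of finite \'etale algebras (cf.\ Proposition \ref{finiteetaleequivalence}) over $A/\varpi A \cong A^\flat/\varpi^\flat A^\flat$ is compatible with the one coming from $\overline{\theta_{\widehat A}}$, so that the final identification is canonical.
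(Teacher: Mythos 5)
First, note that the paper itself gives no proof of Theorem \ref{tiltingCoh}: it is quoted verbatim from \v{C}esnavi\v{c}ius--Scholze \cite[Theorem 2.2.7]{KS20}, so there is no internal argument to compare yours against. Judged on its own merits, your proposal has a fatal gap in the step that carries all the weight.

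The gap is the claimed ``affine analog of proper base change with supports,'' i.e.\ the assertion that $\RGamma_Z(\Spec(A)_{\et},G)$ depends only on the closed immersion $Z\hookrightarrow\Spec(A/\varpi A)$. This is false, and the excision triangle you invoke shows why. Take $Z=\Spec(A)\setminus U$; if $\RGamma_Z(\Spec(A)_{\et},G)$ were computed on $\Spec(A/\varpi A)$, then combining the triangle
$\RGamma_Z(\Spec(A)_{\et},G)\to\RGamma(\Spec(A)_{\et},G)\to\RGamma(U_{\et},G)\xrightarrow{+1}$
with Theorem \ref{HenselianEtale}(1) (which identifies the middle term with $\RGamma(\Spec(A/\varpi A)_{\et},G)$) would force $\RGamma(U_{\et},G)$ to be computed by the open complement of $Z$ inside $\Spec(A/\varpi A)$. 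In the extreme case $U=\Spec(A[\frac{1}{\varpi}])$, $Z=V(\varpi A)$, that open complement is empty, so your principle would give $\RGamma(U_{\et},G)=0$; but already $H^{0}(U_{\et},G)=G\neq 0$ (and for $A=\widehat{\mathbb{Z}_p[p^{1/p^{\infty}}]}$ the group $H^{1}(U_{\et},\mathbb{Z}/\ell)$ is the nontrivial Galois cohomology of the perfectoid field $A[\frac{1}{p}]$). The point is that $\RGamma_Z$ is exactly the piece recording the cohomology of the generic fibre $\Spec(A[\frac{1}{\varpi}])$, which Gabber--Fujiwara cannot see from $A/\varpi A$; Theorem \ref{HenselianEtale}(2) only lets you replace $A$ by its $\varpi$-adic completion, not by its reduction mod $\varpi$.

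Consequently your logical order is also reversed relative to the correct argument: in \cite{KS20}, assertion (2) is \emph{deduced from} assertion (1) via excision together with the identification $\RGamma(\Spec(A)_{\et},G)\cong\RGamma(\Spec(A^{\flat})_{\et},G)$ (both being $\RGamma(\Spec(A/\varpi A)_{\et},G)$ by Theorem \ref{HenselianEtale}(1)), whereas (1) is the genuinely hard statement. Proving (1) requires actual perfectoid input — Scholze's tilting equivalence for the \'etale sites of the perfectoid generic fibres (almost purity), or the arc-/$v$-descent techniques used in \cite{KS20} — and cannot be reduced to the special fibre by Henselian base-change theorems alone. Your reduction to the complete case via Theorem \ref{HenselianEtale}(2) is fine as a first step, but everything after that needs to be replaced.
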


%\subsubsection{Comparison results}
Now we come to the main result on tilting \'etale cohomology groups. Recall that we have fixed a prime $p>0$.

\begin{proposition}
\label{TiltEtaleCohHensel}
Let $(\{R_{j}\}_{j\geq 0}, \{t_{j}\}_{j\geq 0})$ be a perfectoid tower arising from some pair $(R, I_0)$. Suppose that $R_{j}$ is $I_{0}$-adically Henselian 
for every $j\geq 0$. 
Let $\ell$ be a prime different from $p$. 
Suppose further that 
%\begin{itemize}
%\item
%$f_{0}=p$; 
%\item
for every $j \ge 0$, $t_{j}: R_j \to R_{j+1}$ is a module-finite extension of Noetherian normal domains whose generic extension is of $p$-power degree.\footnote{The existence of such towers is quite essential for applications to \'etale cohomology, because the extension degree of each $R_j \to R_{j+1}$ is controlled in such a way that the $p$-adic completion of its colimit is a perfectoid ring.} 
%\end{itemize}
Fix a Zariski-open subset $U \subseteq \Spec(R)$ such that $\Spec (R) \setminus V(pR) \subseteq U$ and the corresponding open subset $U^{s.\flat} \subseteq \Spec(R^{s.\flat})$ (cf.\ Definition \ref{openstilt}). Then, for any fixed $i,n \ge 0$ such that $|H^i(U_{\et}^{s.\flat},\mathbb{Z}/\ell^n\mathbb{Z})|<\infty$, one has 
$$
|H^i(U_{\et},\mathbb{Z}/\ell^n\mathbb{Z})| \le |H^i(U_{\et}^{s.\flat},\mathbb{Z}/\ell^n\mathbb{Z})|. 
$$
In particular, if $H^i(U_{\et}^{s.\flat},\mathbb{Z}/\ell^n\mathbb{Z})=0$, then $H^i(U_{\et},\mathbb{Z}/\ell^n\mathbb{Z})=0$.
\end{proposition}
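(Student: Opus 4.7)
My approach is to bridge $U$ and $U^{s.\flat}$ through a common ``perfectoid limit'' at the top of the tower: on the original (mixed-characteristic) side I will use a trace/transfer argument leveraging that each $t_j$ is finite of $p$-power generic degree, while on the tilt side I will exploit that, after tilting, the tower becomes perfect and each $\Spec(t_j^{s.\flat})$ is a universal homeomorphism. Theorem \ref{tiltingCoh} then identifies the cohomology of this perfectoid limit on the two sides.

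First I would form compatible towers of open subsets $U_j := U \times_{\Spec(R)} \Spec(R_j)$ and their tilts $U_j^{s.\flat}$ (Definition \ref{openstilt}); compatibility along the transition maps is given by Lemma \ref{921WedN}. Let $U_\infty \subseteq \Spec(R_\infty)$ and $U_\infty^{s.\flat} \subseteq \Spec(R_\infty^{s.\flat})$ denote the corresponding colimits. The ring $R_\infty$ inherits $I_0R_\infty$-adic Henselianness from the $R_j$'s (as a filtered colimit of Henselian pairs along integral transitions), and likewise on the tilt side.

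On the original side, the classical trace map on \'etale $\mathbb{Z}/\ell^n\mathbb{Z}$-cohomology of finite covers of normal Noetherian schemes splits pullback up to multiplication by $\deg(t_j)$, which is invertible modulo $\ell^n$ because $\ell \ne p$. Composing these split injections and using continuity of torsion \'etale cohomology along filtered colimits of schemes with affine transition maps, I obtain the injection $H^i(U_{\et}, \mathbb{Z}/\ell^n\mathbb{Z}) \hookrightarrow H^i(U_{\infty,\et}, \mathbb{Z}/\ell^n\mathbb{Z})$. On the tilt side, Proposition \ref{stiltstilt} combined with Lemma \ref{perfdperftower} identifies the tilt tower with a perfect tower, so each $\Spec(t_j^{s.\flat})$ is a universal homeomorphism and induces an equivalence of \'etale topoi; in particular each pullback is already an isomorphism, yielding $H^i(U^{s.\flat}_{\et}, \mathbb{Z}/\ell^n\mathbb{Z}) \xrightarrow{\cong} H^i(U^{s.\flat}_{\infty,\et}, \mathbb{Z}/\ell^n\mathbb{Z})$.

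To close the argument, the $I_0$-adic completion $\widehat{R_\infty}$ is a perfectoid ring by Corollary \ref{smalltiltproperty1}, with perfectoid tilt $\widehat{R_\infty^{s.\flat}}$ (Lemma \ref{lem1218SunN}). Applying the Fujiwara-Gabber formal base change theorem (Theorem \ref{HenselianEtale}(2)) on each side reduces the cohomology of $U_\infty$ and $U_\infty^{s.\flat}$ to that of their completions, and Theorem \ref{tiltingCoh}(1) matches these completed cohomologies via perfectoid tilting. Concatenating the resulting injection and isomorphisms gives $H^i(U_{\et}, \mathbb{Z}/\ell^n\mathbb{Z}) \hookrightarrow H^i(U^{s.\flat}_{\et}, \mathbb{Z}/\ell^n\mathbb{Z})$, hence the claimed cardinality bound. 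The principal obstacle is the trace step on the original side: the transition maps may ramify on $V(I_0) \cap U_j$ and need not be flat there, so the argument relies on the classical existence of a transfer on torsion \'etale cohomology for finite covers of normal Noetherian schemes with coefficients invertible on the generic fibers. A secondary technicality is that $R_\infty$ is non-Noetherian, so the non-Noetherian form of formal base change must be invoked in the last step.
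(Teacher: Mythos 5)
Your proposal is correct and follows essentially the same route as the paper's proof: trace/transfer maps (invertible since the generic degrees are $p$-powers and $\ell\ne p$) give the injection into $H^i(U_{\infty,\et},\mathbb{Z}/\ell^n\mathbb{Z})$, universal homeomorphisms on the perfect tilt side give the isomorphism with $H^i(U^{s.\flat}_{\infty,\et},\mathbb{Z}/\ell^n\mathbb{Z})$, and the \v{C}esnavi\v{c}ius--Scholze comparison together with Lemma \ref{lem1218SunN} and Fujiwara--Gabber formal base change closes the loop at the top of the tower. The only (inessential) difference is that the paper applies Theorem \ref{tiltingCoh} directly to the Henselian ring $R_\infty$ rather than first completing both sides.
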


\begin{proof}
Since each $R_j$ is a $p$-adically Henselian normal domain, so is $R_\infty=\varinjlim_{j\geq 0}R_{j}$. 
Moreover, every prime $\ell$ different from $p$ is a unit in $R_j$ and $R_\infty$. 
Attached to the tower $(\{R_j\}_{j\geq 0}, \{t_j\}_{j\geq 0})$, we get a tower of finite (not necessarily flat) maps of normal schemes:
\begin{equation}
\label{opentower}
U=U_0 \leftarrow \cdots \leftarrow U_j \leftarrow U_{j+1} \leftarrow \cdots.
\end{equation}
More precisely, let $h_j:\Spec(R_{j+1}) \to \Spec(R_{j})$ be the associated scheme map. Then the open set $U_{j+1}$ is defined as the inverse image $h^{-1}_j(U_j)$, thus defining the map $U_{j+1} \to U_j$ in the tower $(\ref{opentower})$. Since $h_j$ is a finite morphism of normal schemes, \cite[Lemma 3.4]{Bh14} applies to yield a well-defined trace map: 
$\Tr:h_{j*}h^*_j\mathbb{Z}/\ell^n\mathbb{Z} \to \mathbb{Z}/\ell^n\mathbb{Z}$ such that
\begin{equation}
\label{etaletrace}
\mathbb{Z}/\ell^n\mathbb{Z} \xrightarrow{h^*_j} h_{j*}h^*_j\mathbb{Z}/\ell^n\mathbb{Z} \xrightarrow{\Tr} \mathbb{Z}/\ell^n\mathbb{Z}
\end{equation}
is multiplication by the generic degree of $h_j$ (=$p$-power order). Then this is bijective, as the multiplication map by $p$ on $\mathbb{Z}/\ell^n\mathbb{Z}$ is bijective. We have the natural map: $H^i(U_{j,\et},\mathbb{Z}/\ell^n\mathbb{Z}) \to H^i(U_{j+1,\et},h_{j}^*\mathbb{Z}/\ell^n\mathbb{Z})$. Since $h_j$ is affine, the Leray spectral sequence gives $ H^i(U_{j+1,\et},h_j^*\mathbb{Z}/\ell^n\mathbb{Z}) \cong H^i(U_{j,\et},h_{j*}h_{j}^*\mathbb{Z}/\ell^n\mathbb{Z})$. Composing these maps, the composite map $(\ref{etaletrace})$ induces
$$
H^i(U_{j,\et},\mathbb{Z}/\ell^n\mathbb{Z}) \to H^i(U_{j+1,\et},h_j^*\mathbb{Z}/\ell^n\mathbb{Z}) \xrightarrow{\cong}H^i(U_{j,\et},h_{j*}h_j^*\mathbb{Z}/\ell^n\mathbb{Z}) \xrightarrow{\Tr} 
H^i(U_{j,\et},\mathbb{Z}/\ell^n\mathbb{Z})
$$
and the composition is bijective. Since $h_j^*\mathbb{Z}/\ell^n\mathbb{Z} \cong \mathbb{Z}/\ell^n\mathbb{Z}$, we get an injection
\begin{equation}
\label{etaleinjective}
H^i(U_{j,\et},\mathbb{Z}/\ell^n\mathbb{Z}) \hookrightarrow H^i(U_{j+1,\et},\mathbb{Z}/\ell^n\mathbb{Z}). 
\end{equation}
Set $U_\infty=\varprojlim_{j} U_j$. Since each morphism $U_{j+1} \to U_{j}$ is affine, by using $(\ref{etaleinjective})$ and \cite[Tag 09YQ]{Stacks}, we have
$$
H^i(U_{\et},\mathbb{Z}/\ell^n\mathbb{Z}) \hookrightarrow \varinjlim_j H^i(U_{j,\et},\mathbb{Z}/\ell^n\mathbb{Z}) \cong H^i(U_{\infty,\et},\mathbb{Z}/\ell^n\mathbb{Z}).
$$
Thus, it suffices to show that $|H^i(U_{\infty,\et},\mathbb{Z}/\ell^n\mathbb{Z})| \le |H^i(U_{\et}^{s.\flat},\mathbb{Z}/\ell^n\mathbb{Z})|$. 
Hence by tilting \'etale cohomology using Theorem \ref{tiltingCoh}, we are reduced to showing 
\begin{equation}
\label{etaleinjective2}
|H^i(U_{\infty,\et}^\flat,\mathbb{Z}/\ell^n\mathbb{Z})| \le |H^i(U_{\et}^{s.\flat},\mathbb{Z}/\ell^n\mathbb{Z})|,
\end{equation}
where $U_\infty^\flat$ is the open subset of $\Spec(R_\infty^\flat)$ that corresponds to $U_\infty \subseteq \Spec(R_\infty)$ in view of 
Example \ref{tiltcorrespond}. 
On the other hand, considering the tilt of $(\{R_{j}\}_{j\geq 0}, \{t_{j}\}_{j\geq 0})$ associated to $(R_{0}, I_{0})$, we have a perfect $\mathbb{F}_p$-tower $(\{R_j^{s.\flat}\}_{j\geq 0},  \{t^{s.\flat}_{j}\}_{j\geq 0})$.  
Note that each $R_j^{s.\flat}$ is $I_0^{s.\flat}$-adically Henselian Noetherian ring\footnote{It is not obvious whether $R_j^{s.\flat}$ is normal. However, the normality was used only in the trace argument and we do not need it in the following argument.} by Lemma \ref{1215ThuN} and Proposition \ref{smalltiltproperty2} (2), and $t_j^{s.\flat}$ is module-finite by Proposition \ref{smalltiltproperty2} (1). 
Considering the small tilts of the Zariski-open subsets appearing in  $(\ref{opentower})$ (see Definition \ref{openstilt}), we get a tower of finite maps: 
$$
U^{s.\flat}=U_0^{s.\flat} \leftarrow \cdots \leftarrow U_j^{s.\flat} \leftarrow U_{j+1}^{s.\flat} \leftarrow \cdots.
$$
So let $U_\infty^{s.\flat}$ be the inverse image of $U^{s.\flat}$ under $\Spec(R_\infty^{s.\flat}) \to \Spec(R^{s.\flat})$. 
Since $U_\infty^{s.\flat} \to U^{s,\flat}$ is a universal homeomorphism, the preservation of the small \'etale sites (\cite[Tag 03SI]{Stacks}) gives an isomorphism:
\begin{equation}
\label{Final2}
H^i(U_{\et}^{s.\flat},\mathbb{Z}/\ell^n\mathbb{Z}) \cong H^i(U_{\infty,\et}^{s.\flat},\mathbb{Z}/\ell^n\mathbb{Z}).
\end{equation}
Now the combination of Lemma \ref{lem1218SunN} and Theorem \ref{HenselianEtale} (2) together with the assumption finishes the proof of the theorem.
\end{proof}

\begin{remark}
One can formulate and prove the version of Proposition \ref{TiltEtaleCohHensel} for the \'etale cohomology with support in a closed subscheme of $\Spec(R)$, using Theorem \ref{tiltingCoh}. 
Then the resulting assertion gives a generalization of \v{C}esnavi\v{c}ius-Scholze's argument in \cite[Theorem 3.1.3]{CK19} which is a key part of their proof for the absolute cohomological purity theorem. One of the advantages of Proposition \ref{TiltEtaleCohHensel} is that it can be used to answer some cohomological questions on possibly singular Noetherian schemes (e.g. log-regular schemes) in mixed characteristic. 
\end{remark}

\subsection{Tilting the divisor class groups of local log-regular rings}\label{divisorclassLR}

We need a lemma of Grothendieck on the relationship between the divisor class group and the Picard group via direct limit. Its proof is found in \cite[Proposition (21.6.12)]{Gro67} or \cite[XI Proposition 3.7.1]{GroSGA}.

\begin{lemma}
\label{Grothendieck}
Let $X$ be an integral Noetherian normal scheme, and let $\{U_i\}_{i \in I}$ be a family of open subsets of $X$. Consider the following conditions.
\begin{enumerate}
\item
$\{U_i\}_{i \in I}$ forms a filter base. 
In particular, one can define a partial order on $I$ so that it is a directed set and $\{U_i\}_{i\in I}$ together with the inclusion maps forms an inverse system. 

\item
Let $V_i:=X \setminus U_i$ for any $i\in I$. Then $\codim_X(V_i) \ge 2$.

\item
For any $x \in \bigcap_{i \in I} U_i$, the local ring $\mathcal{O}_{X,x}$ is factorial.
\end{enumerate}
If  $\{U_i\}_{i \in I}$ satisfies the condition $(2)$, then the natural map $\Pic(U_i) \to \Cl(X)$ is injective for any $i \in I$. If $\{U_i\}_{i \in I}$ satisfies the conditions $(1)$, $(2)$ and $(3)$, then $\varinjlim_{i \in I} \Pic(U_i) \cong \Cl(X)$. In particular, if $U\subseteq X$ is any open subset that is locally factorial with $\codim_X(X \setminus U) \ge 2$, then $\Pic(U) \cong \Cl(X)$.
\end{lemma}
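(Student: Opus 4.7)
The plan is to reduce to two standard facts about an integral Noetherian normal scheme $Y$: (i) the natural map $c_Y : \Pic(Y) \to \Cl(Y)$ sending a line bundle to the class of its associated Weil divisor is injective, and is an isomorphism when $Y$ is locally factorial; (ii) if $Z \subseteq Y$ is closed with $\codim_Y(Z) \ge 2$, then the restriction map $\Cl(Y) \xrightarrow{\cong} \Cl(Y \setminus Z)$ is an isomorphism, because extension-by-closure and restriction give inverse bijections between the prime Weil divisors on $Y$ and on $Y \setminus Z$, and principal divisors correspond under this bijection.

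For the first assertion, fix $i \in I$. By (ii) applied to $Z = V_i$, the restriction $\Cl(X) \to \Cl(U_i)$ is an isomorphism; composing its inverse with the injection $c_{U_i}$ from (i) yields the required injection $\Pic(U_i) \hookrightarrow \Cl(X)$.

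For the second assertion, functoriality in $U_i$ of the constructions in (i) and (ii) ensures these injections are compatible with the transition maps of the directed system indexed by $I$, so they assemble into an injective map $\varphi : \varinjlim_i \Pic(U_i) \hookrightarrow \Cl(X)$; the task is to show $\varphi$ is surjective. Given a Weil divisor $D$ on $X$, let $C(D) \subseteq X$ denote the Cartier locus of $D$, i.e., the largest open subset on which $D$ is locally principal (its openness is standard). Condition~(3) implies that every point of $T := \bigcap_i U_i$ lies in $C(D)$, since every Weil divisor on a local factorial ring is principal; equivalently, $X \setminus C(D) \subseteq \bigcup_i V_i$. The key step is to upgrade this set-theoretic inclusion to $X \setminus C(D) \subseteq V_i$ for some \emph{single} $i \in I$: since $X$ is a Noetherian space, $X \setminus C(D)$ has finitely many irreducible components $Z_1, \ldots, Z_n$; each generic point $\zeta_j$ lies outside $T$, hence in some $V_{i_j}$, and closedness of $V_{i_j}$ forces $Z_j = \overline{\{\zeta_j\}} \subseteq V_{i_j}$. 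A finite induction on the filter-base property~(1) then produces a single $i \in I$ with $V_i \supseteq \bigcup_{j=1}^n V_{i_j} \supseteq X \setminus C(D)$. For this $i$, $D|_{U_i}$ is locally principal, hence defines a line bundle in $\Pic(U_i)$ whose image under $\varphi$ is $[D]$.

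I expect the main obstacle to be precisely this combinatorial step promoting the pointwise covering of $X \setminus C(D)$ by $\{V_i\}$ to a covering by a single $V_i$; the Noetherian hypothesis on $X$ is what reduces the infinite index set to a finite one, and the filter-base property~(1) then finishes the argument. The final ``in particular'' assertion follows by applying the second part of the lemma to the one-element family $\{U\}$: conditions (1) and (2) are immediate, while local factoriality of $U$ gives (3), so $\Pic(U) \cong \varinjlim \Pic(U) \cong \Cl(X)$.
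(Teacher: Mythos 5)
Your proof is correct. Note that the paper does not actually supply an argument for this lemma: it simply cites Grothendieck (EGA IV, Proposition 21.6.12, and SGA 2, Exp.\ XI, Proposition 3.7.1). Your write-up is essentially the standard argument underlying those references, and all the steps check out: injectivity of $\Pic(Y)\to\Cl(Y)$ for $Y$ normal integral, invariance of $\Cl$ under removal of a closed set of codimension $\ge 2$, and for surjectivity the passage through the Cartier locus $C(D)$ (which is open by definition as the union of all opens on which $D$ is locally principal, and contains every point with factorial local ring). The one step that genuinely needs care --- promoting the pointwise inclusion $X\setminus C(D)\subseteq\bigcup_i V_i$ to containment in a single $V_i$ --- is handled correctly: the Noetherian hypothesis gives finitely many irreducible components of the closed set $X\setminus C(D)$, each generic point lands in some closed $V_{i_j}$ and drags its closure with it, and the filter-base condition (1) then supplies a single index dominating the finitely many $i_j$. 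The injectivity of the colimit map also follows as you say, since each $\Pic(U_i)\to\Cl(X)$ is injective and compatible with the transition maps. In short, where the paper defers to a reference, you have reconstructed the intended proof.
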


Next we establish the following two results on the torsion part of the divisor class group of a (Noetherian) normal domain; these are a part of numerous applications of Theorem \ref{HenselianEtale} of independent interest.

\begin{proposition}
\label{DivPicard}
Let $(R,\fm,k)$ be a strictly Henselian Noetherian local normal $\mathbb{F}_p$-domain of dimension $\ge 2$, let $X:=\Spec(R)$ and fix an ideal $J \subseteq \fm$. Let $\{U_i\}_{i \in I}$ be any family of open subsets of $X$ satisfying $(1)$, $(2)$ and $(3)$ as in the hypothesis of Lemma \ref{Grothendieck} and let $U_i^\infty$ be the $\mathbb{F}_p$-scheme which is the perfection of $U_i$.
\begin{enumerate}
\item
For any prime $\ell \ne p$, 
$$
\Cl(X)[\ell^n] \cong \varinjlim_{i \in  I}H^1\big((U^\infty_i)_{\et},\mathbb{Z}/\ell^n\mathbb{Z}\big).
$$

\item
Let $\widehat{R^{1/p^{\infty}}}$ denote the $J$-adic completion of $R^{1/p^\infty}$. If moreover each $U_i$ has the property that $X \setminus V(J) \subseteq U_i$, then for any prime $\ell \ne p$,
$$
\Cl(X)[\ell^n] \cong \varinjlim_{i \in  I}H^1\big((\widehat{U^\infty_i})_{\et},\mathbb{Z}/\ell^n\mathbb{Z}\big),
$$
where $\widehat{U_i^\infty}$ is inverse image of $U_i^\infty$ via the scheme map $\Spec(\widehat{R^{1/p^{\infty}}}) \to \Spec(R^{1/p^{\infty}})$.
\end{enumerate}
\end{proposition}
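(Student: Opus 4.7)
The plan is to realize $\Cl(X)[\ell^n]$ via the Kummer sequence on each $U_i$, then transfer through the universal homeomorphism $U_i^\infty\to U_i$ (for (1)) and further through a Henselian-to-formal passage (for (2)). By Lemma \ref{Grothendieck}, $\Cl(X)\cong\varinjlim_{i\in I}\Pic(U_i)$, and since filtered colimits are exact and commute with taking $\ell^n$-torsion, this restricts to $\Cl(X)[\ell^n]\cong\varinjlim_{i\in I}\Pic(U_i)[\ell^n]$. So both assertions reduce to computing $\Pic(U_i)[\ell^n]$ in \'etale terms, uniformly in $i$.

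Since $R$ is strictly Henselian and $\ell\neq p$, Hensel's lemma (applied to $T^{\ell^n}-\zeta$ over the separably closed residue field $k$) shows that $R$ contains a primitive $\ell^n$-th root of unity, so $\mu_{\ell^n}\cong\mathbb{Z}/\ell^n\mathbb{Z}$ as \'etale sheaves on $(U_i)_{\et}$. The Kummer sequence $1\to\mu_{\ell^n}\to\mathbb{G}_m\xrightarrow{(\cdot)^{\ell^n}}\mathbb{G}_m\to 1$ on $(U_i)_{\et}$ yields the short exact sequence
\[
0\to\Gamma(U_i,\mathcal{O}_{U_i}^\times)/\ell^n\to H^1((U_i)_{\et},\mathbb{Z}/\ell^n\mathbb{Z})\to\Pic(U_i)[\ell^n]\to 0.
\]
Because $\dim R\geq 2$, $R$ is normal, and $\codim_X(X\setminus U_i)\geq 2$, Hartogs' extension gives $\Gamma(U_i,\mathcal{O}_{U_i})=R$, so the left-hand term is $R^\times/\ell^n$. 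Applying Hensel's lemma again to $T^{\ell^n}-u$ for any $u\in R^\times$ shows that $R^\times$ is $\ell^n$-divisible, hence $R^\times/\ell^n=0$. This identifies $\Pic(U_i)[\ell^n]\cong H^1((U_i)_{\et},\mathbb{Z}/\ell^n\mathbb{Z})$ functorially in $i$.

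Next, the structure morphism $U_i^\infty\to U_i$ is a universal homeomorphism, so invariance of the small \'etale site (\cite[Tag 03SI]{Stacks}) gives $H^1((U_i)_{\et},\mathbb{Z}/\ell^n\mathbb{Z})\cong H^1((U_i^\infty)_{\et},\mathbb{Z}/\ell^n\mathbb{Z})$. Passing to $\varinjlim_{i\in I}$ yields assertion (1). For (2), I would apply Theorem \ref{HenselianEtale} (2) to the pair $(R^{1/p^\infty},JR^{1/p^\infty})$ and the open subset $U_i^\infty\subseteq\Spec(R^{1/p^\infty})$. The hypothesis $X\setminus V(J)\subseteq U_i$ pulls back (under the bijective map $\Spec(R^{1/p^\infty})\to X$) to $\Spec(R^{1/p^\infty})\setminus V(JR^{1/p^\infty})\subseteq U_i^\infty$, as required. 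The pair $(R^{1/p^\infty},JR^{1/p^\infty})$ is Henselian because $(R,J)$ is Henselian (having $J\subseteq\fm$ with $R$ a Henselian local ring) and $R\to R^{1/p^\infty}$ is integral, by \cite[Tag 09XI]{Stacks}; moreover $JR^{1/p^\infty}$ is finitely generated since $J$ is. The resulting isomorphism $H^1((U_i^\infty)_{\et},\mathbb{Z}/\ell^n\mathbb{Z})\cong H^1((\widehat{U_i^\infty})_{\et},\mathbb{Z}/\ell^n\mathbb{Z})$, combined with the chain established for (1), gives (2).

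The two checkpoints requiring real input are the $\ell^n$-divisibility of $R^\times$ (immediate from strict Henselianity combined with $\ell\neq p$) and the verification that $(R^{1/p^\infty},JR^{1/p^\infty})$ forms a Henselian pair with a finitely generated defining ideal, which is needed to invoke the Fujiwara--Gabber formal base change theorem. Neither is a genuine obstacle: the entire argument is essentially a formal interleaving of the Kummer sequence, Lemma \ref{Grothendieck}, universal-homeomorphism invariance of the small \'etale site, and Theorem \ref{HenselianEtale} (2). The most delicate bookkeeping is ensuring that all of these identifications are compatible with the transition maps of the inverse system $\{U_i\}_{i\in I}$, so that the direct limit can indeed be taken after each step.
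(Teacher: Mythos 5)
Your proposal is correct and follows essentially the same route as the paper: the Kummer sequence on $U_i$ combined with $\Gamma(U_i,\mathbb{G}_m)=R^\times$ (normality plus codimension $\ge 2$) and the $\ell^n$-divisibility of $R^\times$ in the strictly Henselian local ring to identify $H^1((U_i)_{\et},\mathbb{Z}/\ell^n\mathbb{Z})\cong\Pic(U_i)[\ell^n]$, then Lemma \ref{Grothendieck} for the colimit, topological invariance of the small \'etale site for the perfection, and Theorem \ref{HenselianEtale} (2) for the completion. Your explicit verification that $(R^{1/p^\infty}, JR^{1/p^\infty})$ is a Henselian pair with finitely generated defining ideal is exactly the input the paper also uses, so there is nothing to correct.
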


\begin{proof}
Let us begin with a remark on the direct limit of \'etale cohomology groups. Note that for the transition morphism $g:U_i^\infty \to U_j^\infty$ which is affine, there is a functorial map: $H^1\big((U^\infty_j)_{\et},\mathbb{Z}/\ell^n\mathbb{Z}\big) \to H^1\big((U^\infty_i)_{\et},g^*(\mathbb{Z}/\ell^n\mathbb{Z})\big) \cong H^1\big((U^\infty_i)_{\et},\mathbb{Z}/\ell^n\mathbb{Z}\big)$, which defines the direct system of cohomology groups.

$(1)$: First we prove the following claim:
\begin{enumerate}
\item[$\bullet$]
There is an injection of abelian groups:
$$
H^1(U_{et},\mathbb{Z}/\ell^n\mathbb{Z}) \cong \Pic(U)[\ell^n] \subseteq \Cl(X)[\ell^n]
$$
for any $n \in \mathbb{N}$, where $U \subseteq X$ is an open subset whose complement is of codimension $\ge 2$.
\end{enumerate}
To prove this, consider the Kummer exact sequence
$$
0 \to \mathbb{Z}/\ell^n\mathbb{Z} \cong \mu_{\ell^n} \to \mathbb{G}_m \xrightarrow{(~)^{\ell^n}} \mathbb{G}_m \to 0,
$$
where the identification of \'etale sheaves $\mu_{\ell^n} \cong \mathbb{Z}/\ell^n\mathbb{Z}$ follows from the fact that $R$ is strict Henselian (one simply sends $1 \in \mathbb{Z}/\ell^n\mathbb{Z}$ to the primitive $\ell^n$-th root of unity in $R$). Let $U \subseteq X$ be an open subset with its complement $V=X \setminus U$ having codimension $\ge 2$. Then we have an exact sequence (\cite[Proposition 4.9; Chapter III]{Mi80}):

$$
\Gamma(U_{\et},\mathbb{G}_m) \xrightarrow{(~)^{\ell^n}} \Gamma(U_{\et},\mathbb{G}_m) \to H^1(U_{\et},\mathbb{Z}/\ell^n\mathbb{Z}) \to \Pic(U) \xrightarrow{(~)^{\ell^n}} \Pic(U).
$$
Since $R$ is strict local and $\ell \ne p$, Hensel's lemma yields that $R^\times=(R^\times)^{\ell^n}$. Moreover, since $\codim_X(V) \ge 2$ and $X$ is normal, we have $\Gamma(U_{\et},\mathbb{G}_m)=R^\times$. Thus, $H^1(U_{\et},\mathbb{Z}/\ell^n\mathbb{Z}) \cong \Pic(U)[\ell^n]$. Note that $\Pic(U) \hookrightarrow \Cl(U)$ restricts to $\Pic(U)[\ell^n] \hookrightarrow \Cl(U)[\ell^n]$. Moreover, the natural homomorphism $\Cl(X) \to \Cl(U)$ is an isomorphism, thanks to $\codim_X(V) \ge 2$. Hence $H^1(U_{\et},\mathbb{Z}/\ell^n\mathbb{Z})\cong \Pic(U)[\ell^n] \subseteq \Cl(X)[\ell^n]$, which proves the claim.

Since $R$ is normal, the regular locus has complement with codimension $\ge 2$. Using this fact, we can apply Lemma \ref{Grothendieck} to get an isomorphism $\Cl(X)[\ell^n] \cong \varinjlim_{i \in I}H^1\big((U_i)_{\et},\mathbb{Z}/\ell^n\mathbb{Z}\big)$. By \'etale invariance of cohomology under taking perfection of $\mathbb{F}_p$-schemes (\cite[Tag 03SI]{Stacks}), we get
$$
\Cl(X)[\ell^n] \cong \varinjlim_{i \in I} H^1\big((U_i)_{\et},\mathbb{Z}/\ell^n\mathbb{Z}\big) \cong \varinjlim_{i \in I} H^1\big((U^\infty_i)_{\et},\mathbb{Z}/\ell^n\mathbb{Z}\big),
$$
as desired.

$(2)$: Since $R$ is Henselian along $\fm$ and $J \subseteq \fm$, it is Henselian along $J$ by \cite[Tag 0DYD]{Stacks}. Moreover, the perfect closure of $R$ still preserves Henselian property along $J$. Theorem \ref{HenselianEtale} yields
$$
H^1\big((U^\infty_i)_{\et},\mathbb{Z}/\ell^n\mathbb{Z}\big) \cong H^1\big((\widehat{U^\infty_i})_{\et},\mathbb{Z}/\ell^n\mathbb{Z}\big)
$$
and the conclusion follows from $(1)$.
\end{proof}

\begin{proposition}
\label{Basechange1}
Let $A$ be a Noetherian ring with a regular element $t \in A$ such that $A$ is $t$-adically Henselian and $A \to A/tA$ is the natural surjection between locally factorial domains. Pick an integer $n>0$ that is invertible on $A$. Then if $\Cl(A)$ has no torsion element of order $n$, the same holds for $\Cl(A/tA)$. If moreover $A$ is a $\mathbb{Q}$-algebra and $\Cl(A)$ is torsion-free, then so is $\Cl(A/tA)$.
\end{proposition}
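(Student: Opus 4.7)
The plan is to exploit the Kummer exact sequence in \'etale cohomology in conjunction with Theorem \ref{HenselianEtale} (1) applied to the Henselian pair $(A, (t))$. Since $A$ and $A/tA$ are locally factorial Noetherian integral domains, Weil divisors are Cartier on both, and so we have canonical identifications $\Cl(A) \cong \Pic(A) \cong H^1(\Spec(A)_{\et}, \mathbb{G}_m)$, and similarly for $A/tA$. This translates the whole question into \'etale cohomology, where $(A, (t))$ being a Henselian pair is the decisive structural input.

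For the first assertion, fix $n$ invertible on $A$. The Kummer sequence $0 \to \mu_n \to \mathbb{G}_m \xrightarrow{(-)^n} \mathbb{G}_m \to 0$ is then exact on both $\Spec(A)_{\et}$ and $\Spec(A/tA)_{\et}$, and its naturality under pullback along the closed immersion $\Spec(A/tA) \hookrightarrow \Spec(A)$ yields a commutative ladder with exact rows:
$$
\begin{CD}
0 @>>> A^\times/(A^\times)^n @>>> H^1(\Spec(A)_{\et}, \mu_n) @>>> \Pic(A)[n] @>>> 0 \\
@. @VVV @VVV @VVV @. \\
0 @>>> (A/tA)^\times/((A/tA)^\times)^n @>>> H^1(\Spec(A/tA)_{\et}, \mu_n) @>>> \Pic(A/tA)[n] @>>> 0.
\end{CD}
$$
By Theorem \ref{HenselianEtale} (1) applied to the torsion sheaf $\mu_n$, the middle vertical arrow is an isomorphism. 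Assuming $\Pic(A)[n] = 0$, the top row collapses to an isomorphism on its left, and tracing around the square shows that the bottom-left horizontal arrow is surjective, forcing $\Pic(A/tA)[n]=0$.

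The second assertion is then immediate: if $A$ is a $\mathbb{Q}$-algebra, every positive integer $n$ is invertible on $A$, and torsion-freeness of $\Cl(A)$ means $\Cl(A)[n] = 0$ for every $n > 0$; the first assertion then kills $\Cl(A/tA)[n]$ for every $n$, so $\Cl(A/tA)$ is torsion-free. The proof is almost entirely diagram-theoretic, and the main input is Fujiwara-Gabber. The one point that must be verified carefully is that the Kummer sequence on $\Spec(A/tA)_{\et}$ is the pullback of the Kummer sequence on $\Spec(A)_{\et}$ so that the comparison isomorphism of Theorem \ref{HenselianEtale} (1) naturally slots in as the middle vertical arrow of the ladder above; this is however tautological once $\mu_n$ on the two sites is identified via restriction, so I expect no serious obstacle.
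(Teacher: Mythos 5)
Your proof is correct and follows essentially the same route as the paper: both arguments run the Kummer sequence through the Fujiwara--Gabber isomorphism $H^1(\Spec(A)_{\et},\mu_n)\cong H^1(\Spec(A/tA)_{\et},\mu_n)$ and use local factoriality to identify $\Cl$ with $\Pic$. The only cosmetic difference is that you chase the left-hand square of the ladder (surjectivity onto $\Pic(A/tA)[n]$ via units), whereas the paper chases the right-hand square (vanishing of the connecting maps $\delta_1,\delta_2$); the two diagram chases are equivalent.
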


\begin{proof}
The Kummer exact sequence $0 \to \mu_{n} \to \mathbb{G}_m \xrightarrow{(~)^{n}} \mathbb{G}_m \to 0$ induces the following commutative diagram:
$$
\begin{CD}
H^1(\Spec(A)_{\et},\mu_n) @>\delta_1>> \Pic(A) @>(~)^n>> \Pic(A) \\
@V\alpha VV @VVV @VVV \\
H^1(\Spec(A/tA)_{\et},\mu_n) @>\delta_2>> \Pic(A/tA) @>(~)^n>> \Pic(A/tA)
\end{CD}
$$
By Theorem \ref{HenselianEtale}, the map $\alpha$ is an isomorphism. Then if $\Pic(A)$ has no torsion element of order $n$, $\delta_1$ is the zero map. This implies that $\delta_2$ is also the zero map and hence, $\Pic(A/tA)$ has no element of order $n$. Since both $A$ and $A/tA$ are locally factorial by assumption, we have $\Cl(A) \cong \Pic(A)$ and $\Cl(A/tA) \cong \Pic(A/tA)$. So the assertion follows.
\end{proof}

It is not necessarily true that $\delta_1$ (resp. $\delta_2$) is injective, because we do not assume $A$ to be strictly Henselian. 
%We use the small tilts of the perfectoid tower of local log-regular rings to prove some result on the divisor class group.

\begin{lemma}
\label{logstricthenselization}
Let $(R,\mathcal{Q},\alpha)$ be a log-regular ring. Then a strict Henselization $(R^{\rm{sh}}, \mathcal{Q}, \alpha^{\rm{sh}})$ is also a log-regular ring where $\alpha^{\rm{sh}}: \mathcal{Q} \to R \to R^{\rm{sh}}$ is the composition of homomorphisms.
\end{lemma}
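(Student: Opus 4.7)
The plan is to verify each condition in Definition \ref{LogSchemeStr2} for the triple $(R^{\rm sh}, \mathcal{Q}, \alpha^{\rm sh})$, where $\alpha^{\rm sh}$ denotes the composition $\mathcal{Q}\xrightarrow{\alpha} R \to R^{\rm sh}$. All properties will be deduced from the standard facts that $R \to R^{\rm sh}$ is a faithfully flat local map of Noetherian local rings whose maximal ideal generates that of $R^{\rm sh}$ (so dimensions and most ring-theoretic invariants are preserved), and that strict Henselization is well-behaved with respect to quotients and regularity.

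First, I would check that $(R^{\rm sh}, \mathcal{Q}, \alpha^{\rm sh})$ is a local log ring: the monoid $\mathcal{Q}$ is unchanged (so $\overline{\mathcal{Q}}$ is still fine and saturated), $R^{\rm sh}$ is local and Noetherian, and $(\alpha^{\rm sh})^{-1}((R^{\rm sh})^\times)=\mathcal{Q}^{*}$ because an element of $R$ is a unit in $R^{\rm sh}$ if and only if it is a unit in $R$ (by locality of $R\to R^{\rm sh}$), so that $(\alpha^{\rm sh})^{-1}((R^{\rm sh})^\times)=\alpha^{-1}(R^\times)=\mathcal{Q}^{*}$. This is essentially Lemma \ref{locallogextension} applied to $R\to R^{\rm sh}$.

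Next, I would identify the log-defining ideal: since $\alpha^{\rm sh}$ factors through $\alpha$, we have $I_{\alpha^{\rm sh}} = I_{\alpha}R^{\rm sh}$. The quotient $R^{\rm sh}/I_{\alpha}R^{\rm sh}$ can be identified with a strict Henselization of $R/I_\alpha$ (this is the standard fact that strict Henselization commutes with quotients by ideals; see \cite[Tag 05WS]{Stacks} or the analogous statement). Since $R/I_\alpha$ is a regular local ring by assumption and strict Henselization of a regular local ring is regular (the extension is faithfully flat local with a geometrically regular fiber, see \cite[Tag 06LN]{Stacks}), $R^{\rm sh}/I_{\alpha^{\rm sh}}$ is a regular local ring.

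Finally, for the dimension equality, faithfully flat local extensions with trivial closed fiber preserve Krull dimension, so $\dim R^{\rm sh}=\dim R$ and $\dim R^{\rm sh}/I_{\alpha^{\rm sh}}=\dim R/I_\alpha$. Combining with $\dim R = \dim R/I_\alpha + \dim \mathcal{Q}$, we obtain $\dim R^{\rm sh}=\dim R^{\rm sh}/I_{\alpha^{\rm sh}} + \dim \mathcal{Q}$, completing the verification. The only genuine subtlety is the compatibility of strict Henselization with taking quotients and its preservation of regularity, both of which are well-documented standard results one can simply invoke, so this proof is essentially a bookkeeping exercise.
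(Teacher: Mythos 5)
Your proof is correct and follows essentially the same route as the paper's: both verify the local log ring condition via Lemma \ref{locallogextension}, identify $I_{\alpha^{\rm sh}}=I_\alpha R^{\rm sh}$, use \cite[Tag 05WS]{Stacks} and \cite[Tag 06LN]{Stacks} for regularity of the quotient, and conclude with the invariance of dimension under strict Henselization.
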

\begin{proof}
Since $R \to R^{\rm{sh}}$ is a local ring map, $(R^{\rm{sh}}, \mathcal{Q}, \alpha^{\rm{sh}})$ is a local log ring by Lemma $\ref{locallogextension}$. Note that we have the equality $I_{\alpha^{\rm{sh}}}= I_{\alpha}R^{\rm{sh}}$. Since we have the isomorphism $R^{\rm{sh}}/I_{\alpha^{\rm{sh}}} \cong (R/I_{\alpha})^{\rm{sh}}$ by \cite[Tag 05WS]{Stacks} and $(R/I_{\alpha})^{\rm{sh}}$ is a regular local ring by \cite[Tag 06LN]{Stacks}, $R^{\rm{sh}}/I_{\alpha^{\rm{sh}}}$ is a regular local ring. Moreover, since the dimension of $R$ is equal to the dimension of a strict henselization $R^{\rm{sh}}$, we obtain the following equalities:
$$
\dim R^{\rm{sh}}-\dim (R^{\rm{sh}}/I_{\alpha^{\rm{sh}}})
=\dim R^{\rm{sh}}-\dim (R/I_{\alpha})^{\rm{sh}}
=\dim R - \dim (R/I_{\alpha})
=\dim \mathcal{Q}.
$$
So the local log ring $(R^{\rm{sh}}, \mathcal{Q}, \alpha^{\rm{sh}})$ is log-regular.
\end{proof}

Now we can prove the following result on the divisor class groups of local log-regular rings, as an application of the theory of perfectoid towers.

\begin{theorem}
\label{torsiondivisorclass}
Let $(R,\mathcal{Q},\alpha)$ be a local log-regular ring of mixed characteristic with perfect residue field $k$ of characteristic $p>0$, and denote by $\Cl(R)$ the divisor class group with its torsion subgroup $\Cl(R)_{\rm{tor}}$. Then the following assertions hold.
\begin{enumerate}
\item
Assume that $R \cong W(k)\llbracket\mathcal{Q}\rrbracket$ for a fine, sharp, and saturated monoid $\mathcal{Q}$, where $W(k)$ is the ring of Witt vectors over $k$. Then $\Cl(R)_{\rm{tor}} \otimes \mathbb{Z}[\frac{1}{p}]$ is a finite group. In other words, the $\ell$-primary subgroup of $\Cl(R)_{\rm{tor}}$ is finite for all primes $\ell \ne p$ and vanishes for almost all primes $\ell \ne p$.

\item
Assume that $\widehat{R^{\rm{sh}}}[\frac{1}{p}]$ is locally factorial, where $\widehat{R^{\rm{sh}}}$ is the completion of the strict Henselization $R^{\rm{sh}}$. Then $\Cl(R)_{\rm{tor}} \otimes \mathbb{Z}[\frac{1}{p}]$ is a finite group. In other words, the $\ell$-primary subgroup of $\Cl(R)_{\rm{tor}}$ is finite for all primes $\ell \ne p$ and vanishes for almost all primes $\ell \ne p$.
\end{enumerate}
\end{theorem}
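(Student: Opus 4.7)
Part (1) is an immediate consequence of Proposition \ref{torsiondivisorunramified}, since for a perfect residue field $k$ the Witt ring $W(k)$ coincides with the Cohen ring $C(k)$, and perfect fields are automatically $F$-finite. For part (2), my strategy is to express $\Cl(R)[\ell^n]$ as an \'etale cohomology group on a suitable open subset, then tilt via the perfectoid tower of Construction \ref{logtower} to move to characteristic $p$, and finally invoke Polstra's theorem. The first step is a reduction: using \cite[Theorem 1.2]{GrWe94} together with the standard behavior of class groups under (strict) henselization, one reduces the finiteness of $\Cl(R)_{\rm{tor}}\otimes\mathbb{Z}[1/p]$ to the corresponding statement for $\widehat{R^{\rm{sh}}}$. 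So assume hereafter that $R$ is a complete, strictly Henselian local log-regular domain with separably closed perfect residue field $k$ of characteristic $p$, and that $R[1/p]$ is locally factorial.

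Let $U_{\rm{reg}}\subseteq \Spec R$ be the regular locus. Since $R$ is normal (Theorem \ref{CMnormal}), $\Spec R\setminus U_{\rm{reg}}$ has codimension $\geq 2$. Define
\[
U:=(\Spec R\setminus V(p))\cup U_{\rm{reg}},
\]
so that $\Spec R\setminus U=V(p)\cap(\Spec R\setminus U_{\rm{reg}})$ has codimension $\geq 2$, and every local ring of a point of $U$ is either a localization of the locally factorial ring $R[1/p]$ or a regular local ring; thus $U$ is locally factorial. By Lemma \ref{Grothendieck}, $\Pic(U)\cong\Cl(R)$. Since $R$ is strictly Henselian, $\mu_{\ell^n}\cong \mathbb{Z}/\ell^n\mathbb{Z}$ as \'etale sheaves on $U$ for $\ell\neq p$, and $\Gamma(U,\mathbb{G}_m)=R^{\times}$ is $\ell^n$-divisible by Hensel's lemma. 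The Kummer sequence therefore yields the key identification
\[
\Cl(R)[\ell^n]\;\cong\;\Pic(U)[\ell^n]\;\cong\;H^1(U_{\rm{et}},\mathbb{Z}/\ell^n\mathbb{Z}).
\]

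By Kato's structure theorem (Theorem \ref{CohenLogReg}) we may write $R\cong C(k)\llbracket\mathcal{Q}\oplus\mathbb{N}^r\rrbracket/(p-f)$, and Construction \ref{logtower} then furnishes a perfectoid tower $(\{R_i\}_{i\geq 0},\{t_i\}_{i\geq 0})$ arising from $(R,(p))$ meeting the hypotheses of Proposition \ref{TiltEtaleCohHensel}: each $R_i$ is a complete (hence $p$-adically Henselian) local log-regular, hence normal, domain, each $t_i$ is module-finite, and the successive generic extensions are of $p$-power degree by Lemma \ref{finite424} (3)--(4). By Theorem \ref{TiltingLogRegular}, the tilt is $R^{s.\flat}\cong k\llbracket\mathcal{Q}\oplus\mathbb{N}^r\rrbracket$. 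Since $\Spec R\setminus V(p)\subseteq U$, Proposition \ref{TiltEtaleCohHensel} applies and yields
\[
|H^1(U_{\rm{et}},\mathbb{Z}/\ell^n\mathbb{Z})|\;\leq\;|H^1(U^{s.\flat}_{\rm{et}},\mathbb{Z}/\ell^n\mathbb{Z})|
\]
once the right-hand side is known to be finite. The tilting correspondence (Lemma \ref{921WedN}) identifies the special fibers $V(p)\subset \Spec R$ and $V(p^{s.\flat})\subset\Spec R^{s.\flat}$ as schemes, so $\Spec R^{s.\flat}\setminus U^{s.\flat}$ is homeomorphic to $\Spec R\setminus U$; combined with $\dim R^{s.\flat}=\dim R$ (Proposition \ref{smalltiltproperty2} (3)), this shows that $\Spec R^{s.\flat}\setminus U^{s.\flat}$ also has codimension $\geq 2$.

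Now $R^{s.\flat}$ is itself a complete, strictly Henselian local log-regular domain in characteristic $p$ with perfect residue field, hence $F$-finite and strongly $F$-regular by Lemma \ref{F-regularLog}. Polstra's theorem \cite{Pol20} therefore asserts that $\Cl(R^{s.\flat})_{\rm{tor}}$ is finite. Running the Kummer argument on $U^{s.\flat}$ as before (using that $(R^{s.\flat})^{\times}$ is $\ell^n$-divisible and $\Gamma(U^{s.\flat},\mathbb{G}_m)=(R^{s.\flat})^{\times}$ by the codimension condition) yields
\[
H^1(U^{s.\flat}_{\rm{et}},\mathbb{Z}/\ell^n\mathbb{Z})\;\cong\;\Pic(U^{s.\flat})[\ell^n]\;\hookrightarrow\;\Cl(U^{s.\flat})[\ell^n]=\Cl(R^{s.\flat})[\ell^n],
\]
which is finite for every $\ell\neq p$ and vanishes for all but finitely many $\ell\neq p$. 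Concatenating all inequalities gives the same control on $|\Cl(R)[\ell^n]|$, and the desired conclusion follows. The most delicate points in this plan are the initial reduction to $\widehat{R^{\rm{sh}}}$, where one must carefully control the possibly nontrivial kernel of $\Cl(R)\to \Cl(R^{\rm{sh}})$ on prime-to-$p$ torsion, and the verification that the tilting operation preserves the codimension-$\geq 2$ complement property needed to run the Kummer-theoretic comparison symmetrically on both sides.
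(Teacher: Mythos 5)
Your proposal is correct and follows essentially the same route as the paper's proof: the same open set $U=(\Spec R\setminus V(p))\cup U_{\rm{reg}}$, the same Kummer-sequence identification $\Cl(R)[\ell^n]\cong H^1(U_{\et},\mathbb{Z}/\ell^n\mathbb{Z})$, the same application of Proposition \ref{TiltEtaleCohHensel} to the tower of Construction \ref{logtower}, and the same appeal to Polstra's theorem via Theorem \ref{TiltingLogRegular} and Lemma \ref{F-regularLog}. The one point to adjust: the reduction to $\widehat{R^{\rm{sh}}}$ is handled in the paper by Mori's theorem (a flat local map of Noetherian normal domains induces an injection on divisor class groups, cf.\ \cite[Corollary 6.5.2]{Fo17}), together with Lemma \ref{logstricthenselization} to keep log-regularity, so the kernel you flag as delicate is automatically zero and \cite[Theorem 1.2]{GrWe94} is not the relevant tool for that step.
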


\begin{proof}
The assertion $(1)$ was already proved in Proposition \ref{torsiondivisorunramified}. 
So let us prove the assertion $(2)$. 
We may assume that $\mathcal{Q}$ is fine, sharp, and saturated by Remark \ref{rmk2125}.
The proof given below works for the first case under the assumption of local factoriality of $\widehat{R^{\rm{sh}}}[\frac{1}{p}]$.

Since $R \to \widehat{R^{\rm{sh}}}$ is a local flat ring map, the induced map $\Cl(R) \to \Cl(\widehat{R^{\rm{sh}}})$ is injective by Mori's theorem (c.f. \cite[Corollary 6.5.2]{Fo17}). Thus, it suffices to prove the theorem for $\widehat{R^{\rm{sh}}}$. Moreover, $\widehat{R^{\rm{sh}}}$ is log-regular with respect to the induced log ring structure $\alpha:\mathcal{Q} \to R \to \widehat{R^{\rm{sh}}}$ by Lemma \ref{logstricthenselization}. So without loss of generality, we may assume that the residue field of $R$ is separably closed (hence algebraically closed in our case).

Henceforth, we denote $\widehat{R^{\rm{sh}}}$ by $R$ for brevity and fix a prime $\ell$ that is different from $p$. By Lemma \ref{Grothendieck} and the local factoriality of $R[\frac{1}{p}]$, we claim that there is an open subset $U \subseteq X:=\Spec(R)$ such that the following holds:
\begin{enumerate}
\item[$\bullet$]
$\Pic(U) \cong \Cl(X)$, $X \setminus V(p R) \subseteq U$ and $\codim_X(X \setminus U) \ge 2$.
\end{enumerate}
Indeed, note that $X$ is a normal integral scheme by Kato's theorem (Theorem \ref{CMnormal}) and let $U$ be the union of the regular locus of $X$ and the open $\Spec(R[\frac{1}{p}]) \subseteq X$. Then by Serre's normality criterion, we see that $\codim_X(X \setminus U) \ge 2$. We fix such an open $U \subseteq X$ once and for all. Taking the cohomology sequence associated to the exact sequence
$$
0 \to \mathbb{Z}/\ell^n\mathbb{Z}  \to \mathbb{G}_m \xrightarrow{(~)^{\ell^n}} \mathbb{G}_m \to 0
$$
on the strict local scheme $X$ and arguing as in the proof of Proposition \ref{DivPicard}, we have an isomorphism:
\begin{equation}
\label{etalePicDiv}
H^1(U_{\et}, \mathbb{Z}/\ell^n\mathbb{Z}) \cong \Pic(U)[\ell^n] \cong \Cl(X)[\ell^n].
\end{equation}
On the other hand, there is a perfectoid tower of module-finite extensions of local log-regular rings arising from $(R, (p))$:
\begin{equation}\label{ptLRPfinMT8}
(R,\mathcal{Q},\alpha)=(R_0,\mathcal{Q}^{(0)},\alpha_0) \to \cdots \to (R_j,\mathcal{Q}^{(j)},\alpha_j) \to (R_{j+1},\mathcal{Q}^{(j+1)},\alpha_{j+1}) \to \cdots. 
\end{equation}
Notice that each map is generically of $p$-power rank in view of Lemma \ref{injective1} (2) and Lemma \ref{finite424} (3). Moreover, the tilt of (\ref{ptLRPfinMT8}) (associated to $(R, (p))$) is given by 
$$
(R^{s. \flat},\mathcal{Q}^{s.\flat},\alpha^{s.\flat})=((R_0)^{s.\flat}_{(p)}, \mathcal{Q}^{s.\flat}_{0}, \alpha^{s.\flat}_{0}) \to \cdots
 \to  ((R_j)^{s.\flat}_{(p)}, \mathcal{Q}^{s.\flat}_{j}, \alpha^{s.\flat}_{j}) \to ((R_{j+1})^{s.\flat}_{(p)}, \mathcal{Q}^{s.\flat}_{j+1}, \alpha^{s.\flat}_{j+1}) \to \cdots,
$$
where $((R_j)^{s.\flat}_{(p)}, \mathcal{Q}^{s.\flat}_{j}, \alpha^{s.\flat}_{j})$ is a complete local log-regular ring of characteristic $p>0$ in view of Theorem \ref{TiltingLogRegular}.
The local ring $R^{s.\flat}$ is strictly Henselian and the complement of $U^{s.\flat}(=U^{s.\flat}_{(p)})$ has codimension $\ge 2$ in $\Spec(R^{s.\flat})$, and by repeating the proof of Proposition \ref{DivPicard}, we obtain an isomorphism
\begin{equation}
\label{Final3}
H^1(U_{\et}^{s.\flat},\mathbb{Z}/\ell^n\mathbb{Z}) \cong \Pic(U^{s.\flat})[\ell^n].
\end{equation}
By Lemma \ref{Grothendieck}, the map 
\begin{equation}
\label{Final4}
\Pic(U^{s.\flat})[\ell^n] \to \Cl(R^{s.\flat})[\ell^n]
\end{equation}
is injective. Combining $(\ref{etalePicDiv})$, $(\ref{Final3})$, $(\ref{Final4})$ and Proposition \ref{TiltEtaleCohHensel} together, it is now sufficient to check that there exists an integer $N>0$ depending only on $R^{s.\flat}$ such that
$$
\Cl(R^{s.\flat})[\ell^N]=\bigcup_{n>0} \Cl(R^{s.\flat})[\ell^n],~\mbox{and}~
\Cl(R^{s.\flat})[\ell^N]~\mbox{is finite for all}~\ell~\mbox{and zero for almost all}~\ell \ne p.
$$
Since we know that $R^{s.\flat}$ is strongly $F$-regular by Theorem \ref{TiltingLogRegular} and Lemma \ref{F-regularLog}, the aforementioned result of Polstra finishes the proof.
\end{proof}

\section{Appendix: Construction of differential modules and maximality}\label{AppendixA}
The content of this appendix is taken from Gabber-Ramero's treatise \cite{GR22} whose purpose is to supply the corrected version of Grothendieck's original presentation in EGA. So we give only a sketch of the constructions of relevant modules and maps. The readers are encouraged to look into \cite{GR22} for more details as well as proofs. We are motivated by the following specific problem.

\begin{Problem}
\label{ringextension}
Let $(A,\fm_A)$ be a Noetherian regular local ring and fix a system of elements $f_1,\ldots,f_n \in A$ and a system of integers $e_1,\ldots,e_n$ with $e_i > 1$ for every $i=1,\ldots, n$. 
We set
$$
B:=A[T_1,\ldots,T_n]/(T_1^{e_1}-f_1,\ldots,T_d^{e_n}-f_n).
$$
Then find a sufficient condition that ensures that the localization $B$ with respect to a maximal ideal $\fn$ with $\fm_A=A \cap \fn$ is regular.
\end{Problem}

From the construction, it is obvious that the induced ring map $A \to B$ is a flat finite injective extension. Let now $(A,\fm_A,k)$ be a Noetherian local ring with residue field $k_A:=A/\fm_A$ of characteristic $p>0$. Following the presentation in \cite[(9.6.15)]{GR22}, we define a certain $k_A^{1/p}$-vector space $\bold{\Omega}_A$ together with a map $\boldsymbol{d}_A: A\to \bold{\Omega}_A$ as follows.

\begin{itemize}
\item[Case I:]($p \notin \fm_A^2$)

Let $W_2(k_A)$ denote the $p$-typical ring of length $2$ Witt vectors over $k_A$. Then there is the ghost component map $\overline{\omega}_0: W_2(k_A)\to k_A$, and set 
$V_1(k_A):=\ker(\overline{\omega}_0)$. More specifically, we have $W_2(k_A)=k_A \times k_A$ as sets with addition and multiplication given respectively by 
$$
(a,b)+(c,d)=\Big(a+c,b+d+\frac{a^p+c^p-(a+c)^p}{p}\Big)~\mbox{and}~(a,b)(c,d)=(ac,a^pd+c^pb).
$$
Using this structure, we see that $V_1(k_A)=0 \times k_A$ as sets, which is an ideal of $W_2(k_A)$ and $V_1(k_A)^2=0$. This makes $V_1(k_A)$ equipped with the structure as a $k_A$-vector space by letting $x(0,a):=(x,0)(0,a)$ for $x \in k_A$. One can define the map of $k_A$-vector spaces
\begin{equation}
\label{vectoriso1}
k_A^{1/p} \to V_1(k_A)\ ;\ a \mapsto (0,a^p),
\end{equation}
which is a bijection. With this isomorphism, we may view $V_1(k_A)$ as a $k_A^{1/p}$-vector space. Next we form the fiber product ring:
$$
A_2:=A \times_{k_A} W_2(k_A).
$$
It gives rise to a short exact sequence of $A_2$-modules
\begin{equation}
\label{exactseq1}
0 \to V_1(k_A) \to A_2 \to A \to 0,
\end{equation}
where $A_2 \to A$ is the natural projection, and the $A_2$-module structure of $V_1(k_A)$ is via the restriction of rings $A_2 \to W_2(k_A)$. From $(\ref{exactseq1})$, we obtain an exact sequence of $A$-modules:
$$
V_1(k_A) \to \overline{\Omega}_A \to \Omega^1_{A/\mathbb{Z}} \to 0,
$$
where we put $\overline{\Omega}_A=\Omega^1_{A_2/\mathbb{Z}} \otimes_{A_2} A$. After applying $(~) \otimes_A k_A$ to this sequence, we have another sequence of $k_A$-vector spaces:
\begin{equation}
\label{exactseq2}
0 \to V_1(k_A) \xrightarrow{j_A} \overline{\Omega}_A \otimes_A k_A \to \Omega^1_{A/\mathbb{Z}} \otimes_A k_A \to 0.
\end{equation}
Then this is right exact. Moreover, $(\ref{vectoriso1})$ yields a unique $k_A$-linear map $\psi_A:V_1(k_A) \otimes_{k_A} k_A^{1/p} \to V_1(k_A)$. Define $\boldsymbol{\Omega}_A$ as the push-out of the diagram:
$$
V_1(k_A) \xleftarrow{\psi_A} V_1(k_A)\otimes_{k_A}k_A^{1/p} \xrightarrow{j_A \otimes k_A^{1/p}} \overline{\Omega}_{A}\otimes_{A}k_A^{1/p}.
$$
More concretely, we have
$$
\boldsymbol{\Omega}_A=\dfrac{V_1(k_A) \oplus (\overline{\Omega}_{A}\otimes_{A}k_A^{1/p})}{T},
$$
where $T=\big\{(\psi(x),-(j_A \otimes k_A^{1/p})(x))~\big|~x \in V_1(k_A)\otimes_{k_A}k_A^{1/p}\big\}$. By the universality of push-outs, we get the commutative diagram:
$$
\begin{CD}
0 @>>> V_1(k_A)\otimes_{k_A}k_A^{1/p} @>>> \overline{\Omega}_{A}\otimes_{A}k_A^{1/p} @>>> \Omega^1_{A/\mathbb{Z}}\otimes_Ak_A^{1/p} @>>> 0\\
&& @V\psi_AVV @V\boldsymbol{\psi}_AVV @|\\
0 @>>> V_1(k_A) @>>> \boldsymbol{\Omega}_A @>>> \Omega^1_{A/\mathbb{Z}}\otimes_Ak_A^{1/p} @>>> 0
\end{CD}
$$
We define the map
$$
\boldsymbol{d}_A:A \to \boldsymbol{\Omega}_A
$$
as the composite mapping
$$
A \xrightarrow{1 \times \tau_{k_A}} A_2=A \times_{k_A} W_2(k_A) \xrightarrow{d}\Omega^1_{A_2/\mathbb{Z}} \xrightarrow{\id \otimes 1} \overline{\Omega}_A=\Omega^1_{A_2/\mathbb{Z}} \otimes_A k_A^{1/p}  \xrightarrow{\boldsymbol{\psi}_A} \boldsymbol{\Omega}_A.
$$
Here, $d:A_2 \to \Omega^1_{A_2/\mathbb{Z}}$ is the universal derivation and $\tau_{k_A}:A \to k_A \to W_2(k_A)$, where the first map is the natural projection and the second one is the Teichm\"uller map.

\item[Case II:]($p\in \mathfrak{m}_A^2$)

We just set $\bold{\Omega}_A:=\Omega^1_{A/\mathbb{Z}}\otimes_Ak_A^{1/p}$, and define $\boldsymbol{d}_A: A\to \bold{\Omega}_A$ as the map induced by the universal derivation $d_A: A \to \Omega^1_{A/\mathbb{Z}}$. 
\end{itemize}

Combining both Case I and Case II together, we have a map $\boldsymbol{d}_A: A\to \bold{\Omega}_A$. Moreover, if $\phi: (A,\fm_A) \to (B,\fm_B)$ is a local ring map of local rings, it gives rise to the following commutative diagram:
$$
\begin{CD}
A @>\boldsymbol{d}_A>> \bold{\Omega}_A \\
@V\phi VV @V\bold{\Omega}_{\phi}VV \\
B @>\boldsymbol{d}_B>> \bold{\Omega}_B
\end{CD}
$$
With this in mind, one can consider the functor $A \mapsto \boldsymbol{\Omega}_A$ from the category of local rings $(A,\fm_A)$ of residual characteristic $p>0$ to the category of the $k_A^{1/p}$-vector spaces $\boldsymbol{\Omega}_A$. Some distinguished features in the construction above are expressed by the following proposition.

\begin{proposition}[{\cite[Proposition 9.6.20]{GR22}}]
\label{Differentialmodule}
Let $\phi:(A,\fm_A) \to (B,\fm_B)$ be a local ring map of Noetherian local rings such that the residual characteristic of $A$ is $p>0$. Then
\begin{enumerate}
\item
Suppose that $\phi$ is formally smooth for the $\fm_A$-adic topology on $A$ and the $\fm_B$-adic topology on $B$. Then the maps induced by $\phi$ and $\boldsymbol{\Omega}_\phi$ respectively
$$
(\fm_A/\fm_A^2) \otimes_{k_A} k_B \to \fm_B/\fm_B^2,~\boldsymbol{\Omega}_A \otimes_{K_A^{1/p}} k_B^{1/p} \to \boldsymbol{\Omega}_B
$$
are injective.

\item
Suppose that
\begin{enumerate}
\item
$\fm_A B=\fm_B$.

\item
The residue filed extension $k_A \to k_B$ is separable algebraic.

\item
$\phi$ is flat.
\end{enumerate}
Then $\boldsymbol{\Omega}_\phi$ induces an isomorphism of $k_A^{1/p}$-vector spaces:
$$
\boldsymbol{\Omega}_A \otimes_A B \cong \boldsymbol{\Omega}_B.
$$

\item
If $B=A/\fm_A^2$ and $\phi:A \to B$ is the natural map, then $\boldsymbol{\Omega}_\phi$ is an isomorphism.

\item
The functor $\boldsymbol{\Omega}_\bullet$ and the natural transformation $\boldsymbol{d}_\bullet$ commute with filtered colimits.
\end{enumerate}
\end{proposition}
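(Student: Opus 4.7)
My plan is to prove the four assertions by unpacking the construction of $\boldsymbol{\Omega}_A$ case by case, reducing to standard facts about K\"ahler differentials, Witt vectors, and formally smooth maps. A preliminary observation organizes the argument: if $\phi:A\to B$ is flat local and the induced map $(\fm_A/\fm_A^2)\otimes_{k_A} k_B\hookrightarrow \fm_B/\fm_B^2$ is injective (as will be shown in (1)), then $p\in\fm_A^2$ if and only if $p\in\fm_B^2$, so Case~I and Case~II for $A$ and $B$ coincide under the hypotheses of (1) and (2), and functoriality can be traced piece by piece.

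For (1), the first injectivity is the classical consequence of formal smoothness of $\phi$ for the $\fm$-adic topologies, obtained from the conormal exact sequence for the closed fiber (cf.\ \cite[\S28]{Ma86}). For the injectivity of $\boldsymbol{\Omega}_\phi\otimes_{k_A^{1/p}} k_B^{1/p}$, I would first treat Case~II, where $\boldsymbol{\Omega}_A = \Omega^1_{A/\mathbb{Z}}\otimes_A k_A^{1/p}$, using the fundamental cotangent sequence $\fm_A/\fm_A^2\to\Omega^1_{A/\mathbb{Z}}\otimes_A k_A \to\Omega^1_{k_A/\mathbb{Z}}\to 0$ combined with the injectivity of $\Omega^1_{k_A/\mathbb{Z}}\otimes_{k_A} k_B^{1/p}\to\Omega^1_{k_B/\mathbb{Z}}\otimes_{k_B} k_B^{1/p}$ (which follows from $p$-basis considerations for imperfect field extensions). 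Case~I is then obtained by applying the same analysis to the pair $(A_2,B_2)$ and chasing through the functorial push-out squares that define $\boldsymbol{\Omega}_A$ and $\boldsymbol{\Omega}_B$, using that $W_2(k_A)\to W_2(k_B)$ and $V_1(k_A)\otimes_{k_A}k_B\to V_1(k_B)$ remain injective.

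For (2), the hypotheses force $\phi$ to be formally \'etale for the $\fm$-adic topologies, so the base-change map $\Omega^1_{A/\mathbb{Z}}\otimes_A B\xrightarrow{\cong}\Omega^1_{B/\mathbb{Z}}$ is an isomorphism; the separability of $k_A\to k_B$ further yields $W_2(k_A)\otimes_{k_A}k_B\xrightarrow{\cong}W_2(k_B)$, hence $A_2\otimes_A B\xrightarrow{\cong}B_2$ by flatness. All four corners of the push-out defining $\boldsymbol{\Omega}_\bullet$ then base-change isomorphically, so $\boldsymbol{\Omega}_A\otimes_A B \xrightarrow{\cong}\boldsymbol{\Omega}_B$ follows by commuting push-outs with colimits. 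For (3), since $A\to A/\fm_A^2$ induces the identity on $\fm/\fm^2$, on residue fields, and on $k_A^{1/p}$, the entire construction of $\boldsymbol{\Omega}_\bullet$ is left unchanged, so $\boldsymbol{\Omega}_\phi$ is an isomorphism by direct functoriality. For (4), each ingredient of the construction (the fiber product $A\times_{k_A}W_2(k_A)$, the length-two Witt functor, $\Omega^1_{-/\mathbb{Z}}$, tensor products, and push-outs of modules) commutes with filtered colimits, so their composition and the assembled natural transformation $\boldsymbol{d}_\bullet$ do as well.

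The principal obstacle I foresee is in Case~I of (1): one must compare the $W_2(k_A)$-module structure on $V_1(k_A)$ with the $k_A^{1/p}$-vector space structure coming from \eqref{vectoriso1}, and verify that, after pushing out and base-changing to $k_B^{1/p}$, no extra kernel arises from the potentially non-separable extension $k_A\to k_B$ (which is allowed in (1) but excluded in (2)). Ensuring the compatibility of the two module structures under functoriality, and tracking this through the push-out diagram defining $\boldsymbol{\Omega}$, is where the real work lies.
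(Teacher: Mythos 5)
First, a point of comparison: the paper does not prove this proposition at all. It is quoted verbatim from Gabber--Ramero (\cite[Proposition 9.6.20]{GR22}), and the appendix explicitly defers all proofs to that reference, so there is no in-paper argument to measure yours against. Judged on its own terms, your plan has two concrete gaps.

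In part (1), Case II, you reduce the injectivity of $\boldsymbol{\Omega}_A\otimes_{k_A^{1/p}}k_B^{1/p}\to\boldsymbol{\Omega}_B$ to the injectivity of $\Omega^1_{k_A/\mathbb{Z}}\otimes_{k_A}k_B^{1/p}\to\Omega^1_{k_B/\mathbb{Z}}\otimes_{k_B}k_B^{1/p}$, which you assert follows from ``$p$-basis considerations''. That map is not injective in general: for $k_A=\mathbb{F}_p(t)\subseteq k_B=\mathbb{F}_p(t^{1/p})$ one has $dt=d\bigl((t^{1/p})^p\bigr)=0$ in $\Omega^1_{k_B/\mathbb{Z}}$, and this extension does arise as the residue extension of a formally smooth local map (take $A=\mathbb{F}_p(t)$ and $B=\mathbb{F}_p(t)[x]_{(x^p-t)}$, a localization of a smooth algebra), so it is not excluded by the hypotheses of (1). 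In that example the conclusion of (1) still holds because $dt$ survives in $\Omega^1_{B/\mathbb{Z}}\otimes_Bk_B^{1/p}$ even though it dies in $\Omega^1_{k_B/\mathbb{Z}}$; the injectivity therefore cannot be factored through the residue fields, and a different mechanism (the structure theory of formally smooth algebras, or a lifting argument) is required. The same objection applies to the right-hand column of the diagram chase you propose in Case I. Moreover, the sequence $\fm_A/\fm_A^2\to\Omega^1_{A/\mathbb{Z}}\otimes_Ak_A\to\Omega^1_{k_A/\mathbb{Z}}\to0$ is not left exact in general --- repairing exactly that failure is the purpose of the Witt-vector modification --- so a two-out-of-three argument on it is not available without further justification.

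In part (2), the step ``$\phi$ is formally \'etale, hence $\Omega^1_{A/\mathbb{Z}}\otimes_AB\xrightarrow{\cong}\Omega^1_{B/\mathbb{Z}}$'' is false: the completion $B=\widehat{A}$ satisfies (a)--(c), yet already for $A=\mathbb{Z}_{(p)}$, $B=\mathbb{Z}_p$ one has $\Omega^1_{A/\mathbb{Z}}=0$ while $\Omega^1_{B/\mathbb{Z}}\neq0$. Formal \'etaleness for the adic topologies controls only completed differentials; what the statement needs, and what is true, is the comparison of the fibres $\Omega^1_{A/\mathbb{Z}}\otimes_Ak_B$ and $\Omega^1_{B/\mathbb{Z}}\otimes_Bk_B$ (and likewise for $A_2$ and $B_2$), which must be extracted from the conormal exact sequences together with $\fm_AB=\fm_B$ and the separability of $k_A\to k_B$. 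Your treatment of the $V_1$-corner in (2), and of parts (3) and (4), is essentially sound.
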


We provide an answer to Problem \ref{ringextension} as follows.

\begin{theorem}[{\cite[Corollary 9.6.34]{GR22}}]
\label{criterion-regularity}
Let $f_1,\ldots, f_n$ be a sequence of elements in $A$, and let $e_1,\ldots, e_n$ be a system of integers with $e_i>1$ for every $i=1,\ldots, n$. 
Set 
$$
C:=A[T_1,\ldots, T_n]/(T_1^{e_1}-f_1,\ldots, T_n^{e^n}-f_n).
$$
Fix a prime ideal $\mathfrak{n}\subseteq C$ such that $\mathfrak{n}\cap A=\mathfrak{m}_A$, and let $B:=C_\mathfrak{n}$. 
Let $E\subseteq \bold{\Omega}_A$ be the $k_A^{1/p}$-vector space spanned by $\boldsymbol{d}_Af_1,\ldots, \boldsymbol{d}_Af_n$. Then the following conditions are equivalent. 
\begin{enumerate}
\item
$A$ is a regular local ring, and $\dim_{k_A^{1/p}}E=n$.
\item
$B$ is a regular local ring. 
\end{enumerate}
\end{theorem}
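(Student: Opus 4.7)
\textbf{Proof proposal for Theorem \ref{criterion-regularity}.}

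The plan is to reduce both implications to a statement about the fiber $B\otimes_A k_A$, and then translate the condition $\dim_{k_A^{1/p}}E=n$ into an intrinsic property of this fiber via the functoriality of $\boldsymbol{\Omega}_\bullet$. Since $C=A[T_1,\ldots,T_n]/(T_1^{e_1}-f_1,\ldots,T_n^{e_n}-f_n)$ is free of rank $e_1\cdots e_n$ over $A$, the map $A\to B=C_{\mathfrak{n}}$ is a flat finite local extension. By the standard criterion for regularity along a flat local map (\cite{Ma86}, Theorem~23.7), $B$ is regular if and only if $A$ is regular and $B/\mathfrak{m}_A B$ is a regular local ring; and by faithful flatness, regularity of $B$ forces regularity of $A$. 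Since $\dim B=\dim A$, regularity of $B/\mathfrak{m}_A B$ at its maximal ideal is equivalent to its being a field. Hence both directions reduce to proving, under the blanket hypothesis that $A$ is regular, that
\[
\overline{B}\ :=\ B/\mathfrak{m}_A B\ =\ \bigl(k_A[T_1,\ldots,T_n]/(T_1^{e_1}-\overline{f}_1,\ldots,T_n^{e_n}-\overline{f}_n)\bigr)_{\overline{\mathfrak{n}}}
\]
is a field if and only if $\boldsymbol{d}_Af_1,\ldots,\boldsymbol{d}_Af_n$ are $k_A^{1/p}$-linearly independent in $\boldsymbol{\Omega}_A$.

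For the implication $(1)\Rightarrow(2)$, I would use Proposition~\ref{Differentialmodule}(3) to replace $A$ by $A/\mathfrak{m}_A^2$ without changing $\boldsymbol{\Omega}_A$, so the condition on $E$ becomes a condition about the images of the $f_i$ in $A/\mathfrak{m}_A^2$ together with the Witt-vector correction in Case~I. Then, passing to $\overline{B}$, the relations become $T_i^{e_i}=\overline{f}_i$, and the key point is to use the composite $\boldsymbol{\Omega}_A\to\boldsymbol{\Omega}_{\overline{B}}$ (induced by $A\to\overline{B}$, functorially) to show that the hypothesis $\dim_{k_A^{1/p}}E=n$ forces the Jacobian of these $n$ relations to have full rank at $\overline{\mathfrak{n}}$. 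Concretely, in $\boldsymbol{\Omega}_{\overline{B}}$ one has $e_iT_i^{e_i-1}\boldsymbol{d}_{\overline{B}}T_i=\boldsymbol{d}_{\overline{B}}\overline{f}_i$ (or the appropriate Frobenius-corrected version), and the fact that the right-hand sides are linearly independent while $\boldsymbol{\Omega}_{\overline{B}}$ is generated by $\boldsymbol{d}_{\overline{B}}T_1,\ldots,\boldsymbol{d}_{\overline{B}}T_n$ implies that each $\overline{f}_i$ must be a unit-multiple of $T_i^{e_i}$ in such a way that $\overline{B}$ has no maximal ideal other than its unique prime, i.e.\ it is a field.

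For the converse $(2)\Rightarrow(1)$, assuming $\overline{B}$ is a field, I would pick a regular system of parameters $y_1,\ldots,y_d$ for $A$ and observe that the differentials $\boldsymbol{d}_A y_1,\ldots,\boldsymbol{d}_A y_d$ span a known subspace of $\boldsymbol{\Omega}_A$ (its dimension controlled by regularity of $A$ via Proposition~\ref{Differentialmodule}(1)). Functoriality of $\boldsymbol{\Omega}$ applied to $A\to\overline{B}$, combined with the fact that $\overline{B}$ is a field (so $\boldsymbol{\Omega}_{\overline{B}}$ reduces to the contribution from $W_2$-corrections plus $\Omega^1$-terms of the $T_i$), forces $\boldsymbol{d}_Af_1,\ldots,\boldsymbol{d}_Af_n$ to remain linearly independent when pulled back: if they had a nontrivial $k_A^{1/p}$-linear relation, then taking images in $\boldsymbol{\Omega}_{\overline{B}}$ would produce a nontrivial relation among $e_iT_i^{e_i-1}\boldsymbol{d}_{\overline{B}}T_i$, contradicting the field property of $\overline{B}$ at $\overline{\mathfrak{n}}$.

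The main obstacle is the Case~I portion of the construction of $\boldsymbol{\Omega}_A$: the appearance of $V_1(k_A)=\ker(W_2(k_A)\to k_A)$ and the map $\boldsymbol{\psi}_A$ means that linear independence of $\boldsymbol{d}_Af_1,\ldots,\boldsymbol{d}_Af_n$ in $\boldsymbol{\Omega}_A$ is strictly finer than independence of their classes modulo $\mathfrak{m}_A^2$, and captures precisely the information needed to detect whether the fiber $\overline{B}$ is reduced (and hence a field) in mixed characteristic when some $e_i$ is divisible by $p$. Making the Jacobian-style argument above rigorous therefore requires carefully chasing $\boldsymbol{d}$ through the Witt-vector correction, which is the technical heart of the argument and the reason the criterion must be phrased in terms of $\boldsymbol{\Omega}_A$ rather than $\Omega^1_{A/\mathbb{Z}}$.
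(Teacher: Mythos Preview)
The paper does not prove this result; it is cited from \cite[Corollary 9.6.34]{GR22}, and the appendix explicitly defers all proofs to that reference. So there is no in-paper argument to compare against. Your opening reduction is correct: $A\to B$ is finite free and local, $\dim B=\dim A$, and \cite[Theorem 23.7]{Ma86} gives that $B$ is regular iff $A$ is regular and the zero-dimensional local fibre $\overline B=B/\mathfrak{m}_AB$ is a field.

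The real gap is in your attempt to relate $\dim E=n$ to $\overline B$ via $\boldsymbol{\Omega}_{\overline B}$. The ring map $A\to\overline B$ factors through $k_A$, so by functoriality of $\boldsymbol{\Omega}_\bullet$ the induced map $\boldsymbol{\Omega}_A\to\boldsymbol{\Omega}_{\overline B}$ factors through $\boldsymbol{\Omega}_{k_A}$. By the exact sequences (\ref{modifieddifferential}) and (\ref{modifieddifferential2}) this annihilates the entire subspace of $\boldsymbol{\Omega}_A$ coming from $\mathfrak{m}_A/\mathfrak{m}_A^2$, which is precisely where $\boldsymbol{d}_Af_i$ lives whenever $f_i\in\mathfrak{m}_A$ (and which is all of $\boldsymbol{\Omega}_A$ when $k_A$ is perfect). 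Thus neither direction of your argument goes through: you cannot push the independence hypothesis forward to $\boldsymbol{\Omega}_{\overline B}$, and a dependence relation in $\boldsymbol{\Omega}_A$ need not produce anything nontrivial there either. The comparison must instead be made with $\boldsymbol{\Omega}_B$ rather than $\boldsymbol{\Omega}_{\overline B}$: the relations $T_i^{e_i}=f_i$ hold in $B$, and working out what $\boldsymbol{d}_B$ does to them (which, as you correctly flag, requires care since in Case~I the map $\boldsymbol{d}$ is built from the non-additive Teichm\"uller lift and is not a derivation on the nose) is what links the span $E\subseteq\boldsymbol{\Omega}_A$ to $\dim_{k_B}\mathfrak{m}_B/\mathfrak{m}_B^2$ via the exact sequence for $\boldsymbol{\Omega}_B$, and hence to regularity of $B$.
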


In particular, in the situation of the above theorem, $B$ is a regular local ring if $A$ is a regular local ring and $f_1,\ldots, f_n$ is \emph{maximal} in the sense of the following definition.

\begin{definition}
\label{maximal}
Let $(A,\fm_A,k_A)$ be a local ring with residual characteristic $p>0$. Then we say that a sequence of elements $f_1,\ldots, f_n$ in $A$ is \emph{maximal} if $\boldsymbol{d}_Af_1,\ldots, \boldsymbol{d}_Af_n$ forms a basis of the $k_A^{1/p}$-vector space $\boldsymbol{\Omega}_A$.
\end{definition}

In general, we have the following fact.

\begin{lemma}\label{maximalityRLR}
Let $(A,\fm_A,k_A)$ be a regular local ring of mixed characteristic and assume that
$f_1,\ldots,f_d$ is a regular system of parameters of $A$. Then the following hold:
\begin{enumerate}
\item
$f_1,\ldots,f_d$ satisfies the condition (1) of Theorem \ref{criterion-regularity}.

\item
If the residue field $k_A$ of $A$ is perfect, then the sequence $f_1,\ldots,f_d$ is maximal.
\end{enumerate}
\end{lemma}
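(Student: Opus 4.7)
The strategy is to split the argument according to the two cases that appear in the construction of $\boldsymbol{\Omega}_A$: the unramified case $p \notin \fm_A^2$ (Case I) and the ramified case $p \in \fm_A^2$ (Case II). By Proposition \ref{Differentialmodule} (3), the element $\boldsymbol{d}_A f$ depends only on the class of $f$ modulo $\fm_A^2$, so we may pass to the Artinian quotient $A/\fm_A^2$ whenever convenient. Moreover, both assertions are invariant under the natural $GL_d(k_A)$-action on regular systems of parameters, so we may adjust the $f_i$'s freely modulo $\fm_A^2$ before checking the claims.

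\textbf{Case II.} Here $\boldsymbol{\Omega}_A = \Omega^1_{A/\mathbb{Z}} \otimes_A k_A^{1/p}$ and $\boldsymbol{d}_A f = df \otimes 1$. Since $\fm_A$ acts trivially on the $k_A^{1/p}$-module $\boldsymbol{\Omega}_A$, the Leibniz rule yields a well-defined $k_A$-linear map $\fm_A/\fm_A^2 \to \boldsymbol{\Omega}_A$. Base-changing the standard conormal exact sequence of $\mathbb{Z} \to A \to k_A$ along $k_A \hookrightarrow k_A^{1/p}$, we obtain
\[
(\fm_A/\fm_A^2) \otimes_{k_A} k_A^{1/p} \xrightarrow{\delta} \boldsymbol{\Omega}_A \longrightarrow \Omega^1_{k_A/\mathbb{Z}} \otimes_{k_A} k_A^{1/p} \longrightarrow 0,
\]
where $\delta(f_i \otimes 1) = \boldsymbol{d}_A f_i$. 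Since $f_1, \ldots, f_d$ form a $k_A^{1/p}$-basis of the source, part (1) reduces to the injectivity of $\delta$, and part (2) (where $\Omega^1_{k_A/\mathbb{Z}} = 0$ by perfectness of $k_A$) reduces to $\delta$ being an isomorphism, which then follows from injectivity. For injectivity I would pass to the completion $\widehat{A}$ via faithful flatness, invoke Cohen's structure theorem (Theorem \ref{CohenLogReg} (2)) to write $\widehat{A} \cong C(k_A)\llbracket x_1, \ldots, x_d \rrbracket/(\theta)$ with $\theta \in \fm^2$, and use the explicit splitting of the module of continuous differentials provided by the $C(k_A)$-coefficient field to exhibit the $dx_i$'s as part of a $\widehat{A}$-basis.

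\textbf{Case I and the main obstacle.} Since $p \notin \fm_A^2$, the invariance under change of regular system of parameters lets us assume $f_1 = p$. The construction of $\boldsymbol{\Omega}_A$ furnishes the short exact sequence
\[
0 \longrightarrow V_1(k_A) \longrightarrow \boldsymbol{\Omega}_A \longrightarrow \Omega^1_{A/\mathbb{Z}} \otimes_A k_A^{1/p} \longrightarrow 0.
\]
For $i \geq 2$, the image of $\boldsymbol{d}_A f_i$ in the right-hand quotient equals $df_i \otimes 1$. Since $dp = 0$ in $\Omega^1_{A/\mathbb{Z}}$ (as $p \in \mathbb{Z}$), the conormal sequence attached to $A \twoheadrightarrow A/pA$ identifies $\Omega^1_{A/\mathbb{Z}} \otimes_A k_A^{1/p}$ with $\Omega^1_{(A/pA)/\mathbb{Z}} \otimes_{A/pA} k_A^{1/p}$, and Case II applied to the regular local ring $A/pA$ (of dimension $d-1$ with the same residue field) yields that $df_2 \otimes 1, \ldots, df_d \otimes 1$ are linearly independent, forming a basis of $\Omega^1_{A/\mathbb{Z}} \otimes_A k_A^{1/p}$ when $k_A$ is perfect. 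It remains to show that $\boldsymbol{d}_A p$ is a nonzero element of the subspace $V_1(k_A) \subseteq \boldsymbol{\Omega}_A$; this is the technical heart of the argument. Unwinding: in $A_2$, the composition $(1 \times \tau_{k_A})(p) = (p, 0)$ differs from the structural image $p \cdot 1_{A_2} = (p, V(1))$ (using $p = V(1) \in W_2(k_A)$) by the element $(0, V(1)) \in V_1(k_A) \subseteq A_2$; since the universal derivation $d$ kills $p \cdot 1_{A_2}$ (being the image of $p \in \mathbb{Z}$), one has $d((p, 0)) = -d((0, V(1)))$, and tracing this through the pushout defining $\boldsymbol{\psi}_A$ produces the generator of $V_1(k_A) \cong k_A^{1/p}$ specified by $(\ref{vectoriso1})$. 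The main obstacle is precisely this last identification: one must carefully check through the pushout construction of $\boldsymbol{\Omega}_A$ (and through $\psi_A$) that the class of $d((0, V(1)))$ in $\overline{\Omega}_A \otimes_A k_A^{1/p}$ is indeed glued to the generator $1 \in k_A^{1/p}$ rather than becoming trivial or lying outside the $V_1(k_A)$-summand.
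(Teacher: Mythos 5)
Your overall architecture (split into the cases $p\notin\fm_A^2$ and $p\in\fm_A^2$, reduce everything to $\fm_A/\fm_A^2$ via Proposition \ref{Differentialmodule} (3), and in Case I normalize so that $f_1=p$) is coherent, but it diverges from the paper's route: the paper does not compute anything by hand. It simply quotes from \cite{GR22} the \emph{left-exact} sequences (\ref{modifieddifferential}) (Proposition 9.6.17 of loc.\ cit., for $p\notin\fm_A^2$) and (\ref{modifieddifferential2}) (Lemma 9.6.6, for $p\in\fm_A^2$), under which $\boldsymbol{d}_Af_1,\ldots,\boldsymbol{d}_Af_d$ are the images of a basis of $\fm_A/\fm_A^2\otimes_{k_A}k_A^{1/p}$, so (1) is immediate and (2) follows from $\Omega^1_{k_A/\mathbb{Z}}=0$ for $k_A$ perfect. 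What your approach buys, if completed, is an explicit understanding of \emph{why} the injectivity in (\ref{modifieddifferential}) holds, at the cost of redoing a nontrivial piece of \cite{GR22}; what the paper's approach buys is a two-line proof.

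The problem is that your Case I argument stops exactly at the point where the content lies. You correctly isolate that everything hinges on $\boldsymbol{d}_Ap$ being a nonzero element of $V_1(k_A)\subseteq\boldsymbol{\Omega}_A$, and you correctly compute $(1\times\tau_{k_A})(p)=p\cdot 1_{A_2}-(0,V(1))$, but you then declare the identification of the class of $d\bigl((0,V(1))\bigr)$ under the pushout to be "the main obstacle" without resolving it. This cannot be waved away: the sequence (\ref{exactseq2}) is only asserted to be right exact, i.e.\ $j_A$ need not be injective, and the naive conormal sequence genuinely kills $dp$ (this is the whole reason $\boldsymbol{\Omega}_A$ is built from $W_2(k_A)$ rather than from $\Omega^1_{A/\mathbb{Z}}$ alone). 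So the nonvanishing of $\boldsymbol{d}_Ap$ is precisely the statement your proof is supposed to establish, and deferring it leaves assertion (1) unproved for any regular system of parameters containing a unit multiple of $p$ — the only case where Case I differs from Case II. To close the gap you would either have to carry out the pushout computation in $\Omega^1_{A_2/\mathbb{Z}}$ explicitly (e.g.\ after replacing $A$ by $A/\fm_A^2$, which Proposition \ref{Differentialmodule} (3) permits), or simply cite the left exactness of (\ref{modifieddifferential}) from \cite{GR22} — at which point the case analysis and the normalization $f_1=p$ become unnecessary, and you recover the paper's proof.
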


\begin{proof}
(1): In the case that $p \notin \fm_A^2$, \cite[Proposition 9.6.17]{GR22} gives a short exact sequence:
\begin{equation}
\label{modifieddifferential}
0 \to \fm_A/\fm_A^2 \otimes_{k_A} k_A^{1/p} \to \boldsymbol{\Omega}_A \to \Omega^1_{k_A/\mathbb{Z}} \otimes_{k_A} k_A^{1/p} \to 0.
\end{equation}
Then the images $\overline{f_1},\ldots,\overline{f_d}$ form a basis of the $k_A^{1/p}$-vector space $\fm_A/\fm_A^2 \otimes_{k_A} k_A^{1/p}$. The desired claim follows from the left exactness of $(\ref{modifieddifferential})$.

In the case that $p \in \fm_A^2$, \cite[Lemma 9.6.6]{GR22} gives a short exact sequence
\begin{equation}
\label{modifieddifferential2}
0 \to \fm_A/(\fm_A^2+p\fm_A) \to \Omega_A \to \Omega^1_{k_A/\mathbb{Z}} \to 0.
\end{equation}
and we can argue as in the case $p \notin \fm_A^2$.

(2): If $k_A$ is perfect, then $\Omega^1_{k_A/\mathbb{Z}}=0$. Therefore, $(\ref{modifieddifferential})$ and $(\ref{modifieddifferential2})$ (in the latter case, one tensors it with $k_A^{1/p}$ over $k_A$) gives the desired conclusion.
\end{proof}

\end{document}